\newcommand{\lusim}[1]{\smash{\underset{\raisebox{1.2pt}[0cm][0cm]{$\sim$}}
		{{#1}}}}
\theoremstyle{plain}
\newtheorem{thm}{Theorem}[section]
\newtheorem{claim}[thm]{Claim}
\newtheorem{lemma}[thm]{Lemma}
\newtheorem{corollary}[thm]{Corollary}
\newtheorem{remark}[thm]{Remark}
\newtheorem{defn}[thm]{Definition}
\newtheorem*{remark*}{Remark}
\newtheorem*{lemma*}{Lemma}
\newtheorem*{claim*}{Claim}
\newtheorem*{thm*}{Theorem}
\title{The Magidor Iteration and Restrictions of Ultrapowers to the Ground Model}
\author{Eyal Kaplan}
\begin{document}
\maketitle
  
\begin{abstract}
We study the Magidor iteration of Prikry forcings below a measurable limit of measurables $ \kappa $. We first characterize all the normal measures $ \kappa $ carries in the generic extension, building on and extending the main result of \cite{ben2014forcing}. Then, for every such normal measure, we prove that the restriction of its ultrapower, from the generic extension to the ground model, is an iterated ultrapower of $ V $ by normal measures. This is done without core model theoretic assumptions; $ \mbox{GCH}_{\leq \kappa} $ in the ground model suffices.
\end{abstract}  
  
\section*{Introduction}

In this paper we revisit the Magidor iteration of Prikry forcings, which was first introduced in  \cite{MAGIDOR197633}. Let $ \kappa $ be a measurable limit of measurables. The Magidor iteration can be used to destroy the measurability of every measurable cardinal $ \alpha<\kappa $, while preserving cardinals and the measurability of $ \kappa $ itself. Given such an iteration $ P $ and a generic set $ G\subseteq P $ over the ground model $ V $, we consider the following questions:

\begin{enumerate}
	\item What are the normal measures on $ \kappa $ in $ V\left[G\right] $?
 	\item Given a normal measure $ W\in V\left[G\right] $ on $ \kappa $, let $ j_W\colon V\left[G\right]\to M\left[H\right] $ be its ultrapower embedding. Is $ j_W\restriction_{V} $ an iteration of $ V $ (by its measures or extenders)? 
 	\item Given a normal measure $ W\in V\left[G\right] $, is $ j_W\restriction_{V} $ a definable class of $ V $?
\end{enumerate}

The first question was extensively studied by O. Ben-Neria in \cite{ben2014forcing}. For every normal measure $ U\in V $ on $ \kappa $, he assigned a corresponding measure $ U^{\times}\in V\left[G\right] $ on $ \kappa $, and showed that the mapping $ U\mapsto U^{\times} $ is a bijection between the set of normal measures on $ \kappa $ in $ V $, and the set of normal measures on $ \kappa $ in $ V\left[G\right] $. This was done under the assumption that the ground model $ V $ is the core model and $ 0^{\P} $ does not hold. In this paper, we extend this result, weakening the assumption on the ground model $ V $:

\begin{thm} \label{Thm: Characterization of normal measures}
Assume $ \mbox{GCH}_{\leq \kappa} $ holds in $ V $. Let $ W\in V\left[G\right] $ be a normal measure on $ \kappa $. Then $ W = U^{\times} $ for some normal measure $ U\in V $ on $ \kappa $. Moreover, the measures $ \langle U^{\times} \colon U\in V \mbox{ is a normal measure on } \kappa \rangle $  are pairwise distinct.
\end{thm}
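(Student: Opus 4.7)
The plan is to fix a normal measure $W \in V[G]$ on $\kappa$, with ultrapower $j_W \colon V[G] \to M[H]$, and extract a normal measure $U \in V$ on $\kappa$ such that $W = U^\times$. The natural candidate is
\[
U \;=\; \{\, A \in V \cap P(\kappa) : \kappa \in j_W(A)\,\},
\]
which is immediately seen in $V[G]$ to be a normal, $\kappa$-complete $V$-ultrafilter on $V \cap P(\kappa)$. The crux is to show that $U$ lies in $V$; once this is in hand, the remaining pieces will follow from the structure of $U^\times$.

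Assuming $U \in V$, I expect to prove the identification $W = U^\times$ as follows. Ben-Neria's construction produces $U^\times$ by lifting $j_U \colon V \to \mathrm{Ult}(V, U)$ to an embedding $j_U^* \colon V[G] \to \mathrm{Ult}(V,U)[G^*]$ through the iteration $P$, and setting $A \in U^\times$ iff $\kappa \in j_U^*(A)$. By construction, $W \cap V = U = U^\times \cap V$, and to promote this equality from $V$-subsets to all of $P(\kappa)^{V[G]}$ I would factor $j_W \restriction V$ through $j_U$, show that $j_W$ itself is (up to isomorphism) the canonical lift of $j_U$ determined by $G$, and conclude that the two measures coincide.

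Injectivity of $U \mapsto U^\times$ is direct from the observation that $U^\times \cap V = U$, which is built into the construction: if $A \in V \cap P(\kappa)$ then $\kappa \in j_U^*(A) = j_U(A)$, so $A \in U^\times$ iff $A \in U$. Hence two distinct normal measures $U_1 \neq U_2$ in $V$ are already separated on $V$-subsets of $\kappa$, and the corresponding $U_1^\times, U_2^\times$ are distinct.

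The main obstacle is showing $U \in V$. In \cite{ben2014forcing} this is handled by core-model covering arguments; here, with only $\mbox{GCH}_{\leq\kappa}$, one has $|V \cap P(\kappa)|^V = \kappa^+$ and the iteration $P$ has size $\kappa$ (hence the $\kappa^+$-c.c.), but neither fact alone forces $U$ into $V$. I expect to exploit the Prikry property of the components of $P$ together with a careful combinatorial analysis of $P$-names coding the $\kappa^+$-sized characteristic function of $U$, in order to show that this code is already decided in the ground model. This is the technical heart of the proof, and the step at which $\mbox{GCH}_{\leq\kappa}$ is doing the real work in replacing the core-model hypothesis of \cite{ben2014forcing}.
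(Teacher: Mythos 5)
There is a genuine gap, and it is at the very first step: your candidate $U = W\cap V = \{A\in V\cap P(\kappa) : \kappa\in j_W(A)\}$ is the wrong measure whenever $d''\Delta\in W$, i.e.\ whenever $W$ concentrates on the set of first Prikry points. The paper observes (Remark \ref{Remark: U^0 = Wcap V}) that $W\cap V$ always has Mitchell order $0$ in $V$: since $\kappa$ remains regular in $V[G]$, $\kappa$ cannot be measurable in $M$, so $\Delta\notin W\cap V$. On the other hand, if $o(U)=0$ then $U^{\times}=U^{*}$ and $d''\Delta\notin U^{\times}$, so $(W\cap V)^{\times}$ never contains $d''\Delta$. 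Hence if $W$ does contain $d''\Delta$ — which is exactly the case for $W=U^{\times}$ with $o(U)>0$, and these measures do arise — then $W\neq (W\cap V)^{\times}$, and your identification fails. The correct candidate is obtained only after first "undoing" the projection by $d$: the paper passes to $\Delta^{*}=\{\xi\in\Delta : \xi=\max d^{-1}\{d(\xi)\}\}$, defines $W^{*}=\{X\subseteq\kappa : d''(X\cap\Delta^{*})\in W\}$, and takes $U=W^{*}\cap V$. This $U$ is normal of positive Mitchell order exactly when $d''\Delta\in W$, and the proof that $U^{\times}\subseteq W$ then goes through the factor map $k\colon M_U\to M$, $k([f]_U)=[f]_{W^{*}}$, together with the key fact (Lemma \ref{Lemma: FS, jW of p minus idW* belongs to H}) that $(j_W(p))^{-[Id]_{W^{*}}}\in H$ for every $p\in G$. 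Nothing in your outline addresses this case split, and the "lift $j_U$ and compare" step cannot be salvaged with $U=W\cap V$.

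The same confusion undermines your injectivity argument. You assert $U^{\times}\cap V=U$, but this is only true when $o(U)=0$ (where $U^{\times}=U^{*}$ genuinely extends $U$). When $o(U)>0$, $U^{\times}$ is the $d$-projection of $U^{*}$ and $U^{\times}\cap V$ is again a measure of Mitchell order $0$, hence not equal to $U$; so distinct $U,U'$ of positive order need not be separated by intersecting their $\times$-versions with $V$. The paper instead separates them by taking disjoint $A\in U$, $B\in U'$ and using that $d$ is injective on $\Delta^{*}$, so $d''(A\cap\Delta^{*})$ and $d''(B\cap\Delta^{*})$ are disjoint sets lying in $U^{\times}$ and $U'^{\times}$ respectively. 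Finally, on the one point where your plan is directionally right — getting $U\in V$ from $\mbox{GCH}_{\leq\kappa}$ rather than core-model hypotheses — the paper does not need a new combinatorial analysis of names: it quotes the general fact from \cite{RestElm} that a cardinal-preserving forcing adding no fresh subsets of cardinals in $[\kappa,(2^{\kappa})^{V}]$ has the property that every $\kappa$-complete ultrafilter of the extension restricts to a member of $V$ (Corollary \ref{Corollaery: Every measure in the generic extension extends a measure of V}), with the no-fresh-subsets input supplied by Lemma \ref{Lemma: FS, no fresh subsets of kappa and kappa+}.
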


The proof relies on the same methods of Ben-Neria in \cite{ben2014forcing}; the core-model theoretic aspects of the argument are replaced by the tools developed in \cite{RestElm}. 

The second question is answered affirmatively - for every forcing notion $ P $, not only Magidor iterations - under the assumption that there is no inner model with a Woodin cardinal, and the ground model is the core model (see  \cite{schindler2006iterates}). We prove that this remains true for the Magidor iteration in much more general settings:

\begin{thm} \label{Thm: Structure of j_Wrestriction V}
Assume $ \mbox{GCH}_{\leq \kappa} $ holds in $ V $. Let $ W\in V\left[G\right] $ be a normal measure on $ \kappa $. Then $ j_W\restriction_{V} $ is an iterated ultrapower of $ V $ by normal measures. 
\end{thm}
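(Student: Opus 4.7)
The plan is to invoke Theorem~\ref{Thm: Characterization of normal measures} to write $W = U^{\times}$ for some normal measure $U \in V$ on $\kappa$, and then to analyze the restriction of the ultrapower map $j_W \colon V[G] \to \mathrm{Ult}(V[G], W) =: M[H^*]$ to the ground model. Writing $M$ for the inner model $j_W(V) \subseteq M[H^*]$, the goal is to exhibit the elementary map $j_W \restriction V \colon V \to M$ as the directed limit of an iteration of $V$ by normal measures of the iterates.

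The first stage of the iteration is the ordinary ultrapower $j_U \colon V \to N_0 := \mathrm{Ult}(V, U)$, which captures the generator $\kappa$ of $j_W \restriction V$. Following Ben-Neria, $j_U$ lifts canonically to a map $\tilde{j} \colon V[G] \to N_0[G * H]$, where $H$ is a canonical $N_0$-generic filter for the tail $j_U(P)/G$ of the Magidor iteration from $\kappa$ up to $j_U(\kappa)$. By the universal property of $j_W$ there is a factor map $k \colon M[H^*] \to N_0[G * H]$ with $\tilde{j} = k \circ j_W$, which restricts to a factor map $k \restriction M \colon M \to N_0$. The content of the theorem is that, despite $N_0$ being obtained from a single ultrapower of $V$, the intermediate model $M$ is itself presented from $V$ by a (possibly longer) iteration, whose further stages arise from the Prikry sequences added above $\kappa$ by $H$.

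The main step is a transfinite recursion on the generators of $j_W \restriction V$ lying strictly above $\kappa$. Each such generator is expected to be the critical point of a step of the Magidor iteration $j_U(P)$ above $\kappa$, i.e.\ a measurable cardinal of the current iterate whose Prikry sequence in $H$ isolates it. At stage $\alpha$, one identifies such a generator inside the current iterate $N_\alpha$, picks the normal measure $U_\alpha \in N_\alpha$ concentrating on it, and takes $N_{\alpha+1} := \mathrm{Ult}(N_\alpha, U_\alpha)$; at limits, one takes direct limits. The tools of \cite{RestElm} for analyzing restrictions of external ultrapower maps to ground models are used to certify, at each stage, that $U_\alpha$ genuinely belongs to $N_\alpha$, so that the iteration is internal.

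The hardest part will be precisely this internal definability. Unlike in \cite{schindler2006iterates}, we do not assume the ground model is the core model and so cannot invoke fine-structural comparison arguments; only $\mathrm{GCH}_{\leq \kappa}$ is available. The normal measures driving the iteration must therefore be extracted from the combinatorial data of $G$ and $H$ via the framework of \cite{RestElm}. Given this, wellfoundedness of the iteration follows from that of $M$, and termination is checked by verifying that every element of $M$ is of the form $j_W(f)(\vec{\alpha})$ for some $f \in V$ and some tuple of generators, each absorbed at some stage of the iteration; the direct limit then coincides with $M$, and the direct-limit map with $j_W \restriction V$, as required.
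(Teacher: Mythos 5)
Your high-level architecture --- first take $\mathrm{Ult}(V,U)$ for the $U$ with $W=U^{\times}$, then recurse on generators above $\kappa$ arising from Prikry sequences added by $H$, certifying at each stage that the next measure lives in the current iterate, and terminating when every element of $M$ is absorbed --- does match the shape of the paper's argument. But there are two concrete gaps. First, your factorization is routed through a ``canonical lift'' $\tilde{j}\colon V[G]\to N_0[G*H]$ of $j_U$ together with a factor map $k\colon M[H^*]\to N_0[G*H]$. When $o(U)>0$ this lift does not exist in the form you describe: the tail $j_U(P)/G$ begins with a Prikry forcing at $\kappa$ itself (since $\kappa$ remains measurable in $M_U$), and producing its generic over $M_U$ is not a routine step --- the Prikry sequence of $\kappa$ over $M_U$ is generated by iterating $U^0=W\cap V$ $\omega$ many times, so constructing $H$ already presupposes the iteration structure the theorem is meant to exhibit. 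Relatedly, the measure derived from any genuine elementary embedding via the seed $\kappa$ is automatically normal, whereas $U^*$ is not normal when $o(U)>0$; the seed generating $W$ sits at $\left[Id\right]_{W^*}>\kappa$, and the factor maps that actually drive the paper's recursion go in the opposite direction, $k_{\alpha}\colon M_{\alpha}\to M$ (at stage $0$, $k_0(\left[f\right]_U)=\left[f\right]_{W^*}$), the generators being the critical points $\mu_{\alpha}=\mathrm{crit}(k_{\alpha})$.

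Second, and more seriously, ``picks the normal measure $U_{\alpha}\in N_{\alpha}$ concentrating on it'' skips the identification of which measures occur and why they are internal. The paper must first associate to $W$ a finite Mitchell-increasing system $U^{0}\vartriangleleft\cdots\vartriangleleft U^{m}=U$, where $m=\left|d^{-1}\{\kappa\}\right|$ counts the measurables of $M$ whose Prikry sequence begins with $\kappa$, and correspondingly a system $U^{0}_{\mu_{\alpha}}\vartriangleleft\cdots\vartriangleleft U^{m_{\alpha}-1}_{\mu_{\alpha}}$ at each later critical point; each successor step applies these finitely many normal measures in decreasing Mitchell order, one for each occurrence of $\mu_{\alpha}$ as a first Prikry point. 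Establishing that $\mathrm{crit}(k_{\alpha})$ is measurable in $M_{\alpha}$ and that these measures belong to $M_{\alpha}$ is not something the results of \cite{RestElm} deliver (those only control $\kappa$-complete ultrafilters on $\kappa$ itself in $V\left[G\right]$); the paper has to develop a new Multivariable Fusion lemma, together with the ineffability lemma \ref{Lemma: Ineffability lemma}, to represent the relevant objects by functions of the finitely many generators accumulated so far. Without that tool, or the combinatorial analysis of the multiplicities $\left|d^{-1}\{d(\xi)\}\right|$, the recursion you describe cannot be carried out.
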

Moreover, a concrete description of $ j_W\restriction_{V} $ as an iterated ultrapower is given. This uses and extends ideas appearing in \cite{NonStatRestElm}, where iterations of Prikry forcings were considered under the simpler nonstationary support.

The answer to the third question depends on the choice of the normal measures used along the iteration to singularize the measurables of $ V $ below $ \kappa $. In general, $ j_W\restriction_{V} $ may not be definable in $ V $ (see, e.g., section 5.2 in \cite{RestElm}). In lemma \ref{Lemma: FS, sufficient condition for definability} we provide a sufficient condition for definability of $ j_W\restriction_{V} $ as a class of $ V $.

This paper is organized as follows: In the first section we present the forcing and its basic properties. In section $ 2 $ we prove theorem \ref{Thm: Characterization of normal measures}. In section $ 3 $ we prove theorem \ref{Thm: Structure of j_Wrestriction V}, provide a sufficient condition for definability of $ j_W\restriction_{V} $ as a class of $ V $, and completely  describe the Prikry sequences added to measurables of $ M $ above $ \kappa $ in $ H $.

\section{The Forcing}
\begin{defn}
An iteration $ \langle P_\alpha, \lusim{Q}_\beta \colon \alpha\leq\kappa\ ,\ \beta <\kappa \rangle $ is called a full support (Magidor) iteration of Prikry-type forcings if and only if, for every $ \alpha\leq\kappa $ and $p\in P_\alpha$,
\begin{enumerate}
\item  $ p $ is a function with domain $ \alpha $ such that for every $\beta <\alpha$, $p\restriction \beta  \in P_\beta$, $p\restriction \beta \Vdash p(\beta) \in \lusim{Q}_\beta \mbox{ and }  \langle \lusim{Q}_{\beta}, \lusim{\leq}_{\lusim{Q}_{\beta} }, \lusim{\leq}^*_{\lusim{Q}_{\beta} } \rangle \ \mbox{ is a Prikry-type forcing.}$ . 
\item There exists a finite set $ b\subseteq \kappa $ such that for every $ \beta\notin b $, $ p\restriction_{ \beta}\Vdash  p(\beta) \lusim{\geq}^{*}_{\beta} \lusim{0}_{Q_{\beta}} $, where $ \lusim{\geq}^{*}_{\beta} $ is the direct extension order of $ \lusim{Q}_{\beta} $ (after we define the order $ \leq_{P} $ on  $ P $, this will be abbreviated to $ p\geq_{P} 0_{P} $).
\end{enumerate}

Suppose that $p,q \in P_\alpha$. Then $p\geq q$, which means that $p$ extends $q$, holds if and only if:
\begin{enumerate}
\item For every $\beta \leq \alpha$, $p\restriction \beta \Vdash p(\beta)\geq_\beta q(\beta)$ (where $ \geq_\beta$ is the order of $ Q_\beta $).
\item There is a finite subset $b\subseteq \alpha$, such that for every $\beta \in \alpha\setminus b$, $p\restriction \beta \Vdash p(\beta)\geq^{*}_\beta q(\beta)$ (where $ \geq^{*}_{\beta} $ is the direct extension order of $ Q_\beta $).
\end{enumerate} 
If $b=\emptyset$, we say that $p$ is a direct extension of $q$, and denote it by $p\geq^* q$. 
\end{defn}

Let $ \langle P_\alpha, \lusim{Q}_\beta \colon \alpha\leq\kappa\ ,\ \beta <\kappa \rangle $ be a full support iteration of Prikry forcings, such that, for every $V$-measurable cardinal, $ \alpha $, $ \lusim{Q}_{\alpha} $ is non-trivial, and is forced to be Prikry forcing with a given $ P_{\alpha} $-name for a normal measure on $ \alpha $. If $ \alpha  $ is not measurable in $ V $, $ \lusim{Q}_{\alpha} $ is the trivial forcing.

\textbf{Notations.} Denote $ \Delta = \{ \alpha<\kappa \colon \alpha \mbox{ is measurable in }V \} $. For every $ \alpha\in \Delta $, let $ \lusim{W}_{\alpha} $ be the $ P_{\alpha} $-name for a normal measure on $ \alpha $, which is forced by $ P_{\alpha} $ to be the measure used in the Prikry forcing  $ \lusim{Q}_{\alpha} $. Assume that $ p\in P_{\kappa} $ is a given condition and $ \alpha\in \Delta $. We denote by $ \lusim{t}^{p}_{\alpha} $ and $ \lusim{A}^{p}_{\alpha} $ the $ P_{\alpha} $-names such that $ p\restriction_{\alpha} \Vdash p(\alpha) = \langle \lusim{t}^{p}_{\alpha}, \lusim{A}^{p}_{\alpha} \rangle $. In $ V\left[G\right] $, we denote by $ d\colon \Delta \to \kappa $ the function which maps each former measurable in $ \Delta $ to the first element in its Prikry sequence. Finally, we adopt the following useful notation, introduced by O. Ben-Neria in \cite{ben2014forcing}: Given a condition $ p\in P_{\kappa} $ and $ \alpha< \kappa $, let $ p^{-\alpha} \geq^* p $ be the condition $ p^* $ which satisfies, for every measurable $ \xi>\alpha $, 
	$$ p^* \restriction_{\xi} \Vdash \lusim{A}^{p^*}_{\xi} = \lusim{A}^{p}_{\xi} \setminus \left(\alpha+1\right) $$

The following lemma is standard (see \cite{gitik2010prikry} for example):

\begin{lemma}
$ P=P_{\kappa} $ satisfies the Prikry property.
\end{lemma}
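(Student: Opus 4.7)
The plan is to prove by induction on $\alpha \leq \kappa$ that each $P_\alpha$ satisfies the Prikry property, with the target statement being the case $\alpha = \kappa$. At successor stages $\alpha = \beta+1$: given $p \in P_{\beta+1}$ and a statement $\sigma$, one first applies the Prikry property of $\lusim{Q}_\beta$ (guaranteed by clause (1) of the definition of a full support iteration) inside $V[G_\beta]$ to find a direct extension of $p(\beta)$ deciding $\sigma$, and then uses the inductive Prikry property of $P_\beta$ to pull back a $P_\beta$-name for this deciding extension to a direct extension of $p\restriction_{\beta}$.

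At a limit stage $\alpha \leq \kappa$, clause (2) tells us the set $b \subseteq \alpha$ of coordinates on which $p$ is not a direct extension of the trivial condition is finite. If $\alpha$ is not measurable in $V$ then $\lusim{Q}_\alpha$ is trivial and the step is immediate from the inductive hypothesis. The substantial case is when $\alpha$ is measurable, and in particular $\alpha = \kappa$ is an instance of this case. Here I would build the required $p^* \geq^* p$ by a recursion on measurable coordinates $\xi < \alpha$: at each such $\xi$, shrink $\lusim{A}^{p}_\xi$ using the inductive Prikry property of $P_\xi$ together with normality of $\lusim{W}_\xi$, so that over every finite stem-extension data at $\xi$ (compatible with the shrinking already done below $\xi$) the set of future behaviors leading to a $\geq^*$-extension deciding $\sigma$ is either of full $\lusim{W}_\xi$-measure or empty. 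Assembling these choices via a diagonal intersection along $\alpha$, using normality of the measure driving the iteration at $\alpha$ (or, at $\alpha = \kappa$, any normal measure on $\kappa$), produces a single direct extension $p^*$ of $p$ with globally homogeneous measure-one sets.

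The main obstacle, and the reason one needs a genuine recursion rather than a reduction to a bounded initial segment, is exactly the full support: above any fixed $\gamma < \kappa$ there remain unboundedly many measurable coordinates on which future extensions of $p^*$ may non-trivially act, so one cannot finitize the problem by passing to $P_\gamma$. Once $p^*$ has been constructed, a standard density argument shows that among all extensions of $p^*$ deciding $\sigma$, homogeneity forces a single common decision, whence $p^*$ itself decides $\sigma$. The detailed implementation of this pattern of argument for Magidor-type iterations is carried out in \cite{gitik2010prikry}, which is why the statement is declared standard here.
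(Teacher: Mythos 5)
The paper gives no proof of this lemma --- it simply declares it standard and cites \cite{gitik2010prikry} --- and your outline is precisely that standard induction (the iterand's Prikry property at successors, a fusion with shrinking of measure-one sets at limits, then reduction of the number of non-direct extensions by homogeneity of the shrunken sets), so there is nothing in the paper to compare against and your sketch is an acceptable account of the cited argument. One small imprecision: the assembly of the fusion sequence along a limit $\alpha$ does not use a measure on $\alpha$ at all (the initial segments $p_\xi\restriction_\xi$ stabilize and one takes their union, relying on the increasing closure of the tails $\langle P\setminus\xi,\leq^*\rangle$); normality enters only coordinatewise, to diagonally intersect the measure-one sets at each measurable $\xi<\alpha$, exactly as in the paper's Lemma \ref{Lemma: FS, fusion lemma}.
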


\begin{lemma}[Fusion Lemma] \label{Lemma: FS, fusion lemma}
Let $\delta\leq \kappa$ be a limit ordinal and $ p\in P_{\delta} $. For every $ \alpha<\delta $, let $ e(\alpha) $ be a $ P_{\alpha} $-name such that--
\begin{align*}
p\restriction_{\alpha}\Vdash & \mbox{"} e(\alpha) \mbox{ is a dense open subset of  } P\setminus \alpha \mbox{ above } p\setminus\alpha \mbox{, } \\
& \mbox{ with respect to the direct extension order."}
\end{align*}
Then there exist $p^* \geq^{*} p$ such that for every $ \alpha \in \left[\nu,  \delta\right)$,
$$ p^* \restriction_{\alpha} \Vdash \left(p^*\setminus \alpha\right)^{-\alpha} \in e(\alpha)$$
\end{lemma}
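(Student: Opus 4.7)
The plan is a transfinite recursion along the measurable coordinates in $\Delta \cap [\nu,\delta)$, producing a $\geq^*$-increasing chain $\langle p_i : i \leq \theta \rangle$ with $p_0 = p$ and $p^* := p_\theta$. Non-measurable $\alpha$ are handled automatically: the iteration is trivial on $[\alpha,\min(\Delta \setminus \alpha))$, so $e(\alpha)$ can be identified with a dense open subset of $P \setminus \min(\Delta \setminus \alpha)$ in the direct extension order. Enumerate $\Delta \cap [\nu, \delta)$ increasingly as $\langle \alpha_i : i < \theta \rangle$.

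At a successor step $i+1$, I handle $e(\alpha_i)$ using the Prikry property (Lemma 1.2): it produces a $P_{\alpha_i}$-name $\dot q$ forced by $p_i \restriction_{\alpha_i}$ to be a $\geq^*$-extension of $(p_i \setminus \alpha_i)^{-\alpha_i}$ lying in $e(\alpha_i)$. To absorb $\dot q$ into an actual condition on the tail, I use a maximal antichain argument below $P_{\alpha_i}$: by $\mbox{GCH}_{\leq \kappa}$ this antichain has size $\leq \alpha_i^+ < \xi$ for every measurable $\xi > \alpha_i$, so I can intersect $A^{p_i}_\xi \setminus (\alpha_i+1)$ with the $\lusim{W}_\xi$-large sets naming $\dot q(\xi)$'s measure-one component across the antichain, preserving $\lusim{W}_\xi$-largeness by $\xi$-completeness. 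Stems of $\dot q$, nontrivial on only finitely many coordinates by the support condition, are decided by a further direct extension of $p_i \restriction_{\alpha_i}$. The result is $p_{i+1} \geq^* p_i$ with $p_{i+1} \restriction_{\alpha_i} \Vdash (p_{i+1} \setminus \alpha_i)^{-\alpha_i} \in e(\alpha_i)$, and crucially $A^{p_{i+1}}_\xi \subseteq A^{p_i}_\xi \setminus (\alpha_i+1)$ for every measurable $\xi > \alpha_i$.

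At a limit stage $j$, stems agree with those of $p$ (direct extension preserves stems), and for each measurable $\xi < \delta$ I set $A^{p_j}_\xi$ to be the diagonal intersection $\{ \gamma \in A^{p_0}_\xi : \forall i < j,\ \alpha_i < \gamma \Rightarrow \gamma \in A^{p_{i+1}}_\xi \}$. The main obstacle is showing this is $\lusim{W}_\xi$-large; here the $(-\alpha_i)$ shrinking enforced at each successor step is decisive, because it ensures that at each $\gamma$ only stages $\{ i < j : \alpha_i < \gamma \}$ contribute to the constraint, and this index set has size $< \gamma$ since the $\alpha_i$ are measurables below $\gamma$. After reindexing by ordinals below $\xi$, normality of $\lusim{W}_\xi$ yields $\lusim{W}_\xi$-largeness of the diagonal intersection, so $p_j \geq^* p_i$ for every $i < j$. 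Finally, $p^* = p_\theta$ inherits the conclusion for each $\alpha_i$, since $e(\alpha_i)$ is open in $\geq^*$ above $p \setminus \alpha_i$ and the further shrinkings at stages $> i+1$ only sharpen $(p^* \setminus \alpha_i)^{-\alpha_i}$ by direct extension.
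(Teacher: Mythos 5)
The mechanics of your successor and limit steps (the $-\alpha_i$ shrinking, the diagonal intersections reindexed below each measurable $\xi$, and using openness of $e(\alpha_i)$ under $\leq^*$ to survive later shrinkings) match the paper's construction, but there is a genuine gap in your treatment of non-measurable coordinates. The conclusion is required for \emph{every} $\alpha<\delta$, and in the paper's applications (e.g.\ the claim inside Lemma \ref{Lemma: every name for an ordinael can be decided up to boudedly many values by a direct extension} and the normality argument in Lemma \ref{Lemma: FS, extending U to U*}) the sets $e(\alpha)$ are defined precisely at non-measurable $\alpha$. Your claim that these are ``handled automatically'' is not justified: while $P\setminus\alpha$ is isomorphic to $P\setminus\beta$ for $\beta=\min(\Delta\setminus\alpha)$, the set $e(\alpha)$ is still an arbitrary $\leq^*$-dense open set, and $(p^*\setminus\alpha)^{-\alpha}$ does not land in it merely by being strong. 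To repair this within your scheme you would have to enter, at the single stage devoted to $\beta$, all the sets $e(\alpha)$ for $\alpha$ ranging over the block of non-measurables below $\beta$ --- up to $\beta$-many dense open subsets of $\langle P\setminus\beta,\leq^*\rangle$, an order that is only $\beta$-closed. Meeting them requires its own length-$\beta$ fusion, with a diagonal intersection at the coordinate $\beta$ itself and $>\beta$-closure strictly above it; this is exactly what the paper's recursion achieves by running through \emph{all} ordinals $\xi<\delta$ and amalgamating at each limit $\xi$, rather than only through $\Delta\cap\delta$.

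Two smaller points. Your assertion that $p_j\geq^* p_i$ at limit stages is false as stated: the diagonal intersection only gives $A^{p_j}_\xi\setminus(\alpha_i+1)\subseteq A^{p_{i+1}}_\xi$, i.e.\ $(p_j)^{-\alpha_i}\geq^* p_{i+1}$, which is the invariant the paper records; your closing paragraph does extract the conclusion correctly from this weaker relation, but you should also state that the restrictions $p_{i'}\restriction_{\alpha_i}$ stabilize for $i'>i$, since otherwise the limit amalgamation below $\sup_{i<j}\alpha_i$ is under-determined. Finally, the Prikry property and the $\mbox{GCH}$-bounded antichain are unnecessary at successor steps: each coordinate of a condition is already a $P_\xi$-name, so one can simply mix names over a maximal antichain of $P_{\alpha_i}$ (or just take a name for a member of $e(\alpha_i)$, as the paper does), and direct extensions preserve stems, so no stem-deciding step is needed.
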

\begin{proof}
Define a sequence $ \langle p_{\xi} \colon \xi >\delta \rangle $ of direct extensions of $ p $, such that for every $ \eta<\xi<\delta $,
\begin{enumerate}
\item $ p\leq^* p_{\eta}\leq^* \left(p_{\xi}\right)^{-\eta} $.
\item $ p_{\eta}\restriction_{\eta} = p_{\xi}\restriction_{\eta} $.
\end{enumerate}
Take $ p_0 = p $. Assume that $ \xi = \xi'+1 $ is successor, and let us define $ p_{\xi} $. Take $ p_{\xi}\restriction_{\xi} = p_{\xi'}\restriction_{\xi} $. Take $ p_{\xi}\setminus {\xi} \geq^* p_{\xi'} \setminus {\xi}$ be a $ P_{\xi} $--name for a direct extension which belongs to $ e(\xi) $. 

Assume that $ \xi <\delta$ is limit, and let us define $ p_{\xi} $. First, set--
$$ p_{\xi}\restriction_{\xi}  = \bigcup_{\xi'<\xi} p_{\xi'}\restriction_{\xi'} $$
We now define a $ P_{\xi} $-name for a condition $ r\in P\setminus \xi $. If $ \xi $ is non-measurable, $ r(\xi) $ is trivial. If it is: Let $ \lusim{t} $ be a $ P_{\xi} $-name, and, for every $ \xi'<\xi $, take $ P_{\xi} $-names $ \lusim{A}_{\xi'} $ such that $ p_{\xi}\restriction_{\xi} \Vdash p_{\xi'}(\xi) = \langle \lusim{t}, \lusim{A}_{\xi'} \rangle $. Set $ r(\xi) = \langle \lusim{t}, \triangle_{\xi'<\xi} \lusim{A}_{\xi'} \rangle $. Finally, let $ r\setminus  \left(\xi+1\right) $ be a direct extension of all the conditions $ \langle p_{\xi'} \setminus \left(\xi+1\right) \colon \xi'<\xi \rangle $ (the direct extension order above $ \xi $ is more than $ \xi $-closed). This defines $ r $. Let $ p_{\xi}\setminus \xi \geq^* r $ be a direct extensions which belongs to $ e(\xi) $. Note that $ \lusim{A}^{p_{\xi}}_{\xi} \subseteq \triangle_{\xi'<\xi} \lusim{A}^{p_{\xi'}}_{\xi} $, and thus, for every $ \eta<\xi $, $ \lusim{A}^{p_{\xi}}_{\xi} \setminus \left(\eta+1\right) \subseteq \lusim{A}^{p_{\eta}}_{\xi} $. Thus $ p_{\eta} \leq^* \left(p_{\xi}\right)^{-\eta} $.

This finishes the construction. Define $ p^* = \bigcup_{\xi<\delta} p_{\xi}\restriction_{\xi}$. We claim that $ p^* $ is as desired. Let $ \alpha<\delta $. Then $ p^*\restriction_{\alpha} = p_{\alpha}\restriction_{\alpha} $. Thus, this condition forces that $ \left(p_{\alpha}\setminus \alpha\right)^{-\alpha} \in e(\alpha) $. It also forces that $ \left(p^*\setminus \alpha\right)^{-\alpha} $ direct extends $ \left(p_{\alpha}\setminus \alpha\right)^{-\alpha} $, and thus it belongs to $ e\left( \alpha \right) $, as desired.
\end{proof}

%\begin{corollary} \label{Corollary: FS, generate a generic object for direct extension order above kappa}
%Let $ G\subseteq P = P_{\kappa} $ be generic over $ V $. Let $ U\in V $ be a normal measure on $ \kappa $. Then the set $ H=\{ \left(j_U(p)\setminus \kappa\right)^{-\kappa} \colon p\in G \} $ generates a generic set for $ \langle j_U(P)/G, \leq^* \rangle $ over $ M_U\left[G\right] $.
%\end{corollary}
%
%\begin{proof}
%Let us prove that $ H $ intersects each dense open subset of  $\langle j_U(P)/G, \leq^* \rangle $ which belongs to $ M_U\left[G\right] $. Let $ E \in M_U$ be a $ P $-name for such a subset, and assume that  $ \xi \mapsto e(\xi) $ is a function in $ V $ which represents $ E $. We can assume that for every $ \xi<\kappa $, $ e(\xi) $ is forced by the weakest condition in $ P $ to be a $ \leq^* $--dense open subset of $ P\setminus \xi $. By lemma \ref{Lemma: FS, fusion lemma}, there exists $ p\in G $ such that, for every $ \alpha<\kappa $, $ p\restriction_{\alpha} \Vdash \left(p\setminus \alpha\right)^{-\alpha} \in e(\alpha) $. Thus, in $ M_U $, $ p\Vdash \left(j_{U}(p)\setminus \kappa\right)^{-\kappa} \in E $, as desired.
%\end{proof}

\begin{lemma} \label{Lemma: FS, P preserves cardinals}
$ P = P_{\kappa} $ preserves cardinals.
\end{lemma}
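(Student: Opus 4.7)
My plan is to handle cardinals $\geq \kappa^+$ via a chain condition, and cardinals $\leq \kappa$ via a factorization-and-closure argument.

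For cardinals $\geq \kappa^+$, I would show $P$ satisfies the $\kappa^+$-cc by a $\Delta$-system argument on the finite ``non-direct-extension support'' $b^p$ from Definition 1.1(2). Given any family of $\kappa^+$ conditions, first refine so that the $b^p$'s form a $\Delta$-system with some root $b_0$; then, using that $|P_\xi| \leq \xi^+ < \kappa$ for $\xi < \kappa$ under $\mathrm{GCH}_{\leq \kappa}$, bound the number of possible $P_\xi$-name stem configurations on $b_0$ by some cardinal less than $\kappa$, and pigeonhole to find $\kappa^+$ conditions with identical stems on $b_0$. Any two such are compatible: intersect measure-one sets coordinatewise, and combine the stems (which live on disjoint coordinates outside the common root).

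For cardinals $\leq \kappa$, the main tool is the factorization $P \cong P_\alpha \ast (P/P_\alpha)$ for $\alpha < \kappa$. The direct-extension order on the tail is closed under sequences of length less than the first measurable above $\alpha$: at each measurable $\xi > \alpha$ the ultrafilter $W_\xi$ is $\xi$-complete, so intersections of fewer-than-$\xi$-many measure-one sets at coordinate $\xi$ remain measure-one. Combining this closure with the Prikry property of the tail (inherited from the previous lemma) and the fusion lemma just established, one shows that $P/P_\alpha$ adds no new $\mu$-sequence of ordinals for $\mu < \alpha$; hence any $f \colon \mu \to \mathrm{Ord}$ in $V[G]$ with $\mu < \alpha$ already lies in $V[G \cap P_\alpha]$. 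Cardinal preservation then follows. For $\kappa$ itself: a purported surjection $f \colon \mu \to \kappa$ with $\mu < \kappa$ lies in $V[G \cap P_\alpha]$ for some $\alpha \in (\mu, \kappa)$, but $|P_\alpha| < \kappa$ under $\mathrm{GCH}_{\leq \kappa}$ and the inaccessibility of $\kappa$ in $V$, so $P_\alpha$ has the $\kappa$-cc and cannot force such a surjection. For $\lambda < \kappa$, induct on $\alpha \leq \kappa$ that $P_\alpha$ preserves cardinals: successor steps are clear since Prikry forcing preserves cardinals, and limit steps repeat the factorization argument internally within $P_\alpha$.

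The main obstacle is verifying that the tail $P/P_\alpha$ inherits both the Prikry property and the correct closure of its direct-extension order at limit stages of the iteration. Full support forces us to diagonalize measure-one sets across cofinally many coordinates in the tail, and this is exactly what the fusion lemma provides — without it, closure would fail at limit ordinals in $\mathrm{dom}(P)$ that are limits of measurables, and the ``$P/P_\alpha$ adds no new small sequences'' step would break down.
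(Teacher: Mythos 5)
Your proposal is correct and follows essentially the same route as the paper: an induction on initial segments $P_\delta$, with small cardinals handled by factoring off a tail whose direct-extension order is sufficiently closed (plus the Prikry property), and $\delta^+$ handled by a chain-condition argument that stabilizes the finite non-direct-extension support and the stems on it. One correction: the closure of the direct-extension order of the tail $P\setminus\alpha$ under sequences of length $<$ the first measurable $\geq\alpha$ does \emph{not} require the fusion lemma — it follows immediately from coordinatewise intersection of fewer-than-$\xi$-many measure-one sets of each $\xi$-complete ultrafilter in the tail, so your claim that closure ``would fail'' at limit coordinates without fusion is inaccurate (fusion is needed only when diagonalizing dense sets across all $\delta$-many coordinates, which this lemma does not require).
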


\begin{proof}
We prove by induction that for every $ \delta\leq \kappa $, $ P_{\delta} $ preserves cardinals. This is clear for successor values of $ \delta $. By $ \mbox{GCH}_{\leq \kappa} $, this is clear as well if $ \delta $ is not a limit of measurables. Thus, let us assume that $ \delta \leq \kappa$ is a limit of measurables and $ \mu $ is a cardinal. If $ \mu<\delta $, factor $ P_{\delta} = P_{< \mu}*\lusim{Q}_{\mu}*P_{>\mu} $. Since the direct extension order of $ P_{>\mu} $ is more than $ \mu $-closed, it preserves $ \mu $; $ Q_{\mu} $ preserves $ \mu $  because it is either trivial or a Prikry forcing; finally, by induction, $ P_{<\mu} $ preserves $ \mu $. If $ \mu = \delta $, then $ \mu $ is a limit of measurables, each of them is preserved by induction. If $ \mu> \delta^{+} $, $ \mu $ is preserved since $ \left|P_{\delta}\right| = \delta^{+} $ by $ \mbox{GCH}_{\leq \kappa} $. Thus, it suffices to prove that $ P_{\delta} $ preserves $ \delta^{+} $ for every limit of measurables $ \delta $. It suffices to prove that $ P_{\delta} $ has the $ \delta^{+}-c.c. $: For any antichain $ A\subseteq P_{\delta} $ of cardinality $ \delta^{+} $, there exists a subset $ A'\subseteq A $ of cardinality $ \delta^{+} $, such that the following holds: There exists a finite set $ b\subseteq \delta $, and, for every $ \alpha\in b $, a $ P_{\alpha} $-name for a finite increasing sequence $ \lusim{t}_{\alpha}\in  \left[ \alpha \right]^{<\omega} $, such that--
$$\forall p\in A' \ \forall \beta\in \delta\setminus b \ , p\restriction_{ \beta}\Vdash p(\beta) \geq^* 0_{Q} $$
and--
$$\forall p\in A' \ \forall \alpha\in b \ \exists \lusim{A} , p\restriction_{ \alpha} \Vdash p(\alpha) = \langle \lusim{t}_{\alpha},\lusim{A} \rangle$$
Given these properties, every pair of conditions in $ A' $ are compatible, which is a contradiction.
\end{proof}

\begin{lemma} \label{Lemma: FS, no fresh subsets of kappa and kappa+}
$ P= P_{\kappa} $ doesn't add fresh subsets of $\kappa, \kappa^{+} $.
\end{lemma}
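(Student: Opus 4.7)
The plan is to argue by contradiction: suppose $\lusim{X}$ is a $P$-name and $p\in P$ forces $\lusim{X}$ to be a fresh subset of $\lambda\in\{\kappa,\kappa^+\}$. The two cases $\lambda = \kappa$ and $\lambda = \kappa^+$ are handled separately.

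For $\lambda = \kappa$, I would apply the fusion lemma \ref{Lemma: FS, fusion lemma} with the $P_\alpha$-name $e(\alpha)$ for the set of direct extensions $r\geq^* p\setminus\alpha$ in $P\setminus\alpha$ that decide $\lusim{X}\cap\alpha$ --- a value necessarily in $V$ by freshness. The density of $e(\alpha)$ in the direct extension order (forced by $p\restriction_\alpha$) follows from the Prikry property of $P\setminus\alpha$ over $V[G_\alpha]$ combined with the sufficient closure of its direct extension order above $\alpha$: for each $\beta<\alpha$, decide ``$\check{\beta}\in\lusim{X}$'' by a direct extension, and combine using closure. The fusion lemma then yields $p^*\geq^* p$ with $p^*\restriction_\alpha\Vdash (p^*\setminus\alpha)^{-\alpha}\in e(\alpha)$ for every $\alpha<\kappa$. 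From $p^*$, for each $\alpha$, we extract in $V$ a $P_\alpha$-name $\lusim{Y}_\alpha$ for the decided value of $\lusim{X}\cap\alpha$; since each $\lusim{Y}_\alpha$ takes values in $V\cap P(\alpha)$ and $|V\cap P(\alpha)| = \alpha^+ < \kappa$ by $\mbox{GCH}_{\leq \kappa}$, the initial-segment tree $T$ of possible values $\langle\lusim{Y}_\beta[H_\beta]:\beta\leq\alpha\rangle$ forms, in $V$, a tree of height $\kappa$ with levels of cardinality less than $\kappa$.

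In $V[G]$, the sequence $\alpha\mapsto\lusim{Y}_\alpha[G_\alpha]$ defines a cofinal branch through $T$ whose union equals $\lusim{X}[G]$. The task therefore reduces to the preservation result that $P$ adds no new cofinal $\kappa$-branches to $V$-trees of height $\kappa$ with levels of cardinality less than $\kappa$. To prove this, one applies the fusion lemma to a putative name $\lusim{b}$ for a new branch, obtaining a condition $p^{**}$ with the analogous decision property. A splitting argument then applies: if distinct values $t,t'\in T_\alpha$ were forced by extensions of $p^{**}\restriction_\alpha$, the Prikry property of $P_\alpha$ would produce direct extensions of $p^{**}\restriction_\alpha$ forcing each of them; but direct extensions of a common condition in $P_\alpha$ can always be amalgamated via measure-theoretic intersection of their large-set components at each measurable $\beta<\alpha$ (using the $\beta$-completeness of each $\lusim{W}_\beta$), producing a common direct extension forcing two distinct values --- a contradiction. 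Therefore $\lusim{X}[G]\in V$, contradicting freshness.

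For $\lambda = \kappa^+$, Lemma~\ref{Lemma: FS, P preserves cardinals} supplies the $\kappa^+$-chain condition of $P$, which, together with $\mbox{GCH}_{\leq \kappa}$, bounds the levels of the analogous tree of initial segments by $\kappa$. The classical Kunen-style preservation lemma --- that $\kappa^+$-c.c.~forcings add no new cofinal $\kappa^+$-branches to $V$-trees of height $\kappa^+$ with levels of cardinality at most $\kappa$ --- concludes this case.

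The main obstacle is the ``no new $\kappa$-branches'' preservation step in the $\lambda = \kappa$ case. The measure-theoretic amalgamation of direct extensions interacting with the finite-support stem structure of the full support iteration is the delicate point: one must ensure that intersecting large sets at each measurable level $\beta < \alpha$ consistently yields a condition, which relies on the $\beta$-completeness of each $\lusim{W}_\beta$ and on keeping the ``extra stem'' coordinates of the candidate extensions compatible.
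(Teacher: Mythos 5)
You should first note that the paper itself contains no proof of this lemma: it is cited to \cite{RestElm}, with the remark that the proof there relies on the fact that some normal measure of $V$ extends to a measure of $V\left[G\right]$ (lemma \ref{Lemma: FS, extending U to U*}). So your self-contained fusion/tree argument is necessarily a different route --- but it has genuine gaps in both cases. For $\lambda=\kappa$, the fatal step is the splitting argument: ``if distinct values $t,t'\in T_\alpha$ were forced by extensions of $p^{**}\restriction_\alpha$, the Prikry property of $P_\alpha$ would produce direct extensions of $p^{**}\restriction_\alpha$ forcing each of them.'' The Prikry property produces a direct extension \emph{deciding} a given statement; it does not convert ``some extension forces $\varphi$'' into ``some direct extension forces $\varphi$.'' In Prikry-type forcings these are far apart: distinct values of a name are typically forced only by conditions with distinct stems, and no direct extension forces any of them. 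Your argument nowhere uses freshness or the coherence of values across levels, so if the step were valid it would show that \emph{every} $P_\alpha$-name for a member of $V$ has its value decided by every condition --- refuted by the name for the ordinal $d(\xi)$, the first Prikry point at a measurable $\xi<\alpha$, which no condition with empty stem at $\xi$ decides. (Your amalgamation claim --- two direct extensions of a common condition have a common direct extension, obtained by intersecting the measure-one sets --- is correct for this iteration, but it only excludes splitting \emph{by direct extensions}, which is not what you have.) There is also a secondary, fixable, gap: the identity $\lusim{X}\left[G\right]=\bigcup_\alpha \lusim{Y}_\alpha\left[G_\alpha\right]$ requires $\left(p^*\setminus\alpha\right)^{-\alpha}$ to lie in the quotient generic over $G_\alpha$, and this can fail, since for each $\alpha$ there are (generically) finitely many coordinates $\xi>\alpha$ whose Prikry sequences contain elements $\leq\alpha$. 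Managing this mismatch between $G$ and the ``cleaned'' conditions is precisely what the machinery of lemma \ref{Lemma: every name for an ordinael can be decided up to boudedly many values by a direct extension} is designed for, and your write-up does not address it.

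For $\lambda=\kappa^{+}$, the ``classical Kunen-style preservation lemma'' you invoke is not a theorem: consistently there is a $\kappa^{+}$-Suslin tree, and forcing with it is $\kappa^{+}$-c.c.\ yet adds a cofinal branch to a tree of height $\kappa^{+}$ with levels of size at most $\kappa$; that branch is a fresh subset of $\kappa^{+}$, since each of its initial segments is the predecessor set of a node and hence lies in $V$. The correct classical statement (due to Kunen) requires the $\kappa^{+}$-c.c.\ of the \emph{square} $P\times P$ (or some other productive chain condition), which you neither state nor verify for the Magidor iteration; lemma \ref{Lemma: FS, P preserves cardinals} gives only the $\kappa^{+}$-c.c.\ of $P$ itself. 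So this case is also unproved as written. Both gaps point to the same moral: the Prikry property plus chain conditions alone cannot yield this lemma (a single Prikry forcing at $\kappa$ has the Prikry property and amalgamable direct extensions, yet its generic sequence is a fresh subset of $\kappa$); one needs either the measure-theoretic reflection used in \cite{RestElm}, or a substantially more careful fusion argument that exploits the fact that all the Prikry forcings occur strictly below $\kappa$.
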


The above lemma is proved, for example, in \cite{RestElm}. We remark that  this proof uses the fact that some normal measure on $ \kappa $ in $ V $ extends to a normal measure in $ V\left[G\right] $, and this is indeed the case (this is well known, and in any case, will be proved in the next section in lemma \ref{Lemma: FS, extending U to U*}. The proof will not rely on the current lemma or its consequences).

In \cite{RestElm} it is proved that, if a forcing notion $ P $ preserves cardinals and does not add fresh subsets to cardinals in the interval $ \left[\kappa, \left( 2^{\kappa} \right)^V\right] $, then every $ \kappa $-complete ultrafilter in the generic extension extends a $ \kappa $-complete ultrafilter of $ V $. Since we assume $ \mbox{GCH}_{\leq \kappa} $, the following follows:

\begin{corollary} \label{Corollaery: Every measure in the generic extension extends a measure of V}
Let $ G\subseteq P_{\kappa} $ be generic over $ V $, and let $ W\in V\left[G\right] $ be a $ \kappa $-complete ultrafilter on $ \kappa $. Then $ W\cap V \in V $.
\end{corollary}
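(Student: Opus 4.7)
The plan is to reduce the corollary to a direct application of the theorem cited from \cite{RestElm} in the paragraph just before the statement: if $P$ preserves cardinals and adds no fresh subsets to any cardinal in the interval $\left[\kappa,(2^{\kappa})^{V}\right]$, then every $\kappa$-complete ultrafilter on $\kappa$ in $V\left[G\right]$ extends a $\kappa$-complete ultrafilter of $V$. So the real work is entirely in verifying the hypotheses, together with a routine ultrafilter argument at the end.

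First I would use $\mbox{GCH}_{\leq \kappa}$ to observe that $(2^{\kappa})^{V} = \kappa^{+}$, so the interval $\left[\kappa,(2^{\kappa})^{V}\right]$ contains only the two cardinals $\kappa$ and $\kappa^{+}$. Next, Lemma \ref{Lemma: FS, P preserves cardinals} gives that $P$ preserves cardinals, and Lemma \ref{Lemma: FS, no fresh subsets of kappa and kappa+} gives precisely that $P$ does not add fresh subsets of $\kappa$ or of $\kappa^{+}$. Together these furnish the hypotheses of the cited theorem, so I obtain a $\kappa$-complete ultrafilter $W_{0}\in V$ on $\kappa$ with $W_{0}\subseteq W$.

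Finally I would argue that $W\cap V = W_{0}$, which yields $W\cap V\in V$. One inclusion is immediate from $W_{0}\subseteq W$ and $W_{0}\in V$. For the converse, given $A\in W\cap V$, the ultrafilter property of $W_{0}$ in $V$ forces $A\in W_{0}$ or $\kappa\setminus A\in W_{0}$; the latter case is ruled out because $W_{0}\subseteq W$ and $A\in W$. Hence $W\cap V = W_{0}\in V$.

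Since the real content lies in the fresh-subset and cardinal-preservation lemmas already in place, I do not foresee any genuine obstacle. The only point to double-check is that $\mbox{GCH}_{\leq \kappa}$ indeed collapses the relevant cardinal interval to $\{\kappa,\kappa^{+}\}$, matching exactly what Lemma \ref{Lemma: FS, no fresh subsets of kappa and kappa+} delivers.
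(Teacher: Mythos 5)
Your proposal is correct and follows exactly the route the paper takes: it invokes the cited theorem from \cite{RestElm}, whose hypotheses are supplied by Lemma \ref{Lemma: FS, P preserves cardinals}, Lemma \ref{Lemma: FS, no fresh subsets of kappa and kappa+}, and the observation that $\mbox{GCH}_{\leq\kappa}$ collapses the interval $\left[\kappa,(2^{\kappa})^{V}\right]$ to $\{\kappa,\kappa^{+}\}$. The only addition is your explicit final step identifying $W\cap V$ with the ground-model ultrafilter $W_{0}$, which the paper leaves implicit but which is the routine argument you give.
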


\begin{lemma} \label{Lemma: every name for an ordinael can be decided up to boudedly many values by a direct extension}
Let $ \delta \leq \kappa $ be an inaccessible cardinal. Let $ p\in P_{\delta} $ and assume that $ \lusim{\alpha} $ is a $ P_{\delta} $-name for an ordinal. Then there exists $ p^*\geq^* p $ and a set $ A\in V $ with $ \left|A\right|<\delta $ such that $ p^*\Vdash \lusim{\alpha}\in A $.
\end{lemma}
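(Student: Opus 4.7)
The plan is to apply the Fusion Lemma (Lemma~\ref{Lemma: FS, fusion lemma}) with dense open sets derived from the Prikry property of $P_\delta$, which holds for any inaccessible $\delta\leq\kappa$ by the same argument as for $P_\kappa$. A single application of the Prikry property of $P_\delta$ to the statement ``$\lusim{\alpha}<\delta$'' lets me pass to a direct extension deciding it, reducing the problem to the principal case $p\Vdash \lusim{\alpha}<\delta$ (the complementary case will be treated separately using the chain condition).

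For each $\alpha<\delta$, I let $e(\alpha)$ be a $P_\alpha$-name for the $\leq^*$-dense open subset of $P\setminus\alpha$ above $p\setminus\alpha$ consisting of those conditions which decide, in $V[G_\alpha]$, the statement ``$\lusim{\alpha}<\alpha$''. Density is the Prikry property of $P\setminus\alpha$ over $V[G_\alpha]$, and openness in the direct extension order is immediate since direct extensions preserve decisions. Applying the Fusion Lemma yields $p^*\geq^* p$ such that, for every $\alpha<\delta$, $p^*\restriction_\alpha\Vdash (p^*\setminus\alpha)^{-\alpha}$ decides ``$\lusim{\alpha}<\alpha$'' over $V[G_\alpha]$. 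These decisions are monotone in $\alpha$, so in every generic $G\ni p^*$ (through which $(p^*\setminus\alpha)^{-\alpha}$ enters), the value $\lusim{\alpha}[G]$ coincides with a threshold ordinal $\gamma[G]<\delta$ at which the decision flips from ``$\geq\alpha$'' to ``$<\alpha$'', and $\gamma[G]$ is determined by $G_{\gamma[G]+1}$.

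The main obstacle is the final aggregation: collecting all realized threshold values into a single $V$-set $A$ of cardinality strictly less than $\delta$. By $\mbox{GCH}_{\leq\kappa}$, $|P_{\alpha+1}|<\delta$ for every $\alpha<\delta$, so at each level the possible threshold values form a $V$-set of size $<\delta$; the challenge is combining these levelwise bounds into a global bound $|A|<\delta$ using the inaccessibility of $\delta$ and the $\delta^+$-chain condition (from the proof of Lemma~\ref{Lemma: FS, P preserves cardinals}). The complementary case $p\Vdash \lusim{\alpha}\geq\delta$ is handled analogously, enumerating the at most $\delta$ possible values of $\lusim{\alpha}$ as $\langle\beta_\xi\colon\xi<|R|\rangle$ via the chain condition, and using a second application of the Fusion Lemma with $e(\alpha)$ chosen to decide ``$\lusim{\alpha}=\beta_\xi$'' for $\xi$ in subsets $S_\alpha\subseteq|R|$ of size $<\alpha$ whose union covers $|R|$, exploiting the closure of the direct extension order on $P\setminus\alpha$ over $V[G_\alpha]$ to combine the per-stage decisions.
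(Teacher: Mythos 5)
There is a genuine gap, and it sits exactly at the point your proposal defers: the aggregation step. Your fusion produces, for each $\alpha<\delta$, a decision of ``$\lusim{\alpha}<\alpha$'' by $\left(p^*\setminus\alpha\right)^{-\alpha}$ over $V^{P_\alpha}$, and you correctly note that each level contributes a $V$-set of $<\delta$ many possibilities (by $\mbox{GCH}$, $|P_{\alpha+1}|<\delta$). But there are $\delta$ many levels, and a union of $\delta$ many sets each of size $<\delta$ can have size $\delta$; neither inaccessibility of $\delta$ nor the $\delta^+$-c.c.\ can push this below $\delta$. Deciding ``$\lusim{\alpha}<\delta$'' first does not reduce the problem either: in that case $\lusim{\alpha}$ still a priori ranges over $\delta$ many values, so confining it to $<\delta$ many is precisely the full content of the lemma, and the same remark applies to your treatment of the case $\lusim{\alpha}\geq\delta$ after enumerating the $\leq\delta$ values given by the chain condition. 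A secondary problem: your threshold analysis presumes that $\left(p^*\setminus\alpha\right)^{-\alpha}$ ``enters'' any generic $G\ni p^*$, but this is false in general --- ordinals $\leq\alpha$ can appear in the Prikry sequences of measurables above $\alpha$ in $G$, so the decisions made by these conditions need not constrain the value $\lusim{\alpha}$ actually realized in $V\left[G\right]$.

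The paper's proof avoids the $\delta$-fold union by a different mechanism, and it is worth seeing why it is needed. First, a fusion argument establishes a mixing claim: there is $q\geq^* p$ such that any $r\geq q$ deciding $\lusim{\alpha}$ can have its tail above the last non-direct-extension coordinate $\gamma$ replaced by the canonical tail $\left(q\setminus\left(\gamma+1\right)\right)^{-\gamma+1}$ while still deciding $\lusim{\alpha}$. Second, the measure-one sets of $q$ are shrunk coordinatewise (to $q^*$) so that, at each coordinate $\gamma$ and each stem length $n$, either every stem extension $s$ yields a bounded decision set $A_s$ with $\left|A_s\right|<\delta$, or none does. Then one takes $r\geq q^*$ deciding $\lusim{\alpha}$ with the minimal number of non-direct extensions and removes its top non-direct extension at $\gamma$: the union $\bigcup_s A_s$ runs only over the $<\delta$ many stems $s\in\left[\lusim{A}^{r}_{\gamma}\right]^{n}$ at the \emph{single} coordinate $\gamma<\delta$ (together with the $<\delta$ many possible values of each name $A_s$, bounded via $|P_\gamma|<\delta$), so it stays of size $<\delta$ by inaccessibility. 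Iterating this finitely many times --- once per non-direct extension --- reaches a direct extension of $q^*$ forcing $\lusim{\alpha}$ into a set of size $<\delta$. The finiteness of the induction is what keeps the blow-up to a finite product of $<\delta$-size factors; your approach, by contrast, must union over all $\delta$ many coordinates simultaneously, which is exactly what cannot be afforded.
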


\begin{proof}
Denote by $ D $ the dense open subset of $ P_\delta $ which consists of conditions which decide the value of $ \lusim{\alpha }$. We will apply on $ D $ the following claim:
\begin{claim}
	Let $ \delta \leq \kappa $ be a limit ordinal and let $ D\subseteq P_{\delta} $ be a dense open subset of $ P_{\delta} $. Assume that $ p\in P_{\delta} $. Then there exists $p^* \geq^{*} p$ such that for every $p^* \leq q\in D $, 
	$${q\restriction_{\gamma+1} }^{\frown} \left(p^* \setminus \left( \gamma+1 \right)\right)^{-\left(\gamma+1\right)} \in D$$
	where $\gamma $ is the maximal coordinate which satisfies-- $$ q\restriction_{\gamma} \Vdash \mbox{"} q(\gamma) \mbox{ is not a direct extension of } p^*(\gamma) \mbox{"}$$
	(and, if such $ \gamma $ does not exist, then $ \gamma = 0 $).
\end{claim}

\begin{proof}
	Fix a non-measurable $ \alpha<\delta $ and $ G_{\alpha}\subseteq P_{\alpha} $ generic over $ V $ such that $ p\restriction_{\alpha} \in G_{\alpha} $. Given $p\restriction_{\alpha}\leq  q\in G_{\alpha} $, we define a subset of $ P\setminus \alpha $ which is $ \leq^* $-dense open above $ p\setminus \alpha$:
	$$ e_q(\alpha) = \{ r\in P\setminus \alpha  \colon   q^{\frown} r\in D \mbox{ or }  \left( \forall r'\geq^* r,  \  q^{\frown} r' \notin D  \right)  \} $$
	
	Since $ \alpha $ is non-measurable, the direct extension order of $ P\setminus \alpha $ is more than $ \left| G_{\alpha}\right|^{+} $-distributive. Let $ e(\alpha) $ be a $ P_{\alpha} $-name for the set--
	$$ e(\alpha) = \bigcap_{q\in G_{\alpha}}  e_{q}(\alpha) $$
	then $ p\restriction_{\alpha} $ forces that $ e(\alpha) $ is $ \leq^* $-dense open above $ p\setminus \alpha $.

	Apply lemma \ref{Lemma: FS, fusion lemma}. Let $ p^* \geq^* p $ be such that, for every non-measurable $ \alpha<\delta $, $$ p^* \restriction_{ \alpha } \Vdash \left(p^* \setminus \alpha\right)^{-\alpha} \in e(\alpha) $$
	Assume now that $ p^* \leq q \in D $. Let $ \gamma $ be as in the formulation of the claim. Then $ \gamma+1 $ is not measurable, so--
	$$ p^* \restriction_{ \gamma+1 } \Vdash \left(p^*\setminus \left( \gamma+1 \right)\right)^{-\gamma+1} \in e(\gamma+1) $$
	In particular,
	$$ q \restriction_{ \gamma+1 } \Vdash \left(p^*\setminus \left( \gamma+1 \right)\right)^{-\gamma+1} \in e(\gamma+1) $$
	
	Finally, since there exists a direct extension $ r' = q\setminus \left( \gamma+1 \right) \geq^* p^*\setminus \left(\gamma+1 \right) $ such that $ {q\restriction_{\gamma+1}}^{\frown} r' \in D $, it follows that ${q\restriction_{\gamma+1}}^{\frown}  \left(p^*\setminus \left( \gamma+1\right)\right)^{-\gamma+1}\in D$, as desired.
\end{proof}

Pick a direct extension $ q\geq^* p $, by applying the claim on the set $ D $ of conditions deciding the value of $ \lusim{\alpha} $. We will construct below a direct extension $ q^* \geq^* q $; After this is done, we will prove that $ q^* $ has a direct extension $ p^* \geq^* q^* $ as desired in the lemma. Namely, $ p^* $ satisfies that for some set of ordinals $ A $ with $ \left|A\right| <\delta$, $ p^*\Vdash \lusim{\alpha}\in A $.

First, let us construct $ q^* \geq^* q $. Assume that $ \gamma<\delta $, and $ q^*\restriction_{\gamma} $ has been defined. To define $ q^*\left( \gamma \right) $, we shrink the set $ \lusim{A}^{q}_{\gamma} $. We shrink it to a set $ A\in W_{\gamma} $, such that, for every $ n<\omega $, exactly one of the following holds: Either for every $ s\in \left[ A \right]^n $, there exists a set of ordinals $ A_{s} $ with $ \left|A_s\right|<\delta $, such that--
$$ \langle {t^{q}_{\gamma}}^{\frown} s,  A\setminus \max\left(s\right) \rangle^{\frown} \left( q\setminus \left(\gamma+1\right) \right)^{-\gamma+1} \Vdash \lusim{\alpha}\in A_{s} $$
or, there is no such $ s $. 

This results in a direct extension $ q^*\geq^* q $. It suffices to prove that $ q^* $ has a direct extension $ p^{*} $ which belongs to $ D $. Assume otherwise. Let $ r\geq q^* $ be a condition in $ D $, which is chosen with the least number of non-direct extensions. Let $ \gamma  $ be the maximal coordinate in which a non-direct extension was taken in the extension $ r\geq q^* $. Clearly $ r\geq q $, and in this extension, as well, $ \gamma  $ is the maximal coordinate in which a non-direct extension is taken. Thus, by the choice of $ q $, 
$$  {r\restriction_{\gamma+1}}^{\frown} \left( q\setminus \left(\gamma+1\right) \right)^{-\gamma+1}\in D $$
Let $ n<\omega $ be such that $ r\restriction_{\gamma} $ forces that $ \mbox{lh}\left(  t^{r}_{\gamma} \right) = n+\mbox{lh}\left( t^{q}_{\gamma} \right) $. Then $ r\restriction_{\gamma } $ forces that for every $ s\in \left[\lusim{A}^{r}_{\gamma}\right]^{n} $, there exists a set $ A_{s} $ with $ \left|A_{s}\right|<\delta $, such that--
$$  {    \langle{ t^{q}_{\gamma} }^{\frown} s, \lusim{A}^{r}_{\gamma}\setminus \max\left( s \right) \rangle  }^{\frown}  \left(  q\setminus \left( \gamma+1 \right)  \right)^{-\gamma+1} \Vdash \lusim{\alpha}\in A_{s}  $$
By taking union on the possible values of the sets $ A_s $ as above, there exists a set $ A\in V $ with $ \left|A\right|<\delta $ such that--

$$  { r\restriction_{\gamma} }^{\frown} {\langle \lusim{t}^{q}_{\gamma}, \lusim{A}^{r}_{\gamma}  \rangle}^{\frown} \left(   q\setminus \left( \gamma+1 \right)   \right)^{-\gamma+1} \Vdash \lusim{\alpha}\in A  $$
and this contradicts the minimality of the number of non-direct extensions in the choice of $ r\geq q^* $.
\end{proof}

\begin{corollary} \label{Lemma: FS, evaluating new functions by old ones}
Assume that $ \delta \leq \kappa $ is inaccessible, $ p\in P_{\delta} $ and let $ \lusim{f} $ be a $ P_{\delta} $-name for a function from $ \delta $ to the ordinals. Then there exists $ p^*\geq^* p $ and a function $ F\colon \delta \to \left[ \mbox{Ord} \right]^{<\delta} $ in $ V $, such that for every $ \alpha<\delta $, 
$$  \left(p^*\right)^{-\alpha} \Vdash \lusim{f}(\alpha)\in F\left( \alpha \right) $$
\end{corollary}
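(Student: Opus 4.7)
The plan is to apply the Fusion Lemma to a coordinate-wise system of $\leq^*$-dense open sets, where at each coordinate the density is supplied by the previous lemma. For each $\alpha < \delta$, let $\lusim{e}(\alpha)$ be a $P_\alpha$-name for the collection of all $r \geq^* p\setminus \alpha$ in $P \setminus \alpha$ for which there exists (in the ground model $V[G_\alpha]$) a set of ordinals $A$ with $|A| < \delta$ and $r \Vdash \lusim{f}(\alpha) \in A$. Openness under $\leq^*$ is immediate. For density, note that $\delta$ is still inaccessible in $V[G_\alpha]$, since $|P_\alpha| \leq \alpha^+ < \delta$ by $\mbox{GCH}_{\leq \kappa}$; therefore the previous lemma, applied in $V[G_\alpha]$ to $P \setminus \alpha$ and the name $\lusim{f}(\alpha)$, yields that $\lusim{e}(\alpha)$ is $\leq^*$-dense above $p \setminus \alpha$.

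Applying Lemma \ref{Lemma: FS, fusion lemma}, I obtain $p^* \geq^* p$ such that for every $\alpha < \delta$,
$$p^*\restriction_{\alpha} \Vdash (p^* \setminus \alpha)^{-\alpha} \in \lusim{e}(\alpha).$$
By the maximum principle, fix for each $\alpha$ a $P_\alpha$-name $\lusim{A}_\alpha$ such that $p^*\restriction_{\alpha}$ forces both $|\lusim{A}_\alpha| < \delta$ and $(p^* \setminus \alpha)^{-\alpha} \Vdash \lusim{f}(\alpha) \in \lusim{A}_\alpha$.

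The only remaining task, which I regard as the main obstacle, is to replace $\lusim{A}_\alpha$ by a ground-model set; the previous lemma applied in $V[G_\alpha]$ only furnishes a bound in $V[G_\alpha]$. Define in $V$
$$F(\alpha) = \{\beta \in \mbox{Ord} : \exists q \leq p^*\restriction_{\alpha},\ q \Vdash \beta \in \lusim{A}_\alpha\}.$$
For each such $q$, in any generic $G_\alpha \ni q$ the set $\{\beta : q \Vdash \beta \in \lusim{A}_\alpha\}$ is a subset of $\lusim{A}_\alpha^{G_\alpha}$, hence of size $< \delta$; since $P_\alpha$ preserves cardinals below $\delta$ (Lemma \ref{Lemma: FS, P preserves cardinals}), this also holds in $V$. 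Because $|P_\alpha| < \delta$ and $\delta$ is regular, $|F(\alpha)| < \delta$, so $F \colon \delta \to [\mbox{Ord}]^{<\delta}$ lies in $V$. Clearly $p^*\restriction_{\alpha} \Vdash \lusim{A}_\alpha \subseteq F(\alpha)$, and combining with the previous clause gives $(p^*)^{-\alpha} \Vdash \lusim{f}(\alpha) \in F(\alpha)$, as required.
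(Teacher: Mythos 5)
Your proposal is correct and follows essentially the same route as the paper: define the $\leq^*$-dense open sets $e(\alpha)$ of conditions bounding $\lusim{f}(\alpha)$ in a set of size $<\delta$ (density supplied by the preceding lemma), apply the Fusion Lemma, and then collect in $V$ all possible forced values to form $F(\alpha)$, with the same cardinality count via $|P_\alpha|<\delta$ and regularity. The only quibble is the direction of the inequality in your definition of $F(\alpha)$ ($q\leq p^*\restriction_{\alpha}$ should be $q\geq p^*\restriction_{\alpha}$ in the paper's convention, where $\geq$ denotes extension), which is clearly a notational slip rather than a gap.
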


\begin{proof}
For every $ \alpha<\delta $, set--
\begin{align*}
e\left( \alpha \right) = \{  r\in P\setminus \alpha \colon \mbox{there exists }  A\subseteq \mbox{Ord with } \left|A\right|<\delta \mbox{ such that } r\Vdash \lusim{f}(\alpha)\in A  \}
\end{align*}
by lemma \ref{Lemma: every name for an ordinael can be decided up to boudedly many values by a direct extension}, $ e(\alpha) $ is $ \leq^* $-dense open. Thus, by Fusion, there exists $ p^*\geq^* p $ such that for every $ \alpha<\delta $,
$$  {p^*}\restriction_{\alpha} \Vdash \mbox{there exists }  A_{\alpha}\subseteq \mbox{Ord with } \left|A_{\alpha}\right|<\delta \mbox{ such that } \left(  p^*\setminus \alpha \right)^{-\alpha} \Vdash \lusim{f}(\alpha)\in A_{\alpha} $$
Finally, for every $ \alpha<\delta $, let $ F(\alpha) = \{ \beta  \colon \exists q\geq p^*\restriction_{\alpha}, \ r\Vdash \beta \in \lusim{A}_{\alpha}  \} $. Then $ \left( p^* \right)^{-\alpha}\Vdash \lusim{f}(\alpha)\in F(\alpha) $ and $ \left|F(\alpha)\right|<\delta $, as desired.
\end{proof}

\section{Normal Measures in the Generic Extension}

This section is devoted to the proof of theorem \ref{Thm: Characterization of normal measures}. The same result was first observed by O. Ben-Neria in \cite{ben2014forcing}, assuming that $ V $ is the core model and there is no inner mode with overlapping extenders. We will reduce the assumptions on $ V $ to $ \mbox{GCH}_{\leq \kappa} $.  

Throughout this section, we will extensively use arguments and notations introduced in \cite{ben2014forcing}: For every normal measure on $ \kappa $, $ U\in V $, we will define a measure $ U^*\in V\left[G\right] $ which extends $ U $. It will  turn out that $ U^* $ is normal if and only if $ o(U)=0 $. Let $ U^{\times} $ be the normal measure below $ U^{*} $ in the Rudin-Keisler order. We will prove that every normal measure on $ \kappa $ in $ V\left[G\right] $ has the form $ U^{\times} $ for some $ U\in V $.  Moreover, we simultaneously prove the following:

\begin{lemma} \label{Lemma: Every normal measure concentrates on the complement of the set of unions}
Let $ W $ be a normal measure on $ \kappa  $ in $ V\left[G\right] $. Then--
$$ \kappa\setminus \bigcup_{\alpha\in \Delta \cap \kappa} \left(d(\alpha), \alpha \right] $$
\end{lemma}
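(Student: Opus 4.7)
The plan is to prove, by contradiction, that $S := \bigcup_{\alpha\in\Delta}\,(d(\alpha),\alpha]\notin W$. The heart of the argument is the following density observation: for every ordinal $\gamma_0 < \kappa$, the set
$$ T_{\gamma_0} := \{\alpha\in\Delta \colon d(\alpha) = \gamma_0\} $$
is \emph{finite} in $V\left[G\right]$. Indeed, given any $p\in P$, let $b = b(p)$ be its finite support, and define $p^*\geq^* p$ by leaving $p$ unchanged on $b$ and on non-measurable coordinates, and setting $p^*(\alpha)$ to be a $P_{\alpha}$-name for $\langle\emptyset,\lusim{A}^{p}_{\alpha}\setminus\{\gamma_0\}\rangle$ whenever $\alpha\in\Delta\setminus b$ with $\alpha>\gamma_0$. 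Since each $\lusim{W}_{\alpha}$ is forced to be non-principal, $\lusim{A}^{p}_{\alpha}\setminus\{\gamma_0\}$ still lies in $\lusim{W}_{\alpha}$, so $p^*$ is a legitimate direct extension of $p$. Hence the set of conditions with this property is $\leq^*$-dense, and by genericity we may fix such $p\in G$. For any $\alpha\in T_{\gamma_0}$ we have $\alpha>\gamma_0$; and if $\alpha\notin b$, then $\gamma_0\notin A^{p}_{\alpha}$ in $V\left[G\right]$, while $d(\alpha)\in A^{p}_{\alpha}$ (since in the generic the stem at $\alpha$ is extended using elements drawn from the measure-one set). This forces $d(\alpha)\neq \gamma_0$, a contradiction; hence $T_{\gamma_0}\subseteq b$ is finite.

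Now suppose, toward a contradiction, that $S\in W$. For each $\beta\in S$, set
$$ g(\beta) := \min\{\alpha\in\Delta \colon \beta\in (d(\alpha),\alpha]\}, \qquad h(\beta) := d(g(\beta)). $$
Since $h(\beta) < \beta$, the function $h$ is regressive on a set in $W$. By normality of $W$, there exist $\gamma^*<\kappa$ and $X\in W$ with $h\restriction X\equiv \gamma^*$, so that $g(\beta)\in T_{\gamma^*}$ for every $\beta\in X$. Since $T_{\gamma^*}$ is finite and $W$ is $\kappa$-complete, there exists $\alpha^*\in T_{\gamma^*}$ with $X':=\{\beta\in X \colon g(\beta)=\alpha^*\}\in W$. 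But $X'\subseteq (d(\alpha^*),\alpha^*]\subseteq \alpha^*+1$ is a bounded subset of $\kappa$, contradicting the uniformity of $W$ as a measure on $\kappa$.

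The main obstacle is the finiteness lemma on $T_{\gamma_0}$: once this is established via the direct-extension/density argument above, the rest is a routine combination of normality and $\kappa$-completeness. Importantly, the argument is blind to the identity of $W\cap V$, so it applies uniformly to every normal measure on $\kappa$ in $V\left[G\right]$.
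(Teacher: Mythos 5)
Your proof is correct, and it takes a genuinely different route from the paper's. The paper proves this lemma inside the simultaneous induction with Theorem \ref{Thm: Characterization of normal measures}: it first writes $W = U^{\times}$ for a normal measure $U\in V$, splits into the cases $o(U)=0$ and $o(U)>0$, and then transfers a direct extension $p^*\in G$ --- whose measure-one sets $\lusim{A}^{p^*}_{\mu}$ have been shrunk to avoid $\bigcup_{\alpha\in\Delta\cap\mu}\left(d(\alpha),\alpha\right]$, a shrinking justified by the induction hypothesis that the lemma already holds at every measurable $\mu<\kappa$ --- through the ultrapower $j_U$, reading off membership in $U^*$ and hence in $W$. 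Your argument uses none of this: no appeal to $W=U^{\times}$, no ultrapower of $V$, and no induction on the measurables below $\kappa$. Its two ingredients are (i) finiteness of every fiber $d^{-1}\{\gamma_0\}$, proved by the same remove-a-point density argument that the paper itself uses inside the proof of Lemma \ref{Lemma: FS, U^times in normal}, and (ii) bare normality plus $\kappa$-completeness of $W$: pressing down $\beta\mapsto d(g(\beta))$ gives a constant value $\gamma^*$ on a measure-one set, finiteness of $T_{\gamma^*}$ together with $\kappa$-completeness fixes a single $\alpha^*$, and then a measure-one set is trapped inside the bounded interval $\left(d(\alpha^*),\alpha^*\right]$, which is absurd. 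The trade-off is this: the paper's proof is nearly free given the machinery it is developing anyway for Theorem \ref{Thm: Characterization of normal measures}, but it makes the lemma at $\kappa$ depend on the characterization at $\kappa$, which in turn depends on the lemma below $\kappa$. Your proof decouples the lemma from that circle entirely --- it holds for every normal measure on $\kappa$ in $V\left[G\right]$, uniformly in $W$ and before anything about $W\cap V$ is known --- and could even be used to streamline the paper's architecture: for instance, where the proof of Lemma \ref{Lemma: FS, jW of p minus idW* belongs to H} invokes the induction hypothesis at measurables $\mu<\kappa$, one could instead quote your standalone version of the lemma.
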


Working by induction, we assume that theorem \ref{Thm: Characterization of normal measures} and lemma \ref{Lemma: Every normal measure concentrates on the complement of the set of unions} hold in every generic extension $ V^{P_{\alpha}} $ where $ \alpha< \kappa $ is measurable (where $ \alpha $ replaces $ \kappa $ in their formulation). We will then prove that they hold in the generic extension $ V\left[G\right] $, where $ G\subseteq P_{\kappa} $ is generic over $ V $.

\begin{remark}
In \cite{ben2014forcing}, as in other applications of the Magidor iteration, it was assumed that the measures $ \langle W_{\xi} \colon \xi \in \Delta \rangle $, which were used to singularize the measurables of $ \Delta $, are all derived from normal measures of order $ 0 $ (in the sense that, for every $ \xi\in \Delta $, there exists $ U_{\xi}\in V $ of order $ 0 $, such that $ W_{\xi} = U^{*}_{\xi} $). We do not assume this in the current paper. Each measure $ W_{\xi} $ has, by induction, the form $ U^{\times}_{\xi} $ for some normal measure $ U\in V $, but $ U $ does not necessarily has Mitchell order $ 0 $. 
\end{remark}

We start by extending every normal measure $ U\in V $ on $ \kappa $, to a measure $ U^*\in V\left[G\right] $. For every $ P_{\kappa} $-name $ \lusim{A} $ for a subset of $ \kappa $, $ \left( \lusim{A} \right)_G \in U^* $ if and only if, for some $ p\in G $, 
$$ \{ \xi< \kappa \colon p^{-\xi} \Vdash \check{\xi}\in \lusim{A} \}\in U $$ 
or simply $ \left(j_U(p)\right)^{-\kappa} \Vdash \check{\kappa } \in j_{U}\left( \lusim{A} \right) $ in $ M_U $.

\begin{lemma} \label{Lemma: FS, extending U to U*}
$ U^* $ is a measure on $ \kappa $ in $ V\left[G\right] $ which extends $ U $. Moreover, $ U^* $ is normal if and only if $ U $ has Mitchell order $ 0 $ in $ V $.
\end{lemma}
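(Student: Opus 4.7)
The plan is to verify that \(U^*\) is a well-defined \(V[G]\)-ultrafilter via the Fusion Lemma together with the \(\kappa\)-completeness of \(U\), and then to tie normality to the (non-)triviality at coordinate \(\kappa\) of the iteration \(j_U(P_\kappa)\) in \(M_U\).

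For well-definedness and the ultrafilter property: fix a \(P_\kappa\)-name \(\dot{A}\) for a subset of \(\kappa\) and \(p \in G\). For each \(\alpha < \kappa\), the set of \(\leq^*\)-extensions of \(p\setminus\alpha\) deciding ``\(\check{\alpha} \in \dot{A}\)'' is \(P_\alpha\)-forced to be \(\leq^*\)-dense open in \(P\setminus\alpha\) above \(p\setminus\alpha\), by the Prikry property. Apply Lemma~\ref{Lemma: FS, fusion lemma} to obtain \(p^* \geq^* p\) such that for every \(\alpha < \kappa\), \(p^*\restriction \alpha\) forces \((p^*\setminus\alpha)^{-\alpha}\) to decide ``\(\check{\alpha} \in \dot{A}\)''; since the set of such \(p^*\) is \(\leq^*\)-dense above \(p\), I can arrange \(p^* \in G\) by genericity. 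Now let \(B := \{\alpha < \kappa : (p^*)^{-\alpha} \Vdash \check{\alpha} \in \dot{A}\} \in V\); by the ultrafilter property of \(U\), either \(B \in U\) or \(\kappa\setminus B \in U\), and by elementarity of \(j_U\) this is exactly the statement that \((j_U(p^*))^{-\kappa}\) decides ``\(\check{\kappa} \in j_U(\dot{A})\)'' one way or the other in \(M_U\). So \(U^*\) is a well-defined \(V[G]\)-ultrafilter on \(\kappa\); the filter axioms follow by taking common refinements of conditions in \(G\).

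That \(U^* \cap V = U\) (so \(U^*\) extends \(U\)) follows by taking \(\dot{A} := \check{A}\) for \(A \in V\) and noting that \(j_U(\check{A}) = \check{j_U(A)}\), whence \(A \in U^*\) iff \(\kappa \in j_U(A)\) iff \(A \in U\). For \(\kappa\)-completeness, apply the argument above simultaneously to fewer than \(\kappa\) many names \(\langle \dot{A}_\beta : \beta < \mu\rangle\), using that the intersection at each coordinate of fewer than \(\kappa\) many \(\leq^*\)-dense open sets is again \(\leq^*\)-dense open (by the high closure of the direct extension order at non-measurable coordinates, and by diagonal intersection in \(W_\alpha\) at measurable coordinates, as in the proof of the Fusion Lemma). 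This yields a single \(p^* \in G\) whose \((p^*)^{-\alpha}\) decides ``\(\check{\alpha} \in \dot{A}_\beta\)'' for all \(\beta < \mu\); the \(\kappa\)-completeness of \(U\) then transfers an intersection of \(U\)-large sets \(B_\beta\) back to \(\bigcap_\beta A_\beta \in U^*\).

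For normality: if \(o(U) = 0\), then \(\kappa\) is not measurable in \(M_U\), so coordinate \(\kappa\) of \(j_U(P_\kappa)\) is the trivial forcing. Given a regressive \(f \in V[G]\) with name \(\dot{f}\), apply Corollary~\ref{Lemma: FS, evaluating new functions by old ones} and refine using the Prikry property to single values, obtaining \(p^* \in G\) and a \(V\)-function \(\alpha \mapsto \gamma_\alpha\) with \((p^*)^{-\alpha} \Vdash \dot{f}(\alpha) = \gamma_\alpha\). Applying \(j_U\), \((j_U(p^*))^{-\kappa} \Vdash \dot{f}(\kappa) = j_U(\alpha\mapsto\gamma_\alpha)(\kappa) =: \gamma\); regressivity forces \(\gamma < \kappa\), and \(\kappa\)-completeness of \(U\) then pins \(\gamma_\alpha = \gamma\) on a \(U\)-large set, giving \(f^{-1}(\gamma) \in U^*\). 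Conversely, if \(o(U) > 0\), then \(\kappa\) is measurable in \(M_U\) and coordinate \(\kappa\) of \(j_U(P_\kappa)\) is a non-trivial Prikry forcing; by the standard lifting of \(j_U\) to an embedding \(j^+\colon V[G] \to M_U[H]\) (with \(H\) containing the image of \(G\) and adding a Prikry sequence at \(\kappa\)), the first element \(\tau < \kappa\) of that Prikry sequence is represented in the \(U^*\)-ultrapower as \([f]_{U^*}\) for some regressive \(f \in V[G]\), and genericity of the Prikry sequence prevents \(f\) from being constant on any \(U^*\)-large set. The main obstacle will be this lifting step in the \(o(U) > 0\) direction: one must identify \(M_{U^*}\) with \(M_U[H]\) and extract a genuinely non-constant regressive witness from the first Prikry element at \(\kappa\); the \(o(U) = 0\) direction, by contrast, is a clean Prikry-and-Fusion calculation.
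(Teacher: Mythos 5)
Your forward-direction skeleton (Prikry property plus Lemma \ref{Lemma: FS, fusion lemma}, then reflection along $j_U$) is the same as the paper's, but there is a recurring gap in how you extract decisions, and it affects the ultrafilter step, the completeness step, and the normality step alike. The Fusion Lemma only gives that $p^*\restriction_\alpha$ forces ``$(p^*\setminus\alpha)^{-\alpha}$ decides $\check\alpha\in\dot A$'' \emph{over} $V^{P_\alpha}$; the direction of that decision is a $P_\alpha$-name and genuinely depends on $G\restriction_\alpha$. So your set $B=\{\alpha<\kappa : (p^*)^{-\alpha}\Vdash\check\alpha\in\dot A\}$ (forcing over $V$) is not what fusion controls: from $\kappa\setminus B\in U$ you only get $(j_U(p^*))^{-\kappa}\not\Vdash\check\kappa\in j_U(\dot A)$, which is not $(j_U(p^*))^{-\kappa}\Vdash\check\kappa\notin j_U(\dot A)$, and the typical case --- the decision at $\kappa$ depends on the generic --- is left unhandled. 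In the normality step the gap is fatal as written: the $p^*\in G$ and $V$-function $\alpha\mapsto\gamma_\alpha$ with $(p^*)^{-\alpha}\Vdash\dot f(\alpha)=\gamma_\alpha$ need not exist at all; for instance, for $f(\alpha)=d(\max(\Delta\cap\alpha))$, a regressive function whose values are read off the generic, no condition with only finitely many nontrivial stems can force exact values on a $U$-large set. The paper closes this gap by applying $j_U$ \emph{before} making the decision: after fusion, $p^*$ forces in $M_U$ that for some $\xi^*$ --- now a single $P_\kappa$-name --- one has $(j_U(p^*)\setminus\kappa)^{-\kappa}\Vdash\check\kappa\in j_U(\dot A_{\xi^*})$ (respectively $\Vdash j_U(\dot f)(\kappa)=\xi^*$); one then extends $p^*$ inside $G$ to a condition $q$ deciding that name, and since $(j_U(q))^{-\kappa}\geq q^{\frown}(j_U(p^*)\setminus\kappa)^{-\kappa}$, the condition $q\in G$ itself witnesses membership in $U^*$. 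That maneuver (decide the name with a condition in $G$, then absorb) is the missing idea.

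The converse direction is where your approach would fail outright: the lifting you posit provably does not exist. If $j^+\colon V[G]\to M_U[H]$ were elementary with $j^+\restriction_V=j_U$, $j^+(G)=H$, and $H$ adding a Prikry sequence at $\kappa$ with first element $\tau<\kappa$, then the measure derived from $j^+$ at $\kappa$ would be a $\kappa$-complete non-principal ultrafilter on $\kappa$ in $V[G]$ containing $d^{-1}\{\tau\}$ (because $j^+(d)(\kappa)=\tau$), yet $d^{-1}\{\tau\}$ is finite by the density argument in the proof of Lemma \ref{Lemma: FS, U^times in normal} --- a contradiction. For the same reason the identification of $\mathrm{Ult}\left(V\left[G\right],U^*\right)$ with $M_U\left[H\right]$ is false: $\kappa$ is singular in $M_U\left[H\right]$ (it acquires a Prikry sequence there) but regular in $\mathrm{Ult}\left(V\left[G\right],U^*\right)\subseteq V\left[G\right]$. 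Your underlying intuition is correct, however, and it can be implemented with no lifting at all: when $o(U)>0$ we have $\Delta\in U\subseteq U^*$, the function $d\restriction_\Delta$ is regressive, and every fiber of $d$ is finite, so $d$ is constant on no $U^*$-large set and $U^*$ is not normal. The paper's own argument is a different soft one: if $U^*$ is normal and $\Delta\in U^*$, then $\kappa$ is measurable in the $V$-side $M$ of $j_{U^*}\colon V\left[G\right]\to M\left[H\right]$, hence singular in $M\left[H\right]\subseteq V\left[G\right]$, contradicting the regularity of $\kappa$ in $V\left[G\right]$; therefore $\Delta\notin U^*\cap V=U$, i.e.\ $o(U)=0$.
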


\begin{proof}
It's not hard to verify that $ U^* $ is a filter which extends $ U $. Let us prove that it is a $ \kappa $-complete ultrafilter. Assume that $ \langle \lusim{A}_{\xi} \colon \xi<\delta \rangle $ is forced by a condition $ p\in G $ to be a partition of $ \kappa $, for some $ \delta< \kappa $. Assume that $ q $ is an arbitrary condition above $ p $. For every $ \alpha \in \left( \delta, \kappa \right) $, consider the $ P_{\alpha} $-name for the following set $ e(\alpha) $, which is forced by $ q\restriction_{\alpha} $ to be $ \leq^* $-dense open above $ q\setminus \alpha $,
$$ e(\alpha) = \{  r\geq^* q\setminus \alpha \colon \exists \xi^*< \delta, \ r\Vdash \check{\alpha} \in \lusim{A}_{\xi^*} \} $$
by lemma \ref{Lemma: FS, fusion lemma}, there exists $ p^* \in G $ above $ p $, such that for every $ \alpha\in \left( \delta, \kappa \right) $,
$$ p^*\restriction_{\alpha} \Vdash \exists \xi^*< \delta, \ \left(p^*\setminus \alpha\right)^{-\alpha} \Vdash \check{\alpha} \in \lusim{A}_{\xi} $$
and thus-- 
$$p^* \Vdash \exists \xi^*<\delta, \  \left(j_U\left( p^* \right)\right)^{-\kappa} \setminus \kappa \Vdash \check{\kappa}\in j_U\left( \lusim{A}_{\xi^*} \right) $$
by extending $ p^* $ to a stronger condition in $ G $, we can assume that $ p^* $ decides the value of $ \xi^* $, and so, for some $ \xi^*<\kappa $, 
$$  \left(j_U\left( p^* \right)\right)^{-\kappa} \Vdash \check{\kappa}\in j_U\left( \lusim{A}_{\xi^*} \right) $$
as desired.

Let us assume that $ U $ has Mitchell order $ 0 $. Let $ \lusim{f} $ be a $ P_{\kappa}$-name for a regressive function, as forced by some $ p\in G $. We use a similar argument as before, but now $ e(\alpha) $ is defined for every non-measurable $ \alpha $, to be the name for the following set, which is forced by any extension of $ p\restriction_{\alpha} $ to be $ \leq^* $-dense open above $ p\setminus \alpha $:
$$ e(\alpha) = \{  r\in P\setminus \alpha \colon \exists \xi^*<\alpha, \ r\Vdash \lusim{f}\left( \alpha \right) = \xi^* \} $$ 
where we used the fact that $ \alpha $ is not measurable, and thus $\langle P\setminus \alpha , \leq^*\rangle $ is more than $ \alpha $-closed. Thus, there exists $ p^* \in G $ such that- $$ p^*\Vdash \exists \xi^*< \kappa, \ \left(j_U(p^*)\setminus \kappa\right)^{-\kappa} \Vdash j_U(\lusim{f})\left( \kappa \right) = \xi^* $$ 
By extending $ p^* $ to a condition in $ G $, we can assume that $ p^* $ decides the value of $ \xi^* $. Thus, $ \{  \xi< \kappa \colon f(\xi) = \xi^* \}\in U^* $, as desired.

Finally, assume that $ U^* $ is normal. Let $ j_{U^*} \colon V\left[G\right]\to M\left[H\right] $ be the ultrapower embedding. Note that $ \kappa $ is not measurable in $ M $, since, else, $ \kappa $ would have been singular in $ V\left[G\right] $. Thus, 
$$ \kappa \in j_{U^*}\left(  \{ \xi< \kappa \colon \xi \mbox{ is not measurable in } V \} \right) $$
and thus $ U = U^*\cap V $ concentrates on non-measurables. 
\end{proof}

Let us define the measure $ U^{\times}  \in V\left[G\right]$. If $ U $ has Mitchell order $ 0 $, we take $ U^{\times} = U^* $. Assume otherwise. Let $ d\colon \Delta \to \kappa $ be the function which maps every measurable cardinal of $ V $ to the first element in its Prikry sequence in $ V\left[G\right] $. We claim that whenever $ U^* $ is non-normal, namely, $ \Delta \in U^* $, $ d $ projects $ U^* $ to the normal measure below it in the Rudin-Keisler order; we denote this projected measure by  $ U^{\times} $. In other words,
$$ U^{\times} = d_{*}\left( U^* \right) = \{  A\subseteq \kappa \colon d^{-1}\left[A\right] \in U^* \} $$
Let us consider the previous equation as the definition of $ U^{\times} $ in the case where $ o(U)>0 $, and prove that it is a normal measure on $ \kappa $.

\begin{lemma} \label{Lemma: FS, U^times in normal}
Let $ U $ be a normal measure on $ \kappa $ in $ V $. Then $ U^{\times} $ is a normal measure on $ \kappa $ in $ V\left[G\right] $.
\end{lemma}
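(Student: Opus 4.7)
My plan is to verify, in order, three properties of $U^{\times}$: it is a $\kappa$-complete ultrafilter on $\kappa$, it is uniform, and it is normal. The first transfers routinely from $U^{*}$ (Lemma~\ref{Lemma: FS, extending U to U*}) through the pushforward $d_{*}$, since $\Delta \in U^{*}$ whenever $o(U)>0$. For uniformity, I would fix $\alpha<\kappa$ and unwind the definition of $U^{*}$: the set $\{\xi \in \Delta : d(\xi) < \alpha\}$ lying in $U^{*}$ would require some $p\in G$ with $\{\xi : p^{-\xi} \Vdash \lusim{d}(\xi) < \alpha\} \in U$, but $p$ has empty stem at cofinitely many coordinates, and an empty stem at $\xi$ combined with a $\lusim{W}_{\xi}$-measure-one set (necessarily unbounded in $\xi$) cannot force $d(\xi)<\alpha$; hence the set is bounded in $\kappa$ and not in the normal measure $U$.

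The bulk of the work lies in establishing normality. Fixing a $P_{\kappa}$-name $\lusim{f}$ forced regressive by some $p\in P_{\kappa}$, I plan to construct a direct extension $p^{**}\geq^{*}p$ and an ordinal $c^{*}<\kappa$ such that $p^{**}\Vdash \{\alpha : \lusim{f}(\alpha)=c^{*}\}\in U^{\times}$. By density of such direct extensions and the Prikry property, a suitable condition then lies in $G$, producing the normality witness in $V[G]$. The construction combines three ingredients: (i) Fusion (Lemma~\ref{Lemma: FS, fusion lemma}) to produce $p^{*}\geq^{*}p$ so that at every measurable $\xi$ the set $\lusim{A}^{p^{*}}_{\xi}$ has been shrunk, using normality of $\lusim{W}_{\xi}$ applied to the regressive map $\nu \mapsto$ (decided value of $\lusim{f}(\nu)$), to one on which adjoining any $\nu$ to the stem decides $\lusim{f}(\nu)$ to a common value $\lusim{c}_{\xi}<\xi$; (ii) Corollary~\ref{Lemma: FS, evaluating new functions by old ones} applied to the name $\lusim{c}$, yielding $p^{**}\geq^{*}p^{*}$ and $F:\kappa\to[\kappa]^{<\kappa}$ in $V$ with $(p^{**})^{-\xi}\Vdash \lusim{c}_{\xi}\in F(\xi)\subseteq\xi$; and (iii) normality and $\kappa$-completeness of $U$ applied to the $V$-function $\xi\mapsto |F(\xi)|$ and to each enumeration map $\xi\mapsto \gamma^{\xi}_{i}$ of $F(\xi)$, producing $B\in U$ on which $F(\xi)=\{\gamma^{*}_{i}:i<\mu^{*}\}$ for a fixed tuple of size $\mu^{*}<\kappa$.

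The main obstacle is that the witnessing constant $c^{*}$ is not definable in $V$ alone, since $\lusim{c}$ is only a name controlled by $F$ up to a bounded set. The resolution is a hybrid $V$/$V[G]$ step: since $B\in U\subseteq U^{*}$ and $\lusim{c}_{\xi}$ is forced on $B$ into the fixed $\mu^{*}$-sized set $\{\gamma^{*}_{i}:i<\mu^{*}\}$, the $\kappa$-completeness of $U^{*}$ in $V[G]$ (Lemma~\ref{Lemma: FS, extending U to U*}) places one fiber $\{\xi : \lusim{c}_{\xi}=\gamma^{*}_{i^{*}}\}$ into $U^{*}$. Setting $c^{*}:=\gamma^{*}_{i^{*}}$, and recalling that $(p^{**})^{-\xi}\Vdash \lusim{f}(d(\xi))=\lusim{c}_{\xi}$ at cofinitely many measurable $\xi$ (inherited from (i) together with the cofinite emptiness of $p$'s stems), one concludes that $p^{**}\Vdash \{\alpha:\lusim{f}(\alpha)=c^{*}\}\in U^{\times}$, closing the density argument.
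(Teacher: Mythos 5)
Your ingredient (i) is essentially sound (it compresses a standard argument: decide $\lusim{f}(\nu)$ below stems $\langle\nu\rangle$ by a recursion along the $\leq^*$-closure of the tail forcing, then press down $\nu\mapsto$ decided value with the normality of $\lusim{W}_{\xi}$ over $V^{P_{\xi}}$, so that the common value $\lusim{c}_{\xi}$ is a $P_{\xi}$-name), and (ii) is a legitimate application of Corollary~\ref{Lemma: FS, evaluating new functions by old ones}. The genuine gap is (iii). The Corollary only yields $\left|F(\xi)\right|<\kappa$; since $F(\xi)\subseteq\xi$, all you know is $\left|F(\xi)\right|\leq\xi$, and equality is entirely possible: $F(\xi)$ aggregates the possible values of the $P_{\xi}$-name $\lusim{c}_{\xi}$ over conditions in $P_{\xi}$, a forcing of size $\xi^{+}$ under $\mbox{GCH}$, and nothing in the construction makes this collection bounded or of size $<\xi$. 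Hence $\xi\mapsto\left|F(\xi)\right|$ need not be regressive on any $U$-large set, so normality of $U$ cannot stabilize it to a fixed $\mu^{*}<\kappa$; the most the pressing-down/diagonal-intersection machinery gives is that $F(\xi)$ is an initial segment of a single fixed $\kappa$-sequence $\langle \gamma^{*}_{i}\colon i<\kappa\rangle$. Your hybrid step then collapses: the relevant $U^{*}$-large set splits into $\kappa$-many fibers $\{\xi \colon \lusim{c}_{\xi}=\gamma^{*}_{i}\}$, and mere $\kappa$-completeness of $U^{*}$ cannot select one; nor can you press down with $U^{*}$, since $U^{*}$ is precisely \emph{not} normal when $o(U)>0$. (A secondary, repairable slip: your final claim ``$p^{**}\Vdash \{\alpha\colon\lusim{f}(\alpha)=c^{*}\}\in U^{\times}$'' is not a well-formed forcing statement, since $c^{*}$ is chosen using $G$; the argument can only aim at the assertion that in $V\left[G\right]$ some $c^{*}<\kappa$ works, which is what normality requires.)

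The missing idea is the reflection-plus-genericity trick, which is exactly how the paper closes this step. The paper reduces the whole lemma to showing $\left[d\right]_{U^*}=\kappa$ (finiteness of the fibers $d^{-1}\{x\}$ gives $\left[d\right]_{U^*}\geq\kappa$; then $U^{\times}=d_{*}(U^{*})$ is the measure derived from $j_{U^*}$ at its critical point, hence automatically normal, with no separate verification of completeness or uniformity). For the bound $\left[f\right]_{U^*}<\kappa$ when $f<d$ on $\Delta$, it performs your step (i) in the guise of the dense sets $e(\xi)$, applies $j_U$, and observes that $\left(j_U(p^{*})\right)^{-\kappa}$ reduces $j_U(\lusim{f})(\kappa)$ to a name in the forcing $j_U(P)\restriction_{\kappa}=P_{\kappa}$; since $G$ is $P_{\kappa}$-generic, some condition \emph{in} $G$ decides that name to be a concrete $\gamma<\kappa$ --- this is the sentence ``by extending $p^{*}$ to a condition in $G$, we can assume that it decides the value of $\gamma$.'' In your notation: $c^{*}$ should be obtained as the $G$-value of the $P_{\kappa}$-name $j_U(\langle \lusim{c}_{\xi}\colon\xi<\kappa\rangle)(\kappa)$, which makes your steps (ii) and (iii), and with them the problematic stabilization of $F$, unnecessary.
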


\begin{proof}
We can assume that $ U $ has Mitchell order $ >0 $. It suffices to prove that $ \left[d\right]_{U^*} = \kappa $. 

First, note that for every $ x<\kappa $, $ d^{-1}\{x\} $ is finite. Indeed, given an arbitrary condition $ p\in P_{\kappa} $, let $ b\subseteq \kappa $ be the finite set such for every $ \xi\in \kappa\setminus b $, $ p\restriction_{\xi} \geq^* 0_{Q_{\xi}} $. For every such $ \xi $, let $ p^*\geq^* p $ be such that $ x $ is removed from every measure one set. Then $ p^* $ forces that $ d^{-1}\{x\} $ is finite, and since $ p $ was arbitrary, this indeed holds in $ V\left[G\right] $.

This shows that $ \left[d\right]_{W^*} \geq \kappa$. Assume that $ f\in V\left[G\right] $ is a function in $ V\left[G\right] $ such that, for every $ \xi\in \Delta $, $ f(\xi) < d(\xi)$. Let $ p $ be a condition which forces this. Assume that $ q\geq p $ is arbitrary, and let $ \xi_0 $ be an ordinal which such that for every $ \xi> \xi_0 $, $ q\restriction_{\xi} \Vdash q(\xi) \geq^* 0_{\lusim{Q}_{\xi}} $. For every $ \xi\in \Delta $ above $ \xi_0 $, we describe a name for a subset of $ P\setminus \xi $ which is forced by $ q\restriction_{\xi} $ to be $ \leq^* $ dense open subset of $ P\setminus \xi $ above $ q\setminus \xi $,
\begin{align*}
	 e(\xi) = \{ r\geq q\setminus \xi \colon \mbox{there exists } \gamma< \xi \mbox{ such that } r\setminus \xi \Vdash \lusim{f}\left( \xi \right) = \gamma \}
\end{align*}
The density follows since every name for an ordinal below the first element for a Prikry sequence can be decided by a direct extension. 

By fusion, there exists $ p^*\in G $ above $ p $ such that--
$$ p^* \Vdash \exists \gamma <\kappa, \ \left(j_U\left( p \right)\setminus \kappa\right)^{-\kappa} \Vdash j_U\left(\lusim{f}\right)(\kappa)= \check{\gamma} $$
and by extending $ p^* $ to a condition in $ G $, we can assume that it decides the value of $ \gamma<\kappa $. So $ \left(j_U(p^*)\right)^{-\kappa} \Vdash j_U\left( \lusim{f} \right)(\kappa) = \check{\gamma} $, and thus, in $ V\left[G\right] $, $ \left[f\right]_{U^*} = \gamma < \kappa $, as desired.
\end{proof}

\begin{claim}
	Let $ U\in V $ be a normal measure on $ \kappa $. The following are equivalent:
	\begin{enumerate}
		\item $ U$ has Mitchell order $ 0 $ in $ V $.
		\item $ U^{\times} = U^{*} $.
		\item $ d''\Delta \notin U^{\times} $.
	\end{enumerate}
\end{claim}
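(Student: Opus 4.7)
I would verify the three equivalences in two stages: first $(1)\Leftrightarrow(2)$, then $(1)\Leftrightarrow(3)$. The implication $(1)\Rightarrow(2)$ is immediate from the definition of $U^{\times}$. Conversely, Lemma~\ref{Lemma: FS, U^times in normal} shows that $U^{\times}$ is always a normal measure, so $U^{\times} = U^{*}$ forces $U^{*}$ to be normal; by the ``moreover'' clause of Lemma~\ref{Lemma: FS, extending U to U*} this entails $o(U) = 0$, giving $(2)\Rightarrow(1)$. For $(3)\Rightarrow(1)$ I would argue by contrapositive: if $o(U) > 0$ then $\kappa$ is measurable in $M_U$, hence $\Delta \in U \subseteq U^{*}$; since every $\alpha \in \Delta$ satisfies $d(\alpha) \in d''\Delta$, we have $\Delta \subseteq d^{-1}(d''\Delta)$, so $d^{-1}(d''\Delta) \in U^{*}$, which by the definition $U^{\times} = d_{*}(U^{*})$ in this case gives $d''\Delta \in U^{\times}$.

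The content lies in $(1)\Rightarrow(3)$, which I would handle by computing inside the ultrapower $j_{U^{*}}\colon V[G] \to M[H]$. Assuming $o(U) = 0$, we have $U^{\times} = U^{*}$, so it suffices to show $\kappa \notin j_{U^{*}}(d''\Delta)$. Since $\Delta \in V$, elementarity gives $j_{U^{*}}(\Delta) = j_{U}(\Delta)$, and the task reduces to checking $j_{U^{*}}(d)(\alpha) \neq \kappa$ for every $\alpha \in j_U(\Delta)$. I split into three cases. When $\alpha < \kappa$, elementarity gives $j_{U^{*}}(d)(\alpha) = d(\alpha) < \alpha < \kappa$. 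The value $\alpha = \kappa$ is excluded because $o(U) = 0$ says precisely that $\kappa$ is not measurable in $M_U$. Finally, for $\alpha \in j_U(\Delta)$ with $\alpha > \kappa$, the normal measure driving the Prikry forcing at stage $\alpha$ in $j_U(P_{\kappa})$ is $\alpha$-complete, so it assigns measure zero to the bounded set of ordinals $\leq \kappa$ and thus concentrates on $(\kappa, \alpha)$; hence $j_{U^{*}}(d)(\alpha) > \kappa$.

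I expect the last case to be the main delicate point. To justify it, I would use that by density, $H$ contains a condition $p$ with $t^{p}_{\alpha} = \emptyset$ and $A^{p}_{\alpha} \subseteq (\kappa, \alpha)$; any further $q \in H$ extending $p$ must then satisfy $t^{q}_{\alpha} \setminus t^{p}_{\alpha} \subseteq A^{p}_{\alpha} \subseteq (\kappa, \alpha)$ by compatibility in the Prikry forcing, so the entire Prikry sequence at $\alpha$ -- in particular its first element $j_{U^{*}}(d)(\alpha)$ -- lies in $(\kappa, \alpha)$. Combining the three cases yields $\kappa \notin j_{U^{*}}(d)'' j_U(\Delta) = j_{U^{*}}(d''\Delta)$, which completes $(1) \Rightarrow (3)$ and closes the equivalence.
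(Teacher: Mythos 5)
Your outer structure is sound, and three of your four implications are correct: $(1)\Rightarrow(2)$ is definitional, $(2)\Rightarrow(1)$ via normality of $U^{\times}$ (Lemma \ref{Lemma: FS, U^times in normal}) combined with the ``moreover'' clause of Lemma \ref{Lemma: FS, extending U to U*} is valid, and your contrapositive argument for $(3)\Rightarrow(1)$ is exactly the paper's. (The paper instead closes the cycle $1\Rightarrow 2\Rightarrow 3\Rightarrow 1$, proving $2\Rightarrow 3$ directly from the definition of $U^{*}$.) The gap is in $(1)\Rightarrow(3)$, which is where all the content lies.

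Two problems occur in your case $\alpha>\kappa$. First, the set of conditions with $t^{p}_{\alpha}=\emptyset$ and $A^{p}_{\alpha}\subseteq(\kappa,\alpha)$ is not dense: stems only grow under extension, so a condition whose stem at $\alpha$ already contains an ordinal $\leq\kappa$ has no extension in this set. Second, and more fundamentally, no density argument over $M$ can settle this case, because the assertion ``$\kappa$ appears as a first element of some Prikry sequence of $H$'' is, once pushed through the ultrapower, exactly the assertion $d''\Delta\in U^{*}$ that you are trying to refute. Conditions forcing it and conditions forcing its negation both exist in $j_{U^{*}}(P)$, and which of them land in $H=j_{U^{*}}(G)$ is governed by $U^{*}$ itself, not by genericity; indeed, in the paper the control over $H$ that you want (Lemma \ref{Lemma: FS, jW of p minus idW* belongs to H}) is proved \emph{using} the present claim, not before it, so your route is circular. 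The only tool available at this stage is the combinatorial definition of $U^{*}$: $X\in U^{*}$ iff $\left(j_U(p)\right)^{-\kappa}\Vdash\check{\kappa}\in j_U(\lusim{X})$ for some $p\in G$. That gives what you need in one line: for any $p\in G$, the condition $\left(j_U(p)\right)^{-\kappa}$ has empty stems and measure-one sets disjoint from $\kappa+1$ at every $M_U$-measurable stage above $\kappa$, hence forces that $\kappa$ appears in no Prikry sequence; consequently $\{\xi<\kappa\colon\forall\mu\in\Delta\setminus(\xi+1),\ d(\mu)>\xi\}\in U^{*}$, which is precisely your third case (this is also the last part of Lemma \ref{Lemma: FS, Properties of U* with o(U)>0}). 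This one-line use of the definition is the paper's entire proof of $(2)\Rightarrow(3)$.

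A smaller point: $j_{U^{*}}(\Delta)=j_{U}(\Delta)$ does not follow from ``elementarity''; it presupposes $j_{U^{*}}\restriction_{V}=j_{U}$, which is not available here (it is part of what Theorem \ref{Thm: Structure of j_Wrestriction V} establishes later). You can sidestep it by working with $j_{U^{*}}(\Delta)$ and $j_{U^{*}}(d)$ throughout --- your cases $\alpha<\kappa$ and $\alpha=\kappa$ then go through, since $U^{*}$ is normal and $U^{*}\cap V=U$ --- but the case $\alpha>\kappa$ still requires the definition of $U^{*}$ as above.
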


\begin{proof}
Clearly $ 1 $ implies $ 2 $ by the definition of $ U^{\times} $. 

Assume $ 2 $. If $d''\Delta \in U^{\times} $ then $ d''\Delta\in U^* $, and thus, there exists $ p\in G $ such that--
$$ \left(j_U(p)\right)^{-\kappa} \Vdash \check{\kappa}\in j_U( \lusim{d}''\Delta ) $$
but this cannot happen, since $ \left(j_U(p)\right)^{-\kappa} $ forces that $ \kappa $ does not appear as an element in any of the Prikry sequences. 

Finally, if $ U $ has Mitchell order higher than $ 0 $ in $ V $, then $ \Delta\in U $ and thus $ \Delta\in U^* $.  Therefore, $ d''\Delta \in U^{\times} $.
\end{proof}

\begin{lemma} \label{Lemma: FS, Properties of U* with o(U)>0}
Let $ U $ be a normal measure on $ \kappa $ in $ V $ with $ o(U)>0 $. Let $ j_{U^{*}}\colon  V\left[G\right] \to M\left[H\right]$ be the ultrapower embedding of $ U^{*} $. Then $ \left[Id\right]_{U^*} $ is measurable in $ M $, and $ \kappa $ appears as a first element in its Prikry sequence in $ M\left[H\right] $. $ \left[Id\right]_{U*} $ is maximal with this property, namely, for every measurable above $ \left[Id\right]_{U^*} $, $ \kappa $ does not appear in its Prikry sequence. Furthermore, for every $ \mu> \left[Id\right]_{U^*} $ measurable in $ M $, $ d\left( \mu \right) > \left[Id\right]_{U^*} $.
\end{lemma}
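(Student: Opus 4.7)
The three clauses are verified in order, using the identification $M = j_{U^{*}}(V) = j_U(V)$ (valid since $U^{*}\cap V = U$ forces $j_{U^{*}}\restriction V = j_U$) together with the definition of $U^{*}$ in terms of $j_U$. Since $o(U)>0$, we have $\Delta \in U \subseteq U^{*}$, whence $[Id]_{U^{*}} \in j_{U^{*}}(\Delta) = j_U(\Delta)$, so $[Id]_{U^{*}}$ is measurable in $M$. For the second clause, the first Prikry element of $[Id]_{U^{*}}$ in $M[H]$ equals $j_{U^{*}}(d)\bigl([Id]_{U^{*}}\bigr) = [d]_{U^{*}}$; the proof of lemma \ref{Lemma: FS, U^times in normal} explicitly establishes $[d]_{U^{*}} = \kappa$ (this equality is precisely what makes $U^{\times} = d_{*}(U^{*})$ normal), so $\kappa$ is indeed the first Prikry element.

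For the third clause, let $\mu > [Id]_{U^{*}}$ be measurable in $M$ and pick $g\in V$ representing $\mu$, i.e.\ $\mu = [g]_U$; we may assume $g(\xi)\in\Delta$ and $g(\xi)>\xi$ for every $\xi$. The inequality $d^{M[H]}(\mu) > [Id]_{U^{*}}$ translates, via the basic ultrapower transfer, to $A := \{\xi<\kappa : d(g(\xi))>\xi\} \in U^{*}$. By the definition of $U^{*}$ it suffices to find $p\in G$ for which $\{\xi : p^{-\xi}\Vdash\check{\xi}\in\lusim{A}\}\in U$, and any $p\in G$ will do. Let $b\subseteq\kappa$ be the finite support of $p$. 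If $g(\xi)\notin b$, then the coordinate $g(\xi)>\xi$ of $p$ is a direct extension of the trivial Prikry condition, hence has empty stem; after the modification $p^{-\xi}$ the measure-one set at $g(\xi)$ becomes $\lusim{A}^{p}_{g(\xi)}\setminus(\xi+1)$, which is forced to be a subset of $(\xi,g(\xi))$, so every generic extending $p^{-\xi}$ places the first Prikry element of $g(\xi)$ strictly above $\xi$. Finally, $\{\xi:g(\xi)\notin b\}=\kappa\setminus g^{-1}(b)$ belongs to $U$, because $g(\xi)>\xi$ forces $g^{-1}\{\alpha\}\subseteq\alpha$ for each $\alpha\in b$, so $|g^{-1}(b)|<\kappa$.

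Once $d^{M[H]}(\mu) > [Id]_{U^{*}} > \kappa$ has been established, the remaining claim that $\kappa$ does not appear in the Prikry sequence of $\mu$ follows at once, since Prikry sequences are strictly increasing. The main subtlety lies in the third clause: a naive appeal to the concentration lemma of the induction hypothesis, applied inside $M$ to the normal measure used at $\mu$, only yields that the Prikry sequence of $\mu$ \emph{eventually} avoids the interval $(\kappa,[Id]_{U^{*}}]$, which does not suffice to pin down the first element. The clean route is instead to unfold $U^{*}$ through $j_U$ and exploit the finite-support nature of $P$, ensuring that any fixed $p\in G$ is trivial at the coordinate $g(\xi)$ for $U$-almost every $\xi$.
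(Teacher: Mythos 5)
Your clauses (1) and (2) are fine and coincide with the paper's argument: (1) needs only $\Delta\in U\subseteq U^{*}$ together with {\L}o{\'s}'s theorem, and (2) is exactly the computation $\left[d\right]_{U^{*}}=\kappa$ from lemma \ref{Lemma: FS, U^times in normal}. The problem is your opening identification. The equality $U^{*}\cap V=U$ does \emph{not} imply $j_{U^{*}}\restriction_{V}=j_{U}$ or $M=M_{U}$; what it gives is $j_{U^{*}}\restriction_{V}=k\circ j_{U}$, where $k\colon M_{U}\to M$ is the factor embedding $k(\left[f\right]_{U})=\left[f\right]_{U^{*}}$. In the case $o(U)>0$ treated by this very lemma, $k$ is not the identity: since $U^{*}$ is non-normal (lemma \ref{Lemma: FS, extending U to U*}), $k(\kappa)=k(\left[Id\right]_{U})=\left[Id\right]_{U^{*}}>\kappa$, so $\mbox{crit}(k)=\kappa$ and $\mbox{ran}(k)\cap\bigl[\kappa,\left[Id\right]_{U^{*}}\bigr)=\emptyset$. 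In particular the ordinal $\kappa$ itself, although an element of $M$, is \emph{not} of the form $\left[g\right]_{U^{*}}$ for any $g\in V$ (indeed $\kappa=\left[d\right]_{U^{*}}$, and $d\notin V$); so "representation by ground-model functions" already fails for $\kappa$. This is not a cosmetic point: the entire Section 3 of the paper exists because $j_{U^{*}}\restriction_{V}$ is a long iterated ultrapower of which $j_{U}$ is only the first step, and $M\neq M_{U}$.

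This creates a genuine gap in your third clause. You fix a measurable $\mu>\left[Id\right]_{U^{*}}$ of $M$ and "pick $g\in V$ representing $\mu$"; such a $g$ exists exactly when $\mu\in\mbox{ran}(k)$, and nothing justifies that every measurable of $M$ above $\left[Id\right]_{U^{*}}$ lies in $\mbox{ran}(k)$. In general this is false: in Section 3 the ordinals $\mu^{*j}_{\alpha}$ (for $1\leq j\leq m_{\alpha}-1$) are measurables of $M$ lying above $\left[Id\right]_{U^{*}}$ once $\mu_{\alpha}>\left[Id\right]_{U^{*}}$, and they are seeds of the non-principal measures $U^{j}_{\mu_{\alpha}}=\{X\colon \mu^{*j}_{\alpha}\in k_{\alpha}(X)\}\cap M_{\alpha}$, hence cannot belong to $\mbox{ran}(k_{\alpha})\supseteq\mbox{ran}(k)$; such $\alpha$ occur whenever measures of Mitchell order $>0$ are used cofinally below $\kappa$ (as in Example 3), which the paper allows. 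So your argument proves the conclusion only for the measurables in $\mbox{ran}(k)$, a proper subcollection, and closing that gap would require the structure theory of Section 3 — which in turn relies on this lemma. The paper's proof avoids representing any individual $\mu$: it proves that the single set $A=\{\xi<\kappa\colon \forall\mu\in\Delta\setminus\left(\xi+1\right),\ d(\mu)>\xi\}$ belongs to $U^{*}$, where the quantifier over measurables sits \emph{inside} the set, and then applies {\L}o{\'s} once to get $d(\mu)>\left[Id\right]_{U^{*}}$ for all measurables $\mu>\left[Id\right]_{U^{*}}$ of $M$ simultaneously. Note that your own forcing observation is precisely what proves $A\in U^{*}$: for any $p\in G$, the condition $\left(j_{U}(p)\right)^{-\kappa}$ has empty stems and $(\kappa+1)$-avoiding measure-one sets at \emph{every} $M_{U}$-measurable above $\kappa$ at once, hence forces $\check{\kappa}\in j_{U}(\lusim{A})$. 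So the forcing content of your argument is correct; the error is in where the quantifier over $\mu$ is handled.
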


\begin{proof}
Since $ \Delta\in U\subseteq U^* $, $ \left[Id\right]_{U^*} $ is measurable in $ M $. But-- $$\kappa =  \left[d\right]_{U^*} =j_{U^*}\left(d\right)\left( \left[Id\right]_{U^*} \right)$$
so $ \kappa $ appears first in the Prikry sequence of $ \left[Id\right]_{U^*} $ in $ M\left[H\right] $. 

Finally, fix any condition $ p\in G $. Then--
$$ \left(j_{U}(p)\right)^{-\kappa} \Vdash \mbox{for every }  \mu\in j_U\left( \Delta \right)\setminus \left(\kappa+1\right),  \  j_U(\lusim{d})(\mu) > \kappa $$
In particular, $ \{ \xi<\kappa \colon \mbox{for every } \mu\in \Delta\setminus \left(\xi+1\right), \ d(\mu) >\xi \}\in U^* $. Thus, for every measurable $ \mu> \left[Id\right]_{U^*} $, $ d(\mu) > \left[Id\right]_{U^*} $.
\end{proof}

Let us assume now that $ W $ is an arbitrary normal measure on $ \kappa $ in $ V\left[G\right] $. Our goal will be to prove that $ W = U^{\times} $ for some normal measure $ U\in V $. Denote by $ j_W \colon V\left[G\right] \to M\left[H\right] $ the ultrapower embedding of $ W $ over $V\left[G\right] $.

\begin{remark} \label{Remark: U^0 = Wcap V}
$ W\cap V $ is a normal measure in $ V $ of Mitchell order $ 0 $. Indeed, by corollay \ref{Corollaery: Every measure in the generic extension extends a measure of V}, $ W\cap V \in V $. Clearly $ W\cap V $ is normal in $ V $. Finally, note that $ \Delta\notin W\cap V $, namely $ \kappa \notin j_{W}\left( \Delta \right) $. Otherwise, $ \kappa $ was measurable in $ M $, and thus singular in $ M\left[H\right] \subseteq V\left[G\right] $. But $ \kappa $ is regular in $ V\left[G\right] $, a contradiction.
\end{remark}

Let us assume, by induction, that for every measurable $ \mu<\kappa $, the normal measures on $ \mu $ in $ V^{P_{\mu}} $ have the form $ U^{\times} $ for some normal measure $ U $ on $ \mu $ in $ V $. From the previous remark, we can assume also that every such $ U^{\times} $ concentrates on non-measurables of $ V $ below $ \mu $.

We now define a measure $ W^* \in V\left[G\right] $ on $ \kappa $. If $ d''\Delta \notin W $, take $ W^* = W $. Assume otherwise. For every $ \delta < \kappa $, the set $ d^{-1}\{\delta\} $ is finite (see the proof of lemma \ref{Lemma: FS, U^times in normal}). Define a set $ \Delta^* \subseteq \Delta $,
$$ \Delta^* = \{ \xi \in \Delta \colon \xi = \max d^{-1}\{ d \left(\xi\right)\} \} $$
$ \Delta^*  $ is an unbounded subset of $ \Delta $, on which $ d $ is injective. Let--
$$ W^* = \{ X\subseteq \kappa \colon d'' \left(X\cap \Delta^*\right) \in W \} $$
$ W^* $ is a non-trivial, $ \kappa $-complete ultrafilter on $ \kappa $.

Let us review some of the properties of $ W^* $ in the case where $ d''\Delta\in W $. Clearly $ \Delta,\Delta^* \in W^* $. $ d $ is a Rudin-Keisler projection of $ W^* $ onto $ W $, and is injective on $ \Delta^*\in W^* $. Therefore $ W \equiv_{RK} W^* $, and in particular $ j_W = j_{W^*} $, namely $ W,W^* $ have the same ultrapower embedding from $ V\left[G\right] $ to $ M\left[H\right] $. In $ M\left[H\right] $, $ \kappa = \left[d\right]_{W^*} = j_{W^*}(d)\left( \left[Id\right]_{W^*} \right)  $, namely $ \kappa $ is the first element in the Prikry sequence of $ \left[Id\right]_{W^*} $. Finally, $ \Delta^*\in W^* $, and thus--
$$ \left[Id\right]_{W^*} = \max j_{W^*}(d)^{-1} \{ \kappa \} $$
so $ \kappa $ does not appear as first element in the Prikry sequence of any  measurable above $ \left[Id\right]_{W^*} $.

\begin{lemma} \label{Lemma: FS, W*cap V is a normal measure}
$ W^*\cap V\in V $ is a normal measure on $ \kappa $ in $ V $. 
\end{lemma}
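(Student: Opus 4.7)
The first assertion, $ W^*\cap V\in V $, will follow at once from Corollary \ref{Corollaery: Every measure in the generic extension extends a measure of V} applied to the $ \kappa $-complete ultrafilter $ W^* $; that corollary also gives that $ W^*\cap V $ is a $ \kappa $-complete ultrafilter on $ \kappa $ in $ V $. In the subcase $ d''\Delta\notin W $ one has $ W^* = W $, and Remark \ref{Remark: U^0 = Wcap V} already identifies $ W\cap V $ as a normal measure (of Mitchell order $ 0 $), so no further work is needed. The plan for the remaining case $ d''\Delta\in W $ is to verify normality of $ W^*\cap V $ directly: given a regressive $ f\in V $, $ f\colon \kappa\to \kappa $, I will produce an $ \alpha_0< \kappa $ with $ f^{-1}\{\alpha_0\}\in W^* $ (this set automatically lies in $ V $).

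First I would establish the intermediate claim that $ \{\xi\in \Delta^*\colon d(\xi)>f(\xi)\}\in W^* $, via a density argument in $ V $. Consider the set $ D\in V $ of $ p\in P $ such that, for every measurable $ \xi $ in the domain, $ p\restriction_\xi \Vdash \lusim{A}^{p}_\xi\subseteq \xi\setminus(f(\xi)+1) $. Since $ f(\xi)<\xi $ and each $ \lusim{W}_\xi $ is non-principal, removing the bounded segment $ f(\xi)+1 $ from every $ \lusim{A}^{p}_\xi $ produces a direct extension of $ p $ lying in $ D $, so $ D $ is $ \leq^* $-dense and a fortiori $ \leq $-dense. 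Pick $ p^*\in G\cap D $. By clause $ 2 $ of the definition of full support iteration, there is a finite $ b\subseteq \kappa $ outside of which $ \lusim{t}^{p^*}_\xi $ is forced trivial, so for each $ \xi\in \Delta\setminus b $ the generic picks $ d(\xi)\in \lusim{A}^{p^*}_\xi\subseteq \xi\setminus(f(\xi)+1) $, giving $ d(\xi)>f(\xi) $. Hence $ \{\xi\in \Delta^*\colon d(\xi)>f(\xi)\}\supseteq \Delta^*\setminus b $, a cofinite subset of $ \Delta^*\in W^* $, and so lies in $ W^* $.

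To finish, I would define $ F\colon \kappa\to \kappa $ in $ V[G] $ by $ F(\beta)=f(\xi) $ whenever $ \beta=d(\xi) $ for the unique $ \xi\in \Delta^* $, and $ F(\beta)=0 $ otherwise. Pushing the previous claim forward by $ d $ yields $ d''\{\xi\in \Delta^*\colon d(\xi)>f(\xi)\}\in W $, which is exactly the set of $ \beta\in d''\Delta^* $ on which $ F(\beta)=f(\xi)<d(\xi)=\beta $; thus $ F $ is regressive on a $ W $-measure one set. Normality of $ W $ in $ V[G] $ then gives $ \alpha_0<\kappa $ with $ F^{-1}\{\alpha_0\}\in W $, and intersecting with $ d''\Delta^*\in W $ unfolds to $ d''(f^{-1}\{\alpha_0\}\cap \Delta^*)\in W $, i.e.\ $ f^{-1}\{\alpha_0\}\in W^* $ by the defining formula of $ W^* $. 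Since $ f^{-1}\{\alpha_0\}\in V $, this delivers the desired element of $ W^*\cap V $.

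The main step to watch is the density claim. One has to verify that $ p\mapsto p^* $ is a legitimate, $ V $-definable direct extension (the shrunken sets remain in $ \lusim{W}_\xi $ because $ f(\xi)<\xi $ and $ \lusim{W}_\xi $ is a non-principal measure on $ \xi $), and that $ \leq^* $-density is enough for $ G $ to meet $ D $ via $ \leq^*\subseteq \leq $. Everything else reduces to a routine use of normality of $ W $ in $ V[G] $ together with the Rudin--Keisler projection $ d\colon W^*\to W $.
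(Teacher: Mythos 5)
Your proof is correct and takes essentially the same approach as the paper's: the identical density argument (shrinking each $\lusim{A}^{p}_{\xi}$ to $\xi\setminus\left(f(\xi)+1\right)$, at the cost of a finite set $b$ of coordinates with nontrivial stems) yields $f(\xi)<d(\xi)$ for $W^*$-almost all $\xi$. The only difference is cosmetic: the paper finishes by computing $\left[f\right]_{W^*}<\left[d\right]_{W^*}=\kappa$ in $M\left[H\right]$, whereas you push $f$ forward through the Rudin--Keisler projection $d$ and invoke normality of $W$ in $V\left[G\right]$ --- two phrasings of the same fact that $d$ projects $W^*$ onto the normal measure $W$.
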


\begin{proof}
By corollary \ref{Corollaery: Every measure in the generic extension extends a measure of V}, $ W^*\cap V \in V $. If $ d''\Delta\notin W $, then $ W^* = W $ is normal, and so is $ W^*\cap V $. Let us assume that $ d''\Delta\in W $. Assume that $ f\in V $ and $ \{ \xi < \kappa \colon f(\xi) < \xi \}\in W^*\cap V $. Denote this set by $ A $ and assume that $ A\subseteq \Delta $ (else, intersect).

For every $ p \in P_{\kappa} $, there exists a direct extension $ p^*\geq^* p $ and a finite subset $ b\subseteq \kappa $ such that, for every $ \xi \in A \setminus b $, 
$$ p^*\restriction_{\xi} \Vdash \lusim{A}^{p^*}_{\xi} \subseteq \xi\setminus \left(f(\xi)+1\right) \mbox{ and } t^{p^*}_{\xi} = \langle \rangle$$ 
thus, there exists such $ b\subseteq \kappa $ and $p^*\in G$. Then $ p^* $ forces that for every $ \xi\in A\setminus b $, $f(\xi) < d(\xi) $. But $ A \in W^* $, and thus $ A \setminus b\in W^* $, so, in $ M\left[H\right] $, $ \left[f\right]_{W^*} < \left[ d \right]_{W^*} = d\left( \left[Id\right]_{W^*} \right)= \kappa $. Therefore, there exists $ \beta<\kappa $ such that--
$$ \{ \xi<\kappa \colon f(\xi) = \beta \} \in W^* $$
but this set belongs to $ V $ (since $ f\in V $), and thus--
$$ \{ \xi<\kappa \colon f(\xi) = \beta \} \in W^*\cap V $$
as desired.
\end{proof}

\begin{lemma} \label{Lemma: FS, jW of p minus idW* belongs to H}
Let $ p\in G $ be a condition. Then $ \left(j_{W}(p)\right)^{-\left[ Id\right]_{W^*}} \in H $. In particular, if $ d''\Delta \notin W $, Then $ j_W(p)^{-\kappa} \in H $.
\end{lemma}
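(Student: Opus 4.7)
The plan is to represent $(j_W(p))^{-[Id]_{W^*}}$ as an element of the $W^*$-ultrapower of $V[G]$ and apply \L o\'s to reduce membership in $H$ to a combinatorial statement about $G$. Since $W \equiv_{RK} W^*$ via $d$, the ultrapowers by $W$ and by $W^*$ coincide; in particular $M[H] = \mbox{Ult}(V[G], W^*)$ and $j_W = j_{W^*}$. The function $\xi \mapsto p^{-\xi}$ lives in $V[G]$, and elementarity gives
$$[\xi \mapsto p^{-\xi}]_{W^*} = j_{W^*}(p)^{-[Id]_{W^*}} = (j_W(p))^{-[Id]_{W^*}}.$$
By the standard description of the generic in a forcing ultrapower (\L o\'s applied to the parameter $G$), $[f]_{W^*} \in H$ iff $\{\xi < \kappa : f(\xi) \in G\} \in W^*$. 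Thus the lemma reduces to proving $\{\xi < \kappa : p^{-\xi} \in G\} \in W^*$.

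To pin down when $p^{-\xi} \in G$, let $b \subseteq \kappa$ be the finite set outside of which $p(\mu)$ is a direct extension of the trivial condition. For $\xi > \max b$, the stems $t^p_\mu$ at measurables $\mu > \xi$ are empty, so $p^{-\xi}$ differs from $p$ only by shrinking each $A^p_\mu$ ($\mu > \xi$ measurable) to $A^p_\mu \setminus (\xi+1)$. Since $p \in G$ and Prikry sequences in $V[G]$ are increasing, the condition $p^{-\xi} \in G$ amounts to $d(\mu) > \xi$ for every measurable $\mu > \xi$. Hence it suffices to show $E := \{\xi < \kappa : \forall \mu \in \Delta,\ \mu > \xi \Rightarrow d(\mu) > \xi\} \in W^*$.

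By \L o\'s for $W^*$, $E \in W^*$ translates to the assertion in $M[H]$ that every measurable $\mu \in j_W(\Delta)$ with $\mu > [Id]_{W^*}$ satisfies $j_W(d)(\mu) > [Id]_{W^*}$. I would verify this by a density argument inside $M$: for any $q \in j_W(P)$, the direct extension $q^{-[Id]_{W^*}}$ forces precisely this statement, since it removes $[0, [Id]_{W^*}]$ from every measure-one set at measurables above $[Id]_{W^*}$. Thus the set of conditions forcing the assertion is $\leq^*$-dense in $j_W(P)$ and meets $H$ by genericity. The ``in particular'' clause is the special case $W = W^*$ with $[Id]_{W^*} = \kappa$. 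The main step to justify carefully is the \L o\'s characterization of membership in $H$; the remaining steps are routine unfoldings of the forcing definitions.
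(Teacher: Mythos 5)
Your opening reduction is sound, and it is essentially the framework the paper works in: since $H = j_W(G)$, \L o\'s gives that $[\xi \mapsto p^{-\xi}]_{W^*} = (j_W(p))^{-[Id]_{W^*}}$ belongs to $H$ if and only if $\{\xi<\kappa : p^{-\xi}\in G\}\in W^*$, and (past the finitely many coordinates where $p$ has a nontrivial stem) this amounts to showing that in $M[H]$ no measurable $\mu > [Id]_{W^*}$ has $d(\mu) \leq [Id]_{W^*}$. The genuine gap is in your final step, where you claim this assertion is forced by a $\leq^*$-dense set of conditions of $j_W(P)$ and invoke genericity of $H$. That is false: the operation $q \mapsto q^{-[Id]_{W^*}}$ only shrinks measure-one sets and does not touch stems. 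Starting from the weakest condition one can extend the stem at a measurable $\mu > [Id]_{W^*}$ by an ordinal $\leq [Id]_{W^*}$; below such a condition $q$, the condition $q^{-[Id]_{W^*}}$ still carries the bad stem and forces the \emph{negation} of your assertion, and no extension of $q$ forces it. So the set of conditions forcing the assertion is not dense --- only the set of conditions deciding it is --- and which way $H$ decides it is exactly the content of the lemma. The argument is therefore circular: it presupposes that the conditions of $H$ have no stems below $[Id]_{W^*}$ at measurables above $[Id]_{W^*}$, which is what is to be proved. (Note that $H$ is strictly larger than the filter generated by $j_W''G$, so one also cannot get this from the shape of the conditions $j_W(q)$, $q\in G$.)

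What the paper does instead is establish the assertion as a fact about $G$ in $V[G]$, transferred to $M[H]$ by \L o\'s, rather than by a density argument over $M$. It splits $d(\mu)\leq [Id]_{W^*}$ into three cases. First, $d(\mu)<\kappa$ is impossible because each fiber $d^{-1}\{\alpha\}$, $\alpha<\kappa$, is finite in $V[G]$ (a density argument over $P_\kappa$ in $V$), whereas \L o\'s would convert $d(\mu)=\alpha<\kappa$ for some $\mu>[Id]_{W^*}$ into infinitely many preimages of $\alpha$. Second, $d(\mu)=\kappa$ is impossible because $[Id]_{W^*} = \max d^{-1}\{\kappa\}$ in $M[H]$, which comes from the definition of $W^*$ through the set $\Delta^*$ of maximal elements of fibers. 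Third --- and this is the step your proposal has no counterpart for --- $d(\mu)\in(\kappa,[Id]_{W^*}]$ is excluded using the induction hypothesis, namely lemma \ref{Lemma: Every normal measure concentrates on the complement of the set of unions} at measurables $\mu<\kappa$: since no normal measure on $\mu$ in $V^{P_\mu}$ contains $\bigcup_{\xi\in\Delta\cap\mu}(d(\xi),\xi]$, any condition of $P_\kappa$ can be direct-extended so that for all measurables $\mu$ past a finite set it forces $d(\mu)\notin \bigcup_{\xi\in\Delta\cap\mu}(d(\xi),\xi]$; applying \L o\'s to the resulting statement of $V[G]$, with $\xi$ instantiated at $[Id]_{W^*}$, rules out $d(\mu)\in(\kappa,[Id]_{W^*}]$. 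Without this inductive input the lemma is not a routine unfolding of the forcing definitions.
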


\begin{proof}
$ j_W(p) \in H $ since $ p\in G $. In order to prove that $ \left(j_W(p)\right)^{-\left[Id\right]_{W^*}} \in H $, it suffices to prove that ordinals $ \leq \left[Id\right]_{W^*} $ do not appear in Prikry sequences of measurables above $ \left[Id\right]_{W^*} $ in $ M\left[H\right] $.

Clearly, for every $ \mu > \left[Id\right]_{W^*} $, $ d(\mu) \geq \kappa $. Otherwise, there exist $ \alpha< \kappa $ and $A\in W^* $ such that for every $ \xi\in A $, there is some $ \mu(\xi)> \xi $ with $ d\left( \mu(\xi) \right)= \alpha $. In particular, $ d^{-1}\{ \alpha \} $ is infinite, a contradiction.

Let us argue now that for every $ \mu> \left[Id\right]_{W^*} $, $ d(\mu) > \kappa $. If $ d''\Delta\notin W $ this is clear, since $ \kappa $ does not belong to the image of $ d $ in $ M\left[H\right] $. Thus, let us take care of the case where $ d''\Delta \in W $. In this case, recall that in $ M\left[H\right] $, $ \left[Id\right]_{W^*} = \max d^{-1} \{\kappa\} $. Thus, for every $ \mu> \left[Id\right]_{W^*} $, $ d(\mu) \neq \kappa $. 

Finally, let us argue that for every $ \mu> \left[Id\right]_{W^*} $, $ d(\mu) > \left[Id\right]_{W^*} $. It suffices to prove that for every such $ \mu $, $ d(\mu) \notin \left( \kappa , \left[Id\right]_{W^*} \right] $. If $ d''\Delta\notin W $ this is clear, since in this case $ W^* = W $ and $ \left[Id\right]_{W^*} = \kappa $. Let us assume that $ d''\Delta\in W $. We claim that in $ V\left[G\right] $, there exists a finite set $ b\subseteq \kappa $ such that for every measurable $ \mu>\sup(b) $, 
$$ d(\mu)\notin \bigcup_{ \xi\in \Delta\cap \mu }\left( d(\xi), \xi \right] $$
We prove this by a density argument. Fix a condition $ p\in P_{\kappa} $. Let $ b\subseteq \kappa $ be the set of coordinates such that for every $ \mu> \sup(b) $, $ p\restriction_{\mu}\Vdash p(\mu)\geq^* 0 $. We extend $ p $ to $ p^*\geq^* p $ such that, for every measurable $ \mu> \sup(b) $, 
$$p^*\restriction_{\mu} \Vdash \lusim{d}(\mu) \notin \bigcup_{\xi\in \Delta \cap \mu} \left( d(\xi), \xi  \right] $$
this is possible since, by the induction hypothesis, the weakest condition in $ P_{\mu} $ forces that--
$$ \bigcup_{\xi\in \Delta\cap \mu}  \left( d(\xi), \xi \right]$$
does not belong to any normal measure in $ V^{P_{\mu}} $. 
Pick such $ p^*\in G $. Then in $ V\left[G\right] $, for every $ \mu>\sup(b) $, 
$$ d(\mu)\notin \bigcup_{ \xi\in \Delta\cap \mu }\left( d(\xi), \xi \right] $$
This is true for every $ \mu\in \Delta \setminus \sup(b)\in W^* $. Thus, in $ M\left[H\right] $, for every $ \mu> \left[Id\right]_{W^*} $, $ d\left( \mu \right)\notin \left( \kappa, \left[Id\right]_{W^*} \right] $.
\end{proof}

We are now ready to prove theorem \ref{Thm: Characterization of normal measures}. Recall that the proof is inductive, so we assume that for every $ \alpha\in \Delta $, theorem \ref{Thm: Characterization of normal measures} and lemma \ref{Lemma: Every normal measure concentrates on the complement of the set of unions} are true in $ V^{P_{\alpha}} $; we proceed and prove each of  them in $ V\left[G\right] $.

\begin{proof} [Proof of theorem \ref{Thm: Characterization of normal measures}]
Let $ W\in V\left[G\right] $ be a normal measure on $ \kappa $. Let $ U = W^* \cap V $. Let $ k\colon M_U \to M $ be the embedding which satisfies, for every $ f\in V $, 
$$ k\left( \left[f\right]_U \right) = \left[f\right]_{W^*} $$
It's not hard to verify that $ k $ is elementary and $j_{W}\restriction_{V} =  j_{W^*}\restriction_{V} = k\circ j_U $. Moreover, $ \mbox{crit}\left(k \right) > \kappa $ if and only if $ d''\Delta\notin W $. Let us argue now that $ W = U^{\times} $.

Assume first that $ d''\Delta\notin W $. Then $ U $ has Mitchell order $ 0 $. We argue that $ W = U^{\times} = U^* $. It suffices to prove that $ U^* \subseteq W $. Let $ X\in U^* $, and assume that $ \lusim{X}\in V $  is a $ P_{\kappa} $-name such that $ \left( \lusim{X} \right)_G = X $. Then for some $ p\in G $, $$ \left(j_U(p)\right)^{-\kappa} \Vdash \check{\kappa}\in j_U\left( \lusim{X} \right) $$
By applying $ k\colon M_U \to M $,
$$ \left(j_W(p)\right)^{-\kappa} \Vdash \check{\kappa}\in j_W\left( \lusim{X} \right) $$
where we used that fact that $ k\left( \kappa \right) = \kappa $. Since $ p\in G $ and $ d''\Delta\notin W $, $ \left(j_W(p)\right)^{-\kappa}\in H $, and thus, in $ M\left[H\right] $, $ \kappa\in j_W\left( X \right) $, as desired.

Assume now that $ d''\Delta \in W $. Then $ o\left(U\right)>0 $. In this case, $ k\left( \kappa \right) = \left[Id\right]_{W^*} > \kappa $. 
Let us prove that $ W = U^{\times} $. Since both are ultrafilters in $ V\left[G\right] $, it suffices to prove that $ U^{\times} \subseteq W $. Assume that $ X\in U^{\times} $, and let $ \lusim{X}\in V $ be such that  $ \left( \lusim{X} \right)_{G} = X $. Let $ p\in G $ be a condition such that-- 
$$ \left(j_U(p)\right)^{-\kappa} \Vdash \check{\kappa} \in j_U\left( \lusim{d}^{-1} \lusim{X} \right) $$ By applying $ k\colon M_U \to M $,
$$ \left(j_W(p)\right)^{-\left[Id\right]_{W^*}} \Vdash \lusim{d}\left(\left[Id\right]_{W^*}\right) \in j_W\left( \lusim{X} \right) $$
but $ \left(j_W(p)\right)^{-\left[Id\right]_{W^*}} \in H $ by lemma \ref{Lemma: FS, jW of p minus idW* belongs to H}, and thus, in $ M\left[H\right] $,
$$\kappa = d\left( \left[Id\right]_{W^*}\right) \in \left(j_{W} \left( \lusim{X} \right) \right)_{H} = j_{W}\left(  X \right)$$
so $ X\in W $, as desired.

Finally, assume that $ U\neq U' $ are normal measures in $ V $. If both have Mitchell order $ 0 $, then $ U^*\neq {U'}^{*} $ and thus $ U^{\times} \neq {U'}^{\times} $. If exactly one of them, say $ U $, has Mitchell order $ 0 $, then $ d''\Delta \in {U'}^{\times}\setminus U^{\times} $. Thus, let us consider the case where both have Mitchell order higher than $ 0 $. Let $ A\in U $, $ B\in U' $ be disjoint sets. In $ V\left[G\right] $, let $ A^* = A\cap \Delta^*\in U^{*} $, $ B^* = B\cap \Delta^*\in {U'}^* $. Then $ d''A^*\in U^{\times}, \  d''B^*\in {U'}^{\times}$, and $ d''A^*\cap d''B^* =\emptyset$ since $ d $ is injective on $ \Delta^* $. Thus $ U^{\times}\neq {U'}^{\times} $. \end{proof}

The embedding  $ k\colon M_{U} \to M $ from the above proof will be used in the next sections to analyze the structure of $ j_{W}\restriction_{V} $. For now, let us note that $ \mbox{crit}(k) = \kappa $ if and only if $ d''\Delta\notin W $.

\begin{proof} [Proof of lemma \ref{Lemma: Every normal measure concentrates on the complement of the set of unions}]
	Let $ U $ be a normal measure in $ V $ such that $ W = U^{\times} $. If $ U $ has Mitchell order $ 0 $, then for every $ p\in G $,
	$$ \left(j_U(p)\right)^{-\kappa} \Vdash \check{\kappa}\notin \bigcup_{\alpha\in j_U(\Delta)} {   \left(  \lusim{d}(\alpha), \alpha \right]   }  $$
	and thus--
	$$ \kappa\setminus \left(  \bigcup_{\alpha\in \Delta} \left( d(\alpha), \alpha \right] \right) \in U^* = W $$
	Let us assume that $ U $ has Mitchell order higher than $ 0 $. Then it suffices to prove that--
	$$ \kappa\setminus \left(  \bigcup_{\alpha\in \Delta} d{^{-1}}\left( d(\alpha), \alpha \right] \right) \in U^* $$
	this holds if and only if there exists $ p\in G $ such that, in $ M_{U} $,
	$$ \left(j_U(p)\right)^{-\kappa}\Vdash \lusim{d}(\kappa)\notin \bigcup_{\alpha\in j_U(\Delta)} {  \left( d(\alpha), \alpha \right] } $$
	%Assume that $ \lusim{U}^{\times}_{\kappa} = j_U\left(  \langle \lusim{U}^{\times}_{\alpha} \colon \alpha\in \Delta \rangle  \right)(\kappa) $ is the $ P_{\kappa} $-name for the measure used at stage $ \kappa $ in the iteration $ j_U(P) $.
	By induction hypothesis, for every measurable $ \mu<\kappa $, 
	$$ \bigcup_{\alpha\in \Delta\cap \mu}  \left( d(\alpha), \alpha \right] \notin U^{\times}_{\mu} $$
	and this is forced by the weakest condition in $ P_{\mu} $. Given an arbitrary condition $ p\in P_{\kappa} $, there exists $ p^*\geq^* p $ such that for every measurable $ \mu\in \Delta $,
	$$ p^*\restriction_{\mu} \Vdash \lusim{A}^{p^*}_{\mu}\subseteq \mu\setminus \bigcup_{\alpha\in \Delta\cap \mu} \left( d(\alpha) , \alpha \right]  $$
	and thus there exists such $ p^* \in G $. Then--
	$$ j_U(p^*) \Vdash \lusim{d}(\kappa)\notin \bigcup_{\alpha\in j_U\left( \Delta \right)\cap \kappa } \left( d(\alpha), \alpha \right] $$
	and thus--
	$$ \left(j_U(p^*)\right)^{-\kappa} \Vdash \lusim{d}(\kappa)\notin \bigcup_{\alpha\in j_U\left( \Delta \right) } \left( d(\alpha), \alpha \right] $$
	as desired.
\end{proof}

\section{The Structure of $ j_W\restriction_{V} $}

Given a normal measure $ W\in V\left[G\right] $ on $ \kappa $, let $ j_W\colon V\left[G\right] \to M\left[H\right]$ be the ultrapower embedding, and let $ U\in V $ be a normal measure on $ \kappa $ such that $ W = U^{\times} $. Out main goal in this section will be to factor $ j_W\restriction_{V} $ to an iterated ultrapower of $ V $. 

We divide this section to several subsections. In the first subsection, we isolate a natural number $ m<\omega $ and a sequence $  U^0 \vartriangleleft U^{1} \vartriangleleft\ldots \vartriangleleft U^{m} = U $ of measures on $ \kappa $ in $ V $, all of them participate in the iteration $ j_W\restriction_{V} $ (when taken in decreasing order with respect to the Mitchell order). In the second subsection, we describe in detail the structure of $ j_W\restriction_{V} $ and sketch the main steps in the proof. We will also demonstrate the structure of $ j_W\restriction_{V} $ in several simple cases. In the third subsection, we develop a generalization of the Fusion lemma. This generalization will be applied in the fourth subsection, where we complete the proof of theorem \ref{Thm: Structure of j_Wrestriction V}, provide a sufficient condition for the definability of $ j_W\restriction_{V} $ in $ V $, and describe the Prikry sequences added by $ H $ for measurables of $ M $ above $ \kappa $. For instance, we will prove that each measure $ U^{j} $, for $ 0\leq j < m $, is iterated in $ j_{W}\restriction_{V} $ $ \omega $-many times, producing Prikry sequences for each of the measurables in the finite set $ d^{-1}\{ \kappa \} $

The value of $ m<\omega $ and the exact measures participating in the sequence $  U^0 \vartriangleleft U^{1} \vartriangleleft\ldots \vartriangleleft U^{m} = U $ depend on $ W $ and on the measures in the sequence $\langle W_{\xi} \colon \xi\in \Delta \rangle\in V\left[G\right]$, namely the measures used in $ G $ to singularize the measurables of $ \Delta $. For every $ \xi\in \Delta $, denote by $ U_{\xi}\in V $ the measure on $ \xi $ such that $ W_{\xi} = U^{\times}_{\xi} $. By induction, for every $ \xi\in \Delta $ there exists a natural number $ m_{\xi} $ and a sequence $ U^{0}_{\xi}\vartriangleleft \ldots \vartriangleleft U^{m_{\xi}-1}_{\xi} \vartriangleleft  U^{m_{\xi}}_{\xi} = U_{\xi} $ of normal measures on $ \xi $ in $ V $. The identity of the measures $ \langle U^{i}_{\xi} \colon \xi\in \Delta, \ j\leq m_{\xi} \rangle $ determines what are the measures participating in the iteration of $ j_W\restriction_{V} $, and whether or not this iteration is definable in $ V $.

\subsection{The system $ U^0 \vartriangleleft U^{1} \vartriangleleft\ldots \vartriangleleft U^{m} $ associated with $ W $}

Denote $ m = m(W) = \left|  d^{-1}\{ \kappa \} \right| $ as computed in $ M\left[H\right] $. Namely, $ m(W) < \omega $ is the number of occurrences of $ \kappa $ as a first element in Prikry sequences added to measurables in $ M $. Possibly $ m(W) =0  $, in the case where $ d''\Delta\notin W $. Define, for every $  i\geq 1 $, the set $ \Delta_i \subseteq \Delta $:
\begin{align*}
	\Delta_{i} =& \{ \xi \in \Delta \colon  \left| \xi \cap d^{-1}\{d(\xi) \} \right| = i-1 \} = \\ &\{ \xi\in \Delta \colon \xi \mbox{ is the } i\mbox{-th element in } d^{-1}\{ d(\xi) \}  \} 
\end{align*}
For $ i=0 $, let $ \Delta_{0} = \kappa\setminus \Delta $, the set of non-measurables below $ \kappa $. We state some straightforward properties:
\begin{claim} \mbox{ }
	\begin{enumerate}
		\item $ \{  \xi<\kappa \colon \xi \mbox{ appears as first element in } m \mbox{ Prikry sequences below } \kappa \}\in W $.
		\item For every $ \xi\in \Delta $, if $m\left( W_{\xi} \right) = i-1 $ for some $ 1\leq i<\omega $, then $ \xi $ is the $ i$-th element in $ d^{-1}\{ d(\xi) \} $.
		\item $d''\Delta_1 \supseteq d''\Delta_2 \supseteq \ldots \supseteq d''\Delta_{n} \supseteq \ldots $ ($n<\omega$).
		\item $ m $ is the maximal index such that $ d''\Delta_{m}\in W $.
	\end{enumerate}
\end{claim}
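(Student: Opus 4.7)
The plan is to prove the four items in order. Items 1 and 2 reduce to applications of Los' theorem (applied to $W$ directly, and inductively to $W_\xi$ at each $\xi<\kappa$), while items 3 and 4 follow from a uniform description of each $\Delta_i$ as a fiber-size set for $d$.

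For item 1, define $g \colon \kappa \to \omega$ in $V[G]$ by $g(\xi) = |d^{-1}\{\xi\}|$; this is well-defined since every fiber of $d$ is finite, a density argument already recorded in the proof of lemma \ref{Lemma: FS, U^times in normal}. In $M[H]$ one has $j_W(g)(\kappa) = |d^{-1}\{\kappa\}| = m$ by the very definition of $m = m(W)$. Since $\kappa = [\mathrm{Id}]_W$, Los' theorem yields $\{\xi<\kappa \colon g(\xi) = m\} \in W$, which is precisely the set named in item 1.

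For item 2, fix $\xi\in \Delta$ with $m(W_\xi) = i-1$. Let $g_\xi(\nu) = |d^{-1}\{\nu\}|$ denote the analog of $g$ in $V[G\cap P_\xi]$. By the inductive hypothesis, item 1 applied to $W_\xi$ in $V[G\cap P_\xi]$ yields $\{\nu<\xi \colon g_\xi(\nu) = m(W_\xi)\} \in W_\xi$. Since $d\restriction \xi$ is already definable from $G\cap P_\xi$, the function $g_\xi$ coincides with its counterpart computed in $V[G]$. The first element $d(\xi)$ of the Prikry sequence at $\xi$ is drawn from every $W_\xi$-measure-one set, so $|\{\mu\in \Delta\cap \xi \colon d(\mu) = d(\xi)\}| = m(W_\xi) = i-1$ in $V[G]$, which means that $\xi$ is the $i$-th element of $d^{-1}\{d(\xi)\}$ in increasing order.

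Items 3 and 4 follow from the observation that $d''\Delta_i = \{\alpha<\kappa \colon |d^{-1}\{\alpha\}| \geq i\}$: any $\xi\in \Delta_i$ witnesses $|d^{-1}\{d(\xi)\}|\geq i$, and conversely, if $|d^{-1}\{\alpha\}|\geq i$ then the $i$-th element of this fiber lies in $\Delta_i$ and maps to $\alpha$. Item 3 is immediate. For item 4, the set $X = \{\xi<\kappa \colon |d^{-1}\{\xi\}| = m\}$ from item 1 lies in $W$; by the above description, $X\subseteq d''\Delta_m$ and $X\cap d''\Delta_{m+1} = \emptyset$, giving $d''\Delta_m \in W$ and $d''\Delta_{m+1}\notin W$, hence the maximality of $m$. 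The main subtlety is the coherence step in item 2 between counts computed at stage $\xi$ and in the full extension, which is handled by the remark that $d\restriction \xi$ depends only on $G\cap P_\xi$.
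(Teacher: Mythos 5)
The paper offers no proof of this claim at all (it is introduced with ``We state some straightforward properties''), so the only question is whether your argument is correct. Items 1, 3 and 4 are fine: item 1 is exactly the \L o\'{s} argument, using the finiteness of the fibers of $ d $ established in the proof of Lemma \ref{Lemma: FS, U^times in normal} to make $ g(\xi) = \left| d^{-1}\{\xi\} \right| $ well defined, and the identity $ d''\Delta_i = \{ \alpha<\kappa \colon \left| d^{-1}\{\alpha\} \right| \geq i \} $ does yield items 3 and 4 immediately. (A small caveat in item 4: the inclusion $ X\subseteq d''\Delta_m $ only makes sense for $ m\geq 1 $; when $ m=0 $ one only needs the disjointness half, since $ X $ is then disjoint from every $ d''\Delta_i $, $ i\geq 1 $.)

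Item 2, however, has a genuine gap. The assertion ``the first element $ d(\xi) $ of the Prikry sequence at $ \xi $ is drawn from every $ W_\xi $-measure-one set'' is false: genericity places a \emph{tail} of the Prikry sequence inside each measure-one set of $ V^{P_\xi} $, but the first element is decided by the stem of some condition in $ G $, and stems are arbitrary finite sequences (trivially, $ \xi\setminus \left( d(\xi)+1 \right)\in W_\xi $ and omits $ d(\xi) $). For the same reason the statement you set out to prove --- item 2 for \emph{every} $ \xi\in\Delta $ --- is not provable: a condition may carry the stem $ \langle 5 \rangle $ at two coordinates $ \mu<\xi $, and any generic containing it satisfies $ d(\mu)=d(\xi)=5 $ even if $ m\left( W_\xi \right)=0 $. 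What is true, and what the rest of the section actually uses, is the co-finite version: item 2 holds for all $ \xi\in\Delta $ outside a finite set. To prove it, let $ \lusim{B}_\xi $ be a $ P_\xi $-name for your set $ \{ \nu<\xi \colon g_\xi(\nu) = m\left( W_\xi \right) \} $, forced to lie in $ \lusim{W}_\xi $ by the inductive item 1. Given any $ p\in P_\kappa $, the direct extension $ p^*\geq^* p $ obtained by shrinking $ \lusim{A}^{p}_{\xi} $ to $ \lusim{A}^{p}_{\xi}\cap \lusim{B}_\xi $ simultaneously at every $ \xi\in\Delta $ outside the finite set $ b $ of coordinates where $ p $ has a nonempty stem is again a condition, so some such $ p^* $ lies in $ G $. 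For $ \xi\notin b $, any $ q\in G $ with nonempty stem at $ \xi $ is compatible with $ p^* $ inside $ G $, and a common extension forces its stem at $ \xi $ to be contained in $ \lusim{A}^{p^*}_{\xi}\subseteq \lusim{B}_\xi $; hence $ d(\xi)\in \left( \lusim{B}_\xi \right)_{G_\xi} $, which is item 2 at $ \xi $. Note that this is a density argument over the whole iteration, of the same kind the paper uses in Lemma \ref{Lemma: FS, jW of p minus idW* belongs to H} and in Example 2 --- both of which, tellingly, only conclude ``for all but finitely many $ \xi $''. The coherence of $ g_\xi $ between $ V^{P_\xi} $ and $ V\left[G\right] $, which you single out as the main subtlety, is indeed unproblematic; the real difficulty is exactly the step you passed over.
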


Note that $ d $ is injective on each of the sets $ \Delta_i $.
Let us define, for every $ 1\leq i\leq m $, a measure $ W^i $ as follows:
$$ W^i = \{ X\subseteq \kappa \colon  d''\left( X\cap \Delta_{i}\right) \in W \} $$ 
In particular, $ W^m$ is the measure $ W^* $ defined in the previous section. Since $ d $ is injective on each set $ \Delta_i $, 
$$ W\equiv_{RK} W^1 \equiv_{RK} W^2 \equiv_{RK} \ldots \equiv_{RK} W^m = W^* $$
For every $1\leq  i\neq j \leq m $ , let $\pi_{i,j} \colon \Delta_i \to \Delta_j$ be the function which maps each $ \xi\in \Delta_i $ to the $ j $-th element in $ d^{-1}(d\left(\xi\right)) $ (which typically exists. if not, set $ \pi_{i,j}(\xi) = 0  $). Then $ \pi_{i,j} $, which  projects $ W^i $ onto $ W^j $, is injective on the set-- 
$$ \{ \xi \in \Delta_i \colon \left| d^{-1}\left( d(\xi) \right) \right| = m \}\in W^i $$
Finally, denote, for every $ 1\leq i\leq m $, $ U^{i} = W^i \cap V \in V$, and note that $  U = U^{m} $.\\
For sake of completeness, let us denote $ W^{0} = W $ and $ U^{0} = W\cap V $. By remark \ref{Remark: U^0 = Wcap V}, $ U^{0} $ concentrates on $\Delta_0 = \kappa\setminus \Delta$. We begin by studying the properties of $ U^{0} $.

\begin{lemma} \label{Lemma: FS, U0 below U}
$ U^{0}\trianglelefteq U $ is a normal measure of Mitchell order $ 0 $ in $ V $. $ U^{0} = U $ if and only if $ U $ already has Mitchell order $ 0 $ in $ V $. Finally, if $ U $ has Mitchell order above $ 0 $ in $ V $, then $ U^{0} = \{ A\subseteq \kappa \colon \kappa\in k\left( A \right) \}\cap M_U $.
\end{lemma}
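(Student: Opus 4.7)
I would prove the three assertions of the lemma in order.

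\emph{Part 1: $U^0 = W \cap V$ is a normal measure of Mitchell order $0$ in $V$.}  Since $W$ is a $\kappa$-complete ultrafilter in $V[G]$, Corollary \ref{Corollaery: Every measure in the generic extension extends a measure of V} gives $U^0 \in V$, and normality descends from $W$.  Remark \ref{Remark: U^0 = Wcap V} yields $\Delta \notin U^0$, since otherwise $\kappa$ would be measurable in $M$ and hence singular in $M[H] \subseteq V[G]$.  As $U^0$ concentrates on non-measurables, $\kappa$ is non-measurable in $\mathrm{Ult}(V, U^0)$, so no normal measure lies $\vartriangleleft U^0$; i.e.\ $o(U^0) = 0$.

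\emph{Part 2: $U^0 = U$ if and only if $o(U) = 0$.}  If $o(U) = 0$, the claim preceding the lemma gives $W = U^\times = U^*$ and $d''\Delta \notin W$; the construction of $W^*$ in the previous section then forces $W^* = W$, so $U = W^* \cap V = W \cap V = U^0$.  Conversely, if $o(U) > 0$ then $d''\Delta \in W$, and since $d''\Delta = d''\Delta^*$ this yields $\Delta^* \in W^*$ and hence $\Delta \in W^* \cap V = U$; since $\Delta \notin U^0$ by Part 1, we conclude $U \neq U^0$.

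\emph{Part 3: the identification $U^0 = \{ A \subseteq \kappa : \kappa \in k(A) \} \cap M_U$ when $o(U) > 0$.}  In this case $\mathrm{crit}(k) = \kappa$ and $k(\kappa) = [\mathrm{Id}]_{W^*} > \kappa$.  Using $\mathcal{P}(\kappa)^V = \mathcal{P}(\kappa)^{M_U}$ (closure of $M_U$ under $\kappa$-sequences), every $A \in \mathcal{P}(\kappa)^{M_U}$ satisfies $j_U(A) \cap \kappa = A$, so by elementarity $k(A) = k(j_U(A)) \cap k(\kappa)$; and $\kappa < k(\kappa)$ gives
\[ \kappa \in k(A) \iff \kappa \in k(j_U(A)). \]
Combined with $j_W \restriction_{V} = k \circ j_U$:
\[ A \in U^0 \iff \kappa \in j_W(A) \iff \kappa \in k(j_U(A)) \iff \kappa \in k(A), \]
which is the stated identification.

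\emph{Main obstacle.}  The identification in Part 3 still needs to be supplemented with the verification that $U^0 \in M_U$ (equivalently $U^0 \vartriangleleft U$).  My approach would be to factor $k$ as $k_1 \circ i$, where $i$ is the externally-defined ultrapower of $M_U$ by the derived measure $U^0$ and $k_1$ has critical point strictly above $\kappa$; combined with the closure of $M_U$ under $\kappa$-sequences and techniques in the spirit of \cite{RestElm} invoked already in the proof of Theorem \ref{Thm: Characterization of normal measures}, this should provide the required internal access to $U^0$ within $M_U$.
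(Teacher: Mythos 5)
Your Parts 1 and 2 are correct, and your Part 3 computation --- using $A = j_U(A)\cap\kappa$ inside $M_U$, elementarity of $k$, normality of $W$, and $j_W\restriction_{V} = k\circ j_U$ to conclude $A\in W \iff \kappa\in k(A)$ --- is a correct and in fact more direct derivation of the displayed identity than the paper's own density argument for that clause. The genuine gap is precisely the point you defer as the ``main obstacle'': the assertion $U^{0}\trianglelefteq U$, which for $o(U)>0$ means $U^{0}\vartriangleleft U$, i.e.\ $U^{0}\in M_U$. This is the core content of the lemma, and your proposed remedy --- factoring $k$ as $k_1\circ i$ through $\mathrm{Ult}(M_U,U^{0})$ with $\mathrm{crit}(k_1)>\kappa$, plus $\kappa$-closure of $M_U$ --- cannot establish it: those ingredients are present in situations where the conclusion fails. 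For instance, for any normal measure $W_0$ on $\kappa$, the embedding $j_{W_0}\restriction_{M_{W_0}}$ is elementary on $M_{W_0}$ with critical point $\kappa$, it factors in exactly this way through the ultrapower of $M_{W_0}$ by its derived measure (with factor map of critical point above $\kappa$), and $M_{W_0}$ is closed under $\kappa$-sequences; yet the derived measure is $W_0$ itself, which is never an element of $M_{W_0}$. Likewise, the tools of \cite{RestElm} (corollary \ref{Corollaery: Every measure in the generic extension extends a measure of V}) give only $U^{0}=W\cap V\in V$, not $U^{0}\in M_U$; applying that corollary over $M_U$ would require knowing $W\in M_U\left[G\right]$, which is not available. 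So abstract embedding algebra plus closure is genuinely insufficient here.

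The idea you are missing is forcing-theoretic and uses the induction hypothesis on the stage measures. Since each $W_\alpha\cap V$ belongs to $V$, the sequence of $P_\alpha$-names $\langle \lusim{U}^{0}_{\alpha}\colon \alpha\in\Delta\rangle$ for these restrictions lies in $V$; applying $j_U$ and evaluating at $\kappa$ yields a $P$-name $\lusim{U}^{0}=j_U\left(\langle \lusim{U}^{0}_{\alpha}\colon\alpha\in\Delta\rangle\right)(\kappa)\in M_U$ (using $j_U(P)\restriction_{\kappa}=P$), which by elementarity is forced over $M_U$ to denote an element of the ground model $M_U$, namely a normal measure on $\kappa$ of Mitchell order $0$. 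Since $G$ is also generic over $M_U$, the evaluation $\left(\lusim{U}^{0}\right)_G$ is literally a member of $M_U$, hence Mitchell-below $U$. One then identifies this internal object with $W\cap V$: if $A=\left[\alpha\mapsto A(\alpha)\right]_U$ is forced into $\lusim{U}^{0}$, then $A(\alpha)$ is forced into $W_\alpha$ for $U$-many $\alpha$, and lemma \ref{Lemma: FS, fusion lemma} produces a direct extension in $G$ forcing $d(\alpha)\in A(\alpha)$ for all but finitely many $\alpha$; hence $d^{-1}(A)\in U^{*}$, so $A\in U^{\times}=W$, and since both objects are ultrafilters on $\mathcal{P}(\kappa)^{M_U}=\mathcal{P}(\kappa)^{V}$ they coincide. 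Without this (or an equivalent) construction of $U^{0}$ as the interpretation of an $M_U$-name for a ground-model object, the first assertion of the lemma remains unproved.
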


We will need the following claim:

\begin{claim} \label{Lemma: FS, MUG and VG have the same subsets of kappa}
	Let $ U\in V $ be a measure on $ \kappa $. Then $ M_U\left[G\right] $ and $ V\left[G\right] $ have the same subsets of $ \kappa $.
\end{claim}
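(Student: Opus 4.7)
The containment $M_U[G]\cap V_{\kappa+1}\subseteq V[G]\cap V_{\kappa+1}$ is immediate from $M_U\subseteq V$, so the real content is to show that every $A\subseteq \kappa$ lying in $V[G]$ already belongs to $M_U[G]$. My plan is, for such $A$, to exhibit a $P_\kappa$-name $\lusim{A}$ of $A$ which already lives in $M_U$; then $A=(\lusim{A})_G\in M_U[G]$.

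First I would verify that $P_\kappa\subseteq M_U$ and $P_\kappa\in M_U$. Since $U$ is a $\kappa$-complete measure on $\kappa$, $M_U$ contains $V_\kappa$ and is closed under $\kappa$-sequences in $V$. By $\mbox{GCH}_{\leq\kappa}$ and induction on the length of the iteration, $|P_\alpha|<\kappa$ for every $\alpha<\kappa$, so the initial segment $\langle P_\alpha,\lusim{Q}_\beta\colon \alpha,\beta<\kappa\rangle$ sits inside $V_\kappa\subseteq M_U$; and the full-support limit defining $P_\kappa$ from this data is absolute, so $P_\kappa\in M_U$. Moreover, each condition $p\in P_\kappa$ is a function on $\kappa$ whose values are names of hereditary size $<\kappa$, hence a $\kappa$-sequence into $V_\kappa\subseteq M_U$, and so $p\in M_U$ by $\kappa$-closure; thus $P_\kappa\subseteq M_U$, and $G$ is $M_U$-generic for $P_\kappa$.

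The second and main step is a standard nice-name argument using the $\kappa^+$-c.c.\ of $P_\kappa$ (lemma \ref{Lemma: FS, P preserves cardinals}). Fix a nice name $\lusim{A}=\bigcup_{\alpha<\kappa}\{\check\alpha\}\times A_\alpha\in V$ for $A$, where each $A_\alpha$ is a maximal antichain of conditions deciding ``$\check\alpha\in \lusim{A}$''. The $\kappa^+$-c.c.\ gives $|A_\alpha|\leq \kappa$; enumerating $A_\alpha$ in $V$ presents it as a $\kappa$-sequence into $M_U$, whence $A_\alpha\in M_U$ by $\kappa$-closure. Then the map $\alpha\mapsto A_\alpha$ is itself a $\kappa$-sequence in $V$ into $M_U$, so by closure again it lies in $M_U$, and $\lusim{A}$ is definable from it. I do not anticipate any genuine obstacle: this is the standard nice-name-plus-closure routine, and the only mildly nontrivial ingredient is that $P_\kappa$ is coded in $M_U$, which in turn is controlled by $\mbox{GCH}_{\leq \kappa}$ and the hereditary smallness of the tail of the iteration below $\kappa$.
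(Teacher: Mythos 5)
Your proof is correct, but it follows a genuinely different route from the paper's. The paper does not use nice names or the chain condition at all: given a name $ \lusim{A}\in V $ for $ A $, it applies the Fusion Lemma (lemma \ref{Lemma: FS, fusion lemma}) to the $ \leq^* $-dense open sets $ e(\alpha) $ of conditions in $ P\setminus\alpha $ deciding $ \lusim{A}\cap\alpha $ over $ V^{P_\alpha} $, obtaining a single $ p\in G $ below which, for every non-measurable $ \alpha<\kappa $, the restriction $ \lusim{A}\cap\alpha $ is reduced to a $ P_{\alpha} $-name $ \lusim{A}_{\alpha}\in V_{\kappa} $; the $ \kappa $-sequence $ \langle \lusim{A}_{\alpha}\colon \alpha<\kappa \rangle $ then lies in $ M_U $ and $ A=\bigcup_{\alpha<\kappa}\left(\lusim{A}_{\alpha}\right)_{G_{\alpha}}\in M_U\left[G\right] $. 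You instead combine the $ \kappa^{+} $-c.c.\ (implicit in the proof of lemma \ref{Lemma: FS, P preserves cardinals}) with nice names and the closure of $ M_U $ under $ \kappa $-sequences. Both arguments share two ingredients: the closure of $ M_U $ under $ \kappa $-sequences, and the (tacit) facts that $ P_{\kappa}\subseteq M_U $ with the same order and that $ G $ is $ M_U $-generic — the latter rests on the convention that conditions are built from canonical names of hereditary size $ <\kappa $, which is exactly what makes $ \left|P_{\delta}\right|=\delta^{+} $ under $ \mbox{GCH}_{\leq\kappa} $, so your explicit flagging of this point is appropriate. What each approach buys: yours is more elementary and more general, since it works for any iteration whose conditions are $ \kappa $-sequences of elements of $ V_{\kappa} $ and which satisfies the $ \kappa^{+} $-c.c., making no use of the Prikry property or direct extensions; the paper's fusion argument, by contrast, needs no chain condition and produces finer local information — a single condition in $ G $ that decides each $ \lusim{A}\cap\alpha $ by a direct extension above $ \alpha $ — which is the form of control the paper reuses repeatedly in the later sections.
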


\begin{proof}
	Assume that $ \lusim{A}\in V $ is a $ P$-name for a subset of $ \kappa $. For every non-measurable $ \alpha<\kappa $, let $ e(\alpha) $ be the $ \leq^* $-dense open subset of $ P\setminus \alpha $ which decides the value of $ \lusim{A}\cap \alpha $ over $ V^{P_{\alpha}} $. By lemma \ref{Lemma: FS, fusion lemma}, there exists $ p\in G $ such that for every non-measurable $ \alpha<\kappa $,
	$$ p\restriction_{\alpha}\Vdash \left( p\setminus \alpha \right)^{-\alpha} \in e(\alpha) $$
	For every such $ \alpha $, let $ \lusim{A}_{\alpha}\in V_{\kappa} $ be a $ P_{\alpha} $-name such that-- 
	$$ p\restriction_{\alpha} \Vdash p\setminus \alpha \Vdash \lusim{A}\cap \alpha = \lusim{A}_{\alpha} $$
	The sequence $ \langle \lusim{A}_{\alpha} \colon \alpha<\kappa \rangle $ belongs to $ M_U $. Thus, $ A = \left( \lusim{A} \right)_G \in  M_{U}\left[G\right] $, since--
	$$ A = \bigcup_{\alpha<\kappa}  \left( \lusim{A}_{\alpha} \right)_{G_{\alpha}} $$
\end{proof}

\begin{proof}[Proof of Lemma \ref{Lemma: FS, U0 below U}]
	If $ U $ has Mitchell order $ 0 $ in $ V $, then $ W = U^{\times} = U^{*} $ and thus $ U^{0} = W\cap V = U $. Let us assume that $ U $ has Mitchell order higher than $ 0 $, namely $ \Delta\in U $.
	
	We provide a definition of $ U^{0} $ which is different from the definition $ U^{0} = W\cap V $ as in the statement of the lemma. From the definition we provide, it will be simple to see that $ U^{0}\in M_{U} $. After that,  we will prove that indeed $ U^{0} = W\cap V $.
	 
	In $ V\left[G\right] $, define for every $ \alpha\in \Delta $,  $ U^{0}_{\alpha} = W_{\alpha}\cap V\in V $.  In $ V $, let $ \lusim{U}^{0} = j_U\left( \langle \lusim{U}^{0}_{\alpha} \colon \alpha\in \Delta \rangle \right)\left( \kappa \right) $. This is a $ j_U(P)\restriction_{\kappa} = P $-name for a normal measure of Mitchell order $ 0 $ which belongs to $ M_U $. Let $ U^{0} = \left( \lusim{U}^{0} \right)_G \in M_U $. Then $ U^{0}\in V $ is a normal measure on $ \kappa $ of Mitchell order $ 0 $. Since $ U^{0}\vartriangleleft U $, it suffices to prove that $U^{0} = W\cap V$. Assume that $ A\in U^{0} $ holds, and consider this as a statement in $ M_U\left[G\right] $. For some $ p\in G $, 
	$$ p\Vdash \check{A} \in  j_U\left( \langle \lusim{U}^{0}_{\alpha} \colon \alpha\in \Delta \rangle \right)\left( \kappa \right)   $$
	Let $ \alpha \mapsto A(\alpha) $ be a function in $ V $ which represents $ A $ in $ M_U $. Then we can assume that for every $ \alpha\in \Delta $, 
	$$ p\restriction_{\alpha}\Vdash \check{A}(\alpha)\in \lusim{U}^{0}_{\alpha} \subseteq U^{\times}_{\alpha} $$
	By lemma \ref{Lemma: FS, fusion lemma}, there exists $ p^*\geq^* p $ such that, for all but finitely $ \alpha\in \Delta $, 
	$$ \left( p^*\right)^{-\alpha}\Vdash \lusim{d}(\alpha)\in \check{A}(\alpha) $$
	where $ d(\alpha) $ is the first element in the Prikry sequence of $ \alpha $. Thus,
	$$ \left(j_U(p^*)\right)^{-\kappa} \Vdash \check{\kappa}\in j_U\left(\lusim{d}^{-1}\left( \check{A} \right)\right) $$
	and thus $ d^{-1}\left( A \right) \in U^* $ in $ V\left[G\right] $. Therefore, 
	$$ d''\left(d^{-1}A\right)\in U^{\times} = W $$
	so $ A\in W $, as desired.
	
	Finally, let us assume that $ \Delta\in U $ and argue that $ U^{0} = \{ A\subseteq \kappa \colon \kappa\in k\left( A \right) \}\cap M_U $. Since both are ultrafilters in $ M_{U} $, it suffices to prove that $ U^{0}  \subseteq  \{ A\subseteq \kappa \colon \kappa \in k\left(A\right) \}\cap M_{U}$.
	
	Let $ A\in U^{0} $ be a set, and assume that $ \xi\mapsto A(\xi) $ is a function in $ V $ such that $ \left[\xi \mapsto A\left(\xi\right)\right]_U = A $. Assume that $ p\in G $ forces that $ A\in \lusim{U}^{0} $. We can assume that for every $ \xi<\kappa $, $ p\restriction_{\xi} \Vdash A\left(\xi\right)\in U^{0}_{\xi} $, and in particular, $ p\restriction_{\xi} \Vdash A\left( \xi \right)\in U^{\times}_{\xi} $.
	
	Given any extension $q\geq p $ in $ P_{\kappa} $, there exists $ p^*\geq^* p $ and a finite subset $ b\subseteq \kappa $ such that, for every $ \xi \in \Delta\setminus b $,
	$$ p^*\restriction_{\xi}\Vdash \lusim{A}^{p^*}_{\xi} \subseteq A(\xi) \mbox{ and } t^{p^*}_{\xi} = \langle \rangle$$
	and thus, there exists such $ p^* \in G $. Since $ \Delta\setminus b\in W^* $ and $ j_{W^*}(p) \in H $, it follows that, in $ M\left[H\right] $,
	$$ \kappa = d(\left[Id\right]_{W^*}) \in \left[\xi \mapsto A_{\xi}\right]_{W^*} = k\left(  A \right) $$
	as desired.
\end{proof}

\begin{lemma} \label{Lemma: FS, the U^i s are Mitchell-ordered}
	For every $ 1\leq i \leq m $, $ U^i $ is normal and has Mitchell order higher than $ 0 $. Furthermore, 
	$$ U^{0} = U^0\vartriangleleft U^1 \vartriangleleft U^2   \vartriangleleft \ldots  \vartriangleleft U^m=U $$
\end{lemma}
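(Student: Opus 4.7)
The statement splits into three components: normality of each $U^i$ for $1 \leq i \leq m$, Mitchell order $>0$ for these $U^i$, and the chain $U^0 \vartriangleleft U^1 \vartriangleleft \cdots \vartriangleleft U^m$. I would handle them in turn.

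Normality is a direct adaptation of Lemma \ref{Lemma: FS, W*cap V is a normal measure} (the case $i = m$). The required computation is $[d]_{W^i} = \kappa$ in $M[H]$: since $d$ is injective on $\Delta_i \in W^i$ and realizes the Rudin--Keisler projection $W^i \to W$, its class equals $[Id]_W = \kappa$. Given $f \in V$ regressive on $A \in U^i$ (wlog $A \subseteq \Delta$), Lemma \ref{Lemma: FS, fusion lemma} produces $p^* \in G$ which, for $\xi \in A$ outside a finite set $b$, forces $\lusim{A}^{p^*}_\xi \subseteq \xi \setminus (f(\xi)+1)$ with empty stem; hence $f(\xi) < d(\xi)$ on $A \setminus b \in W^i$, so $[f]_{W^i} < \kappa$, and $f$ is constantly some $\beta < \kappa$ on a $V$-set in $W^i$, i.e., a set in $U^i$. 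Mitchell order $>0$ for $i \geq 1$ follows once $\Delta \in U^i$: since $\Delta \in V$, $\Delta \supseteq \Delta_i$, and $\Delta_i \in W^i$ (because $d''\Delta_i \in W$ for $i \leq m$), we get $\Delta \in W^i \cap V = U^i$, which says $\kappa$ is measurable in $M_{U^i}$.

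For the chain, I would induct on $i$, imitating Lemma \ref{Lemma: FS, U0 below U} (the case $i = 1$). By the outer induction hypothesis on measurables below $\kappa$, for the relevant $\xi \in \Delta$ the $V$-measure $U^{i-1}_\xi = W^{i-1}_\xi \cap V$ on $\xi$ is available; let $\lusim{U}^{i-1}_\xi$ be its $P_\xi$-name. Form the $j_{U^i}(P)\restriction \kappa = P$-name in $M_{U^i}$,
$$\lusim{U}^{i-1} := j_{U^i}\bigl(\langle \lusim{U}^{i-1}_\xi : \xi \in \Delta \rangle\bigr)(\kappa),$$
forced by the weakest condition to be a normal measure on $\kappa$. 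Setting $U^{i-1} := (\lusim{U}^{i-1})_G \in M_{U^i}$, what remains is to identify this with $W^{i-1} \cap V$: given $A \in (\lusim{U}^{i-1})_G$ with $A \in V$, choose $p \in G$ forcing $\check{A} \in \lusim{U}^{i-1}$, pick representatives $\xi \mapsto A(\xi) \in \lusim{U}^{i-1}_\xi$, apply Lemma \ref{Lemma: FS, fusion lemma} to obtain $p^* \in G$ with $(p^*)^{-\xi} \Vdash d(\xi) \in A(\xi)$ for all but finitely many relevant $\xi$, and conclude $A \in W^{i-1}$.

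The principal obstacle will be this identification $(\lusim{U}^{i-1})_G = W^{i-1} \cap V$ for general $i \geq 2$. The bookkeeping links $\xi$'s position in $d^{-1}(d(\xi))$ (the $i$-th, for $\xi \in \Delta_i$) with the parameter $m_\xi$ at the measurable $\xi$: Claim~(2) of the previous subsection asserts $m_\xi = i-1 \Rightarrow \xi \in \Delta_i$, which is what ensures that averaging the $(i-1)$-th-level measures $U^{i-1}_\xi$ via $U^i$ correctly reproduces the $(i-1)$-th-level structure at $\kappa$, namely $W^{i-1} \cap V$.
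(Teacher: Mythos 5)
Your treatment of normality and of the Mitchell order being positive is correct and coincides with the paper's (the paper phrases the latter as a disjointness contradiction between $\kappa\setminus\Delta$ and $\Delta_i\in W^i$; your contrapositive $\Delta\supseteq\Delta_i\in W^i\Rightarrow\Delta\in U^i$ is the same point). Your averaging scheme for the chain is also the same family of argument the paper runs: the paper proves $U^i\vartriangleleft U^{i+1}$ by averaging names for the extension ultrafilters $W^i_\xi$ through $j_{U^{i+1}}$ and intersecting with $M_{U^{i+1}}$ after interpreting, while you average the $V$-restrictions $W^{i-1}_\xi\cap V$ through $j_{U^i}$; for $i=1$ your identification is correct, and in fact it bypasses the paper's detour through the factor embedding $k$ in its proof of $U^0\vartriangleleft U^1$.

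The genuine gap is in the identification for $i\geq 2$ --- exactly where the lemma has content --- and the step you display there is false, not merely unfinished. One cannot obtain $p^*\in G$ with $\left(p^*\right)^{-\xi}\Vdash d(\xi)\in A(\xi)$ when $A(\xi)\in W^{i-1}_\xi\cap V$: this case genuinely arises with $A=\Delta$ (so $A(\xi)=\Delta\cap\xi$), since for $i\geq 2$ the measure $W^{i-1}_\xi$ concentrates on $\Delta_{i-1}\cap\xi$, whereas $d(\xi)$ is the \emph{first Prikry point} at $\xi$ and can only be steered into sets of $W_\xi$ --- a normal measure which, by remark \ref{Remark: U^0 = Wcap V} applied below $\kappa$, concentrates on the non-measurables of $V$; so the weakest condition already forces $d(\xi)\notin\Delta$. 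What $A(\xi)\in W^{i-1}_\xi$ actually gives is that the projected set $d''\left(A(\xi)\cap\Delta_{i-1}\cap\xi\right)$ is forced to lie in $W_\xi$, so the correct fusion step is to shrink $\lusim{A}^{p^*}_{\xi}\subseteq d''\left(A(\xi)\cap\Delta_{i-1}\cap\xi\right)$, yielding $d(\xi)\in d''\left(A(\xi)\cap\Delta_{i-1}\cap\xi\right)$. Two further ingredients are then needed and are absent from your sketch. First, one must show that on a $U^i$-large set of $\xi$ it is \emph{forced} that $W_\xi$ concentrates on $d''\left(\Delta_{i-1}\cap\xi\right)$, so that the name $\lusim{U}^{i-1}_\xi$ denotes the intended measure at all; the paper does this by deciding the corresponding statement over the ultrapower and showing a negative decision contradicts $d''\Delta_i\in W$ (in your setup one can instead observe that a negative decision produces a $W^i$-large set disjoint from $\Delta_i$, using the converse of part (2) of the claim, which follows from it since the $\Delta_j$ are pairwise disjoint). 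Second, one must pass from the resulting $W^i$-large set $B=\{\xi<\kappa : d(\xi)\in d''\left(A\cap\Delta_{i-1}\right)\}$ to the conclusion via $d''\left(B\cap\Delta_i\right)\subseteq d''\left(A\cap\Delta_{i-1}\right)$ and the definition of $W^{i-1}$, giving $A\in W^{i-1}\cap V=U^{i-1}$. Your appeal to part (2) of the claim only identifies which $\xi$ are relevant; it does not supply this forcing-and-projection argument, which is the actual content of the paper's proof.
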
 

\begin{proof}
	The proof that each $ U^{i} $ is normal is identical to \ref{Lemma: FS, W*cap V is a normal measure},  and essentially uses the fact that $ d $ projects each $ W_i $ onto $ W $. 
	
	For $ i\geq 1 $, each $ U^{i}$ has Mitchell order above $ 0 $: otherwise, $ \kappa\setminus \Delta\in U^{i}\subseteq W^{i} $, and this contradicts the fact that $ \Delta_i \in W_i $ is disjoint from $ \kappa\setminus \Delta $.
	
	Let us prove that for every $ 1\leq i<m $, $ U^i \vartriangleleft U^{i+1} $.  Work in $ V\left[G\right] $. For every $ \xi<\kappa $, let $U^{\times}_{\xi}$ be the normal measure used at stage $ \xi $ in the iteration. We define an ultrafilter $ U^{i}_{\xi} $: if $ U^{\times}_{\xi} $  concentrates on $d''\left(\Delta_{i}\cap \xi\right)$, set $ \lusim{U}^{i}_{\xi} $ to be the ultrafilter which concentrates on $ \Delta_{i}\cap \xi $ and is projected via $ d $ onto  $ U^{\times}_{\xi} $. Else, set $ U^{i}_{\xi} = U^{*}_{\xi} $.
	
	Let $ \lusim{\mathcal{U}}^{i}\in V $ be the sequence of names for the measures $ U^{i}_{\xi} $ defined above. Consider in $ M_{U^{i+1}} $ the $ P_{\kappa} $-name $ j_{U^{i+1}}\left( \lusim{\mathcal{U}}^{i} \right)(\kappa) $, and let--
	$$ F = \left(j_{U^{i+1}}\left( \lusim{\mathcal{U}}^{i} \right)(\kappa)\right)_G \cap M_{U^{i+1}} \in M_{U^{i+1}} $$
	$ F $ is a normal measure on $ \kappa $ which belongs to $ M_{U^{i+1}} $. Thus, it suffices to prove that $ F = U^{i} $.
	
	Pick $ X\in F $. Let $ p\in G $ be a condition such that $ p\Vdash X\in j_{U^{i+1}}\left( \lusim{\mathcal{U}}^{i} \right)(\kappa) $, namely--
	$$  \{ \xi\in \Delta \colon p\restriction_{\xi}\Vdash X\cap \xi\in U^{i}_{\xi} \}\in U^{i+1}$$
	we would like to argue that $ U^{i}_{\xi} $ in the above equation is the measure which concentrates on $ \Delta_{i} $ and is projected via $ d $ onto $ U^{\times}_{\xi} $. This requires to have-- 
	$$\{\xi\in \Delta \colon p\restriction_{\xi}\Vdash  d''\left(\Delta_i\cap \xi\right) \in U^{\times}_{\xi}\}\in U^{i+1} $$
	Let us argue that $ p $ can be extended inside $ G $ such that this holds. Work over $ N_{U^{i+1}} $, and extend $ p $ in $ G $ such that--
	$$ p\parallel \Delta_{i}\in j_{U^{i+1} }\left( \lusim{U}^{\times} \right)(\kappa) $$
	It's enough to argue that $ p $ decides the above statement in a positive way. Assume otherwise. Then--
	$$ \{ \xi< \kappa \colon p\restriction_{\xi} \Vdash \Delta_{i}\cap \xi \notin U^{\times}_{\xi} \}\in U^{i+1}\subseteq W_{i+1} $$
	For every $ \xi $ in the above set (but finitely many), $ d(\xi)\notin d''\Delta_{i} $. In particular, $ W_{i+1} $ concentrates on such $ \xi $-s, and thus in $ M\left[H\right] $, $ \kappa\notin d''\Delta_{i} $, which is a contradiction.
	
	Thus we can assume that $ p\in G $ and--
	$$  \{ \xi\in \Delta \colon p\restriction_{\xi}\Vdash d''\left(X\cap\Delta_{i}\cap \xi\right)\in U^{\times}_{\xi} \}\in U^{i+1}$$

	Therefore, 
	$$ \{ \xi \in \Delta \colon p \Vdash d(\xi)\in d''\left( X\cap \Delta_{i}\cap \xi \right) \}\in U^{i+1}$$
	and thus, in $ V\left[G\right] $,
	$$ \{ \xi \in \Delta \colon d(\xi)\in d''\left( X\cap \Delta_i \right) \}\in W^{i+1} $$
	so--
	$$ d''\{ \xi \in \Delta \colon d(\xi)\in d''\left( X\cap \Delta_i \right) \}\in W $$
	so $ d''\left( X\cap \Delta_i \right)\in W $, and in particular, $ X\in W_i $. So $ X\in U^{i} = W_{i}\cap V $.
	
	Finally, let us argue that $ U^{0}\vartriangleleft U^{1} $. Consider in $ M_{U^{1}} $ the name $ j_{U^{1}}\left( \lusim{\mathcal{U}}^{\times} \right)(\kappa) $, and let $ F\in M_{U^{1}} $ be its value with respect to the generic $ G $. It suffices to prove that $ F = U^{0} $. given $ X\in F $, there exists $ p\in G $ such that--
	$$ \{ \xi\in \Delta \colon p\restriction_{\xi}\Vdash X\cap \xi\in U^{\times}_{\xi} \}\in U^{1} $$
	In $ V\left[G\right] $,
	$$ \{ \xi\in \Delta \colon d(\xi) \in X  \}\in W^1  $$
	Recall that $ W^1\equiv_{RK} W $, and thus in $ M\left[H\right] $, 
	$$ d\left(\left[Id\right]_{W^1}\right) \in j_{W^1}(X) = j_{W}(X) = k\left( j_U(X) \right) $$
	where $ k\colon M_U\to M $ is the embedding which satisfies $ k\left( \left[f\right]_U \right) = \left[f\right]_{W^*} $. Recall that $ \mbox{crit}(k) = \kappa $, and thus $\kappa \in k\left( \kappa\cap j_U(X) \right) = k(X)$. In particular, $ X\in U^{0} $.
\end{proof}

\begin{remark} \label{Remark: U^i s derived from k}
	Denote $\langle \mu^{*1}_{0}, \ldots , \mu^{*m}_{0} \rangle = d^{-1}\{ \kappa\}  = \langle \left[Id\right]_{W^1}, \ldots, \left[Id\right]_{W^*} \rangle $. Then for every $ 1\leq i \leq m-1 $, $ U^{i} = \{ X\subseteq \kappa \colon \mu^{*i}_{0}\in k(X)  \} $. Indeed, assume that $ X\subseteq \kappa $ and $ \mu^{*i}_{0}\in k(X) = k\left( j_U(X)\cap \kappa \right) = j_{W^*}(X)\cap \left[Id\right]_{W^*} $. Since $ W^i $ and $ W^* $ are Rudin-Keisler equivalent and $ \mu^{*i}_{0} = \left[Id\right]_{W^i} $, it follows that $ X\in W^{i} $. Therefore $ X\in U^{i} $.
\end{remark}

The previous remark hints that all the measures $ U^{i} $ (for $ 0\leq i \leq m $) participate in $ j_W\restriction_{V} $, since $ U = U^{m} $ is the first step, and each of the measures $ U^{i} $ for $ 0\leq i \leq m-1 $ is derived from $ k $. This is indeed the case. In $ M_{U} $, we can derive a measure on $ \left[\kappa\right]^{m} $ using $ k $ as follows:
$$ \mathcal{E}_0 =  \{ X\subseteq \left[\kappa\right]^{m} \colon \langle \kappa, \mu^{*1}_{0}, \ldots, \mu^{*m-1}_{0} \rangle \in k(X)  \}  $$
we will prove later that $ \mathcal{E}_0 \in M_U $, and $\mbox{Ult}\left( M_U, \mathcal{E}_0 \right)$ is isomorphic to the finite iteration, with decreasing order, with $ U^{0}\vartriangleleft U^{1}\vartriangleleft \ldots \vartriangleleft U^{m-1} $.

For the rest of the subsection we provide an internal definition of $ \mathcal{E}_0 $ (namely, a definition that does not depend on $ k\colon M_U\to M $). 

We do this in a general context; thus, let us assume that $ \kappa $ is an arbitrary measurable cardinal, $ m<\omega $ is a natural number and $ U^{0}\vartriangleleft  U^{1}\vartriangleleft\ldots \vartriangleleft U^{m-1} $ are normal measures on $ \kappa $. We start by constructing $ \mathcal{E} $. 

Fix $ 0\leq i \leq m-1 $, and denote $ j = m-i $. Let us define a measure $ W_{j} $ on $ \left[\kappa\right]^{j} $. Intuitively, the ultrapower with $ W_{j} $ is the iterated ultrapower with $  U^{m-1}, \ldots, U^{i}  $ in decreasing Mitchell order. We construct $ W_{j} $ by induction on $ j $, and finally take $ \mathcal{E} = W_m $.

For $ j=1 $ (equivalently, $ i=m-1 $), set $ W_1 = U^{m-1} $. Assume  now that $ j<m $ and $ W_{j} $ was defined, such that $ M_{j} = \mbox{Ult}\left( V, W_{j} \right) $ is the model obtained by iterating $ U^{m-1}, U^{m-2}, \ldots, U^{i} $ (where $ i=m-j $). Then $ U^{i-1}\in M_{j} $. Let $ \langle \nu_{i},\ldots, \nu_{m-1} \rangle\mapsto U^{i-1}\left( \nu_{i},\ldots, \nu_{m-1} \right) $  be a function in $ V $ which represents $ U^{i} $ in the ultrapower with $ W_{j} $. Define $ W_{j+1} $ as follows:
$$ X\in W_{j+1} \iff \{  \langle \nu_{i}, \ldots, \nu_{m-1} \rangle \colon  \{ \nu_{i-1} < \nu_{i} \colon \langle \nu_{i-1}, \nu_{i},  \ldots, \nu_{m-1}  \rangle   \in X \} \in U^{i-1}\left( \nu_{i},\ldots, \nu_{m} \right)  \}\in W_{j} $$
Then $ W_{j+1} $ is as desired. 

\begin{claim}
	For every set $ X\in W_j $ there are sets $ X_{i}\in U^{i}, \ldots, X_{m-1}\in U^{m-1} $ such that $X$ contains the set of increasing sequences in  $ X_{i}\times \ldots \times X_{m-1} \in W_j $.
\end{claim}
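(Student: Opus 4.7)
The plan is a direct induction on $j$. The base case $j=1$ is immediate since $W_1 = U^{m-1}$: take $X_{m-1} := X$.

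For the inductive step, suppose the claim holds for $W_j$ and fix $X \in W_{j+1}$. Set
\[ B(\vec{\nu}) := \{\xi < \nu_i : \langle \xi, \vec{\nu}\rangle \in X\} \]
for $\vec{\nu} = \langle \nu_i,\ldots,\nu_{m-1}\rangle$. By the definition of $W_{j+1}$, the set $Y := \{\vec{\nu} : B(\vec{\nu}) \in U^{i-1}(\vec{\nu})\}$ belongs to $W_j$. Because $\vec{\nu} \mapsto U^{i-1}(\vec{\nu})$ was chosen to represent $U^{i-1}$ in $M_j := \mathrm{Ult}(V,W_j)$, the set $[B]_{W_j}$ lies in $U^{i-1}$ as computed in $M_j$. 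Since each step in the construction of $W_j$ is a normal-measure ultrapower with critical point $\kappa$, we have $P(\kappa)^{M_j} = P(\kappa)^V$, so $U^{i-1}$ in $M_j$ coincides with $U^{i-1}$ in $V$. Unwinding the ultrapower yields
\[ X_{i-1} := \{\xi < \kappa : \{\vec{\nu} : \xi \in B(\vec{\nu})\} \in W_j\} \in U^{i-1}. \]

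For each $\xi \in X_{i-1}$, the set $Z(\xi) := \{\vec{\nu} : \xi \in B(\vec{\nu})\}$ lies in $W_j$, so the inductive hypothesis supplies $X_i(\xi) \in U^i,\ldots, X_{m-1}(\xi) \in U^{m-1}$ such that $Z(\xi)$ contains every increasing tuple in $X_i(\xi) \times \cdots \times X_{m-1}(\xi)$. Choose such witnesses using the Axiom of Choice, and for each $k \in \{i,\ldots,m-1\}$ set
\[ X_k := \triangle_{\xi < \kappa}\, X_k'(\xi), \qquad X_k'(\xi) := \begin{cases} X_k(\xi) & \xi \in X_{i-1}, \\ \kappa & \text{otherwise.} \end{cases} \]
Each $X_k'(\xi)$ lies in $U^k$, so by normality of $U^k$ the diagonal intersection $X_k$ is in $U^k$; moreover, by the definition of diagonal intersection, for any $\alpha \in X_k$ and any $\xi \in X_{i-1} \cap \alpha$ one has $\alpha \in X_k(\xi)$.

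Finally, let $\langle \nu_{i-1}, \nu_i, \ldots, \nu_{m-1}\rangle$ be any increasing tuple with $\nu_{i-1} \in X_{i-1}$ and $\nu_k \in X_k$ for $k \in \{i,\ldots,m-1\}$. The previous observation gives $\nu_k \in X_k(\nu_{i-1})$ for every $k \geq i$, so $\langle \nu_i,\ldots,\nu_{m-1}\rangle$ is an increasing tuple in $X_i(\nu_{i-1}) \times \cdots \times X_{m-1}(\nu_{i-1})$ and therefore belongs to $Z(\nu_{i-1})$. Unpacking, $\nu_{i-1} \in B(\langle \nu_i,\ldots,\nu_{m-1}\rangle)$, i.e., the whole tuple lies in $X$, as required. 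The only point that requires care is the identification $X_{i-1} = [B]_{W_j}$, which isolates the newly added coordinate as a set in $U^{i-1}$; the main obstacle to expect is that $X_{i-1}$ need not lie in any of the $U^k$ with $k \geq i$, and this is circumvented precisely by padding with $\kappa$ outside $X_{i-1}$ before diagonalizing, so that normality of each individual $U^k$ suffices to produce the required rectangular measure-one set.
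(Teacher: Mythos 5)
Your proof is correct, but it organizes the inductive step differently from the paper, so a comparison is worthwhile. Both arguments run induction on $j$ and both identify the new set as the $W_j$-class of the section function --- in your notation $X_{i-1} = [B]_{W_j} = \{\xi<\kappa : Z(\xi)\in W_j\}\in U^{i-1}$, which is exactly the paper's $X_{i-1} = \left[\vec{\nu}\mapsto X\left(\vec{\nu}\right)\right]_{W_j}$ --- but they diverge in how the rectangle is then assembled. The paper applies the induction hypothesis to the $W_j$-large set $\{\vec{\nu} : B(\vec{\nu})\in U^{i-1}(\vec{\nu})\}$ to get one rectangle $X_i\times\cdots\times X_{m-1}$, and then applies the induction hypothesis a \emph{second} time, to the $W_j$-large set $\{\vec{\nu} : X_{i-1}\cap\nu_i = B(\vec{\nu})\cap\nu_i\}$ (large because $\vec{\nu}\mapsto\nu_i$ represents $\kappa=\mbox{crit}(j_{W_j})$), so that on the shrunken rectangle every fiber $B(\vec{\nu})$ coheres with the single set $X_{i-1}$ below $\nu_i$. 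You fiber in the opposite direction: for each $\xi\in X_{i-1}$ you apply the induction hypothesis to the fiber $Z(\xi)\in W_j$, and then merge the resulting $\kappa$-many rectangles by diagonal intersection in each $U^k$, after padding with $\kappa$ outside $X_{i-1}$ so that normality of each individual $U^k$ applies. The paper's route needs only finitely many applications of the induction hypothesis and no diagonal intersections, staying inside the \L o\'{s}/ultrapower formalism; yours makes $\kappa$-many choices and uses normality of the $U^k$ explicitly, which is freely available here and makes the final coherence check quite transparent. Both arguments are sound and yield exactly the claimed statement.
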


\begin{proof}
	By induction on $ j $. For $ j=1 $ this is clear. Let us prove for $ W_{j+1} $. Let $ X\in W_{j+1} $. Then--
	$$ \{ \langle \nu_i, \ldots, \nu_{m-1} \rangle \colon \{ \nu_{i-1}<\nu_{i} \colon \langle \nu_{i-1}, \ldots, \nu_{m-1} \rangle\in X  \}\in U^{i-1}\left( \nu_{i}, \ldots, \nu_{m} \right)  \}\in W_{j}  $$
	and thus, by induction, there are sets $ X_i\in U^{i} , \ldots , X_{m-1}\in U^{m-1}$ such that every increasing sequence in $ X_i\times \ldots \times X_{m-1} $ belongs to the above set. So, for every increasing $ \langle \nu_{i}, \ldots, \nu_{m-1} \rangle\in X_{i}\times \ldots \times X_{m-1} $, There exists a set $ X\left( \nu_{i}, \ldots, \nu_{m-1} \right)\in U^{i-1}\left( \nu_{i}, \ldots, \nu_{m-1} \right) $, such that for every $ \nu_{i-1} $ in it, 
	$$ \langle \nu_{i-1}, \ldots, \nu_{m-1} \rangle\in X $$
	Let $ X_{ i-1 } = \left[  \langle \nu_{i-1}, \ldots, \nu_{m-1} \rangle\mapsto X\left( \nu_{i-1}, \ldots, \nu_{m-1} \right) \right]_{W_j} \in U^{i-1}$. Applying the induction hypothesis again, we can shrink $ X_{i},\ldots, X_{m-1} $ such that for every increasing $ \langle \nu_{i}, \ldots, \nu_{m-1} \rangle\in X_i\times \ldots \times X_{m-1} $, 
	$$ X_{i-1}\cap \nu_{i} = X\left( \nu_{i}, \ldots, \nu_{m-1} \right)\cap \nu_{i} $$
	this is true since $ \langle \nu_{i},\ldots, \nu_{m-1} \rangle\mapsto \nu_{i} $ represents $ \kappa  = \mbox{crit}\left( j_{ W_j } \right) $ modulo $ W_{j} $. Then $ X_{i-1}, \ldots, X_{m-1} $ are as desired.
\end{proof}

\begin{claim}
	The measures $ W_{j} $ are normal, in the following sense: given a sequence of sets $ \langle X_{\alpha} \colon \alpha<\kappa \rangle $, each of them in $ W_{j} $, their diagonal intersection belongs to $ W_{j} $ as well, and is defined as follows:
	$$ \underset{ \alpha<\kappa }{\triangle} X_{\alpha} = \{  \langle \nu_{i}, \ldots, \nu_{m-1} \rangle \colon \forall \alpha< \nu_{i}, \ \langle \nu_{i}, \ldots, \nu_{m-1} \rangle\in X_{\alpha} \} $$
\end{claim}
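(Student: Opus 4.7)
My plan is to prove this by induction on $j$. The base case $j = 1$ is immediate because $W_1 = U^{m-1}$ is a normal measure on $\kappa$ in the usual sense, and for singleton tuples $\langle \nu_{m-1}\rangle$ the displayed diagonal intersection reduces to the classical one, which belongs to $U^{m-1}$ by normality.

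For the inductive step, assume the claim holds for $W_j$ (where $i = m-j$), and let $\langle X_\alpha : \alpha < \kappa\rangle$ be sets in $W_{j+1}$. Writing $B_\alpha(\vec{\nu}) := \{\nu_{i-1} < \nu_i : \langle \nu_{i-1}, \vec{\nu}\rangle \in X_\alpha\}$, the definition of $W_{j+1}$ tells me that each $A_\alpha := \{\vec{\nu} : B_\alpha(\vec{\nu}) \in U^{i-1}(\vec{\nu})\}$ lies in $W_j$. Applying the inductive hypothesis to the family $\langle A_\alpha : \alpha < \kappa\rangle$ yields
$$A := \{\vec{\nu} : \forall \alpha < \nu_i,\ B_\alpha(\vec{\nu}) \in U^{i-1}(\vec{\nu})\} \in W_j.$$
For each $\vec{\nu} \in A$, I would then form the diagonal intersection $B(\vec{\nu}) := \underset{\alpha < \nu_i}{\triangle}B_\alpha(\vec{\nu})$ inside the normal measure $U^{i-1}(\vec{\nu})$ on $\nu_i$. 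A line of definition chasing identifies $B(\vec{\nu})$ with the set $\{\nu_{i-1} < \nu_i : \langle \nu_{i-1}, \vec{\nu}\rangle \in \underset{\alpha<\kappa}{\triangle} X_\alpha\}$, and since $B(\vec{\nu}) \in U^{i-1}(\vec{\nu})$, the $W_j$-large set $A$ is contained in the defining set for membership of $\underset{\alpha<\kappa}{\triangle} X_\alpha$ in $W_{j+1}$, completing the step.

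The only nontrivial point to secure is that the representing function $\vec{\nu} \mapsto U^{i-1}(\vec{\nu})$ can be chosen so that $U^{i-1}(\vec{\nu})$ is genuinely a normal measure on $\nu_i$ for $W_j$-almost every $\vec{\nu}$. This is a standard \L{}o\'{s}-style reflection: since $U^{i-1} \in M_j$ is a normal measure on the cardinal represented by $\vec{\nu} \mapsto \nu_i$, any representative can be thinned on a $W_j$-large set so that each $U^{i-1}(\vec{\nu})$ becomes a normal measure on the corresponding $\nu_i$. Once this is fixed, everything else is a clean unpacking of the inductive definitions of $W_j$ and $W_{j+1}$.
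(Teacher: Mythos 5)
Your proof is correct. Note, however, that the paper itself offers \emph{no} proof of this claim at all: it is stated bare and the text moves straight on to the ineffability lemma, so there is nothing to compare your argument against; you have supplied details the author left to the reader. Your induction on $j$ is the natural argument, and the key points all check out: the base case $W_1 = U^{m-1}$ is classical normality; in the inductive step the identity
$$ \underset{\alpha<\nu_i}{\triangle} B_\alpha(\vec{\nu}) \;=\; \{ \nu_{i-1} < \nu_i \colon \langle \nu_{i-1}, \vec{\nu} \rangle \in \underset{\alpha<\kappa}{\triangle} X_\alpha \} $$
is exactly right, because membership of $\langle \nu_{i-1}, \vec{\nu}\rangle$ in the big diagonal intersection quantifies over $\alpha < \nu_{i-1}$, which matches the classical diagonal intersection over $U^{i-1}(\vec{\nu})$; and applying the inductive hypothesis to the sets $A_\alpha = \{\vec{\nu} \colon B_\alpha(\vec{\nu}) \in U^{i-1}(\vec{\nu})\}$ is precisely what is needed to land back in the defining set for $W_{j+1}$. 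The one point you flag as nontrivial --- thinning the representative so that $U^{i-1}(\vec{\nu})$ is a normal measure on $\nu_i$ for $W_j$-almost every $\vec{\nu}$ --- is indeed the only place where care is required, and your \L{}o\'{s} justification is sound given the paper's own (unproved) assertion that $U^{i-1} \in M_j$; together with the standard fact that $P(\kappa)^{M_j} = P(\kappa)^V$, this makes $U^{i-1}$ a normal measure on $\kappa$ from the point of view of $M_j$, which is what the reflection uses. The only cosmetic gap is that at the end you should intersect $A$ with this normality set before concluding, but since $W_j$ is a filter this is immediate.
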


The measures $ W_j $ have also some form of ineffability, which will be useful in the next section. 

\begin{lemma} \label{Lemma: Ineffability lemma}
	Assume that $ \kappa $ is a measurable cardinal and $ U^{0}\vartriangleleft U^{1} \vartriangleleft \ldots \vartriangleleft U^n $ is a sequence of normal measures on it. For every $ \langle \nu_0, \ldots, \nu_{n} \rangle \in \left[\kappa\right]^{n+1}$, let $r\left( \nu_0, \ldots, \nu_{n} \right) $ be a function with domain $ \kappa $, such that, for every inaccessible $ \alpha<\kappa $, its restriction on $ \alpha $ has image contained $ V_{\alpha} $. Then there are:
	\begin{enumerate}
		\item Sets $ X_0\in U^{0}, \ X_1\in U^{1} , \ \ldots, \ X_n \in U^{n}$.
		\item For every $ i\leq n $ and $ \langle \nu_0, \ldots , \nu_{i} \rangle\in \left[\kappa\right]^{i+1}$,  a function $r_{i+1}\left( \nu_0, \ldots, \nu_{i} \right) $ with domain $ \kappa $.
	\end{enumerate}
	such that for every $ i<n $ and increasing sequence $ \langle \nu_0, \ldots, \nu_{n} \rangle\in X_0 \times \ldots \times X_n $, 
	$$ r\left( \nu_0, \ldots, \nu_{i}, \nu_{i+1}, \ldots, \nu_{n} \right)\restriction_{ \nu_{i+1} } = r_{i+1}\left( \nu_0, \ldots, \nu_{i} \right)\restriction_{ \nu_{i+1} } $$
\end{lemma}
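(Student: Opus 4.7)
The plan is by induction on $n$. The base case $n = 0$ is vacuous, so take $X_0 = \kappa$ and $r_1 := r$. For the inductive step, the strategy is to stabilize $r$ in its last argument using $U^n$, thereby reducing the remaining work to an application of the hypothesis to the shorter Mitchell-ordered sequence $U^0 \vartriangleleft \cdots \vartriangleleft U^{n-1}$.

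Define, for each $\langle \nu_0, \ldots, \nu_{n-1} \rangle \in [\kappa]^n$,
$$r_n(\nu_0, \ldots, \nu_{n-1}) := \bigl[\, \nu_n \mapsto r(\nu_0, \ldots, \nu_{n-1}, \nu_n)\!\restriction_{\nu_n} \,\bigr]_{U^n},$$
interpreted as an element of $V_{\kappa+1}$. Since $M_{U^n}$ is $\kappa$-closed in $V$, one has $V_{\kappa+1}^V = V_{\kappa+1}^{M_{U^n}}$, so $r_n$ belongs to $V$ and each $r_n(\nu_0, \ldots, \nu_{n-1})$ is a function with domain $\kappa$. The image constraint transfers: for an inaccessible $\alpha < \kappa$ and $\beta < \alpha$, the value $r_n(\nu_0, \ldots, \nu_{n-1})(\beta)$ coincides with $r(\nu_0, \ldots, \nu_{n-1}, \nu_n)(\beta) \in V_\alpha$ on a $U^n$-measure-one set of $\nu_n$, hence $r_n(\nu_0, \ldots, \nu_{n-1})\!\restriction_\alpha$ has image in $V_\alpha$.

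By {\L}o\'s' theorem, for each tuple the set
$$A(\nu_0, \ldots, \nu_{n-1}) := \{\nu_n : r(\nu_0, \ldots, \nu_n)\!\restriction_{\nu_n} = r_n(\nu_0, \ldots, \nu_{n-1})\!\restriction_{\nu_n}\}$$
lies in $U^n$. Then
$$X_n := \{\nu_n < \kappa : \forall \langle \nu_0, \ldots, \nu_{n-1}\rangle \in [\nu_n]^n,\ \nu_n \in A(\nu_0, \ldots, \nu_{n-1})\}$$
belongs to $U^n$: by $\kappa$-completeness one intersects the sets $A(\nu_0,\ldots,\nu_{n-1})$ over $[\alpha]^n$ for each $\alpha < \kappa$, and then normality allows taking the diagonal intersection. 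Now apply the induction hypothesis to $U^0 \vartriangleleft \cdots \vartriangleleft U^{n-1}$ and $r_n$ (viewed as a function on $[\kappa]^n$ satisfying the image condition) to obtain sets $X_0, \ldots, X_{n-1}$ and functions $r_1, \ldots, r_{n-1}$ (together with an unconstrained top function, which we take to be the $r_n$ already defined) such that for every increasing $\langle \nu_0, \ldots, \nu_{n-1}\rangle \in X_0 \times \cdots \times X_{n-1}$ and $i < n - 1$,
$$r_n(\nu_0, \ldots, \nu_{n-1})\!\restriction_{\nu_{i+1}} = r_{i+1}(\nu_0, \ldots, \nu_i)\!\restriction_{\nu_{i+1}}.$$
Setting $r_{n+1} := r$, the case $i = n-1$ follows directly from the choice of $X_n$, while for $i < n - 1$ one restricts the top-level identity from $\nu_n$ to $\nu_{i+1} \leq \nu_n$ and chains it with the inductive identity. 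The delicate points are ensuring that $r_n$ lies in $V$ with the correct image condition (so that the inductive hypothesis is applicable) and extracting a single $X_n \in U^n$ uniformly across all tuples below via the diagonal intersection; both hinge on the $\kappa$-closure of $M_{U^n}$ together with the normality of $U^n$.
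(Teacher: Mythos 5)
Your proof is correct, but it takes a genuinely different route from the paper's. The paper defines all the stabilized functions simultaneously: for each $i$, it sets $r_i(\nu_0,\ldots,\nu_i)$ to be the class of $\langle\nu_{i+1},\ldots,\nu_n\rangle \mapsto r(\nu_0,\ldots,\nu_n)\restriction_{\nu_{i+1}}$ modulo the measure $W_{n-i}$ on $\left[\kappa\right]^{n-i}$ --- the finite iteration of $U^{n},\ldots,U^{i+1}$ in decreasing Mitchell order constructed in the claims preceding the lemma --- then converts each $W_{n-i}$-large stabilization set into a product of $U^{k}$-large sets, and merges everything by diagonal intersections over tuples. You instead induct on $n$: stabilize only the last coordinate using $U^{n}$ alone (normality plus $\kappa$-completeness, with the rank hypothesis guaranteeing that each value lies in some fixed $V_\alpha$, $\alpha<\kappa$, hence is constant on a large set), verify that the stabilized function $r_n$ again satisfies the hypotheses of the lemma, apply the induction hypothesis to $U^{0}\vartriangleleft\cdots\vartriangleleft U^{n-1}$ and $r_n$, and chain the two identities. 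Your argument is more elementary: it never invokes the product measures $W_j$, and in fact it never uses the Mitchell-ordering hypothesis at all, so it establishes the lemma for an arbitrary finite sequence of normal measures on $\kappa$. What the paper's formulation buys is that the functions $r_i$ are exhibited directly as representatives modulo the tail-iteration measures $W_{n-i}$, which is the form in which the lemma is consumed later (the measures $\mathcal{E}_\alpha$ and the multivariable fusion arguments are phrased in terms of exactly these iterations). One cosmetic point: your appeal to $\kappa$-closure of $M_{U^n}$ to conclude $r_n\in V$ is unnecessary --- $M_{U^n}$ is a transitive subclass of $V$, so its elements are automatically elements of $V$; what actually matters, and what you do verify, is that the values of $r_n$ land in $V_\kappa$, which follows from the rank hypothesis together with $\kappa$-completeness.
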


\begin{remark}
	We should think of $ r\left( \nu_0, \ldots, \nu_{m-1} \right) $ as a condition in an iterated forcing $ P_{\kappa} $ of length $ \kappa $. The lemma will be useful in the next section for combining plenty of such conditions to a single condition.
\end{remark}

\begin{proof}
	Fix $ -1\leq i\leq n $ and $ \langle \nu_0, \ldots, \nu_{i} \rangle $ (where $ i=-1 $ means that the empty sequence in fixed). Let--
	$$ r_i\left( \nu_0, \ldots, \nu_{i} \right) = \left[ \langle \nu_{i+1}, \ldots, \nu_{n} \rangle\mapsto r\left(\nu_0, \ldots, \nu_{i}, \nu_{i+1}, \ldots, \nu_{n} \right)\restriction_{\nu_{i+1}}  \right]_{W_{n-i}} $$
	Then $ r_i\left( \nu_0, \ldots, \nu_{i} \right) $ is a function with domain $ \kappa $, since $\langle \nu_{i+1},  \ldots, \nu_{n} \rangle\mapsto  \nu_{i+1} $ represents $ \kappa $ in the ultrapower by $ W_{n-i} $. For every $ \alpha<\kappa $, denote--
	$$X(\nu_0, \ldots, \nu_{i})(\alpha) =\{\langle \nu_{i+1}, \ldots, \nu_{n} \rangle \colon r_i\left( \nu_0, \ldots, \nu_{i} \right)(\alpha) = r\left( \nu_0, \ldots, \nu_{i}, \nu_{i+1},\ldots, \nu_{n} \right)(\alpha) \}\in W_{n-i} $$
	where indeed $X(\nu_0, \ldots, \nu_{i})(\alpha) \in W_{n-i} $, since $r_i\left( \nu_0, \ldots, \nu_{i} \right)(\alpha) \in V_{\kappa} $. Let us apply now diagonal intersection. Then--
	$$ X\left( \nu_0, \ldots, \nu_{i} \right) = \{  \langle \nu_{i+1}, \ldots, \nu_{n} \rangle \colon r_i\left( \nu_0,\ldots, \nu_{i} \right)\restriction_{\nu_{i+1}} = r\left( \nu_0, \ldots, \nu_{n} \right)\restriction_{\nu_{i+1}} \}\in W_{n-i} $$
	
	For every $ i+1\leq k\leq n $, let $ X_{k}\left( \nu_0, \ldots, \nu_{i} \right)\in U^{k} $ be a set such that--
	$$ \prod_{i+1\leq k\leq n} X_k\left( \nu_0, \ldots, \nu_{i} \right) \subseteq X\left( \nu_0, \ldots, \nu_{i} \right)  $$
	(in the product we consider only increasing sequence. We abuse the notation and use product for simplicity).
	Such sets were constructed for every sequence $ \langle \nu_{0},\ldots, \nu_{i} \rangle $. Now just merge them. First, take--
	$$ X_0 =  X_{0}\left( \langle \rangle \right)  \in U^{0} $$
	Assuming that $ X_0, \ldots , X_{i} $ were defined, set--
	\begin{align*}
		X_{i+1} = &\underset{ \langle \nu_0, \ldots, \nu_{i} \rangle\in X_0\times \ldots \times X_i }{\triangle} X_{i+1}\left( \nu_0,\ldots, \nu_{i} \right)  =\\
		& \{ \nu_{i+1}< \kappa \colon \forall \langle \nu_{0},\ldots, \nu_{i} \rangle < \nu_{i+1},  \nu_{i+1}\in X_{i+1}\left( \nu_0,\ldots, \nu_{i} \right)  \} \in U^{i+1} 
	\end{align*}
\end{proof}

\subsection{Description of the Iteration}
Assume that $ W\in V\left[G\right] $ is a normal measure on $ \kappa $. Let $ U\in V $ be a normal measure such that $ W = U^{\times} $. Denote $\kappa^* = j_W(\kappa)$ (we will later prove that $ \kappa^* = j_{U}(\kappa) $). Let $ j_{W}\colon V\left[G\right]\to M\left[H\right] $ be the ultrapower embedding. We work by induction on $ \alpha \leq \kappa^* $ and define an iterated ultrapower $\langle M_{\alpha} \colon \alpha \leq \kappa^* \rangle$. We define as well, for every $\alpha< \kappa^* $,
\begin{enumerate}
	\item Elementary embeddings $ j_{\alpha} \colon V\to M_{\alpha} $ and $ k_{\alpha} \colon M_{\alpha}\to M $, such that $ j_W\restriction_{V} = k_{\alpha} \circ j_{\alpha} $.
	\item An ordinal $ \mu_{\alpha} = \mbox{crit}\left( k_{\alpha} \right)  $,  which is a measurable cardinal in $ M_{\alpha} $.
	\item A natural number $ 1\leq m_{\alpha} < \omega $, and a sequence of normal measures on $ \mu_{\alpha} $, $$ U^{0}_{\mu_{\alpha}} \vartriangleleft \ldots \vartriangleleft U^{m_{\alpha}-1 }_{\mu_{\alpha}} $$
	each of them belong to $ M_{\alpha} $. We also denote by $ \mathcal{E}_{\alpha} $ be the measure on $ \left[\mu_{\alpha}\right]^{m_{\alpha}-1} $ defined by taking product of the above measures, namely, a set $ X\subseteq \left[ \mu_{\alpha} \right]^{m_{\alpha}-1} $ belongs to $ \mathcal{E}_{\alpha} $ if and only if--
	$$   \{ \nu_{m_{\alpha-1}} < \mu_{\alpha} \colon \{  \ldots \{  \nu_{0}< \mu_{\alpha}  \colon \langle \nu_0, \ldots, \nu_{m_{\alpha}} \in X \rangle \}\in U^{0}_{\mu_{\alpha}  } \ldots  \}  \} \in U^{ m_{\alpha}-1 }_{\mu_{\alpha} }   $$
	Possibly $ m_{\alpha}=1 $ and then $ \mathcal{E}_{\alpha} = U^{0}_{ \mu_{\alpha} } $.
\end{enumerate}

Let us demonstrate the first two steps in $ j_W\restriction_{V} $. Recall the system $ W\cap V =U^{0} \vartriangleleft U^{1}\vartriangleleft\ldots \vartriangleleft U^{m}=U $. First, let $ M_0  = \mbox{Ult}\left( V, U \right) =\mbox{Ult}\left( V, U^{m} \right)$. Let $ k_0 \colon M_0 \to M $ be the embedding which satisfies, for every $ f\in V $,
$$ k_0\left( \left[f\right]_U \right) = \left[f\right]_{W^*} $$
$ k_0 $ is elementary since $ U\subseteq W^* $; furthermore, $ \mu_{0} =\mbox{crit}\left( k_0 \right) = \kappa $. It will also turn out that $ m_0 = m$ and $ U^{j}_{ \mu_0 } = U^{j} $ for every $ j\leq m-1 $. Thus, $ \mathcal{E}_{0 } $ is the finite iteration which corresponds to $ U^{0}\vartriangleleft \ldots \vartriangleleft U^{m-1} $ (as defined in the previous subsection). We will then define $ M_1 = \mbox{Ult}\left( M_0, \mathcal{E}_0 \right) $.

The iteration $ \langle M_{\alpha} \colon \alpha \leq \kappa^* \rangle $  is continuous, namely, for every limit $ \alpha \leq \kappa^* $, $ M_{\alpha} $ is the direct limit of $ \langle M_{\beta} \colon \beta<\alpha \rangle $. At successor steps, $ M_{\alpha+1} = \mbox{Ult}\left( M_{\alpha}, \mathcal{E}_{\mu_{\alpha} } \right)$.

For simplicity, we denote the sequence $ \left[Id\right]_{ \mathcal{E}_{ \mu_{\alpha} } } $ by $ \left[Id\right]_{\alpha} $. Arguing by induction, every element in $ M_{\alpha} $ has the form--
\begin{equation} \label{Equation: Form of elements in Malpha}
 j_{\alpha}\left( f \right)\left( j_{1,\alpha}\left( \kappa \right), j_{\alpha_0+1, \alpha} \left(\left[Id\right]_{\alpha_0}  \right), \ldots, j_{\alpha_k+1 , \alpha} \left(\left[Id\right]_{\alpha_k}\right) \right) 
\end{equation}
for some $ f\in V $ and $ \alpha_0< \ldots < \alpha_k <\alpha $.

\begin{remark}
$ M_{\alpha+1} $ can be viewed as iteration of length $ m_{\alpha} $ of $ M_{\alpha} $, in the following sense: denote--
$$ M^{m_\alpha-1  }_{\alpha} = \mbox{Ult}\left( M_{\alpha}, U^{m_{\alpha}-1}_{\mu_{\alpha} } \right) $$
$$ M^{m_{\alpha}-2}_{\alpha} = \mbox{Ult}\left(  M^{m_{\alpha}}_{\mu_{\alpha} } , U^{m_{\alpha}-2}_{\mu_{\alpha}} \right) $$
etc., up to--
$$ M^{0}_{\alpha}  = \mbox{Ult}\left(  M^{1}_{\alpha}, U^{0}_{\mu_{\alpha} } \right)$$
and take $ M_{\alpha+1} = M^{0}_{\alpha} $. We denote, for every $ 0\leq i_0 < i_1 < m_{\alpha} $,  by $ j^{i_1,i_0}_{ {\alpha} } $, the embedding from $ M^{1_1}_{\alpha} $ to $ M^{i_0}_{\alpha} $, namely, the finite iteration associated with $ U^{ i_0 }_{ \mu_{\alpha} } \vartriangleleft\ldots \vartriangleleft U^{ i_1+1 }_{ \mu_{\alpha} } $ (in decreasing order). We identify $ \left[Id\right]_{ \mathcal{E}_{ \alpha }} $ with a sequence of generators $\vec{\mu}_{\alpha} =  \langle \mu_{\alpha}, \mu^{1}_{\alpha}  , \ldots, \mu^{m_{\alpha}-1}_{\alpha} \rangle $, where, for every $ 0\leq i < m_{\alpha} $, $\mu^{i}_{\alpha} = j^{i, 0}_{ {\alpha} }\left( \mu_{\alpha} \right)$.
\end{remark}

Before we proceed, we would like to present several examples in the case where the Mitchell order is linear in $ V $.

\textbf{Example 1:} Assume that the Mitchell order on each measurable is linear in $ V $. For every $ \alpha\in \Delta $, let $ U_{\alpha, 0} $ be the unique measure on $ \alpha $ of order $ 0 $. Let $ P = P_{\kappa} $ be the Magidor iteration, where, for each $ \alpha\in \Delta $, the measure $  U^{*}_{\alpha,0} = U^{\times}_{\alpha,0} $ is taken to be $W_{\alpha}$. In $ V\left[G\right] $, consider $ W = U^{*}_{\kappa, 0} = U^{\times}_{\kappa, 0} $. In this case, $ d''\Delta\notin W $, $ m(W) =0 $ and $ j_W\restriction_{V} $ is an iterated ultrapower of $ V $, starting with $ U_{\kappa,0} $. After this step $ \kappa $ is no longer measurable. Let $ \alpha< \kappa^* = j_{U_{\kappa,0} }(\kappa) $. In $ M_{\alpha} $, $ \mu_{\alpha} $ is the least measurable  $\geq \sup\{ \mu_{\beta} \colon \beta<\alpha \} $ with cofinality above $ \kappa $ in $ V $, and $ M_{\alpha+1} = \mbox{Ult}\left( M_{\alpha}, U^{M_{\alpha}}_{ \mu_{\alpha}, 0 } \right) $ is the ultrapower with the unique measure of order $ 0 $ on $ \mu_{\alpha} $ in $ M_{\alpha} $. 

\textbf{Example 2:} Assume the same settings as in the previous example, but now $ W = U^{\times} $ for arbitrary $ U $ of order higher than $ 0 $ (below $ \kappa $ we still assume that measures of order $ 0 $ are used). We argue that now, $ m = m(W) =1 $. First, since $o(U) > 0$, $ d''\Delta\in W $, and thus $ d^{-1}\{ \kappa \} \neq \emptyset   $ in $ M\left[H\right] $. So $ m\geq 1 $. In order to prove that $ m=1 $, it suffices to prove that the following property holds in $ V\left[G\right] $: There exists a finite subset $ b\subseteq \kappa $ such that $ d $ is an injection on $ \Delta \setminus b $. Furthermore, other then finitely many, all the Prikry sequences $ G $ adds to measurables in $ \Delta $ are pairwise disjoint. Let us provide the proof. For every $ \alpha\in \Delta $, let $ C_{\alpha}\subseteq \alpha $ be the Prikry sequence added to $ \alpha $ in $ V\left[G\right] $. Then, for every $ \alpha\in \Delta $,
\begin{equation} \label{Equation for d is injective}
	\bigcup_{\beta<\alpha} C_\beta \notin U^{*}_{\alpha,0} = U^{\times}_{\alpha,0}  
\end{equation}
since otherwise there exists $ p\in G_{\alpha} $ such that $ \left(j_{U_{\alpha,0}}(p)\right)^{-\alpha} \Vdash \alpha \in \bigcup_{\beta< j_{U_{\alpha,0}} \left(\alpha\right)} \lusim{C}_{\beta}  $; but $ \left(j_{U_{\alpha,0}}(p)\right)^{-\alpha} $ forces that $ \alpha $ cannot belong to Prikry sequences of measurables above $ \alpha $, a contradiction.\\
Now we can apply equation \ref{Equation for d is injective} in a density argument: Every condition $ p$ can be direct extended to $ p^*\geq^* p $ by removing from each set $ \lusim{A}^{p}_{ \alpha } \in W_{\alpha}$ (where $ \alpha\in \Delta $) the set $ \bigcup_{ \beta<\alpha } C_\beta $. Then $ p^* $ forces that the Prikry sequences added to measurables of $ \Delta $, aside from finitely many, are pairwise disjoint. \\
Thus $ m=1 $, and the system $ U^{0} \vartriangleleft U^{1} $ consists of $ U_{\kappa,0} = U^{0} \vartriangleleft U^{1} = U $. The first step in  $ j_W\restriction_{V} $ is $ \mbox{Ult}\left( V, U \right) $, and $ U_{\kappa,0} $ is applied $ \omega $-many times to produce a Prikry sequence of critical points to $ \left[Id\right]_{W^1}  $, which is the only element in $ d^{-1}\{\kappa\} $. For every $ \alpha<\kappa^* $, $ M_{\alpha+1} = \mbox{Ult} \left( M_{\alpha},  U^{M_{\alpha}}_{ \mu_{\alpha}, 0 }\right) $ as in the previous example. The main difference is that the length of the iteration, $ \kappa^* = j_W(\kappa) = j_U(\kappa) $ is strictly higher than $ j_{U_{\kappa,0}}(\kappa) $.

\textbf{Example 3: } Assume again linearity of the Mitchell order, but now fix in advance $ m<\omega  $, and assume as well that $ o(\kappa) = m+1 $, namely, the normal measures on $ \kappa $ are $ U_{\kappa,0}\vartriangleleft U_{\kappa,1} \vartriangleleft \ldots \vartriangleleft U_{\kappa, m} $. We define the iteration $P = P_{\kappa} $ such that for every $ \alpha\in \Delta $, the measure $ W_{\alpha} $  is chosen as follows: If $ o(\alpha) = l+1 $ for some $ l\leq m $, use the measure $ W_{\alpha} =  U^{\times}_{\alpha, l} $. In $ V\left[G\right] $, let $ W = U^{\times}_{ \kappa, m } $. We argue that $ m(W) = m $. We work by induction: If $ m=0 $, then $ d''\Delta\notin W $ and thus $ m(W) =0 $. Assume that $ m\geq 1 $.  $ W $ concentrates on measurables $ \alpha \in \Delta $ such that $ W_{\alpha} = U^{\times}_{\alpha,m-1} $, and for each such $ \alpha  $, $ m\left(  W_{\alpha} \right)  = m-1 $. So $ W^* = U^{*}_{ \kappa, m } $ concentrates on measurables $ \alpha\in \Delta $ such that $ \alpha $ is the $ m $-th element in $ d^{-1}\{ d(\alpha) \} $, and thus $ m(W) = m $. It will turn out in this case that the system $ U^{0}\vartriangleleft \ldots \vartriangleleft U^{m} $ is exactly the sequence $ U_{\kappa,0}\vartriangleleft U_{\kappa,1} \vartriangleleft \ldots \vartriangleleft U_{\kappa, m} $. $ d^{-1}\{\kappa\} = \{ \left[Id\right]_{W^{1}}, \ldots , \left[Id\right]_{W^m}  \} $ contains exactly $ m $ elements, and each $ \left[Id\right]_{W_i} $ (where $ 1\leq i \leq m $) has Prikry sequence in $ M\left[H\right] $ which is generated by iterating the measure $ U_{\kappa, j-1} $ $ \omega  $-many times.

$  $

We would like to define the embedding $ k_{\alpha} \colon M_{\alpha}\to M $. We do this assuming that embeddings $ k_{\beta} \colon M_{\beta}\to M $ have been defined for every $ \beta<\alpha $. We also assume by induction that for each such $ \beta <\alpha $, a sequence $ \vec{\mu}^*_{\beta} = \langle \mu^{*0}_{\beta}, \ldots, \mu^{*m_{\beta }-1 }_{\beta} \rangle $ has been defined. We then define $ k_{\alpha}\colon M_{\alpha}\to M $ as follows: 
$$ k_{\alpha}\left( j_{\alpha}(f)\left( j_{0,\alpha}\left({{\kappa}}\right), j_{ \alpha_0+1, \alpha }\left( \left[Id\right]_{\alpha_0}\right), \ldots, j_{\alpha_k+1, \alpha}\left(\left[Id\right]_{\alpha_k}\right) \right) \right) = j_W(f)\left( \left[Id\right]_{W^*}  , \vec{\mu}^*_{\alpha_0}, \ldots, {\vec{\mu}}^*_{\alpha_k} \right) $$
for every $ f\in V $ and $ 1\leq \alpha_0< \ldots < \alpha_k $. 

%. In the above equation, the sequences $ \vec{\mu}^*_{\alpha_i} $ are defined as follows:
%$$ \vec{\mu}^{*}_{\alpha_i} = \langle \mu_{\alpha_i}, k_{ \alpha_i }\left( \mu_{\alpha_i} \right) , k_{ \alpha_i }\left( \mu_{\alpha_i} \right), \ldots , k_{ \alpha_i }\left( \mu_{\alpha_i}\right) \rangle $$
%We also define--
%$$ U^{0}_{\mu_{\alpha} } = \{ X\subseteq \mu_{\alpha} \colon \mu_{\alpha}\in k_{\alpha}(X) \} $$ 
%it's not trivial that $ U^{0}_{\mu_{\alpha} }\in M_{\alpha} $; we will prove that, and continue and define, inductively, for every $ 0\leq j\leq m_{\alpha} $, 
%$$ U^{j+1}_{\mu_{\alpha} } = \{ X\subseteq \mu_{\alpha} \colon \mu^{j}_{\alpha} \in k_{\alpha}\left( X \right) \} $$
We will prove by induction on $ \alpha\leq \kappa^* $ that the following properties hold:

\begin{enumerate} [label=(\Alph*)]
	\item $k_{\alpha} \colon M_{\alpha}\to M$ is elementary.
	\item Denote $ \mu_{\alpha} = \mbox{crit}\left( k_{\alpha} \right) $. Then $ \mu_{\alpha} $ is measurable in $ M_{\alpha} $. Moreover, $ \mu_{\alpha} $ is the least measurable $\mu\in  M_{\alpha}  $ which is greater or equal to $ \sup\{ \mu_{\beta} \colon \beta<\alpha \} $ and satisfies $\left( \mbox{cf}\left( \mu \right)\right)^V > \kappa  $.
	\item Let $ \mu^*_{\alpha} = k_{\alpha}\left( \mu_{\alpha} \right) $. Then $ \mu_{\alpha} $ appears as an element in the Prikry sequence of $ k_{\alpha}\left( \mu_{\alpha} \right) $ in $ H $. 
	%We will denote by $ n_i<\omega  $ the length of the initial segment $ t_{i} $ of the Prikry sequence of $ \mu^*_{\alpha_i} $ below $ \mu_{\alpha_i} $.
	\item Let $ \{  \mu^{*1}_{\alpha}, \ldots, \mu^{{*m}_{\alpha}-1}_{\alpha} \} $ be the increasing enumeration of $ d^{-1}\left( \mu_{\alpha} \right) $ below $ \mu^*_{\alpha} $, and denote as well  $\mu^{*0} = \mu_{\alpha} , \mu^*_{\alpha} = \mu^{m_{\alpha}}_{\alpha} $ (possibly $ m_{\alpha}=1 $ and then $ \mu_{\alpha} $ does not appear as first element in Prikry sequences of measurables below $ \mu^*_{\alpha} $). For every $ 0\leq j \leq m_{\alpha} $, there exists a measure $ U^{j}_{ \mu_{\alpha}  }\in M_{\alpha} $ on $ \mu_{\alpha} $, which satisfies--
		$$ k_{\alpha}\left( U^{j}_{\mu_{\alpha} } \right) = j_W\left(  \xi\mapsto U^{j}_{\xi} \right)\left(  k_{\alpha}\left( \mu_{\alpha} \right) \right) $$
		Moreover, 
		$$ U^{0}_{\mu_{\alpha} }\vartriangleleft  U^{1}_{\mu_{\alpha} } \vartriangleleft \ldots \vartriangleleft U^{m_{\alpha}-1 }_{\mu_{\alpha} }$$
	\item The measure $ \mathcal{E}_{\alpha} $ which corresponds to $  U^{0}_{\mu_{\alpha} }\vartriangleleft  U^{1}_{\mu_{\alpha} } \vartriangleleft \ldots \vartriangleleft U^{m_{\alpha}-1 }_{\mu_{\alpha} }$  is derived from $ k_{\alpha}\colon M_{\alpha}\to M $  in the following sense:
	$$  \mathcal{E}_{\alpha} = \{ X\subseteq \left[\mu_{\alpha} \right]^{m_{\alpha}}  \colon \langle \mu^{*0}_{\alpha} , {\mu}^{*1}_{\alpha}, \ldots, \mu^{*m_{\alpha  }-1}_{\alpha} \rangle \in k_{\alpha}\left( X \right) \}\cap M_{\alpha}$$
\end{enumerate}

The proof of the above properties goes by induction on $ \alpha $. For $ \alpha =0 $, $ k_0 \colon M_0 = M_U \to M $ is the embedding which maps each $ \left[ f \right]_U $ to $ \left[f\right]_{W^*} $; it has critical point $ \mu_0 = \kappa $. In $ M\left[H\right] $, $ \mu_0 $ appears as a first element in the Prikry sequence of $ k_0(\mu_0) = \left[Id\right]_{W^*} $, and of $ m-1 $ measurables $ {\mu_0}^{*1 } = \left[Id\right]_{W^1}, \ldots , {\mu_0}^{*m-1} = \left[Id\right]_{W^{m-1}} $. The measure $ \mathcal{E}_0\in M_0 $ derived from $ k_0 $ using $ \langle \mu_0, {\mu_0}^{*1},\ldots , {\mu_0}^{*m-1} \rangle $ is indeed the product of $ U^{0}\vartriangleleft \ldots \vartriangleleft  U^{m-1} $ by remark \ref{Remark: U^i s derived from k}.

We proceed and prove the properties for arbitrary $ 0<\alpha< \kappa^* $.

\begin{lemma}
$ k_{\alpha} \colon M_{\alpha}\to M $ is elementary, and $ j_W\restriction_{V} = k_{\alpha}\circ j_{\alpha} $.
\end{lemma}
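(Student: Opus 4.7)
The proof proceeds by induction on $ \alpha \leq \kappa^* $, with the base case $ \alpha = 0 $ already handled when $ k_0\colon M_U \to M $ was introduced (it is the standard factor map $ \left[f\right]_U \mapsto \left[f\right]_{W^*} $, which is elementary because $ U \subseteq W^* $, and trivially satisfies $ j_W\restriction_{V} = k_0 \circ j_U $). The inductive step splits into successor and limit cases.

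For the successor step $ \alpha = \beta+1 $, the induction hypothesis supplies elementarity of $ k_\beta\colon M_\beta \to M $ with $ j_W\restriction_{V} = k_\beta \circ j_\beta $, together with property (E) at $ \beta $: namely, $ X \in \mathcal{E}_\beta \iff \vec{\mu}^*_\beta \in k_\beta(X) $ for $ X \in M_\beta $. The plan is to invoke the standard derived-embedding construction: define $ \tilde{k}\colon \mbox{Ult}(M_\beta, \mathcal{E}_\beta) \to M $ by $ \tilde{k}([f]_{\mathcal{E}_\beta}) = k_\beta(f)(\vec{\mu}^*_\beta) $. A direct {\L}o\'s-style computation, using exactly the derivation property (E), shows that $ \tilde{k} $ is well-defined, elementary, and satisfies $ \tilde{k} \circ j_{\beta, \beta+1} = k_\beta $. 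I then need to verify that $ \tilde{k} $ agrees with the explicit formula defining $ k_{\beta+1} $ in the excerpt: this is a matter of unfolding a representative of the form $ j_{\beta+1}(f)\bigl(j_{1,\beta+1}(\kappa), j_{\alpha_0+1,\beta+1}([Id]_{\alpha_0}), \ldots, j_{\alpha_k+1,\beta+1}([Id]_{\alpha_k})\bigr) $ through the factorization $ j_{\beta+1} = j_{\beta,\beta+1} \circ j_\beta $ and the induction hypothesis on generators at stages $ \alpha_i \leq \beta $. The commutativity $ j_W\restriction_{V} = k_{\beta+1} \circ j_{\beta+1} $ then follows from $ j_{\beta+1} = j_{\beta,\beta+1} \circ j_\beta $, $ \tilde{k} \circ j_{\beta,\beta+1} = k_\beta $, and the inductive identity $ j_W\restriction_{V} = k_\beta \circ j_\beta $.

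For the limit step, $ M_\alpha $ is by construction the direct limit of $ \langle M_\beta, j_{\beta,\beta'} \colon \beta < \beta' < \alpha \rangle $. I would first establish compatibility of the previously defined maps along this directed system, i.e. $ k_\beta = k_{\beta'} \circ j_{\beta, \beta'} $ for $ \beta < \beta' < \alpha $; this is a consequence of iterated application of the successor-case identity $ k_\gamma = k_{\gamma+1} \circ j_{\gamma, \gamma+1} $ along the interval $ [\beta, \beta') $ together with the defining formula of the $ k_\gamma $'s on generators. Given this compatibility, the universal property of direct limits yields the unique map $ k_\alpha $, which is elementary because elementarity is preserved by direct limits of elementary embeddings. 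Commutativity $ j_W\restriction_{V} = k_\alpha \circ j_\alpha $ follows by choosing any $ \beta < \alpha $, writing $ j_\alpha = j_{\beta, \alpha} \circ j_\beta $, and composing with the inductive hypothesis.

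The principal obstacle will be the successor case, where one must reconcile the explicit formula for $ k_{\beta+1} $ with the derived embedding $ \tilde{k} $. Concretely, this requires unpacking how elements of the internal finite iteration $ M^{m_\beta-1}_\beta \to \cdots \to M^0_\beta = M_{\beta+1} $ are represented through the product measure $ \mathcal{E}_\beta $, and matching the internal generators $ \vec{\mu}_\beta = \langle \mu_\beta, \mu^1_\beta, \ldots, \mu^{m_\beta-1}_\beta \rangle $ with their images $ \vec{\mu}^*_\beta $ under $ k_\beta $, using property (E). Once this identification is in place, both the elementarity of $ k_{\beta+1} $ and the commutativity with $ j_W\restriction_V $ are immediate consequences of standard ultrapower and direct-limit machinery.
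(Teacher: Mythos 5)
Your proposal is correct and follows essentially the same route as the paper: induction on $\alpha$, with the successor step resting on the derivation property (E) at the previous stage (that $\mathcal{E}_{\alpha'}$ is derived from $k_{\alpha'}$ via the generators $\vec{\mu}^*_{\alpha'}$) to run a {\L}o\'s-style computation, and the limit step handled by direct limits. The paper merely verifies the membership-preservation of the explicit formula for $k_{\alpha}$ directly rather than first constructing the abstract derived embedding $\tilde{k}$ and then matching it, but this is the same computation organized differently.
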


\begin{proof}
For $ \alpha=0 $, we already argued that $ k_{0} \colon M_0\to M $ in elementary.

For simplicity, we will prove that for every $ x,y\in M_{\alpha} $, $ M_{\alpha} \vDash x\in y $ if and only if $ M\vDash k_{\alpha}(x)\in k_{\alpha}(y) $. 

Let us focus on the case where $ \alpha = \alpha'+1 $  is successor, as the limit case is simpler. There are functions $ f,g\in V $ and $ \alpha_0 < \ldots < \alpha_k <\alpha' $ such that--
$$  x = j_{\alpha}(f)\left(  j_{0,\alpha}(\kappa) , j_{\alpha_0+1, \alpha}\left( \left[Id\right]_{\alpha_0} \right), \ldots , j_{\alpha_k+1, \alpha}\left( \left[Id\right]_{\alpha_k}\right) , j_{\alpha'+1, \alpha}\left( \left[Id\right]_{\alpha'}\right) \right) $$
$$  y = j_{\alpha}(g)\left(  j_{0,\alpha}(\kappa) , j_{\alpha_0+1, \alpha}\left( \left[Id\right]_{\alpha_0} \right), \ldots , j_{\alpha_k+1, \alpha}\left( \left[Id\right]_{\alpha_k}\right) , j_{\alpha'+1, \alpha}\left( \left[Id\right]_{\alpha'}\right) \right) $$
We assumed that $ M_{\alpha} = \mbox{Ult}\left( M_{\alpha'}, \mathcal{E}_{\alpha'} \right) \vDash x\in y $, namely,
$$ \mbox{Ult}\left( M_{\alpha'}, \mathcal{E}_{\alpha'} \right) \vDash  \left[Id\right]_{\alpha'} \in j_{\alpha',\alpha}  \left( X \right) $$
where $ X $ is the set--
\begin{align*}
\left\{   \vec{\xi}    \colon \right. & \ j_{\alpha'}(f)\left(   j_{0,\alpha'}(\kappa) , j_{\alpha_0+1, \alpha'}\left( \left[Id\right]_{\alpha_0} \right), \ldots , j_{\alpha_k+1, \alpha'}\left( \left[Id\right]_{\alpha_k}\right) , \vec{\xi}  \right)   \in \\
&\left. j_{\alpha'}(g)\left(   j_{0,\alpha'}(\kappa) , j_{\alpha_0+1, \alpha'}\left( \left[Id\right]_{\alpha_0} \right), \ldots , j_{\alpha_k+1, \alpha'}\left( \left[Id\right]_{\alpha_k}\right) , \vec{\xi}  \right)   
\right\}   
\end{align*}
In particular, $ X\in \mathcal{E}_{\alpha'} $, and thus $ \vec{\mu}^{*}_{\alpha'} \in k_{\alpha'}(X) $. Since $ j_{W}\restriction_{V} = k_{\alpha'}\circ j_{\alpha'} $, it follows that--
\begin{align*}
j_{W}\left(  f  \right)\left(    \left[Id\right]_{W^*}, \vec{\mu}^{*}_{\alpha_0}, \ldots, \vec{\mu}^{*}_{ \alpha_k } , \vec{\mu}^{*}_{ \alpha' }     \right) \in  j_{W}\left(  g  \right)\left(    \left[Id\right]_{W^*}, \vec{\mu}^{*}_{\alpha_0}, \ldots, \vec{\mu}^{*}_{ \alpha_k } , \vec{\mu}^{*}_{ \alpha' }     \right)
\end{align*}
namely $ k_{\alpha}(x)\in k_{\alpha}(y) $. 
\end{proof}

We will present the proof of properties (B)-(E) is the next subsections.

\subsection{Multivariable Fusion}

Assume from now on that $ \alpha >0 $ is fixed, and we are at stage $ \alpha $ in the inductive proof of properties (A)-(E). In this subsection we develop a generalization of lemma \ref{Lemma: FS, fusion lemma} (the Fusion lemma).

Since $ \alpha>0 $, we may assume in equation \ref{Equation: Form of elements in Malpha} that $ \alpha_0 = 0 $. This will simplify some of the arguments below. We can also denote by $ \mathcal{E}_{0}^{'} $ the measure which corresponds to $ U^{0}\vartriangleleft U^{1}\vartriangleleft\ldots \vartriangleleft U^{m} $ 
(including $ U^{m} $, unlike $ \mathcal{E}_0 $) and 
$ \left[Id\right]^{'}_{0} = {\left[Id\right]_{0}}^{\frown} j_{0,1}\left(\kappa \right)$ so that every element in $ M_{\alpha} $ has the form--
\begin{equation} 
	j_{\alpha}\left( f \right)\left( j_{1,\alpha}\left( \left[Id\right]^{'}_{0} \right) , j_{\alpha_1+1, \alpha} \left(\left[Id\right]_{\alpha_1}  \right), \ldots, j_{\alpha_k+1 , \alpha} \left(\left[Id\right]_{\alpha_k}\right) \right) 
\end{equation}
for some $ f\in V $ and $ 0=\alpha_0< \alpha_{1} < \ldots < \alpha_k <\alpha $.

\begin{defn}
Let $ p\in P_{\kappa} $ be a condition. An increasing sequence $ \langle \xi, \xi^{1}, \ldots, \xi^{m} \rangle $ below $ \kappa $ is admissible for $ p $ if for every $ 1\leq i \leq m $ (if such $ i $ exists, namely $ m\geq 1 $),
$$ p\restriction_{ \xi^i } \Vdash \xi\in \lusim{A}^{\xi^{i}} \mbox{ and } \lusim{t}^{p}_{\xi^{i}} = \emptyset $$
if this holds, set--
\begin{align*} p^{\frown} \langle \vec{\xi} \rangle =& {\left({p\restriction_{\xi^{1}}}\right)^{-\xi}}^{\frown} {\langle \langle \xi \rangle, \lusim{A}^{p}_{\xi^{1}} \rangle }^{\frown} {\left({p\restriction_{ \left(\xi^{1}, \xi^{2} \right)}}\right)^{-\xi^1}}^{\frown} {\langle \langle \xi \rangle, \lusim{A}^{p}_{\xi^{2}} \rangle }^{\frown} \ldots^{\frown} \\
	&{\left({p\restriction_{ \left(\xi^{m-1}, \xi^{m} \right)}}\right)^{-\xi^{m-1}}}^{\frown} {\langle \langle \xi \rangle, \lusim{A}^{p}_{\xi^{m}} \rangle }^{\frown} \left({p\setminus \left( 
		\xi^{m}+1 \right)}\right)^{-\xi^m} 
\end{align*}
\end{defn}

In $ V\left[G\right] $, denote, for every $ \xi\in d''\Delta $ with $ \left|d^{-1}\{ \xi \}\right| = m $,  $d^{-1}\{ \xi \} = \langle   \mu^{*1}_{0}(\xi), \ldots, \mu^{*m}_{0}(\xi) \rangle  = \vec{ \mu  }^{*}_{0}(\xi) $. Then in $ M\left[H\right] $, the sequence $\left[\xi\mapsto \vec{\mu}^{*}_{0}(\xi) \right]_W$ is--
$$ \langle \kappa , \mu^{*1}_{0} , \ldots, \mu^{*m-1}_{0} , \mu^{*m}_{0}  \rangle = \langle \left[Id\right]_W , \left[ Id \right]_{W^{1}}, \ldots,  \left[Id\right]_{W^{m-1}}, \left[Id\right]_{W^m} \rangle $$

\begin{lemma} \label{Lemma: FS, Multivar Fusion for xi}
Let $ p\in P_{\kappa} $. For every increasing $\vec{\xi} = \langle \xi, \xi^{1}, \ldots, \xi^{m} \rangle $, let $ e\left( \vec{\xi} \right) $ a subset of $ P\setminus \xi  $ which is $ \leq^* $-dense open above conditions which force that $ d^{-1}\{ \xi \} = \langle \xi^{1}, \ldots, \xi^{m} \rangle $. Then there exists $ p^* \geq^* p$ and a set $ X\in \mathcal{E}^{'}_{0} $ such that for every increasing $ \langle \xi, \xi^{1}, \ldots, \xi^{m} \rangle\in X $ which is admissible for $ p^* $,
$$ p^*\restriction_{\xi} \Vdash {p^*}^{\frown} \langle \xi, \xi^{1}, \ldots, \xi^{m} \rangle\setminus \xi \in e\left( \xi, \xi^{1}, \ldots, \xi^{m} \right)$$
Furthermore, if $ p^* $ as above is chosen in $ V\left[G\right] $,
\begin{align*}
\{  \xi<\kappa \colon &  \langle \xi, \mu^{*1}_{0}(\xi), \ldots, \mu^{*m}_{0}(\xi) \rangle \mbox{ is admissible for } p^* , \  {p^*}^{\frown} \langle \xi,  \mu^{*1}_{0}(\xi), \ldots, \mu^{*m}_{0}(\xi) \rangle\in G \\
&\mbox{and }  p^*\restriction_{\xi}\Vdash {p^*}^{\frown} \langle \xi,  \mu^{*1}_{0}(\xi), \ldots, \mu^{*m}_{0}(\xi) \rangle\setminus \xi \in e\left( \xi,   \mu^{*1}_{0}(\xi), \ldots, \mu^{*m}_{0}(\xi) \right)  \}\in W  
\end{align*}
\end{lemma}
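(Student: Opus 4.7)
This is a multivariable generalization of the Fusion Lemma \ref{Lemma: FS, fusion lemma}, with the single measure $U$ replaced by the system $U^0\vartriangleleft\cdots\vartriangleleft U^m$ and single-coordinate integration replaced by the Ineffability Lemma \ref{Lemma: Ineffability lemma}. The plan has three stages: extract a per-branch witness $q(\vec{\xi})$, coherentize via ineffability, and glue via nested fusion.

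For the first stage, fix any sequence $\vec{\xi}=\langle\xi,\xi^1,\ldots,\xi^m\rangle$ admissible for $p$. Then $p^{\frown}\vec{\xi}$ is a genuine condition in $P_\kappa$ which forces $d^{-1}\{\xi\}=\langle\xi^1,\ldots,\xi^m\rangle$ on a dense set of its extensions, so by $\leq^*$-density of $e(\vec{\xi})$ above such conditions we may pick a $P_\xi$-name $r(\vec{\xi})$ for a direct extension of $p^{\frown}\vec{\xi}\setminus\xi$ in $e(\vec{\xi})$. Define $q(\vec{\xi})\geq^* p$ by adopting the shrunken measure-one sets of $r(\vec{\xi})$ at coordinates $\xi^1,\ldots,\xi^m$ but restoring their stems to $\emptyset$, while copying $r(\vec{\xi})$ verbatim at other coordinates. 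By $\leq^*$-openness, $q(\vec{\xi})^{\frown}\vec{\xi}\setminus\xi\in e(\vec{\xi})$ as well.

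Next, view $\vec{\xi}\mapsto q(\vec{\xi})$ as a function in $V$ whose restriction to any inaccessible $\alpha<\kappa$ lies in $V_\alpha$, and apply lemma \ref{Lemma: Ineffability lemma}: obtain sets $X_0\in U^0,\ldots,X_m\in U^m$ and coherent functions $r_{i+1}(\xi,\xi^1,\ldots,\xi^i)$ (for $0\leq i<m$) with $q(\vec{\xi})\restriction_{\xi^{i+1}}=r_{i+1}(\xi,\xi^1,\ldots,\xi^i)\restriction_{\xi^{i+1}}$ on the product. Now assemble $p^*$ by an $(m+1)$-level nested iteration of lemma \ref{Lemma: FS, fusion lemma}: process the measures $U^m,U^{m-1},\ldots,U^0$ from top downward, each level integrating out one parameter and producing intermediate conditions $s_i(\xi,\ldots,\xi^{i-1})\geq^* p$. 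Diagonal intersections of measure-one sets at each level---the exact move that powers the proof of the standard Fusion Lemma---are legitimate here in $\mathcal{E}'_0$ precisely because of the coherence from ineffability. The output is $p^*\geq^* p$ such that on a diagonal intersection $X\in\mathcal{E}'_0$ of the $X_i$'s, the condition $p^{*\frown}\vec{\xi}\setminus\xi$ direct-extends a member of $e(\vec{\xi})$ and hence lies in $e(\vec{\xi})$.

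The main obstacle is this third stage: forcing the nested diagonal intersections to telescope into a \emph{single} condition $p^*$ rather than a family depending on the parameter $\vec{\xi}$. The telescoping agreements $q(\vec{\xi})\restriction_{\xi^{i+1}}=r_{i+1}(\xi,\ldots,\xi^i)\restriction_{\xi^{i+1}}$ from ineffability are the key lubricant; without them, the value of $p^*$ at a low coordinate $\beta$ would need to know the higher coordinates of $\vec{\xi}$, which is incompatible with being a single element of $P_\kappa$. For the ``moreover'' clause, pick $p^*\in G$ by the $\leq^*$-density (above $p\in G$) of the set of suitable direct extensions; then in $V[G]$ one has $\vec{\mu}^{*}_0(\xi)=d^{-1}\{\xi\}$ for $W$-many $\xi$ via the identification of the generators of $\mathcal{E}'_0$ under $k\colon M_U\to M$, and the admissibility, compatibility with $G$, and the forcing statement all transfer to a $W$-large set of $\xi$ by standard density.
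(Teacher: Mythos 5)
Your plan reproduces the paper's architecture faithfully in its first two stages: extracting a per-branch witness $r(\vec{\xi})\in e(\vec{\xi})$ above ${p}^{\frown}\langle\vec{\xi}\rangle\setminus\xi$, and coherentizing the family via Lemma \ref{Lemma: Ineffability lemma}. The gap is in the gluing stage, where you attribute the existence of a single $p^*$ entirely to the ineffability agreements plus diagonal intersections. Ineffability only removes the dependence of the \emph{low} coordinates of the glued condition on the \emph{high} variables; it does nothing about the converse dependence, which your plan leaves unresolved. Concretely, to get $\left(\left(p^{*}\right)^{\frown}\langle\xi,\ldots,\xi^{i}\rangle\right)^{-\xi^{i}}\geq^{*} r_{i+1}\left(\xi,\ldots,\xi^{i}\right)$ one needs the part of $p^*$ \emph{above} $\xi^{i}$ to direct extend $r_{i+1}\left(\xi,\ldots,\xi^{i}\right)\setminus\left(\xi^{i}+1\right)$, an object that still varies with the lower variables. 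The paper's resolution is that entries of a condition above $\xi^{i}$ are $P_{\xi^{i}+1}$-names: in the intermediate fusion step it sets the upper part of $q_{\xi^{i}}$ to be the name $r\left(\lusim{\xi},\ldots,\xi^{i}\right)\setminus\left(\xi^{i}+1\right)$, where $\lusim{\xi}=d\left(\xi^{i}\right)$ is read off the generic below $\xi^{i}+1$, and takes the diagonal intersection only at the coordinate $\xi^{i}$ itself, over the lower tuples. Without this device (or an explicit substitute, e.g.\ intersecting the relevant measure-one sets over all $\leq\xi^{i}$-many lower tuples using completeness), a "nested iteration of Lemma \ref{Lemma: FS, fusion lemma}" does not go through: that lemma handles dense sets indexed by a single coordinate with no parameter dependence, and the whole difficulty here is the parameter.

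The "furthermore" clause is also not "standard density," and this is a second genuine gap. The condition $p^{*}\in G$ is already fixed, and ${p^{*}}^{\frown}\langle\xi,\mu^{*1}_{0}(\xi),\ldots,\mu^{*m}_{0}(\xi)\rangle$ shrinks the measure-one sets at every measurable $\mu$ lying in a gap $\left(\mu^{*i-1}_{0}(\xi),\mu^{*i}_{0}(\xi)\right)$ by removing all ordinals $\leq\mu^{*i-1}_{0}(\xi)$; this extension belongs to $G$ only if the Prikry sequence that $G$ chooses at each such $\mu$ avoids the removed ordinals, i.e.\ only if $d(\mu)>\mu^{*i-1}_{0}(\xi)$. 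That is a nontrivial structural fact about the generic: the paper derives it from the property, established in the proof of Lemma \ref{Lemma: FS, jW of p minus idW* belongs to H}, that for all but finitely many measurables $\mu$ one has $d(\mu)\notin\bigcup_{\zeta\in\Delta\cap\mu}\left(d(\zeta),\zeta\right]$, followed by a short induction on $i$ showing $d(\mu)>\xi$ and then $d(\mu)>\mu^{*i-1}_{0}(\xi)$. Your sketch never invokes this property, and without it the $W$-largeness of $\{\xi<\kappa\colon{p^{*}}^{\frown}\langle\xi,\mu^{*1}_{0}(\xi),\ldots,\mu^{*m}_{0}(\xi)\rangle\in G\}$ has no justification.
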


\begin{proof}
Let us assume for simplicity that $ p\geq^* 0 $. Else, just work with values of $ \xi $ above some ordinal $ \mu $ for which $ p\setminus \mu \geq^* 0 $.

For every $ \langle \xi, \xi^{1}, \ldots, \xi^{m} \rangle $, direct extend $ p(\xi, \xi^{1}, \ldots, \xi^{m}) \geq^* p $ such that for every $ 1\leq i\leq m $, $ p\left( \xi, \xi^{1}, \ldots, \xi^{m} \right) \restriction_{\xi^{i}} \parallel  \xi \in \lusim{A}^{p\left( \xi, \xi^{1}, \ldots, \xi^{m} \right)}_{\xi^{i} } $. If this is positively decided for every $ 1\leq i\leq m $, let $ r\left( \vec{\xi} \right) =  r\left( \xi, \xi^{1}, \ldots, \xi^{m} \right) \geq^* {p\left( \vec{\xi}\right)}^\frown \langle \vec{\xi} \rangle \setminus \xi $ be a condition in $e(\vec{\xi})$. 

Apply the ineffability lemma \ref{Lemma: Ineffability lemma}. There are sets $ X_0\in U^{0} $, $ X_1\in U^{1}, \ ,\ldots, \ X_{m}\in U^{m} $, such that, for every $ 0\leq j\leq m $ and an increasing sequence $ \langle \xi, \xi^{1}, \ldots, \xi^{m} \rangle\in X_0\times \ldots \times X_{m} $, there exists a condition $ r_{j+1}\left(  \xi, \xi^{1}, \ldots, \xi^{j}\right) $ such that--
$$  r\left( \xi, \xi^{1},\ldots, \xi^{m} \right)\restriction_{ \xi^{j+1} } = r_{j+1}\left( \xi, \ldots, \xi^{j} \right)\restriction_{ \xi^{j+1} } $$ 
(if $ j=m $, let $ \xi^{j+1} = \kappa $ and $ r_{j+1} = r $, and the above equality trivially holds).

The idea now will be to combine all the conditions $ r_{j+1}\left( \xi, \xi^{1}, \ldots, \xi^{j}  \right) $ into a single condition. We first define a sequence $ r^{0}\leq^* r^{1} \leq^* \ldots \leq^* r^{m-1} $ of  direct extensions of $ p $, such that, for every $ j\leq m-1 $ and $\langle  \xi, \xi^{1} , \ldots, \xi^{j}\rangle \in X_0 \times X_1\times \ldots \times X_j$ which is admissible for $ r^{j} $,
$$\left( {\left(r^{j}\right)^{\frown} \langle \xi, \xi^{1},\ldots, \xi^{j}\rangle }\right)^{ -\xi^{j} } \geq^*  r_{j+1}\left( \xi, \xi^{1},\ldots, \xi^{j}\right) $$

\begin{claim*}
	Such a sequence $ \langle r^{j} \colon j\leq m-1 \rangle $ exists.
\end{claim*}

\begin{proof}
We remark that in the proof below we omit the subindex $ j+1 $ in conditions of the form $ r_{j+1}\left( \xi, \xi^{1}, \ldots, \xi^{j} \right) $ (the index can be calculated from the number of variables).\\
\textbf{Construction for $\textbf{j=0} $:} Our goal is to define a condition $ r^{0}\geq^* p $ which has the following property: for every $ \xi\in X_0 $, $ \left(r^{0}\right)^{-\xi}\geq^* r\left( \xi \right) $. Such a condition can be constructed in a similar manner as in the Fusion lemma: we first construct a sequence $ \langle q_{ \eta } \colon \eta< \kappa \rangle $ of direct extensions of $ p $, which satisfy, for every $ \eta < \mu < \kappa$,
	$$ q_{\mu}\restriction_{\eta} = q_{\eta}\restriction_{\eta}  \ \mbox{ and } \ \left(  q_{\mu} \right)^{-\eta} \geq^* q_{\eta}$$
	and such that, for every $ \xi \in X_0 $, $ q_{\xi} \setminus \xi \geq^* r\left( \xi \right) $. Then, take-- 
	$$ r^{0}= \underset{ \xi < \kappa  }{ \bigcup } q_{\xi}\restriction_{\xi} $$
	\textbf{Construction for $ \textbf{j=1} $:} Before we proceed to the general case (for arbitrary $ j\leq m-1 $), let us construct $ r^{1}\geq^* r^{0} $. We construct it such that it has the following property:
	For every $ \langle \xi,\xi^{1} \rangle\in X_{0}\times X_{1} $ which is admissible for $ r^{1} $, 
	$$ \left(\left(r^{1}\right)^{\frown} \langle \xi ,\xi^{1}  \rangle\right)^{ -\xi^{1} } \geq^* r\left( \xi, \xi^{1} \right) $$
	First, note that for every $ \langle \xi, \xi^{1} \rangle\in X_0 \times X_1 $,
	$$ \left(  r^{0}\restriction_{ \xi^{1} } \right)^{-\xi} \geq^* r\left( \xi \right)\restriction_{\xi^{1}} = r\left( \xi, \xi^{1} \right)\restriction_{ \xi^{1} } $$
	Now, we apply Fusion. We construct a sequence $ \langle q_{\eta} \colon \eta<\kappa \rangle $ of direct extensions of $ r^{0} $. As before, we require that for every $ \eta<\mu< \kappa $,
	$$ q_{\mu}\restriction_{\eta} = q_{\eta}\restriction_{\eta}  \ \mbox{ and } \ \left(  q_{\mu} \right)^{-\eta} \geq^* q_{\eta}$$
	In addition, we require that if $ \xi^{1}\in X_{1} $,
	\begin{align*}
		q_{\xi^{1}}\restriction_{\xi^{1}} \Vdash &\forall \xi\in \lusim{A}^{q}_{\xi^{1}}\cap X_0,  \mbox{ if } r\left( \xi, \xi^{1} \right)\restriction_{\xi^{1}}\in \lusim{G}_{\xi^{1}}, \mbox{ then--} \\ 
		&{\langle \langle \xi \rangle, A^{q}_{\xi^{1}}\setminus \xi+1 \rangle}^{\frown}\left( q_{\xi^{1}}\setminus \left(\xi^{1}+1\right)\right) \geq^* r\left( \xi, \xi^{1} \right)\restriction_{g_{i+1}} 
	\end{align*}
	Indeed, fix $ \xi^{1} \in  X_1 $, and let us argue that $ q_{\xi^{1}} $ can be constructed as described above. Take $ q_{\xi^{1}}\restriction_{\xi^{1}} = \bigcup \{ q_{ \eta }\restriction_{\eta} \colon \eta<\xi^{1} \} $. Fix a generic set $ G_{\xi^{1}}\subseteq P_{\xi^{1}} $ with $ q_{\xi^{1}}\restriction_{\xi^{1}} \in G_{\xi^{1}} $, and let us describe $ q_{\xi^{1}}\setminus \xi^{1} $. Fix $ \xi\in A^{r^{0}}_{\xi^{1}} \cap X_0$. If there exists $ \mu \in \left( \xi, \xi^{1} \right) $  with $ d\left( \mu \right) \leq \xi $, do nothing. Else, it follows that $ \left( r^{0}\restriction_{\xi^{1}} \right)^{-\xi} \in G_{\xi^{1}} $. In particular, $ r\left(  \xi, \xi^{1} \right)\restriction_{ \xi^{1} }\in G_{\xi^{1}} $. Thus $ r\left( \xi, \xi^{1} \right)\setminus \xi^{1} $ is a legitimate direct extension of $ \langle \langle \xi \rangle, A^{r^{0}}_{\xi^{1}}  \rangle^{\frown} r^{0}\setminus \left(\xi^{1}+1\right) $ in $ V\left[G_{\xi^{1}}\right] $. It follows that we can choose-- 
	$$ A^{q_{\xi^{1}}}\left( \xi^{1} \right) =  A^{r^{0}}_{\xi^{1}} \cap \underset{ \xi \in A^{r^{0}}_{\xi^{1}} \cap X_0   }{\triangle} A^{ r\left( \xi, \xi^{1} \right) }_{\xi^{1}} $$
	(where, in the above equation, whenever $ \xi $ satisfies that there exists  $ \mu \in \left( \xi, \xi^{1} \right) $ with $ d\left( \mu \right)\leq \xi $, we take $A^{r\left( \xi, \xi^{1} \right)}_{\xi^{1}} = A^{r}_{\xi^{1}}$). \\
	So $ q_{\xi^{1}}(\xi^{1}) $ has been constructed, and we can proceed and define  $ q_{\xi^{1}}\setminus \left( \xi^{1}+1 \right) $. We take it to be $ r\left( \lusim{\xi}, \xi^{1} \right)\setminus \left( \xi^{1}+1 \right) $, where $ \lusim{\xi} = d\left( \xi^{1} \right) $. This concludes the construction of $ q_{\xi^{1}} $. 
	
	Applying the standard Fusion argument, we can choose $ r^{1}\geq^* r^{0} $, such that for every $ \xi^{1}< \kappa $, $ \left(r^{1}\right)^{-\xi^{1}} \geq^* q_{\xi^{1}} $. \\
	\textbf{Construction for arbitrary $ \textbf{j} $:} Assume that $ r^{j-1} $ has been constructed, and let us define $ r^{j}\geq^* r^{j-1} $, such that, for every $ \langle \xi, \xi^{1}, \ldots, \xi^{j} \rangle\in X_0 \times \ldots \times X_{j} $ which is admissible for $ r^{j} $,
	$$  \left(  \left(  r^{j} \right)^{\frown} \langle \xi, \xi_1, \ldots, \xi^{j} \rangle  \right)^{-\xi^{j}} \geq^* r\left( \xi, \xi^{1}, \ldots, \xi^{j} \right) $$
	Let us construct a sequence $ \langle q_{\eta} \colon \eta<\kappa \rangle $ of direct extensions of $ r^{j-1} $, similarly as before; however, we now require that whenever $ \xi^{j}\in X_{j} $,
	\begin{align*}
		q_{\xi^{j}}\restriction_{\xi^{j}} \Vdash &\forall \xi\in \lusim{A}^{q}_{\xi^{j}}\cap X_{j}, \mbox{ if, in } V\left[G_{\xi^{j}}\right], \  d^{-1}\{\xi\} = \{ \xi^{1}, \ldots, \xi^{j-1} \}, \ \mbox{and }\\ 
		&r\left(  \xi, \xi^{1}, \ldots, \xi^{j-1}, \xi^{j} \right)\restriction_{\xi^{j}}\in \lusim{G}_{\xi^{j}}, \mbox{ then-}\\
		&\langle \langle \xi \rangle, A^{q}_{\xi}\setminus \left(\xi+1\right) \rangle^{\frown} \left(  q_{\xi^{j}} \setminus \left( \xi^{j}+1 \right)  \right) \geq^* r\left(  \xi, \xi^{1},\ldots, \mu^{j-1}, \xi^{j} \right) \setminus \xi^{j}
	\end{align*}
	Now construct $ r^{j} = \bigcup_{\eta<\kappa} q_{\eta}\restriction_{\eta} $, such that for every $ \xi^{j}\in X_{j}$, $ \left(r^{j}\right)^{-\xi^{j}} \geq^* q_{\xi^{j}} $. The condition $ r^{j} $ defined above is as desired.
\end{proof}

Now, let $ p^* = r^{m-1} $. It has the following property: for every increasing  $ \langle \xi, \xi^{1}, \ldots, \xi^{m-1} \rangle\in X_0\times X_1 \times \ldots \times X_{m-1} $, which is admissible for $ p^* $,
$$\left(p^*\right)^{\frown}  \langle \xi, \xi^{1},\ldots, \xi^{m-1} \rangle \geq^* r\left( \xi, \xi^{1},\ldots, \xi^{m-1} \right) $$
This proves the first part of the lemma. We now proceed to the second part, and assume that $ p^* \in G $. $ p^* $ satisfies that for every increasing sequence $ \langle \xi, \xi^{1}, \ldots, \xi^{m} \rangle\in X_0 \times \ldots \times X_m $, and for every $ 1\leq i \leq m $, $ {p^*}^{\frown} \langle \vec{\xi} \rangle\restriction_{ \xi^{i} } $ decides whether $ \xi \in  \lusim{A}^{p^*}_{\xi^{i}} $. Let us argue that--
$$ \{ \xi<\kappa \colon \langle \vec{\mu}^{*}_0(\xi) \rangle  \mbox{ is admissible for } p^* \}\in W $$
Take $ X\in W $ such that $ X\subseteq X_0 \cap d''X_1 \cap \ldots \cap d'' X_m $. For every $ \xi\in X $, $ \vec{\mu}^{*}(\xi) \in X_0\times X_1\times \ldots \times X_m $. Then $ X $ can be shrinked to a set in $ W $ for which the above decisions are positive with respect to $ \vec{\xi} = \vec{\mu}^{*}_{0}(\xi) $:  Indeed, otherwise, in $ M\left[H\right] $, it would not hold that $ d^{-1}\{ \kappa\} = \langle \mu^{*1}_{0}, \ldots, \mu^{*m}_{0} \rangle $.

Finally, let us verify that $  \{ \xi<\kappa \colon  {p^*}^{\frown} \langle \vec{\mu}^*_{0}(\xi)  \rangle \in G \}\in W $. We need to verify that for a set of $ \xi $-s in $ W $ the following holds: for every $ 1\leq i \leq m $ and for every measurable $ \mu\in \left(\mu^{*i-1}(\xi), \mu^{*i}(\xi) \right)  $, $ d(\mu) > \mu^{*i-1}(\xi) $.

Recall following property from the proof of lemma \ref{Lemma: FS, jW of p minus idW* belongs to H}: If $ d''\Delta \in W $ (namely $ m>0 $; if $ m=0 $ there is nothing to prove), then there exists a finite subset $ b\subseteq \kappa $ such that for every measurable $ \mu > \sup(b) $,
$$  d(\mu) \notin \bigcup_{ \xi\in \Delta \cap \mu } \left(  d(\xi), \xi \right] $$
from now on, we consider values of $ \xi $ above $ \sup(b) $, such that $ d^{-1}\{\xi\} $ contains only measurables $ \mu $ for which the above holds (the set of such $ \xi $-s is clearly in $ W $).  Let $ \mu \in \left(\mu^{*i-1}(\xi), \mu^{*i}(\xi) \right)  $. First,  note that $ \mu< \mu^{*i}_{\xi} $, and thus--
$$ \xi = d\left( \mu^{*i}(\xi) \right)\notin \left( d(\mu), \mu \right] $$
so $ d(\mu) \geq\xi $, and thus $ d(\mu)> \xi $. This also proves the desired property for $ i=1 $. Assume now that $ i>1 $. Then $ \mu> \mu^{*i-1}(\xi) $ and thus--
$$ d(\mu)\notin \left(  d\left(  \mu^{* i-1}(\xi) \right) , \mu^{* i-1}(\xi)   \right] = \left( \xi, \mu^{* i-1}(\xi)  \right] $$ 
since we already proved that $ d(\mu)> \xi $, it follows that $ d(\mu) > \mu^{*i-1}(\xi) $, as desired.
\end{proof}

\begin{defn}
Fix $ 0<\alpha\leq\kappa^* $. An increasing sequence $ 0=\alpha_0<\alpha_1< \ldots <\alpha_k  $ below $ \alpha $ is called nice, if the are functions $ g_i, f_i, F^{j}_i $ for every $ 1\leq i\leq k $ and $ 0\leq j \leq m_{ \alpha_i } $ such that--
$$ \mu_{\alpha_1} = j_{\alpha_1}\left( g_1 \right)\left(  j_{1,\alpha_1}\left(  \left[Id\right]'_{0} \right) \right) $$
$$ t_{\alpha_1} = j_{\alpha_1}\left(  f_0 \right) \left(  j_{1,\alpha_1}\left(  \left[Id\right]'_{0} \right)\right) $$
	$$ U^{j}_{\mu_{\alpha_1} } = j_{\alpha_1}\left( F^{j}_{0} \right)\left(   j_{1,\alpha_1}\left(  \left[Id\right]'_{0} \right) \right)   \ \ \left( 0\leq j < m_1 \right)$$
and, for every $ 1\leq i < k $,
$$  \mu_{\alpha_{i+1}} = j_{\alpha_{i+1}}\left( g_{i+1} \right)\left(  j_{1,\alpha_1}\left( \left[Id\right]'_0 \right),j_{\alpha_1,\alpha_{i+1}  } \left( \left[Id\right]_{\alpha_1} \right), \ldots, j_{ \alpha_{i}, \alpha_{i+1}  }  \left( \left[Id\right]_{\alpha_i}\right) \right) $$
$$  t_{\alpha_{i+1}} = j_{\alpha_{i+1}}\left( f_{i+1} \right)\left( j_{1,\alpha_1}\left( \left[Id\right]'_0 \right),j_{\alpha_1,\alpha_{i+1}  } \left( \left[Id\right]_{\alpha_1} \right), \ldots, j_{ \alpha_{i}, \alpha_{i+1}  }  \left( \left[Id\right]_{\alpha_i}\right) \right)  $$
$$  U^{j}_{\mu_{\alpha_{i+1}}} = j_{\alpha_{i+1}}\left( F^{j}_{i+1} \right)\left(   j_{1,\alpha_1}\left( \left[Id\right]'_0 \right),j_{\alpha_1,\alpha_{i+1}  } \left( \left[Id\right]_{\alpha_1} \right), \ldots, j_{ \alpha_{i}, \alpha_{i+1}  }  \left( \left[Id\right]_{\alpha_i}\right) \right) $$
\end{defn}

Working by induction, we can define now functions, in $ V\left[G\right] $, which represent the cardinals $ \vec{\mu}^{*}_{\alpha_i} $ in $ \mbox{Ult}\left( V\left[G\right], W \right) $.

\begin{enumerate}
	\item For every $ \xi<\kappa $ with $ \left|d^{-1}(\xi)\right| = m $, recall the sequence  $ \langle {\mu}_0^{*1}(\xi), \ldots, {\mu}_0^{*m}(\xi)\rangle $ which is the increasing enumeration of $ d^{-1}( \xi) $. Denote--
	$$ \vec{\mu}^{*}_0(\xi)  = \langle \xi, {\mu}_0^{*1}(\xi), \ldots, {\mu}_0^{*m}(\xi) \rangle $$
	Then in $ M\left[H\right] $, the sequence $\left[\xi\mapsto \vec{\mu}^{*}_{0}(\xi) \right]_W$ is--
	$$ \langle \kappa = \mu_0 , \mu^{*1}_{0} , \ldots, \mu^{*m-1}_{0} , \mu^{*m}_{0}  \rangle = \langle \left[Id\right]_W , \left[ Id \right]_{W^{1}}, \ldots,  \left[Id\right]_{W^{m-1}}, \left[Id\right]_{W^m} \rangle $$
	\item Given $ \xi<\kappa $, let $ \mu_{\alpha_1}(\xi) $ be the $ n_0 $-th element in the Prikry sequence of $g_0 = g_0(\vec{\mu}^{*}_{0}(\xi) )  $. Let $ \langle \mu^{*1}_{\alpha_1}(\xi), \ldots, \mu^{*m_{ \alpha_1 }-1 }_{\alpha_1}(\xi) \rangle $ be the increasing enumeration of $ d^{-1}\left( \mu_{\alpha_1}(\xi)  \right) $ below $ g_0 $. Denote $ \mu^{*m_{\alpha_1 }}_{\alpha_1}(\xi)  = g_0$ and--
	$$ \vec{\mu}^{*}_{\alpha_1}(\xi) = \langle \mu_{\alpha_1}(\xi) , \mu^{*1}_{\alpha_1}(\xi), \ldots, \mu^{ *m_1 -1}_{\alpha_1}(\xi) \rangle $$ 
	Then in $ M\left[H\right] $,
	$$  \left[\xi\mapsto  \vec{\mu}^{*}_{ \alpha_1 }(\xi) \right]_{W} = \langle \mu_{\alpha_1} , \mu^{*1}_{\alpha_1}, \ldots, \mu^{*m_1-1 } \rangle $$
	namely, this is the sequence which includes $ \mu_{\alpha_1} $, concatenated with the increasing enumeration of $ d^{-1}\{  \mu_{\alpha_1} \} $ in $ M\left[H\right] $.
	\item Assuming that $ 0\leq j<k $ and the functions $ \vec{\mu}^{*}_{\alpha_0}(\xi), \ldots, \vec{\mu}^{*}_{\alpha_j}(\xi) $ have been defined, let $ \mu_{\alpha_{j+1}}(\xi) $ be the $ n_{j+1} $-th element in the Prikry sequence of $ g_{j+1}\left( \vec{\mu}^{*}_{0}(\xi), \vec{\mu}^{*}_{\alpha_1}(\xi), \ldots, \vec{\mu}^{*}_{\alpha_j}(\xi) \right) $ in $ M\left[H\right] $. Let $ \langle \mu^{*1}_{\alpha_{j+1}}(\xi) , \ldots, \mu^{  * m_j-1  }_{\alpha_{j+1}}(\xi)\rangle $ be the increasing enumeration of $ d^{-1}\left( \mu_{\alpha_{j+1}}(\xi) \right) $ below-- 
	$$ \mu^{ * m_{j } }_{\alpha_{j+1}}(\xi) = g_{j+1}\left( \vec{\xi}, \vec{\mu}^{*}_{\alpha_0}(\xi), \ldots, \vec{\mu}^{*}_{\alpha_j }(\xi) \right) $$ 
	Also, denote--
	$$ \vec{\mu}^{*}_{  \alpha_{ j+1 } }(\xi) = \langle \mu^{*1}_{\alpha_{j+1}}(\xi), \ldots, \mu^{ * m_{ j+1 } -1 }_{ \alpha_{j+1} }(\xi) \rangle $$
\end{enumerate}

Denote $ \mu_{\alpha} = \mbox{crit}\left( k_{\alpha} \right) $. Write $ \mu_{\alpha} =j_{\alpha}(h)\left( \kappa, \mu_{\alpha_0}, \ldots, \mu_{\alpha_k} \right) $, where $ \alpha_0 < \ldots < \alpha_{k} <\alpha $ is a nice sequence. Let $ m_0, \ldots, m_k $ be such that $ m_i = m_{\alpha_i} $. Denote $ m = m_{0} $. Let $ g_i,f_i, F_i,F^{j}_{i} $ be functions as above. 

Note that, by induction, $ \left[\xi\mapsto F^{j}_{i+1}\left(\xi,  \mu_{\alpha_0}(\xi), \ldots, \mu_{\alpha_{i}}(\xi)  \right) \right]_W = \left[\xi \mapsto U^{j}_{ \mu_{\alpha_{i+1}  }(\xi) } \right]_W$ for every $ 0\leq i \leq k $ and $ 0\leq j \leq m_{\alpha_{i+1}} $. Thus, for a set of $ \xi $-s in $ W $, 
$$  F^{j}_{i+1}\left(\xi,  \mu_{\alpha_0}(\xi), \ldots, \mu_{\alpha_{i}}(\xi)  \right)= U^{j}_{ \mu_{\alpha_{i+1}  }(\xi) } $$

\begin{defn}
Fix a nice sequence $ 0= \alpha_0 < \alpha_1 <\ldots < \alpha_k $ below $ \alpha $. Given a condition $ p\in P_{\kappa} $ and a sequence of increasing sequences--
$$ \langle \vec{\xi}, \vec{\nu}_1, \ldots, \vec{\nu}_{k} \rangle =  \langle  \langle \xi, \xi^{1}, \ldots, \xi^{m} \rangle, \langle \nu_1, \nu^{1}_{1}, \ldots, \nu^{m_0-1}_{1} \rangle, \langle \nu_{2} ,\nu^{1}_{2}, \ldots, \nu^{m_1-1}_{2} \rangle, \ldots \ldots , \langle \nu_{k}, \nu^{1}_{k},\ldots ,\nu^{m_k-1}_{k}\rangle  \rangle $$ 
we define whenever $  \langle \vec{\xi}, \vec{\nu}_1, \ldots, \vec{\nu}_{k} \rangle  $ is admissible for $ p $, and in this case, we define an extension $ p^{\frown} \langle \xi, \vec{\nu}_1, \ldots, \vec{\nu}_{k} \rangle \geq p $. 
\begin{enumerate}
	\item An increasing sequence $ \vec{\xi} = \langle \xi,  \xi^{1}, \ldots, {\xi}^{m} \rangle $ is admissible for $ p $ if for every $ 1\leq i \leq m $ (if such $ i $ exists, namely $ m\geq 1 $),
	$$ p\restriction_{ \xi^i } \Vdash \xi\in \lusim{A}^{\xi^{i}} \mbox{ and } \lusim{t}^{p}_{\xi^{i}} = \emptyset $$
	if this holds, set--
	\begin{align*} p^{\frown} \langle \vec{\xi} \rangle =& {\left({p\restriction_{\xi^{1}}}\right)^{-\xi}}^{\frown} {\langle \langle \xi \rangle, \lusim{A}^{p}_{\xi^{1}} \rangle }^{\frown} {\left({p\restriction_{ \left(\xi^{1}, \xi^{2} \right)}}\right)^{-\xi^1}}^{\frown} {\langle \langle \xi \rangle, \lusim{A}^{p}_{\xi^{2}} \rangle }^{\frown} \ldots^{\frown} \\
	&{\left({p\restriction_{ \left(\xi^{m-1}, \xi^{m} \right)}}\right)^{-\xi^{m-1}}}^{\frown} {\langle \langle \xi \rangle, \lusim{A}^{p}_{\xi^{m}} \rangle }^{\frown} \left({p\setminus \left( 
	\xi^{m}+1 \right)}\right)^{-\xi^m} 
	\end{align*}
	\item Let $  1 \leq i <k$. Assume that $ \langle \vec{\xi}, \vec{\nu}_1, \ldots, \vec{\nu}_{i} \rangle $ is admissible for $ p $ and $q = p^{\frown} \langle \vec{\xi}, \vec{\nu}_1, \ldots, \vec{\nu}_{i} \rangle $ has been defined. Denote-- 
	$$ g_{i+1} = g_{i+1}\left( \vec{\xi}, \vec{\nu}_{1}, \ldots, \vec{\nu}_{i} \right) $$
	$$ t_{i+1} = t_{i+1}\left( \vec{\xi}, \vec{\nu}_1, \ldots, \vec{\nu}_{i} \right) $$
	$$ F^{j}_{i+1} = F^{j}_{i+1}\left( \vec{\xi}, \vec{\nu}_1, \ldots, \vec{\nu}_{i} \right) \ \left( 0\leq j < m_{ \alpha_{i+1} } \right) $$
	We say that $ \langle \vec{\xi}, \vec{\nu}_1, \ldots, \vec{\nu}_{i+1} \rangle $ is admissible for $ p $ if for every $ 1\leq j< m_i $ (if such $ j $ exists),
	\begin{align*}	
		q \restriction_{ \nu^{j}_{i} } \Vdash \nu_{i}\in \lusim{A}^{q}_{\nu^{j}_{i}} \mbox{ and } t^{q}_{\nu^{j}_{i}} = \emptyset 
	\end{align*}
	and-- 
	\begin{align*}
	q\restriction_{g_{i+1}} \Vdash & \langle {t_{{i+1} } }^{\frown} \langle \nu_{i} \rangle, \lusim{A}^{q}_{g_{i+1}} \rangle \mbox{ is compatible with } q\left( g_{i+1} \right), \mbox{ and } \langle F^{j}_{i+1} \colon j\leq m_{\alpha_{i+1}} \rangle \\
	&\mbox{is the system of measures } \langle U^{j}_{ g_{i+1} } \colon j\leq m\left( U^{\times}_{g_{i+1}} \right) \rangle.
	\end{align*}
	if this holds, and $ t^{q}_{g_{i+1}} $ is an initial segment of $ {t_{i+1}}^{\frown} \langle \nu_{i+1} \rangle $, let--
	\begin{align*}
	p^{\frown} \langle \vec{\xi}, \vec{\nu}_0, \ldots, \vec{\nu}_{i+1} \rangle = & q^{\frown} \langle \vec{\nu}_{i+1} \rangle\\
	= &  	{  \left(q\restriction_{\nu^{1}_{i+1} }\right)^{-\nu_{i+1}}  }^{\frown} { \langle \langle \nu_{i+1} \rangle, \lusim{A}^{q}_{\nu^{1}_{i+1} }  \rangle  }^{\frown} { \left(q\restriction_{  \left( \nu^{1}_{i+1}, \nu^{2}_{i+1} \right) } \right)^{-\nu^{1}_{i+1} }}  ^{\frown}  \\
	& {\langle \langle \nu_{i+1} \rangle, \lusim{A}^{q}_{\nu^{2}_{i+1} } \rangle}^{\frown}\ldots ^{\frown} { \langle {t_{i+1}}^{\frown} \langle \nu_{i+1} \rangle, \lusim{A}^{q}_{  \nu^{m_{i+1} }_{i+1}  } \rangle   }^{\frown} \left(q\setminus \left( g_{i+1}+1 \right)\right) 
	\end{align*}
	else, set $ p^{\frown} \langle \vec{\xi}, \vec{\nu}_1, \ldots, \vec{\nu}_{m+1} \rangle = p^{\frown} \langle \vec{\xi}, \vec{\nu}_1, \ldots, \vec{\nu}_{m} \rangle $.
\end{enumerate}
Given $ i<\omega $ and a condition $ p $ which forces that--
$$ \langle \vec{\xi}, \vec{\nu}_1, \ldots, \vec{\nu}_{k} \rangle =  \langle   \vec{\mu}^{*}_0(\xi), \vec{\mu}^{*}_1(\xi), \ldots, \vec{\mu}^{*}_i(\xi) \rangle $$ 
We define, similarly to above, whenever a sequence 
$$ \langle  \langle \nu_{i+1}, \nu^{1}_{i+1}, \ldots, \nu^{m_{i+1}-1}_{i+1} \rangle,  \ldots \ldots , \langle \nu_{k}, \nu^{1}_{k},\ldots ,\nu^{m_k-1}_{k}\rangle  \rangle $$ 
is admissible for $ p $ above $ \langle \vec{\xi}, \vec{\nu}_1, \ldots, \vec{\nu}_{k} \rangle  $. If this is the case, we can define similarly the condition $ p^{\frown} \langle  \langle \nu_{i+1}, \nu^{1}_{i+1}, \ldots, \nu^{m_{i+1}-1}_{i+1} \rangle,  \ldots \ldots , \langle \nu_{k}, \nu^{1}_{k},\ldots ,\nu^{m_k-1}_{k}\rangle  \rangle $.
\end{defn}

\begin{thm} [Multivariable Fusion]
Let $ p\in P_{\kappa} $ be  a condition and, for every sequence $ \langle \vec{\xi}, \vec{\nu}_1, \ldots, \vec{\nu}_k \rangle $, let $ e\left( \vec{\xi}, \vec{\nu}_1, \ldots, \vec{\nu}_k \right)$ be a subset of $ P\setminus \left(\nu_k+1\right) $ which is $ \leq^* $ dense open above any condition which forces that $ \langle \vec{\mu}^{*}_0(\xi), \vec{\mu}^{*}_{\alpha_1}(\xi), \ldots, \vec{\mu}^{*}_{\alpha_k}(\xi) \rangle = \langle \vec{\xi},  \vec{\nu}_1,\ldots, \vec{\nu}_k \rangle $. Then there exists $ p^*\geq^* p $ and a set $ X\in \mathcal{E}^{'}_{0} $, such that for every sequence of increasing sequences,
$$\langle \vec{\xi}, \vec{\nu}_1,\ldots, \vec{\nu}_{k} \rangle =   \langle \langle \xi, \xi^{1}, \ldots, \xi^{m} \rangle, \langle \nu_1, \nu^{1}_{1}, \ldots, \nu^{m_1-1}_{1} \rangle, \ldots, \langle \nu_k, \nu^{1}_{k}, \ldots, \nu^{m_k-1}_{k} \rangle \rangle $$
which is admissible for $ p^* $, and such that $ \langle \xi, \xi^{1}, \ldots, \xi^{m} \rangle\in X $,
$$ {{p^*}^{\frown} \langle \vec{\xi}, \vec{\nu}_1, \ldots, \vec{\nu}_{k} \rangle}\restriction_{ \nu_{k} } \Vdash {p^*}^{\frown} \langle \vec{\xi}, \vec{\nu}_1, \ldots, \vec{\nu}_{k} \rangle\setminus \nu_k \in e\left( \vec{\xi}, \vec{\nu}_1, \ldots, \vec{\nu}_k \right) $$
Furthermore, there exists $ p^*\in G $, for which--
\begin{align*}
\{  \xi<\kappa \colon &\langle \vec{\xi}, \vec{\mu}_{\alpha_1}(\xi),\ldots, \vec{\mu}_{\alpha_k}(\xi) \rangle \mbox{ is admissible for } p^*, \\
& {p^*}^{\frown}\langle \vec{\xi}, \vec{\mu}^{*}_{\alpha_1}(\xi),\ldots, \vec{\mu}^{*}_{\alpha_k}(\xi) \rangle \in G  \mbox{ and } \\
& {{p^*}^{\frown} \langle \vec{\xi}, \vec{\mu}^{*}_{\alpha_1}, \ldots, \vec{\mu}^{*}_{\alpha_k} \rangle}\restriction_{ \mu_{\alpha_k} } \Vdash {p^*}^{\frown} \langle \vec{\xi}, \vec{\mu}^{*}_{\alpha_1}, \ldots, \vec{\mu}^{*}_{\alpha_k} \rangle\setminus \mu_{\alpha_k} \in e\left( \vec{\xi}, \vec{\mu}^{*}_{\alpha_1}, \ldots, \vec{\mu}^{*}_{\alpha_k} \right) 
 \}\in W 
\end{align*}
\end{thm}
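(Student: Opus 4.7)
The plan is to argue by induction on $k$, the length of the nice sequence beyond the initial $\alpha_0 = 0$. The base case $k = 0$ — when only the variable $\vec{\xi}$ is present — is exactly Lemma \ref{Lemma: FS, Multivar Fusion for xi}, which I will reuse as a black box. For the inductive step, I want to reduce the problem with $k$ intermediate variables to one with $k-1$ by pushing the last variable $\vec{\nu}_k$ into the $\leq^*$-dense-open data.

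Concretely, for each admissible prefix $\langle \vec{\xi}, \vec{\nu}_1, \ldots, \vec{\nu}_{k-1}\rangle$ relative to $p$, set $q = p^{\frown}\langle \vec{\xi}, \vec{\nu}_1, \ldots, \vec{\nu}_{k-1}\rangle$. This condition decides the coordinate $\mu_{\alpha_k} = g_k(\vec{\xi}, \vec{\nu}_1, \ldots, \vec{\nu}_{k-1})$ together with its measure system $\langle F^j_k(\vec{\xi}, \vec{\nu}_1, \ldots, \vec{\nu}_{k-1}) \colon j < m_{\alpha_k}\rangle$. I apply the single-variable fusion of Lemma \ref{Lemma: FS, Multivar Fusion for xi}, shifted to the coordinate $\mu_{\alpha_k}$ with this measure system in place of $U^0 \vartriangleleft \ldots \vartriangleleft U^m$ at $\kappa$; the family $\{e(\vec{\xi}, \vec{\nu}_1, \ldots, \vec{\nu}_{k-1}, \vec{\nu}_k) : \vec{\nu}_k\}$ serves as the $\leq^*$-dense-open input. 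The output is a direct extension $r$ of $q \setminus \nu_{k-1}$ and a set in the $\mathcal{E}'$-measure associated with $\mu_{\alpha_k}$ such that $r^{\frown}\langle \vec{\nu}_k\rangle \setminus \nu_k \in e(\vec{\xi}, \vec{\nu}_1, \ldots, \vec{\nu}_k)$ for admissible $\vec{\nu}_k$ drawn from that measure-one set. I then define $e'(\vec{\xi}, \vec{\nu}_1, \ldots, \vec{\nu}_{k-1})$ to be the collection of $r$ witnessing this property; by construction, $e'$ is $\leq^*$-dense open above conditions forcing the appropriate values of the prefix. The inductive hypothesis, applied to $\{e'(\vec{\xi}, \vec{\nu}_1, \ldots, \vec{\nu}_{k-1})\}$, yields $p^* \geq^* p$ and $X \in \mathcal{E}'_0$ with the required behavior for $e'$, and hence, after absorbing the prefix-dependent $\mathcal{E}'$-measure-one sets at each coordinate $\mu_{\alpha_k}$ into the $A$-sets of $p^*$ via diagonal intersection (in the spirit of the construction of the $q_\eta$'s in the proof of Lemma \ref{Lemma: FS, Multivar Fusion for xi}), also for $e$ itself.

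For the ``furthermore'' part, once $p^*$ and $X$ are fixed, I argue as at the end of Lemma \ref{Lemma: FS, Multivar Fusion for xi}: the set $\{\xi : \vec{\mu}^*_0(\xi) \in X\}$ lies in $W$ by the characterization of $\mathcal{E}'_0$ via $k_0 \colon M_0 \to M$ (Remark \ref{Remark: U^i s derived from k}), while $W$-measure-one admissibility of $\langle \vec{\xi}, \vec{\mu}^*_{\alpha_1}(\xi), \ldots, \vec{\mu}^*_{\alpha_k}(\xi)\rangle$ for $p^*$ follows inductively at each coordinate $\mu_{\alpha_i}$, using the same Prikry-sequence-disjointness facts (absence of $d(\mu) \in (\mu^{*i-1}, \mu^{*i}]$) already established there. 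A standard density argument then places $p^*$ inside $G$, and the intersection of these $W$-measure-one sets gives the desired set of $\xi$.

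The principal obstacle is Step 1 of the induction, where one invokes single-variable fusion at a coordinate $\mu_{\alpha_k}$ that itself depends on the prefix $\langle \vec{\xi}, \vec{\nu}_1, \ldots, \vec{\nu}_{k-1}\rangle$. To merge the uncountably many resulting direct extensions (one per prefix) into a single $p^*$, one must appeal to the Ineffability Lemma \ref{Lemma: Ineffability lemma} applied to the system $U^0_{\mu_{\alpha_k}} \vartriangleleft \ldots \vartriangleleft U^{m_{\alpha_k}-1}_{\mu_{\alpha_k}}$ (pulled back to $V$ via the representing functions $F^j_k$), and then diagonalize across prefixes. It is precisely because admissibility of $\vec{\nu}_k$ for the final condition is prefix-dependent that a direct single-shot argument does not suffice; the induction on $k$ lets the bookkeeping propagate one coordinate at a time.
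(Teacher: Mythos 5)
Your proposal is correct and follows essentially the same route as the paper: the paper's proof is likewise a level-by-level reduction in which its Lemma \ref{Lemma: FS, main lemma for multivar Fusion} plays exactly the role of your inductive step (applying Lemma \ref{Lemma: FS, Multivar Fusion for xi} to the forcing on the interval up to $g_{i+1}$ with the measure system $F^{0}_{i+1}\vartriangleleft\ldots\vartriangleleft F^{m-1}_{i+1}$, then absorbing the resulting $\mathcal{E}$-measure-one set and the diagonal intersection of the $A^{q(\nu_{i+1})}$-sets into the Prikry condition at $g_{i+1}$), while your base case is the paper's final application of Lemma \ref{Lemma: FS, Multivar Fusion for xi} at $\kappa$, whose ``furthermore'' clause also supplies the $W$-measure-one statement. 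The only cosmetic difference is that the paper defines the derived dense sets $e(\vec{\xi},\vec{\nu}_1,\ldots,\vec{\nu}_i)$ by requiring that \emph{all} admissible continuations land in $e$ (doing the absorption inside the density proof), whereas you define them as fusion outputs and absorb afterwards; both versions yield the same construction.
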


\begin{proof}
For every $ 1\leq i\leq k $ and a sequence $ \langle \vec{\xi},\ldots, \vec{\nu}_{i} \rangle $, we define a set $ e\left( \vec{\xi}, \vec{\nu}_1, \ldots, \vec{\nu}_{i} \right) $, which is $ \leq^* $ dense open above conditions which force that-- 
\begin{align*}
&\langle d^{-1}\{ \xi \},  \langle \mu_{\alpha_1}(\xi), \mu^{*1}_{\alpha_1}(\xi),\ldots, \mu^{*m_1-1}_{\alpha_1}(\xi) \rangle, \ldots, \langle \mu_{\alpha_i}(\xi), \mu^{*1}_{\alpha_i}(\xi), \ldots, \mu^{*m_i-1}_{\alpha_i}(\xi)\rangle \rangle =\\
&\langle \langle \xi^{1}, \ldots, \xi^{m} \rangle , \langle \nu_1, \nu^{1}_1, \ldots, \nu^{m_1-1}_{1} \rangle , \ldots, \langle \nu_i, \nu^{1}_{i}, \ldots, \nu^{m_i-1}_{i} \rangle \rangle 
\end{align*}as follows:
\begin{align*}
e\left( \vec{\xi}, \vec{\nu}_1, \ldots, \vec{\nu}_{i} \right) = &\{ r\in P\setminus \nu_{i} \colon  \mbox{ for every } \langle \vec{\nu}_{i+1},\ldots, \vec{\nu}_{k} \rangle \mbox{ which is admissible for }\\
& r \mbox{ above } \langle \vec{\xi}, \vec{\nu}_1, \ldots, \vec{\nu}_{i} \rangle, r^{\frown} \langle \vec{\nu}_{i+1}, \ldots, \vec{\nu}_{k} \rangle\restriction_{\nu_{k}} \Vdash \\ & r^{\frown} \langle \vec{\nu}_{i+1}, \ldots, \vec{\nu}_{k} \rangle\setminus  \nu_{k} \in e\left( \vec{\xi}, \vec{\nu}_1, \ldots, \vec{\nu}_{k} \right)
 \}
\end{align*}

\begin{lemma} \label{Lemma: FS, main lemma for multivar Fusion}
If $ e\left( \vec{\xi}, \vec{\nu}_1, \ldots, \vec{\nu}_{i}, \vec{\nu}_{i+1} \right) $ is $ \leq^* $-dense open above conditions which force that--
\begin{align*}
	\langle \vec{\mu}^{*}_0(\xi), \vec{\mu}^{*}_{\alpha_1}(\xi), \ldots, \vec{\mu}^{*}_{\alpha_{i+1}}(\xi)  \rangle = \langle \vec{\xi},  \vec{\nu}_1,\ldots, \vec{\nu}_{i}, \vec{\nu}_{i+1} \rangle
\end{align*}
then $ e\left( \vec{\xi}, \vec{\nu}_1, \ldots, \vec{\nu}_{i} \right) $ is $ \leq^* $-dense open above conditions which force that--
\begin{align*}
	\langle \vec{\mu}^{*}_0(\xi), \vec{\mu}^{*}_{\alpha_1}(\xi), \ldots, \vec{\mu}^{*}_{\alpha_{i}}(\xi)  \rangle = \langle \vec{\xi}, \vec{\nu}_1,\ldots, \vec{\nu}_{i}\rangle
\end{align*}

\end{lemma}

\begin{proof}
Fix $ \langle \vec{\xi}, \vec{\nu}_1, \ldots, \vec{\nu}_{i}\rangle $. Let $ r\in P\setminus \nu_i $ be a condition which forces that--
\begin{align*}
	\langle \vec{\mu}^{*}_0(\xi), \vec{\mu}^{*}_{\alpha_1}(\xi), \ldots, \vec{\mu}^{*}_{\alpha_{i}}(\xi)  \rangle = \langle \vec{\xi}, \vec{\nu}_1,\ldots, \vec{\nu}_{i}\rangle
\end{align*} 
Denote for simplicity $ m = m_{i+1} $ and--
$$ g_{i+1} = g_{i+1}\left( \vec{\xi}, \vec{\nu}_1, \ldots, \vec{\nu}_i \right) \ , \  t_{i+1}  = t_{i+1}\left( \vec{\xi}, \vec{\nu}_1, \ldots, \vec{\nu}_i \right) \  , \  F^{j}_{i+1} = F^{j}_{i+1}\left( \vec{\xi}, \vec{\nu}_1, \ldots, \vec{\nu}_{i} \right)  \ \left( 0\leq j< m \right)$$
We apply lemma \ref{Lemma: FS, Multivar Fusion for xi}. For that, consider the forcing $ P\restriction_{ \left( \nu_i , g_{i+1} \right) } $ and the sequence $ F^{0}_{i+1}\vartriangleleft  F^{1}_{i+1} \vartriangleleft \ldots \vartriangleleft F^{m-1}_{i+1} $ of measures on $ g_{i+1} $. We describe a set $ d(\nu_{i+1}, \nu^{1}_{i+1}, \ldots, \nu^{m-1}_{i+1} ) \subseteq P\restriction_{ \left( \nu_i , g_{i+1} \right) } \setminus \nu_{i+1} $ which is $ \leq^* $ dense open above conditions which force that $ d^{-1}\{ \nu_{i+1}\} = \langle \nu^{1}_{i+1} , \ldots, \nu^{m-1}_{i+1} \rangle $: 
\begin{align*}
d(\nu_{i+1}, \nu^{1}_{i+1}, \ldots, \nu^{m-1}_{i+1} ) = \{& s\in P\restriction_{ \left( \nu_{i+1},g_{i+1}   \right) }  \colon \mbox{ if } s\Vdash \langle  {t_{i+1}}^{\frown} \langle \nu_{i+1} \rangle, \lusim{A}^{r}_{g_{i+1}} \rangle \geq r(g_{i+1}), \mbox{ then there}\\
&\mbox{exists a direct extension } q\geq^* \langle  {t_{i+1}}^{\frown} \langle \nu_{i+1} \rangle, \lusim{A}^{r}_{g_{i+1}} \rangle^{\frown} r\setminus \left( g_{i+1}+1 \right) \\
&\mbox{such that } s^{\frown} q\in e\left( \vec{\xi}, \ldots, \vec{\nu}_i, \vec{\nu}_{i+1} \right) \}
\end{align*}
By lemma \ref{Lemma: FS, Multivar Fusion for xi}, there exists $ r^*\restriction_{g_{i+1}} \geq^* r\restriction_{g_{i+1}} $ and a set $ X $ which belongs to the measure $ \mathcal{E}_{i+1} $ associated with $ F^{0}_{i+1}\vartriangleleft  F^{1}_{i+1} \vartriangleleft \ldots \vartriangleleft F^{m-1}_{i+1} $, such that for every increasing $ \langle \nu_{i+1}, \nu^{1}_{i+1}, \ldots, \nu^{m-1}_{i+1} \rangle \in X $,
$$ { r^* }\restriction_{ \nu_{i+1} } \Vdash {r^*}^{\frown} \langle \nu_{i+1}, \nu^{1}_{i+1}, \ldots, \nu^{m-1}_{i+1} \rangle\setminus \nu_{i+1} \in d\left( \nu_{i+1}, \nu^{1}_{i+1}, \ldots, \nu^{m-1}_{i+1} \right)  $$
Let us define $ r^*\setminus g_{i+1} $. Assume that we work in the generic extension for $ P_{g_{i+1}} $, and $ r^*\restriction_{g_{i+1}} $, which is already defined, belongs to it. For every $ \nu_{i+1} \in {A}^{r}_{ g_{i+1} } $ above $ \max t_{i+1} $, we denote $ d^{-1}\left( \nu_{i+1} \right) = \langle {\nu}^{1}_{i+1}, \ldots, {\nu}^{m-1}_{i+1} \rangle $. Let $ q\left( \nu_{i+1} \right) \geq^* \langle {t_{i+1}}^{\frown} \langle \nu_{i+1} \rangle, A^{r}_{g_{i+1}} \rangle^{\frown} r\setminus g_{i+1} $ be a condition such that--
$$ \left({r^*}^{\frown} \langle \nu_{i+1}, \ldots, \nu^{m-1}_{i+1} \rangle \setminus \nu_{i+1}\right)^{\frown} q\left( \nu_{i+1} \right)  \in e\left( \vec{\xi}, \ldots, \vec{\nu}_{i+1} \right)$$
(and take $ q\left( \nu_{i+1} \right) = r\setminus g_{i+1} $ if such $ q $ does not exist). 

We can now define $ r^*\left( g_{i+1} \right) $. Generate from the set $ X\in \mathcal{E}_{i+1} $ a corresponding set $ Y\in W_{g_{i+1}} $. Just pick $ Y $ to be a set such that every increasing sequence from $ Y\times \pi^{-1}_{1,0} Y \times \ldots \times \pi^{-1}_{m-1,0} Y $  belongs to $ X $. Let--
$$  r^*\left( g_{i+1} \right)  = \langle \langle \rangle, A^{r}_{g_{i+1}} \cap \left( Y\cup  \max t_{i+1} \right)\cap \left(\left( \triangle_{\nu_{i+1}<g_{i+1}} A^{q\left( \nu_{i+1} \right)}_{g_{i+1}} \right) \cup \max t_{i+1}\right) \rangle$$

Finally, let us define $ r^*\setminus \left(g_{i+1}+1\right) $ to be $ q\left(  \lusim{\nu}_{i+1} \right)\setminus \left(g_{i+1}+1\right) $, where here, $ \lusim{\nu}_{i+1} = d\left( g_{i+1} \right) $ can be read from the generic up to $ g_{i+1}+1 $. This concludes the definition of $ r^*\in e\left( \vec{\xi}, \ldots, \vec{\nu}_{i} \right) $.
\end{proof}

Inductively, it follows that for every increasing sequence $ \vec{\xi} = \langle \xi, \xi^{1}, \ldots, \xi^{m} \rangle $, the set $ e(\vec{\xi}) $, defined similarly as above, is $ \leq^* $-dense open above conditions which force that $d^{-1}\{ \xi \} =  \langle  \xi^{1}, \ldots, \xi^{m}\rangle $. Apply lemma \ref{Lemma: FS, Multivar Fusion for xi} one more time to obtain, from the condition $ p $ given in the formulation of the theorem, the required direct extension $ p^* $.
\end{proof}

\subsection{Proof of Properties  (B)-(E) }

\begin{lemma}
$\mu_{\alpha} = \mbox{crit}\left( k_{\alpha} \right)$  is measurable in $ M_{\alpha} $.
\end{lemma}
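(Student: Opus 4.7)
The measurability of $\mu_{\alpha}$ in $M_{\alpha}$ will be witnessed by exhibiting the normal measure $U^{0}_{\mu_{\alpha}}$ foreshadowed in property (D). The natural candidate is the ultrafilter derived from $k_{\alpha}$, namely
$$U^{0}_{\mu_{\alpha}} = \{ X \in \mathcal{P}(\mu_{\alpha}) \cap M_{\alpha} \colon \mu_{\alpha} \in k_{\alpha}(X) \}.$$
From $\mu_{\alpha} = \mbox{crit}(k_{\alpha})$ and the elementarity of $k_{\alpha}$ it is routine that this is a $\mu_{\alpha}$-complete, normal ultrafilter on $\mathcal{P}(\mu_{\alpha}) \cap M_{\alpha}$. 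The substantive content is to show that $U^{0}_{\mu_{\alpha}} \in M_{\alpha}$.

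To prove membership in $M_{\alpha}$, I will use the representation \eqref{Equation: Form of elements in Malpha} of elements of $M_{\alpha}$. Write $\mu_{\alpha} = j_{\alpha}(h)(\vec{\delta})$ for some $h \in V$ and a tuple $\vec{\delta}$ of generators drawn from earlier stages of the iteration. In $V$ define a function $\hat{F}$ that, given a tuple $\vec{\eta}$ in its domain, returns the $V$-normal measure of Mitchell order $0$ on $h(\vec{\eta})$ whenever the latter is $V$-measurable (and a default value otherwise). Then $j_{\alpha}(\hat{F})(\vec{\delta}) \in M_{\alpha}$, and the goal reduces to the identity $j_{\alpha}(\hat{F})(\vec{\delta}) = U^{0}_{\mu_{\alpha}}$. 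Applying $k_{\alpha}$ and using $j_{W}\restriction_{V} = k_{\alpha} \circ j_{\alpha}$, this is in turn equivalent to
$$j_{W}(\hat{F})(k_{\alpha}(\vec{\delta})) = \{ X \in \mathcal{P}(k_{\alpha}(\mu_{\alpha})) \cap M \colon \mu_{\alpha} \in X \}$$
inside $M[H]$, and this last identity will hold as soon as $k_{\alpha}(\mu_{\alpha})$ is $V$-measurable and $\mu_{\alpha}$ occurs in its Prikry sequence in $H$; the right-hand side is then precisely the $V$-order-$0$ normal measure on $k_{\alpha}(\mu_{\alpha})$ as computed inside $M$ via the name $\lusim{U}^{0}$ from Lemma~\ref{Lemma: FS, U0 below U}.

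The main obstacle is thus to establish these two coupled facts: that $k_{\alpha}(\mu_{\alpha})$ is $V$-measurable, and that $\mu_{\alpha}$ appears in its Prikry sequence in $H$. Both will be extracted from density arguments based on the Multivariable Fusion of the previous subsection. Starting from an arbitrary condition in $G$, one direct-extends so that on an $\mathcal{E}'_{0}$-large set of generator tuples $\vec{\eta}$, the value $h(\vec{\eta})$ is $V$-measurable and the indexing by the earlier generators $[Id]_{\alpha_{1}}, \ldots, [Id]_{\alpha_{k}}$ lines $\mu_{\alpha}$ up correctly as a Prikry element of $k_{\alpha}(\mu_{\alpha})$. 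This simultaneous choreography of fusions along the nice sequence $\langle \alpha_{0}, \ldots, \alpha_{k} \rangle$ is exactly what the multivariable version was developed to accomplish, while the inductive hypothesis on the stages $\beta < \alpha$ supplies the necessary data about each earlier $\mu_{\beta}$ and its system of measures.
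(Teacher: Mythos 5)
There is a genuine gap here, and it stems from inverting the paper's logical order. The paper proves this lemma by a self-contained contradiction argument: writing $\mu_{\alpha}=j_{\alpha}(h)(\cdot)$ over the generators, if the lemma failed then $h$ could be taken pointwise non-measurable in $V$; since the forcing is then trivial at the coordinates $h\left( \vec{\xi}, \vec{\nu}_1, \ldots, \vec{\nu}_k \right)$, any $f\in V\left[G\right]$ with $\left[f\right]_W=\mu_{\alpha}$ can be trapped, via Multivariable Fusion and the lemma on deciding ordinal names by direct extensions, inside $V$-sets of size smaller than $h\left( \vec{\xi}, \vec{\nu}_1, \ldots, \vec{\nu}_k \right)$; this places $\mu_{\alpha}$ in $\mbox{Im}\left(k_{\alpha}\right)$, contradicting $\mu_{\alpha}=\mbox{crit}\left(k_{\alpha}\right)$. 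Your plan instead is to exhibit the derived ultrafilter and show it belongs to $M_{\alpha}$ by representing it as $j_{\alpha}(\hat{F})(\vec{\delta})$ for some $\hat{F}\in V$. This hits an obstruction the paper is explicitly built to avoid: no such $\hat{F}$ is available in $V$. Mitchell order $0$ does not determine a normal measure uniquely, and the measure you actually need at $h(\vec{\eta})$ is $U^{0}_{h(\vec{\eta})}=W_{h(\vec{\eta})}\cap V$, where $W_{h(\vec{\eta})}$ is read off from the generic; the assignment $\xi\mapsto U^{0}_{\xi}$ lives in $V\left[G\right]$ and in general not in $V$. This is exactly why the paper must first prove the lemma producing a small set $\mathcal{F}\in M_{\alpha}$ of candidate measures (using $V$-enumerations $s_{\xi}$ of \emph{all} normal measures) before it can define $U^{j}_{\mu_{\alpha}}$, and why definability of $j_W\restriction_{V}$ needs the extra hypothesis of lemma \ref{Lemma: FS, sufficient condition for definability}; if your $\hat{F}$ existed, that hypothesis would be superfluous.

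Beyond this, the reduction you state is incorrect as written: $\{ X\in \mathcal{P}\left(k_{\alpha}\left(\mu_{\alpha}\right)\right)\cap M \colon \mu_{\alpha}\in X \}$ is the principal ultrafilter at $\mu_{\alpha}$ as computed in $M$, so it cannot equal any normal measure of $M$; and applying $k_{\alpha}$ to the desired identity $j_{\alpha}(\hat{F})(\vec{\delta})=U^{0}_{\mu_{\alpha}}$ only controls sets in the range of $k_{\alpha}$, not arbitrary $X\in \mathcal{P}\left(k_{\alpha}\left(\mu_{\alpha}\right)\right)\cap M$. The correct statement in this vicinity --- that the ultrafilter derived from $k_{\alpha}$ coincides with a measure $U^{0}_{\mu_{\alpha}}\in M_{\alpha}$ --- is precisely the last lemma of this section; its proof requires a full Multivariable Fusion argument (membership of the single point $\mu_{\alpha}$ in $k_{\alpha}(X)$ must be converted into the forcing deciding, on a $W$-large set of $\xi$, that $X(\ldots)$ lies in $U^{\times}_{h(\ldots)}$, hence in $U^{0}_{h(\ldots)}$ in the sense of lemma \ref{Lemma: FS, U0 below U}), and it depends on the present lemma and on property (C). This exposes the circularity in your closing step: of the two ``coupled facts'' you defer to density arguments, the first ($k_{\alpha}\left(\mu_{\alpha}\right)$ is measurable in $M$) is, by elementarity of $k_{\alpha}$, equivalent to the very statement being proven, and the second ($\mu_{\alpha}$ occurs in its Prikry sequence in $H$) is property (C), which the paper derives \emph{from} this lemma. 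The actual content of the proof --- the step where criticality of $\mu_{\alpha}$ is used to produce a contradiction --- is nowhere supplied in your proposal.
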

 
\begin{proof}
	Write $ \mu_{\alpha} = j_{\alpha}\left(h\right)\left( j_{1,\alpha}\left( \left[Id\right]^{'}_{0} \right) , j_{\alpha_1+1, \alpha} \left(\left[Id\right]_{\alpha_1}  \right), \ldots, j_{\alpha_k+1 , \alpha} \left(\left[Id\right]_{\alpha_k}\right) \right)  $, and let $ f\in V\left[G\right] $ be a function such that $ \mu_{\alpha} = \left[f\right]_W $. Let us assume, for contradiction, that for every $ \langle \vec{\xi}, \vec{\nu}_1, \ldots, \vec{\nu}_k \rangle $, $ h\left( \vec{\xi}, \vec{\nu}_1, \ldots, \vec{\nu}_k \right) $ is non-measurable. We can also assume that for every $ \xi<\kappa $, 
	$$  f(\xi) < h\left( \vec{\xi}, \vec{\mu}_{\alpha_1}(\xi), \ldots, \vec{\mu}_{\alpha_k}(\xi) \right) $$
	and let $ p\in G $ be a condition which forces this. From now, work above $ p $. We define a subset, 
	$$e\left( \vec{\xi}, \vec{\nu}_1, \ldots, \vec{\nu}_k \right) \subseteq  P\setminus \nu_{k}+1 $$
	for every $ \langle \vec{\xi}, \vec{\nu}_1, \ldots, \vec{\nu}_k \rangle $, which is $ \leq^* $ dense open above conditions which force that $ \langle \vec{\mu}_{\alpha_1}(\xi), \ldots, \vec{\mu}_{\alpha_k}(\xi) \rangle = \langle \vec{\nu}_1, \ldots, \vec{\nu}_k \rangle $:
	\begin{align*}
		e\left(\vec{\xi}, \vec{\nu}_1, \ldots, \vec{\nu}_k\right) = &\{ r\in P\setminus \nu_k+1 \colon \exists \alpha< h\left( \vec{\xi}, \vec{\nu}_1, \ldots, \vec{\nu}_k \right) , \ r\Vdash \lusim{f}(\xi) < \alpha \}
	\end{align*}
	The  $ \leq^* $-density of $e\left(\vec{\xi}, \vec{\nu}_1, \ldots, \vec{\nu}_k\right) $ essentially uses the fact that $h = h\left( \vec{\xi}, \vec{\nu}_{1}, \ldots , \nu_{k} \right) $ is not measurable; employing this, $ \lusim{f}(\xi) $ can be reduced to a $ P_{h} $-name, and thus be evaluated by less then $ h $ many possibilities by lemma \ref{Lemma: every name for an ordinael can be decided up to boudedly many values by a direct extension}.
	
	Let us apply now the Multivariable Fusion Lemma. There exists $ p^*\in G $ and sets $ X_0\in U^{0}, \ldots, X_m \in U^{m} $ such that for every sequence of sequences  $ \langle \vec{\xi}, \vec{\nu}_1, \ldots, \vec{\nu}_k \rangle $ which is admissible for $ p^* $,
	$$  \left({p^*}^{\frown} \langle \vec{\xi}, \vec{\nu}_1,\ldots ,\vec{\nu}_k \rangle \right)\restriction_{ \nu_k+1 } \Vdash \\
	\left({p^*}^{\frown} \langle \vec{\xi}, \vec{\nu}_1,\ldots , \vec{\nu}_k \rangle \right)\setminus { \left(\nu_k+1\right) } \in e\left( \xi, \nu_1 ,\ldots, \nu_k \right) $$
	whenever $ \vec{\xi}\in X_0\times \ldots X_{m} $ is increasing.
	
	For every such $ \langle \vec{\xi}, \vec{\nu}_1, \ldots, \vec{\nu}_k \rangle $, let--
	\begin{align*}
	A\left( \vec{\xi}, \vec{\nu}_1, \ldots, \vec{\nu}_k \right) = &\{  \gamma < h\left( \vec{\xi}, \vec{\nu}_1, \ldots, \vec{\nu}_k \right) \colon \exists q\geq \left({p^*}^{\frown} \langle \vec{\xi}, \vec{\nu}_1,\ldots ,\vec{\nu}_k \rangle \right)\restriction_{ \nu_k+1 } , \\
	&q\Vdash \gamma = \lusim{\alpha}\left( \vec{\xi}, \vec{\nu}_1,\ldots, \vec{\nu}_k \right) \}
	\end{align*}
	then $ A\left( \vec{\xi}, \vec{\nu}_1, \ldots, \vec{\nu}_k \right)  $ is a bounded subset of $ h\left( \vec{\xi}, \vec{\nu}_{1}, \ldots, \vec{\nu}_{k} \right) $. 
	
	For a set of $ \xi $-s in $ W $,
	$$ p^*\Vdash \lusim{f}(\xi)\in A\left( \vec{\xi}, \vec{\mu}^{*}_{\alpha_1}(\xi), \ldots, \vec{\mu}^{*}_{\alpha_k}(\xi) \right) $$
	In particular, in $ M\left[H\right] $,
	\begin{align*}
	 \left[f\right]_{W} \in &j_W\left(  \langle \vec{\xi}, \vec{\nu}_0, \ldots, \vec{\nu}_k \rangle\mapsto A\left( \vec{\xi}, \vec{\nu}_0, \ldots, \vec{\nu}_k \right) \right)\left(  \vec{\kappa}^*, \vec{\mu}^*_{\alpha_1}, \ldots, \vec{\mu}^*_{ \alpha_k } \right) = \\
	 & k_{\alpha}\left(  j_{\alpha}\left(  \langle \vec{\xi}, \vec{\nu}_1, \ldots, \vec{\nu}_k \rangle\mapsto A\left( \vec{\xi}, \vec{\nu}_1, \ldots, \vec{\nu}_k \right) \right)\left( j_{1,\alpha}\left( \left[Id\right]^{'}_{0} \right) , j_{\alpha_1+1, \alpha} \left(\left[Id\right]_{\alpha_1}  \right), \ldots, j_{\alpha_k+1 , \alpha} \left(\left[Id\right]_{\alpha_k}\right) \right)  \right)
	\end{align*}
	But--
\begin{align*}
	&\left|   j_{\alpha}\left(  \langle \vec{\xi}, \vec{\nu}_1, \ldots, \vec{\nu}_k \rangle\mapsto A\left( \vec{\xi}, \vec{\nu}_1, \ldots, \vec{\nu}_k \right) \right)\left( j_{1,\alpha}\left( \left[Id\right]^{'}_{0} \right) , j_{\alpha_1+1, \alpha} \left(\left[Id\right]_{\alpha_1}  \right), \ldots, j_{\alpha_k+1 , \alpha} \left(\left[Id\right]_{\alpha_k}\right) \right) \right| < \\
	&j_{\alpha}\left(h\right)\left(  j_{1,\alpha}\left( \left[Id\right]^{'}_{0} \right) , j_{\alpha_1+1, \alpha} \left(\left[Id\right]_{\alpha_1}  \right), \ldots, j_{\alpha_k+1 , \alpha} \left(\left[Id\right]_{\alpha_k}\right)\right) = \mu_{\alpha}
\end{align*}
and thus $ \mu_{\alpha} = \left[f\right]_W \in \mbox{Im}\left( k_{\alpha} \right) $, a contradiction.
\end{proof}

\begin{corollary}
$ \mu_{\alpha} $ is the least measurable $ \mu $ in $ M_{\alpha} $ such that $ \mu \geq \sup\{ \mu_{\alpha'} \colon \alpha'<\alpha \} $ and $ \left(\mbox{cf}\left(  \mu \right)\right)^V > \kappa $.
\end{corollary}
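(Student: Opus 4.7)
The plan is to verify the three assertions of the corollary in turn, using the inductive hypothesis that property (B) has been established at all earlier stages $\alpha'<\alpha$.

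For the inequality $\mu_\alpha\geq \sup\{\mu_{\alpha'}\colon \alpha'<\alpha\}$, one uses the identity $k_{\alpha'}=k_\alpha\circ j_{\alpha',\alpha}$. By construction, $j_{\alpha',\alpha'+1}$ has critical point $\mu_{\alpha'}$, and by the inductive hypothesis each subsequent step $j_{\beta,\beta+1}$ for $\alpha'<\beta<\alpha$ has critical point $\mu_\beta>\mu_{\alpha'}$. Hence $j_{\alpha'+1,\alpha}$ fixes $\mu_{\alpha'}$, while $j_{\alpha',\alpha'+1}(\mu_{\alpha'})>\mu_{\alpha'}$. On the other hand, $\mu_{\alpha'}=\mu^{*0}_{\alpha'}$ is already in the range of $k_\alpha$ (by the formula defining $k_\alpha$, since $\mu_{\alpha'}$ appears as an entry of some $\vec{\mu}^*_{\alpha'}$), so $k_\alpha$ fixes $\mu_{\alpha'}$, whence $\mathrm{crit}(k_\alpha)=\mu_\alpha>\mu_{\alpha'}$.

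For the cofinality condition $(\mathrm{cf}(\mu_\alpha))^V>\kappa$, we observe that the iteration $\langle M_\beta\colon \beta\leq\alpha\rangle$ is built from ultrapowers by $\kappa$-complete normal measures; together with $\mathrm{GCH}_{\leq\kappa}$ this yields that $M_\alpha$ is sufficiently closed in $V$ so that any $V$-cofinal sequence into $\mu_\alpha$ of length $\leq\kappa$ would already lie in $M_\alpha$. Since $\mu_\alpha$ is measurable in $M_\alpha$, no such short sequence can be cofinal in it.

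For the minimality, suppose towards a contradiction that some $\mu$ with $\sup\{\mu_{\alpha'}\colon \alpha'<\alpha\}\leq\mu<\mu_\alpha$ is measurable in $M_\alpha$ and satisfies $(\mathrm{cf}(\mu))^V>\kappa$. Since $\mu<\mathrm{crit}(k_\alpha)$, the embedding $k_\alpha$ is the identity on $\mu^+$, so $\mu$ is measurable in $M$ as well. Writing $\mu=j_\alpha(g)(\vec{\xi},\vec{\nu}_1,\ldots,\vec{\nu}_k)$ along a nice sequence via a function $g\in V$, and applying Multivariable Fusion exactly as in the measurability argument just completed, one transports the situation into $V[G]$: for $W$-almost every $\xi$, the value of $g$ at $\langle \vec{\mu}^*_{0}(\xi),\vec{\mu}^*_{\alpha_1}(\xi),\ldots,\vec{\mu}^*_{\alpha_k}(\xi)\rangle$ must be a measurable of $M$ that is fixed by $k_\alpha$, has $V$-cofinality $>\kappa$, and sits in the interval between $\sup\{\mu_{\alpha'}\}$ and $\mu_\alpha$. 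By lemma \ref{Lemma: Every normal measure concentrates on the complement of the set of unions} applied inside $M$, the first element of its Prikry sequence in $M[H]$ must coincide with some $\mu_{\alpha'}$ for $\alpha'<\alpha$, identifying $\mu$ with some $\mu^{*j}_{\alpha'}=k_\alpha(\mu^j_{\alpha'})=\mu^j_{\alpha'}$; but by the inductive hypothesis applied at stage $\alpha'+1$, all such $\mu^j_{\alpha'}$ are accounted for below $\mu_{\alpha'+1}\leq\sup\{\mu_{\alpha''}\colon \alpha''<\alpha\}$, contradicting $\mu\geq\sup\{\mu_{\alpha'}\colon \alpha'<\alpha\}$.

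The main obstacle is the minimality clause: it requires pinning down exactly which measurables of $M$ can simultaneously be measurable in $M_\alpha$ and fixed by $k_\alpha$, which in effect demands the explicit description of the Prikry sequences added by $H$ at measurables of $M$ above $\kappa$ promised at the end of the section. The lower bound and cofinality parts are comparatively formal consequences of the definitions of $k_\alpha$ and of the $\kappa$-closure of the iteration.
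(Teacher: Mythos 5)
Your treatment of the minimality clause --- the only part of this corollary that the paper actually argues, and the part that carries all the content --- takes the wrong route and contains non sequiturs. The paper's proof is two lines long: if $\lambda$ is measurable in $M_{\alpha}$ with $\sup\{\mu_{\alpha'} \colon \alpha'<\alpha\}\leq \lambda<\mu_{\alpha}$, then (as you correctly begin) $k_{\alpha}(\lambda)=\lambda$, so $\lambda$ is measurable in $M$; but $\lambda<\mu_{\alpha}<\kappa^{*}=j_{W}(\kappa)$, so the Magidor iteration $j_{W}(P)$ adds a Prikry sequence to $\lambda$ in $M\left[H\right]$, hence $\left(\mbox{cf}(\lambda)\right)^{M\left[H\right]}=\omega$, hence $\left(\mbox{cf}(\lambda)\right)^{V\left[G\right]}=\omega$ since $M\left[H\right]\subseteq V\left[G\right]$, and therefore $\left(\mbox{cf}(\lambda)\right)^{V}<\kappa$ because $P$ preserves cardinals (lemma \ref{Lemma: FS, P preserves cardinals}, via the $\delta^{+}$-c.c.); this contradicts the cofinality requirement on $\lambda$. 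No Fusion and no identification of $\lambda$ is needed. Your identification argument, by contrast, fails at both of its key steps: lemma \ref{Lemma: Every normal measure concentrates on the complement of the set of unions} only says that normal measures avoid $\bigcup_{\alpha\in\Delta}\left(d(\alpha),\alpha\right]$; it does not imply that the first element of the Prikry sequence of $\mu$ in $M\left[H\right]$ coincides with some $\mu_{\alpha'}$. And your final counting claim is backwards: by the last lemma of the section, the ordinals $\mu^{j}_{\alpha'}$ for $j\geq 1$ are precisely \emph{later} critical points $\mu_{\beta}$ with $\beta>\alpha'$, so they lie at or above $\mu_{\alpha'+1}$, not below it. Your closing paragraph also misdiagnoses the architecture of the section: the explicit description of the Prikry sequences added by $H$ is proved \emph{after} this corollary, and its proof uses the present characterization of the $\mu_{\beta}$'s, so deriving minimality from that description would risk circularity.

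The two parts you call ``comparatively formal'' are also not right as written, although the paper does not in fact include them in this proof (they belong to the inductive bookkeeping of properties (A)--(E)). For the lower bound: that $\mu_{\alpha'}$ lies in the range of $k_{\alpha}$ does not imply $k_{\alpha}$ fixes it, and even fixing the single ordinal $\mu_{\alpha'}$ does not bound $\mbox{crit}\left(k_{\alpha}\right)$ from below, since an elementary embedding can fix an ordinal while moving smaller ones; what is needed is that $k_{\alpha}$ fixes \emph{every} ordinal below $\mu_{\alpha'}$, which one gets from $k_{\alpha}\circ j_{\alpha'+1,\alpha}=k_{\alpha'+1}$ together with the inductive fact $\mbox{crit}\left(k_{\alpha'+1}\right)=\mu_{\alpha'+1}>\mu_{\alpha'}$. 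For the cofinality of $\mu_{\alpha}$ itself, the closure claim is false: at limit stages $M_{\alpha}$ is a direct limit and is not closed under $\omega$-sequences of $V$ (the critical sequence is never an element of the limit model), let alone $\kappa$-sequences, so ``any $V$-cofinal sequence of length $\leq\kappa$ lies in $M_{\alpha}$'' cannot be the reason $\left(\mbox{cf}(\mu_{\alpha})\right)^{V}>\kappa$.
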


\begin{proof}
Assume for contradiction that there exists a measurable $ \lambda $ in $ M_{\alpha} $, such that $ \sup\{ \mu_{\alpha'} \colon \alpha'<\alpha \} \leq \lambda < \mu_{\alpha} $. Let us argue that $  \left(\mbox{cf}\left( \lambda \right)\right)^{V} \leq\kappa $.

First, $ \lambda = k_{\alpha}\left( \lambda \right) $, since $ \mbox{crit}\left( k_{\alpha} \right) = \mu_{\alpha} $, and thus $ \lambda $ is measurable in $ M $. Note that $ \lambda < \mu_{\alpha} < \kappa^* = j_{\alpha}\left( \kappa \right) $, so $ \lambda < j_W(\kappa) $  and thus $ \lambda  $ has cofinality $ \omega $ in $ M\left[H\right] $. Thus, $ \left(\mbox{cf}\left( \lambda \right)\right)^{V\left[G\right]} = \omega $, and, in particular, $  \left(\mbox{cf}\left( \lambda \right)\right)^{V} <\kappa $. 
\end{proof}

\begin{lemma} \label{Lemma:  mu alpha appears in the sequence of its k alpha }
$ \mu_{\alpha} $ appears as an element in the Prikry sequence of $ k_{\alpha}\left( \mu_{\alpha} \right) $. 
\end{lemma}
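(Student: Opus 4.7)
As in the proof of the previous lemma, we write $\mu_\alpha = j_\alpha(h)(j_{1,\alpha}([Id]'_0), j_{\alpha_1+1, \alpha}([Id]_{\alpha_1}), \ldots, j_{\alpha_k+1, \alpha}([Id]_{\alpha_k}))$ using a nice sequence $0 = \alpha_0 < \alpha_1 < \cdots < \alpha_k < \alpha$ and some $h \in V$. Applying $k_\alpha$ gives
\[
k_\alpha(\mu_\alpha) = [\xi \mapsto \mu^*(\xi)]_W, \quad \mu^*(\xi) := h(\vec{\mu}^*_0(\xi), \vec{\mu}^*_{\alpha_1}(\xi), \ldots, \vec{\mu}^*_{\alpha_k}(\xi)).
\]
By the previous lemma and Los, for a $W$-measure-one set of $\xi$, the value $\mu^*(\xi)$ is a $V$-measurable cardinal (that is, $\mu^*(\xi) \in \Delta$), and hence acquires a Prikry sequence in $V[G]$.

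The strategy is to construct a function $\xi \mapsto \mu_\alpha(\xi)$ in $V[G]$ satisfying: (i) for $W$-a.e. $\xi$, $\mu_\alpha(\xi)$ is an element of the Prikry sequence of $\mu^*(\xi)$ in $V[G]$; and (ii) $[\xi \mapsto \mu_\alpha(\xi)]_W = \mu_\alpha$. By Los applied to (i), this immediately yields the lemma. The function $\mu_\alpha(\xi)$ will be defined by extending the inductive recipe for $\mu_{\alpha_i}(\xi)$ from the Multivariable Fusion subsection: via an appropriate natural number $n$ locating $\mu_\alpha$'s position, along with a $V$-function $\beta$, so that $\mu_\alpha(\xi)$ is read off from the Prikry data of $\mu^*(\xi)$ together with the generators $\vec{\mu}^*_{\alpha_i}(\xi)$.

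For property (ii), combine the inductive hypotheses (properties (C) and (D) for $\alpha_i < \alpha$, which give $[\xi \mapsto \mu_{\alpha_i}(\xi)]_W = \mu_{\alpha_i}$) with the minimality characterization of $\mu_\alpha = \mbox{crit}(k_\alpha)$ from the preceding corollary; the claim $[\xi \mapsto \mu_\alpha(\xi)]_W = \mu_\alpha$ then follows by recognizing the image as the unique measurable in $M_\alpha$ above $\sup_{\alpha' < \alpha} \mu_{\alpha'}$ of the required cofinality. For property (i), apply the Multivariable Fusion Theorem to the $\leq^*$-dense open sets
\[
e(\vec{\xi}, \vec{\nu}_1, \ldots, \vec{\nu}_k) := \{ r \in P \setminus (\nu_k+1) : r \text{ arranges } \mu_\alpha(\vec{\xi}, \vec{\nu}_1, \ldots, \vec{\nu}_k) \text{ to be a Prikry element at } h(\vec{\xi}, \vec{\nu}_1, \ldots, \vec{\nu}_k)\},
\]
where density is obtained by shrinking (via direct extension) the measure-one set at coordinate $h(\vec{\xi}, \vec{\nu}_1, \ldots, \vec{\nu}_k)$ so that the candidate value lies in it, and then invoking a standard genericity argument for Prikry forcing. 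Extracting $p^* \in G$ from fusion yields (i).

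The main obstacle will be identifying the correct natural number $n$ indexing the position of $\mu_\alpha(\xi)$ within the Prikry sequence of $\mu^*(\xi)$, and verifying that the candidate so defined indeed represents $\mu_\alpha$ under $W$. This requires careful tracing of the niceness data — the functions $g_i, f_i, F^j_i$ attached to the sequence $\alpha_1 < \cdots < \alpha_k$ — and confirming that the Mitchell-order structure $U^0_{\mu_\alpha} \vartriangleleft \cdots \vartriangleleft U^{m_\alpha - 1}_{\mu_\alpha}$ governing the next step $\mathcal{E}_\alpha$ of the iteration correctly mirrors the Prikry structure at stage $\xi$ in $V[G]$, so that Los transports the inductive picture from the $\alpha_i$'s to $\alpha$.
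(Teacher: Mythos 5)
Your high-level skeleton does coincide with the paper's: write $k_\alpha(\mu_\alpha)=\left[\xi\mapsto h\left(\vec{\mu}^{*}_{0}(\xi),\ldots,\vec{\mu}^{*}_{\alpha_k}(\xi)\right)\right]_W$, define in $V\left[G\right]$ a function $\xi\mapsto\mu_\alpha(\xi)$ picking out an element of the Prikry sequence of $h(\ldots)(\xi)$, prove $\left[\xi\mapsto\mu_\alpha(\xi)\right]_W=\mu_\alpha$, and conclude by {\L}o\'{s}. However, your proposal spends the forcing machinery where none is needed, and leaves unproved the step that carries all the content. The ``main obstacle'' you defer --- choosing the index $n$ --- is resolved by the paper at the outset, and doing so makes your property (i) trivial: let $t^*$ be the initial segment of the Prikry sequence of $k_\alpha(\mu_\alpha)$ consisting of the ordinals below $\mu_\alpha$, let $n^*$ be its length, represent $t^*$ by a $V$-function of the generators (possible since $k_\alpha(t^*)=t^*$, after modifying the nice sequence), and define $\mu_\alpha(\xi)$ to be the $(n^*+1)$-th element of the Prikry sequence of $h(\ldots)(\xi)$. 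Then (i) holds by definition --- no fusion argument is needed --- and $\left[\xi\mapsto\mu_\alpha(\xi)\right]_W\geq\mu_\alpha$ is automatic. Worse, your proposed density argument for (i) is false as stated: one cannot ``arrange'' a prescribed candidate ordinal to become an element of the Prikry sequence by direct extensions. Shrinking the measure-one set at coordinate $h(\ldots)$ so that the candidate lies in it does not place the candidate in the generic sequence; on the contrary, removing any fixed ordinal from the measure-one set is itself a direct extension, so generically any prespecified ordinal outside the stem is \emph{avoided} by the Prikry sequence.

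The actual content of the lemma is the reverse inequality $\left[\xi\mapsto\mu_\alpha(\xi)\right]_W\leq\mu_\alpha$, and your treatment of it is both circular and directionally wrong. To invoke the minimality characterization of $\mu_\alpha=\mbox{crit}(k_\alpha)$ you would need to already know that $\left[\xi\mapsto\mu_\alpha(\xi)\right]_W$ is (the $k_\alpha$-image of) a measurable of $M_\alpha$ above $\sup\{\mu_{\alpha'}\colon\alpha'<\alpha\}$ with $V$-cofinality above $\kappa$ --- which is essentially what is being proved; and even granting that, minimality identifies $\mu_\alpha$ as the \emph{least} such measurable (not the unique one), so it would only yield $\mu_\alpha\leq\left[\xi\mapsto\mu_\alpha(\xi)\right]_W$, the trivial direction. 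The paper instead proves the upper bound by showing that every $\eta<\left[\xi\mapsto\mu_\alpha(\xi)\right]_W$ belongs to $\mbox{Im}(k_\alpha)\cap k_\alpha(\mu_\alpha)=\mu_\alpha$ (the last equality because $\mbox{crit}(k_\alpha)=\mu_\alpha$): given $f\in V\left[G\right]$ with $\left[f\right]_W=\eta$ and $f(\xi)<\mu_\alpha(\xi)$ everywhere, Multivariable Fusion is applied to the sets $e\left(\vec{\xi},\vec{\nu}_1,\ldots,\vec{\nu}_k\right)$ of conditions forcing the name for $f(\xi)$ below some fixed ordinal smaller than $h(\ldots)$, conditioned on $t^*(\ldots)$ being an initial segment of the Prikry sequence at coordinate $h(\ldots)$. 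The $\leq^*$-density of these sets is the technical heart, obtained by a three-step reduction: reduce the name for $f(\xi)$ to a $P_{h+1}$-name using closure of the direct extension order above $h$; then to a $P_h$-name using the Prikry-forcing fact that an ordinal forced to lie below the successor of the stem $t^*$ can be decided by a direct extension; then bound it by fewer than $h$ many values using the lemma on deciding names for ordinals up to boundedly many values. This is where the fusion machinery is genuinely required, and without it your step (ii) has no proof.
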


\begin{proof}
In $ M\left[H\right] $, denote by $ t^* $ the initial segment of the Prikry sequence of $ k_{\alpha}\left( \mu_{\alpha} \right) $ which consists of all the ordinals below $ \mu_{\alpha} $. Denote by $ n^* $ the length of $ t^* $. Let $ \langle \vec{\xi}, \vec{\nu}_1, \ldots, \vec{\nu}_{k} \rangle\mapsto t^*\left( \vec{\xi}, \vec{\nu}_1, \ldots, \vec{\nu}_{k} \right) $ be a function in $ V $ such that--
$$t^* = j_{\alpha}\left(   \langle \vec{\xi}, \vec{\nu}_1, \ldots, \vec{\nu}_{k} \rangle\mapsto t^*\left( \vec{\xi}, \vec{\nu}_1, \ldots, \vec{\nu}_k \right) \right)\left( j_{1,\alpha}\left(  \left[Id\right]^{'}_{\alpha} \right)   ,  j_{\alpha_1+1, \alpha}\left(  \left[Id\right]_{\alpha_1} \right) , \ldots, j_{\alpha_k+1, \alpha}\left(  \left[Id\right]_{\alpha_k}  \right) \right)  $$
(this can be done by modifying the nice sequence $ \langle \vec{\alpha}_1, \ldots, \vec{\alpha}_k \rangle $, if necessary, so that $ t^* $ can be represented by it). We can assume that for every $ \langle \vec{\xi}, \vec{\nu}_1, \ldots, \vec{\nu}_k \rangle $, $ t^*\left( \vec{\xi}, \vec{\nu}_1, \ldots, \vec{\nu}_k \right) $ is a sequence of length $ n^* $.  Since $ k_{\alpha}\left( t^* \right) = t^* $, 
$$ \left[ \xi \mapsto t^*\left( \vec{\xi}, \vec{\mu}^{*}_{\alpha_1}(\xi), \ldots, \vec{\mu}^{*}_{\alpha_k}(\xi) \right) \right]_W = t^* $$
In $ V\left[G\right] $, denote, for every $ \xi<\kappa $,
$$ \mu_{\alpha}(\xi) = \mbox{ the } \left(n^*+1\right)\mbox{-th element in the Prikry sequence of } h\left( \vec{\xi}, \vec{\mu}^{*}_{\alpha_1}(\xi), \ldots, \vec{\mu}^{*}_{\alpha_k}(\xi) \right) $$
Clearly $ \left[\xi\mapsto \mu_{\alpha}(\xi)\right]_W \geq \mu_{\alpha} $. We argue that equality holds. We will prove that for every $ \eta< \left[  \xi \mapsto \mu_{\alpha}(\xi)\right]_W $, $ \eta < \mu_{\alpha} $. Assume that such $ \eta $ is given, and let $ f\in V\left[G\right] $ be a function such that $ \left[f\right]_W = \eta $. Then we can assume that for every $ \xi<\kappa $,
$$ f(\xi) < \mu_{\alpha}(\xi) $$
and let $ p\in G $ be a condition which forces this. 

Let us apply now the Multivariable Fusion Lemma. For every $ \langle \vec{\xi}, \vec{\nu}_1, \ldots, \vec{\nu}_{k} \rangle $, consider the set--
\begin{align*}
e\left( \vec{\xi}, \vec{\nu}_1, \ldots, \vec{\nu}_{k} \right) = \{ &r\in P\setminus \nu_k \colon \exists \alpha<h\left( \vec{\xi}, \vec{\nu}_1,\ldots, \vec{\nu}_{k} \right), \ r\Vdash \mbox{ if } t^*\left( \vec{\xi}, \vec{\nu}_1,\ldots, \vec{\nu}_k \right)\\ 
&\mbox{is an initial segment of the Prikry sequence of } h\left( \vec{\xi}, \vec{\nu}_1, \ldots, \vec{\nu}_k \right),\\
&\mbox{then } \lusim{f}(\xi) < \alpha \} 
\end{align*}
then $ e\left( \vec{\xi}, \vec{\nu}_1, \ldots, \vec{\nu}_k \right) $ is $ \leq^* $ dense open above conditions which force that $ d^{-1}\{ \xi \} = \langle \xi^{1}, \ldots, \xi^{m} \rangle $ and $  \langle \vec{\nu}_0, \ldots, \vec{\nu}_k \rangle = \langle \vec{\mu}_{\alpha_0}(\xi),\ldots, \vec{\mu}_{\alpha_k}(\xi) \rangle  $. This follows in several steps: First, use the $ \leq^* $-closure to reduce $ \lusim{f}(\xi) $ to a $ P_{h+1} $-name, where $ h = h\left( \vec{\xi}, \vec{\nu}_1, \ldots, \vec{\nu}_k \right) $. This can be done by taking a direct extension of a given condition in the forcing $ P\setminus \left(h+1\right) $. Second, reduce $ \lusim{f}(\xi) $ to a $ P_{h} $ name, by applying on the Prikry forcing at coordinate $ h $ the following fact: If $ t^*  =  t^*\left( \vec{\xi}, \vec{\nu}_1, \ldots, \vec{\nu}_k \right) $ is an initial segment of the Prikry sequence of $ h $, and a given an ordinal is forced to be below the successor of $ t^*$ in this sequence, then its value can be decided by taking a direct extension. Finally, apply lemma \ref{Lemma: FS, evaluating new functions by old ones} and direct extend in the forcing $ P_{\left(\nu_k,h\right)} $, to bound the value of $ \lusim{f}(\xi) $ by an ordinal below $ h $. 

 Thus, there exists $ p^*\in G $, such that for every $ \langle \vec{\xi}, \vec{\nu}_1, \ldots, \vec{\nu}_k \rangle $ which is admissible for $ p^* $,
$$ {p^*}^{\frown} \langle \vec{\xi}, \vec{\nu}_1, \ldots, \vec{\nu}_k \rangle\restriction_{ \nu_k } \Vdash {p^*}^{\frown} \langle \vec{\xi}, \vec{\nu}_1, \ldots, \vec{\nu}_k \rangle\setminus \nu_k \in e\left( \vec{\xi}, \vec{\nu}_1, \ldots, \vec{\nu}_k \right) $$
in particular, $ {p^*}^{\frown} \langle \vec{\xi}, \vec{\nu}_1, \ldots, \vec{\nu}_k \rangle\restriction_{\nu_k} $ forces that there exists $ \alpha< h\left( \vec{\xi}, \vec{\nu}_1, \ldots, \vec{\nu}_k \right) $ such that--
$$ {p^*}^{\frown} \langle \vec{\xi}, \vec{\nu}_{1}, \ldots, \vec{\nu}_{k} \rangle\setminus  \nu_k \Vdash \lusim{f}(\xi)<\alpha $$ 
Let--
$$ A\left( \vec{\xi}, \vec{\nu}_1, \ldots, \vec{\nu}_{k} \right) = \{  \gamma \colon \exists q\geq {p^*}^{\frown}\langle \vec{\xi}, \vec{\nu}_1, \ldots, \vec{\nu}_k \rangle\restriction_{ \nu_{k} }, \ q\Vdash \lusim{\alpha} = \gamma \}$$
then $ p^*\Vdash \lusim{f}(\xi)\in A\left( \vec{\xi}, \vec{\mu}^{*}_{\alpha_1}(\xi), \ldots, \vec{\mu}^{*}_{\alpha_k}(\xi) \right) $. Arguing as in the previous lemma, it follows that $ \eta = \left[f\right]_W \in \mbox{Im}\left( k_{\alpha} \right)\cap k_{\alpha}\left( \mu_{\alpha} \right) = \mu_{\alpha} $, as desired.
\end{proof}

\begin{lemma}
Assume that for every measurable $ \xi<\kappa $, $\lusim{U}_{\xi}$ is a $ P_{\xi} $-name for a measure on $ \xi $ which belongs to $ V $. Let $ \lusim{U} = j_{\alpha}\left( \xi \mapsto \lusim{U}_{ \xi } \right)  \left( \mu_{\alpha} \right) $. Then there exists a set $ \mathcal{F}\in M_{\alpha} $ of measures on $ \mu_{\alpha} $ in $ M_{\alpha} $, with $ \left| \mathcal{F} \right|<\mu_{\alpha} $, such that, for some $ p\in G $,
	$$ \left(j_W(p)\right)^{\frown} \langle \vec{\mu}^{*}_{\alpha_0}, \ldots, \vec{\mu}^{*}_{\alpha_k} \rangle \Vdash k_{\alpha}\left(\lusim{U} \right)\in k_{\alpha}'' \mathcal{F} $$
In particular, there exists a measure $F\in \mathcal{F}$ such that $ k_{\alpha}\left( F \right) = \left( k_{\alpha}\left( \lusim{U} \right) \right)_H $.
\end{lemma}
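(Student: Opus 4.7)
The plan is to combine the Multivariable Fusion Theorem with Lemma \ref{Lemma: every name for an ordinael can be decided up to boudedly many values by a direct extension}. Fix a nice sequence $0=\alpha_0 < \alpha_1 < \ldots < \alpha_k < \alpha$ representing $\mu_\alpha$, so that $\mu_\alpha = j_\alpha(h)\left(j_{1,\alpha}\left(\left[Id\right]'_0\right), j_{\alpha_1+1,\alpha}\left(\left[Id\right]_{\alpha_1}\right), \ldots, j_{\alpha_k+1,\alpha}\left(\left[Id\right]_{\alpha_k}\right)\right)$ for some $h \in V$. Using the same generators, represent $\lusim{U}$ by a function $V \ni (\vec\xi, \vec\nu_1, \ldots, \vec\nu_k) \mapsto \lusim{U}(\vec\xi, \vec\nu_1, \ldots, \vec\nu_k)$; for every tuple where $h = h(\vec\xi, \vec\nu_1, \ldots, \vec\nu_k)$ is measurable in $V$, the assumption guarantees that $\lusim{U}(\vec\xi, \ldots)$ is a $P_h$-name for a measure on $h$ lying in $V$. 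By $\mbox{GCH}_{\leq\kappa}$, fix in $V$ a uniform enumeration of the measures on each measurable $h < \kappa$ by ordinals $< h^{++}$; this converts each $\lusim{U}(\vec\xi, \ldots)$ into a $P_h$-name for an ordinal.

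For each such tuple, define
\begin{align*}
e(\vec\xi, \vec\nu_1, \ldots, \vec\nu_k) = \{r \in P \setminus \left(\nu_k+1\right) \colon \ &\exists A \in V, \ |A| < h(\vec\xi, \ldots), \\
&r \Vdash \lusim{U}(\vec\xi, \ldots) \in A\}.
\end{align*}
Lemma \ref{Lemma: every name for an ordinael can be decided up to boudedly many values by a direct extension} applied to $P_h$ (using the factoring $P = P_h * P \setminus h$ and the high closure of the direct extension order on $P \setminus h$) yields that $e(\vec\xi, \ldots)$ is $\leq^*$-dense open. Apply now the Multivariable Fusion Theorem to the given $p \in G$ (which we first strengthen inside $G$ to force the relevant representations) with the family $\{e(\vec\xi, \ldots)\}$, to obtain $p^* \in G$ extending $p$, such that on a $W$-measure-one set of $\xi$, $(p^*)^\frown \langle \vec\xi, \vec\mu^*_{\alpha_1}(\xi), \ldots, \vec\mu^*_{\alpha_k}(\xi)\rangle \in G$ and forces $\lusim{U}(\vec\xi, \ldots) \in A(\vec\xi, \ldots)$, where
\begin{align*}
A(\vec\xi, \vec\nu_1, \ldots, \vec\nu_k) = \{F \colon \ &F \text{ is a measure on } h(\vec\xi, \ldots), \\
&\exists q \geq (p^*)^\frown \langle \vec\xi, \ldots\rangle\restriction_{\nu_k}, \ q \Vdash F = \lusim{U}(\vec\xi, \ldots)\}.
\end{align*}
By construction, $A(\vec\xi, \ldots) \in V$ and $|A(\vec\xi, \ldots)| < h(\vec\xi, \ldots)$.

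Set $\mathcal{F} = j_\alpha\big((\vec\xi, \ldots) \mapsto A(\vec\xi, \ldots)\big)\left(j_{1,\alpha}\left(\left[Id\right]'_0\right), j_{\alpha_1+1,\alpha}\left(\left[Id\right]_{\alpha_1}\right), \ldots, j_{\alpha_k+1,\alpha}\left(\left[Id\right]_{\alpha_k}\right)\right) \in M_\alpha$. By elementarity of $j_\alpha$, $\mathcal{F}$ is a set of measures on $\mu_\alpha$ in $M_\alpha$ with $|\mathcal{F}|^{M_\alpha} < \mu_\alpha$. Since $|\mathcal{F}| < \mu_\alpha = \mbox{crit}(k_\alpha)$, one has $k_\alpha(\mathcal{F}) = k_\alpha''\mathcal{F}$. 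Pushing the Multivariable Fusion conclusion through $j_W = k_\alpha \circ j_\alpha$, the condition $(j_W(p^*))^\frown \langle \vec\mu^*_{\alpha_0}, \ldots, \vec\mu^*_{\alpha_k}\rangle$ lies in $H$ and forces $k_\alpha(\lusim{U}) \in k_\alpha(\mathcal{F}) = k_\alpha''\mathcal{F}$, yielding the desired $F \in \mathcal{F}$ with $k_\alpha(F) = (k_\alpha(\lusim{U}))_H$.

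The main technical point is the $\leq^*$-density of each $e(\vec\xi, \ldots)$; the hypothesis that $\lusim{U}_\xi$ names a measure in $V$ (rather than merely in $V^{P_\xi}$) is essential, as it allows a $V$-internal enumeration reducing $\lusim{U}(\vec\xi, \ldots)$ to a $P_h$-name for an ordinal, thereby bringing the problem within the reach of Lemma \ref{Lemma: every name for an ordinael can be decided up to boudedly many values by a direct extension}.
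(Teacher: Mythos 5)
Your proof is correct and takes essentially the same approach as the paper's: both use a ground-model enumeration of the measures to turn the $P_h$-name for a $V$-measure into a name for an ordinal, apply the Multivariable Fusion theorem to the $\leq^*$-dense open sets of conditions that bound this name inside a set of size less than $h\left( \vec{\xi}, \vec{\nu}_1, \ldots, \vec{\nu}_k \right)$, and then push the resulting function $\langle \vec{\xi}, \vec{\nu}_1, \ldots, \vec{\nu}_k \rangle \mapsto A\left( \vec{\xi}, \vec{\nu}_1, \ldots, \vec{\nu}_k \right)$ through $j_{\alpha}$ and $k_{\alpha}$ to obtain $\mathcal{F}$. The only cosmetic difference is that your sets $A\left( \vec{\xi}, \vec{\nu}_1, \ldots, \vec{\nu}_k \right)$ collect the possible measures directly, whereas the paper collects ordinal indices and applies the image of the enumeration under $j_{\alpha}$ at the end; your explicit remark that $k_{\alpha}\left( \mathcal{F} \right) = k_{\alpha}'' \mathcal{F}$ since $\left| \mathcal{F} \right| < \mbox{crit}\left( k_{\alpha} \right)$ is used implicitly in the paper as well.
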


\begin{proof}
For every $ \xi<\kappa $, fix an enumeration $ s_{\xi} $ of all the normal measures on $ \xi $ in $ V $. Apply Multivariable Fusion. Define for every $ \langle \vec{\xi}, \vec{\nu}_1, \ldots, \vec{\nu}_{k} \rangle $ the set--
	\begin{align*}
		e\left( \vec{\xi}, \vec{\nu}_0, \ldots, \vec{\nu}_{k} \right) = &\{ r\in P\setminus \nu_k \colon \mbox{there exists a set of ordinals } A \mbox{ of cardinality}\\
		&\mbox{strictly smaller than } h\left( \vec{\xi}, \vec{\nu}_1, \ldots, \vec{\nu}_{k} \right), \mbox{ such that }\\
		& r\restriction_{  h\left( \vec{\xi}, \vec{\nu}_1, \ldots, \vec{\nu}_{k} \right) } \Vdash \lusim{U}_{ h\left( \vec{\xi}, \vec{\nu}_1, \ldots, \vec{\nu}_{k} \right) } \in s_{ h\left( \vec{\xi}, \vec{\nu}_1, \ldots, \vec{\nu}_{k} \right) } '' A \}
	\end{align*}
	As before, there exists $ p^* \geq^* p  $ in $ G $, such that for every $ \langle \vec{\xi}, \vec{\nu}_1, \ldots, \vec{\nu}_{k} \rangle $ which is admissible for $ p^* $,
	\begin{align*}
		{p^*}^{\frown} \langle \vec{\xi}, \vec{\nu}_1, \ldots, \vec{\nu}_{k} \rangle\restriction_{\nu_{k}} \Vdash {p^*}^{\frown} \langle \vec{\xi}, \vec{\nu}_1, \ldots, \vec{\nu}_{k} \rangle\setminus{\nu_{k}}\in e\left( \vec{\xi}, \vec{\nu}_1, \ldots, \vec{\nu}_{k} \right)
	\end{align*}
	Let $ A\left( \vec{\xi}, \vec{\nu}_1, \ldots, \vec{\nu}_{k} \right) $ be the set of ordinals, forced by some extension of $ {p^*}^{\frown} \langle \vec{\xi}, \vec{\nu}_1, \ldots, \vec{\nu}_{k} \rangle\restriction_{\nu_{k}} $, to be an element of the set $ \lusim{A} $ above. Then $   A\left( \vec{\xi}, \vec{\nu}_1, \ldots, \vec{\nu}_{k} \right)  $ is a set of ordinals of cardinality strictly below $ h\left( \vec{\xi}, \vec{\nu}_1, \ldots, \vec{\nu}_{k} \right) $. 
	
	In $ V\left[G\right] $,
	$$ \{ \xi<\kappa \colon {p^*}^{\frown} \langle \vec{\xi}, \vec{\mu}^{*}_{\alpha_1}(\xi), \ldots, \vec{\mu}^{*}_{\alpha_k }(\xi) \rangle \Vdash   U_{ h\left( \vec{\xi}, \vec{\nu}_1, \ldots, \vec{\nu}_{k} \right) } \in  s_{ h\left( \vec{\xi}, \vec{\nu}_1, \ldots, \vec{\nu}_{k} \right) }'' A^*\left( \vec{\xi}, \vec{\mu}^{*}_{\alpha_1}(\xi), \ldots, \mu^{*}_{\alpha_k}(\xi) \right) \}\in W $$
	Now, in $ M_{\alpha} $, denote--
	$$ \mathcal{F}  =  \left(j_{ \alpha }(s)_{ \mu_{\alpha} } \right)''  j_{\alpha} \left(  \langle \vec{\xi}, \vec{\nu}_1, \ldots, \vec{\nu}_{k} \rangle\mapsto A^*\left( \vec{\xi}, \vec{\nu}_1, \ldots, \vec{\nu}_{k} \right) \right)\left( j_{1,\alpha}\left( \left[Id\right]^{'}_{0} \right), j_{\alpha_1+1, \alpha}\left(  \left[Id\right]_{1} \right) , \ldots,  j_{\alpha_k+1, \alpha }\left( \left[Id\right]_{k} \right) \right) $$
	Then $ \left| \mathcal{F} \right| < \mu_{\alpha} $, and, in $ M\left[H\right] $, 
	$$ \left(j_W(p^*)\right)^{\frown} \langle \vec{\mu}^{*}_{\alpha_0}, \ldots, \vec{\mu}^{*}_{\alpha_k} \rangle \Vdash k_{\alpha}\left( \lusim{\mathcal{U}} \right) \in k_{\alpha}'' \mathcal{F} $$

\end{proof}
We now apply the above lemma on specific names for measures $ \lusim{U}_{\xi} $. For every measurable $ \xi<\kappa $, let $ W_\xi = U^{\times}_{\xi} $ be the measure used to singularize $ \xi $ at stage $ \xi $ in the iteration $ P = P_{\kappa} $. Each such $ W^{\times}_{\xi} $ can be assigned to a sequence of Rudin-Keisler equivalent measures on $ \xi $, 
$$ W^{0}_{\xi}, \ldots, W^{m_{\xi}}_{\xi} = W_{\xi} $$
as defined in section $ 2 $. Denote $ U^{j}_{\xi} = W^{j}_{\xi} \cap V \in V $ for every $ 0\leq j \leq m_{\xi} $. Then--
$$U^{0}_{\xi}  \vartriangleleft  U^{1}_{\xi} \vartriangleleft \ldots  \vartriangleleft U^{m_{\xi}}_{\xi} $$

\begin{corollary} \label{Corollary: FS, how U^j_mualpha depends on U^j_xi}
For every $ 1\leq j <  m_{\alpha} $, there exists a measure $ U^{j}_{\mu_{\alpha} }\in M_{\alpha} $ on $ \mu_{\alpha} $, such that--
$$ k_{\alpha}\left(  U^{j}_{\mu_{\alpha} } \right) = j_{W} \left(  \xi \mapsto U^{j}_{\xi}  \right)\left( k_{\alpha}\left( \mu_{\alpha} \right)  \right)  = \left[ \xi\mapsto U^{j}_{ h\left(  \vec{\mu}^{*}_{\alpha_0}(\xi), \ldots, \vec{\mu}^{*}_{\alpha_{k}}(\xi) \right) } \right]_W $$
\end{corollary}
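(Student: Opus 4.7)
The plan is to apply the previous lemma directly to the $ P_{\xi} $-names for the measures $ U^{j}_{\xi} $. For each $ 1\leq j <m_{\alpha} $ and each measurable $ \xi<\kappa $ with $ j< m_{\xi} $, the measure $ U^{j}_{\xi}\in V $ can be represented by a $ P_{\xi} $-name $ \lusim{U}^{j}_{\xi}\in V $ (for $ \xi $ which does not satisfy $ j<m_{\xi} $, take an arbitrary $ P_{\xi} $-name for a normal measure on $ \xi $, it will not matter). Set $ \lusim{U}^{j} = j_{\alpha}\left( \xi\mapsto \lusim{U}^{j}_{\xi} \right)\left( \mu_{\alpha} \right) $; applying the previous lemma, obtain $ \mathcal{F}_{j}\in M_{\alpha} $ with $ \left|\mathcal{F}_{j}\right|<\mu_{\alpha} $ and a measure $ F_{j}\in \mathcal{F}_{j} $ such that $ k_{\alpha}\left( F_{j} \right) = \left( k_{\alpha}\left( \lusim{U}^{j} \right) \right)_{H} $. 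Define $ U^{j}_{\mu_{\alpha}}:= F_{j} $.

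It remains to identify the value of $ k_{\alpha}\left( U^{j}_{\mu_{\alpha}} \right) $ with the two expressions in the statement. Using $ k_{\alpha}\circ j_{\alpha} = j_{W}\restriction_{V} $, applied to the function $ \xi\mapsto \lusim{U}^{j}_{\xi}\in V $,
\[ k_{\alpha}\left( U^{j}_{\mu_{\alpha} } \right) = \left( k_{\alpha}\left(  j_{\alpha}\left( \xi\mapsto \lusim{U}^{j}_{\xi} \right)\left( \mu_{\alpha} \right)  \right) \right)_{H} = \left( j_{W}\left( \xi\mapsto \lusim{U}^{j}_{\xi} \right)\left( k_{\alpha}\left(  \mu_{\alpha} \right) \right) \right)_{H}\]
which coincides with $ j_{W}\left( \xi\mapsto U^{j}_{\xi} \right)\left( k_{\alpha}\left( \mu_{\alpha} \right) \right) $, since evaluation with respect to $ H $ commutes with $ j_{W} $ on names from $ V $. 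For the final equality, recall that by the form of $ \mu_{\alpha} $ as a nice term,
\[ k_{\alpha}\left( \mu_{\alpha} \right) = j_{W}(h)\left( \vec{\kappa}^*, \vec{\mu}^{*}_{\alpha_1}, \ldots, \vec{\mu}^{*}_{\alpha_k} \right),\]
and each $ \vec{\mu}^{*}_{\alpha_i} $ is represented in $ \mbox{Ult}\left( V\left[G\right], W \right) $ by $ \xi \mapsto \vec{\mu}^{*}_{\alpha_i}(\xi) $. By {\L}o\'{s}'s theorem applied in the ultrapower by $ W $ to the function $ \xi\mapsto U^{j}_{h\left( \vec{\mu}^{*}_{\alpha_0}(\xi), \ldots, \vec{\mu}^{*}_{\alpha_k}(\xi) \right)} $, this equals $ \left[\xi \mapsto U^{j}_{h\left( \vec{\mu}^{*}_{\alpha_0}(\xi), \ldots, \vec{\mu}^{*}_{\alpha_k}(\xi) \right)}\right]_{W} $, as required.

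The only mild subtlety is to ensure that the quantity $\left( k_{\alpha}(\lusim{U}^{j}) \right)_{H}$ really is a single measure on $ k_{\alpha}\left( \mu_{\alpha} \right) $ (so that we may read it back as $ j_{W}(\xi\mapsto U^{j}_{\xi})\left( k_{\alpha}(\mu_{\alpha}) \right) $ rather than a mere name), but this is immediate from the elementarity of $k_{\alpha}$ and the fact that $ j_{\alpha}\left( \xi\mapsto \lusim{U}^{j}_{\xi} \right)\left( \mu_{\alpha} \right) $ is a $ j_{\alpha}(P) $-name forced to be a normal measure on $ \mu_{\alpha} $. No further obstruction arises; the content of the corollary is essentially bookkeeping on top of the previous lemma.
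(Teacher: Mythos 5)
Your proposal is correct and takes essentially the same route as the paper: the paper gives no separate argument for this corollary, treating it exactly as you do — apply the preceding lemma to $P_{\xi}$-names for the measures $U^{j}_{\xi} = W^{j}_{\xi}\cap V$, take $U^{j}_{\mu_{\alpha}}$ to be the measure $F$ produced there, and identify the displayed expressions via $k_{\alpha}\circ j_{\alpha} = j_{W}\restriction_{V}$ and {\L}o\'{s}'s theorem, using that $\vec{\mu}^{*}_{\alpha_i} = \left[\xi\mapsto \vec{\mu}^{*}_{\alpha_i}(\xi)\right]_W$. Your bookkeeping (including the handling of $\xi$ with $j\geq m_{\xi}$ on a $W$-null set and the commutation of $H$-evaluation with $j_W$ on names from $V$) is precisely the intended content.
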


We consider the above corollary as the definition of the measures $ U^{j}_{ \mu_{\alpha} } $ for every $ 1\leq j<  m_{\alpha} $. Note that, by elementarity, $ U^{0}_{ \mu_{\alpha} }\vartriangleleft U^{1}_{\mu_{\alpha} }\vartriangleleft\ldots \vartriangleleft  U^{m_{\alpha}-1 }_{ \mu_{\alpha} } $. Let $ \mathcal{E}_{\alpha} $ be the measure on $ \left[\kappa\right]^{m_{\alpha}} $ which corresponds to the iterated ultrapower with $ U^{m_{\alpha}-1}_{\mu_{\alpha}} \vartriangleright \ldots \vartriangleright U^{0}_{ \mu_{\alpha} } $ in decreasing order. We argue that $ \mathcal{E}_{\alpha} $ is derived from $ k_{\alpha} \colon M_{\alpha}\to M $. 

\begin{lemma}
$ U^{0}_{ \mu_{\alpha} } = \{ X\subseteq \mu_{\alpha} \colon \mu_{\alpha}\in k_{\alpha}(X) \}\cap M_{\alpha} $. Furthermore, if $ m_{\alpha}>1 $, then for every $ 1\leq j \leq m_{\alpha}-1 $, 
$$ U^{j}_{ \mu_{\alpha} } = \{ X\subseteq \mu_{\alpha} \colon \mu^{*j}_{\alpha}\in k_{\alpha}(X) \}\cap M_{\alpha}  $$
\end{lemma}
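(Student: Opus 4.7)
The plan is to mimic, at stage $\alpha$, the arguments given at stage $0$ in Lemma~\ref{Lemma: FS, U0 below U} and Remark~\ref{Remark: U^i s derived from k}, using the embedding $k_{\alpha}$ in place of $k_{0}$ and Multivariable Fusion in place of the ordinary Fusion Lemma. Both $U^{j}_{\mu_{\alpha}}$ and each right-hand side are ultrafilters in $V[G]$, so it will suffice to prove one containment. I aim to show the direction $X \in U^{j}_{\mu_{\alpha}} \Longrightarrow \mu^{*j}_{\alpha} \in k_{\alpha}(X)$, adopting the convention $\mu^{*0}_{\alpha} = \mu_{\alpha}$.

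First I would apply Corollary~\ref{Corollary: FS, how U^j_mualpha depends on U^j_xi}: if $X \in U^{j}_{\mu_{\alpha}}$, pick a representing function $\langle \vec{\xi}, \vec{\nu}_{1}, \ldots, \vec{\nu}_{k} \rangle \mapsto X(\vec{\xi}, \vec{\nu}_{1}, \ldots, \vec{\nu}_{k})$ in $V$ along a nice sequence witnessing $X \in M_{\alpha}$. Then in $V[G]$ there is a $W$-large set of $\xi$'s for which $X(\vec{\mu}^{*}_{0}(\xi), \ldots, \vec{\mu}^{*}_{\alpha_{k}}(\xi))$ belongs to $U^{j}_{h(\vec{\mu}^{*}_{0}(\xi), \ldots, \vec{\mu}^{*}_{\alpha_{k}}(\xi))}$, where $h$ is the representing function for $\mu_{\alpha}$.

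Next I would invoke the characterization of $U^{0}_{h}$ and $U^{j}_{h}$ from Lemma~\ref{Lemma: FS, U0 below U} and Remark~\ref{Remark: U^i s derived from k} applied in $V$ at each measurable $h = h(\vec{\xi}, \vec{\nu}_{1}, \ldots, \vec{\nu}_{k})$: these say that $U^{0}_{h}$ concentrates on the first element of the Prikry sequence of $h$, and $U^{j}_{h}$ concentrates on the $j$-th element of $d^{-1}\{d(h)\}$ below $h$. These facts are forced generically by direct extensions, so Multivariable Fusion produces a $p^{*} \in G$ and a $W$-large set of $\xi$'s along which $X(\vec{\mu}^{*}_{0}(\xi), \ldots, \vec{\mu}^{*}_{\alpha_{k}}(\xi))$ already contains $\mu^{*j}_{\alpha}(\xi)$ (or $\mu_{\alpha}(\xi)$ when $j = 0$). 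Taking the ultrapower by $W$ and using $\mu^{*j}_{\alpha} = [\xi \mapsto \mu^{*j}_{\alpha}(\xi)]_{W}$ together with $k_{\alpha}(X) = [\xi \mapsto X(\vec{\mu}^{*}_{0}(\xi), \ldots, \vec{\mu}^{*}_{\alpha_{k}}(\xi))]_{W}$ then gives $\mu^{*j}_{\alpha} \in k_{\alpha}(X)$, as required.

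The main obstacle will be in designing the correct $\leq^{*}$-dense open sets $e(\vec{\xi}, \vec{\nu}_{1}, \ldots, \vec{\nu}_{k}) \subseteq P \setminus \nu_{k}$ to feed into Multivariable Fusion. Each such set must force, for its candidate value of $h$, that the first element (resp.\ the $j$-th preimage under $d$) of the Prikry sequence of $h$ lies in $X(\vec{\xi}, \vec{\nu}_{1}, \ldots, \vec{\nu}_{k})$ whenever the latter actually belongs to $U^{0}_{h}$ (resp.\ $U^{j}_{h}$). The $\leq^{*}$-density of these sets ultimately reduces, via the already established characterizations at measurables below $\kappa$ together with the fact that direct extensions of Prikry conditions can freely shrink measure-one sets, to what has already been done one coordinate at a time in the preceding lemmas of this section.
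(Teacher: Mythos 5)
Your reduction and opening steps are sound, and your architecture is essentially the paper's proof run in the mirror direction: the paper fixes $X$ with $\mu^{*j}_{\alpha}\in k_{\alpha}(X)$ and shows $X\in U^{j}_{\mu_{\alpha}}$, whereas you start from $X\in U^{j}_{\mu_{\alpha}}$; since both sides are ultrafilters on $\mathcal{P}\left(\mu_{\alpha}\right)\cap M_{\alpha}$ (note $\mu_{\alpha}\leq \mu^{*j}_{\alpha}<k_{\alpha}\left(\mu_{\alpha}\right)$), either containment suffices. Your use of corollary \ref{Corollary: FS, how U^j_mualpha depends on U^j_xi} (for $j=0$, of property (D), i.e.\ $U^{0}_{\xi}=U^{\times}_{\xi}\cap V$) together with the ultrapower representation $k_{\alpha}(X)=\left[\xi\mapsto X\left(\vec{\mu}^{*}_{0}(\xi),\ldots,\vec{\mu}^{*}_{\alpha_{k}}(\xi)\right)\right]_{W}$, to get that $X\left(\vec{\mu}^{*}_{0}(\xi),\ldots\right)\in U^{j}_{h\left(\vec{\mu}^{*}_{0}(\xi),\ldots\right)}$ for $W$-many $\xi$, is exactly right, and the machinery you then invoke (Multivariable Fusion, dense sets acting at the coordinate $h$, shrinking $\lusim{A}^{r}_{h}$ via the equivalence $A\in U^{j}_{h}\iff d''\left(A\cap \Delta_{j}\right)\in W_{h}$) is the same as the paper's.

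The gap is in the content of your dense sets, and it is not something that "reduces to the preceding lemmas" without further work. You assert that the relevant point is the \emph{first} element of the Prikry sequence of $h$ (resp.\ the $j$-th element of $d^{-1}\{d(h)\}$). In fact the generator $\mu_{\alpha}$ is represented by $\xi\mapsto \mu_{\alpha}(\xi)$ where $\mu_{\alpha}(\xi)$ is the $\left(n^{*}+1\right)$-th element of the Prikry sequence of $h(\ldots)$, with $n^{*}$ the length of the initial segment $t^{*}$ of the Prikry sequence of $k_{\alpha}\left(\mu_{\alpha}\right)$ lying below $\mu_{\alpha}$; accordingly $\mu^{*j}_{\alpha}(\xi)$ is the $j$-th element of $d^{-1}\{\mu_{\alpha}(\xi)\}$, not of $d^{-1}\{d(h)\}$. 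One has $n^{*}>0$ at every stage whose critical point continues a Prikry sequence begun earlier, i.e.\ whenever $k_{\alpha}\left(\mu_{\alpha}\right)=k_{\beta}\left(\mu_{\beta}\right)$ for some $\beta<\alpha$ --- the typical case, since each measure $U^{j-1}_{\mu_{\beta}}$ is applied $\omega$ many times. In exactly this case the coordinate $h(\ldots)$ coincides ($W$-often) with an earlier coordinate $g_{i}(\ldots)$ at which the operation ${p^{*}}^{\frown}\langle \vec{\xi},\vec{\nu}_{1},\ldots,\vec{\nu}_{k}\rangle$ has already installed the nonempty stem ${t_{i}}^{\frown}\langle \nu_{i}\rangle$; shrinking the measure-one set $\lusim{A}^{r}_{h}$ gives no information about stem elements, so your dense sets do not constrain $\mu_{\alpha}(\xi)$ at all. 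This is precisely what the extra clauses in the paper's proof are for: its dense sets additionally decide whether $\mbox{lh}\left(t^{r}_{h}\right)>n^{*}$ and, in the positive case, confine the $n^{*}$-th stem element to a bounded subset of $h$; after fusion one then argues, by a boundedness computation in the ultrapower, that the positive case occurs only on a $W$-null set of $\xi$'s, so that $\mu_{\alpha}(\xi)$ genuinely lies in the shrunken measure-one set. As written, your argument establishes the lemma only at stages with $n^{*}=0$; to cover the general case you must build this stem control into $e\left(\vec{\xi},\vec{\nu}_{1},\ldots,\vec{\nu}_{k}\right)$ and reproduce the accompanying argument.
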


\begin{remark}
	$  $
\begin{enumerate}
\item Let us note that for every $ j $ as above, $ \mu_{\alpha}< \mu^{j}_{\alpha}< k_{\alpha}\left( \mu_{\alpha} \right) $, so $ U^{*j}_{\mu_{\alpha} } $ is an  ultrafilter which concentrates on $ \mu_{\alpha} $.
\item  We deliberately did not define, in corollary \ref{Corollary: FS, how U^j_mualpha depends on U^j_xi}, the measure $ U^{m_{\alpha}}_{ \mu_{\alpha} } $ - it is not derived from $ k_{\alpha} $ and does not participate in $ j_W\restriction_{V} $. The exception is $ \alpha =0 $ where $ U^{m} = U $ is the first step in the iteration.
\end{enumerate}
\end{remark}

\begin{proof}
We first provide the proof for $ U^{0}_{\mu_{\alpha} } $. Assume that $ X\in M_{\alpha} $ and $ \mu_{\alpha}\in k_{\alpha}(X) $. Write--
$$ X = j_{\alpha}\left( \langle \vec{\xi}, \vec{\nu}_0, \ldots, \vec{\nu}_{k} \rangle\mapsto X\left( \vec{\xi}, \vec{\nu}_0, \ldots, \vec{\nu}_{k} \right) \right)\left( j_{1,\alpha}\left(\vec{\kappa}\right), j_{\alpha_0, \alpha}\left(\vec{\mu}_{\alpha_0}\right), \ldots, j_{\alpha_k, \alpha}\left(\vec{\mu}_{\alpha_k}\right) \right) $$
where, without loss of generality, the nice sequence $ \langle {\alpha_0}, \ldots, {\alpha_k} \rangle $ can be used to represent $ \mu_{\alpha} $ in $ M_{\alpha} $, in the usual sense that for a function $ h\in V $,
$$ \mu_{\alpha} = j_{\alpha}(h)\left( j_{1,\alpha}\left(\vec{\kappa}\right), j_{\alpha_0, \alpha}\left(\vec{\mu}_{\alpha_0}\right), \ldots, j_{\alpha_k,\alpha}\left(\vec{\mu}_{\alpha_k}\right) \right) $$
Apply Multivariable Fusion. For every $\langle \vec{\xi}, \vec{\nu}_1, \ldots, \vec{\nu}_{k}\rangle $, let--
\begin{align*}
e\left( \vec{\xi}, \vec{\nu}_1, \ldots, \vec{\nu}_{k} \right) = \{ & r\in P\setminus \nu_k \colon r\restriction_{ h\left( \vec{\xi}, \vec{\nu}_1, \ldots, \vec{\nu}_{k} \right) }\parallel X\left( \vec{\xi}, \vec{\nu}_1, \ldots, \vec{\nu}_{k} \right)\in U^{\times }_{ h\left( \vec{\xi}, \vec{\nu}_1, \ldots, \vec{\nu}_{k} \right) } , \\
& \mbox{if it decides positively, then } r\restriction_{ h\left( \vec{\xi}, \vec{\nu}_1, \ldots, \vec{\nu}_{k} \right) } \Vdash \lusim{A}^{r}_{ h\left( \vec{\xi}, \vec{\nu}_1, \ldots, \vec{\nu}_{k} \right) } \subseteq \\
& X\left( \vec{\xi}, \vec{\nu}_1, \ldots, \vec{\nu}_{k} \right) ; \mbox{ else, } r\restriction_{ h\left( \vec{\xi}, \vec{\nu}_1, \ldots, \vec{\nu}_{k} \right) } \Vdash \lusim{A}^{r}_{ h\left( \vec{\xi}, \vec{\nu}_1, \ldots, \vec{\nu}_{k} \right) } \mbox{ is disjoint} \\
&\mbox{from } X\left( \vec{\xi}, \vec{\nu}_1, \ldots, \vec{\nu}_{k} \right). \mbox{ Moreover, } r\restriction_{ h\left( \vec{\xi}, \vec{\nu}_1, \ldots, \vec{\nu}_{k} \right) } \parallel \mbox{lh}\left( t^{r}_{ h\left( \vec{\xi}, \vec{\nu}_1, \ldots, \vec{\nu}_{k} \right) } \right) > n^*, \\
&\mbox{and if it decides positively, then there exists a bounded subset } \\
& A\left( \vec{\xi}, \vec{\nu}_1, \ldots, \vec{\nu}_{k} \right)\subseteq h\left( \vec{\xi}, \vec{\nu}_1, \ldots, \vec{\nu}_{k} \right) \mbox{ for which } r\restriction_{ h\left( \vec{\xi}, \vec{\nu}_1, \ldots ,\vec{\nu}_{k} \right) } \Vdash \mbox{ the } n^*\mbox{-th}\\
&\mbox{element of }t^{r}_{ h\left( \vec{\xi}, \vec{\nu}_1, \ldots, \vec{\nu}_{k} \right) } \mbox{ belongs to } A\left( \vec{\xi}, \vec{\nu}_1, \ldots, \vec{\nu}_{k} \right)  \}
\end{align*}
Applying the same tools above, there exists a condition $ p^*\in G $ and a bounded subset $ A^*\left( \vec{\xi}, \vec{\nu}_1, \ldots, \vec{\nu}_{k} \right) $, such that--
\begin{align*}
\{ \xi<\kappa \colon &p^*\restriction_{ h\left( \vec{\mu}^*_{ 0 }(\xi), \vec{\mu}^{*}_{\alpha_1}(\xi), \ldots, \vec{\mu}^{*}_{\alpha_k}(\xi) \right) }  \   \Vdash \mbox{if lh}\left( t^{r}_{ h\left( \vec{\mu}^*_{ 0 }(\xi), \vec{\mu}^{*}_{\alpha_1}(\xi), \ldots, \vec{\mu}^{*}_{\alpha_k}(\xi) \right) } \right)\geq n^* \mbox{ then the }\\
&n^*\mbox{-th element in the Prikry sequence of }\\
&h\left( \vec{\mu}^*_{ 0 }(\xi), \vec{\mu}^{*}_{\alpha_1}(\xi), \ldots, \vec{\mu}^{*}_{\alpha_k}(\xi) \right) \mbox{ belongs to } A^*\left( \vec{\mu}^*_{ 0 }(\xi), \vec{\mu}^{*}_{\alpha_1}(\xi), \ldots, \vec{\mu}^{*}_{\alpha_k}(\xi) \right)  \}\in W 
\end{align*}
Since $ A^*\left( \vec{\mu}^*_{ 0 }(\xi), \vec{\mu}^{*}_{\alpha_1}(\xi), \ldots, \vec{\mu}^{*}_{\alpha_k}(\xi) \right) $ is bounded in $ h\left( \vec{\mu}^*_{ 0 }(\xi), \vec{\mu}^{*}_{\alpha_1}(\xi), \ldots, \vec{\mu}^{*}_{\alpha_k}(\xi) \right) $, it follows that--
$$ \{ \xi <\kappa  \colon p^*\restriction_{ h\left( \vec{\mu}^*_{ 0 }(\xi), \vec{\mu}^{*}_{\alpha_1}(\xi), \ldots, \vec{\mu}^{*}_{\alpha_k}(\xi) \right) }\Vdash \mbox{lh}\left(  t^{r}_{ h\left( \vec{\mu}^*_{ 0 }(\xi), \vec{\mu}^{*}_{\alpha_1}(\xi), \ldots, \vec{\mu}^{*}_{\alpha_k}(\xi) \right) } \right) <n^*  \}\in W $$
By the choice of $ p^* $, it follows that for a set of $ \xi $-s in $ W $, 
$$ p^*\restriction_{  h\left( \vec{\mu}^*_{ 0 }(\xi), \vec{\mu}^{*}_{\alpha_1}(\xi), \ldots, \vec{\mu}^{*}_{\alpha_k}(\xi) \right) } \parallel X\left( \xi, \mu_{\alpha_0}(\xi), \ldots, \mu_{\alpha_k}(\xi)  \right)\in U^{\times}\left( \vec{\mu}^*_{ 0 }(\xi), \vec{\mu}^{*}_{\alpha_1}(\xi), \ldots, \vec{\mu}^{*}_{\alpha_k}(\xi) \right) $$
we argue that for a set of $ \xi $-s in $ W $, the decision is positive. Indeed, otherwise, it holds in $ M\left[H\right] $ that--
$$ \mu_{\alpha} = \left[  \xi \mapsto \mu_{\alpha}(\xi) \right]_W \in \left[ \xi \mapsto h\left( \vec{\mu}^*_{ 0 }(\xi), \vec{\mu}^{*}_{\alpha_1}(\xi), \ldots, \vec{\mu}^{*}_{\alpha_k}(\xi) \right)\setminus X\left( \vec{\mu}^*_{ 0 }(\xi), \vec{\mu}^{*}_{\alpha_1}(\xi), \ldots, \vec{\mu}^{*}_{\alpha_k}(\xi) \right)  \right]_W = k_{\alpha}(\mu_{\alpha}\setminus X) $$
contradicting the choice of $ X $. Thus, for a set of $ \xi $-s in $ W $, 
$$ p^*\restriction_{  h\left( \vec{\mu}^*_{ 0 }(\xi), \vec{\mu}^{*}_{\alpha_1}(\xi), \ldots, \vec{\mu}^{*}_{\alpha_k}(\xi) \right) } \Vdash  X\left( \vec{\mu}^*_{ 0 }(\xi), \vec{\mu}^{*}_{\alpha_1}(\xi), \ldots, \vec{\mu}^{*}_{\alpha_k}(\xi) \right)\in U^{\times}\left( \vec{\mu}^*_{ 0 }(\xi), \vec{\mu}^{*}_{\alpha_1}(\xi), \ldots, \vec{\mu}^{*}_{\alpha_k}(\xi) \right) $$
recall that $ U^{0} = U^{\times} \cap V $; hence--
$$ p^*\restriction_{  h\left( \vec{\mu}^*_{ 0 }(\xi), \vec{\mu}^{*}_{\alpha_1}(\xi), \ldots, \vec{\mu}^{*}_{\alpha_k}(\xi) \right) } \Vdash  X\left( \vec{\mu}^*_{ 0 }(\xi), \vec{\mu}^{*}_{\alpha_1}(\xi), \ldots, \vec{\mu}^{*}_{\alpha_k}(\xi) \right)\in U^{0}\left( \vec{\mu}^*_{ 0 }(\xi), \vec{\mu}^{*}_{\alpha_1}(\xi), \ldots, \vec{\mu}^{*}_{\alpha_k}(\xi) \right) $$
and thus, in $ M\left[H\right] $,  $ k_{\alpha}(X) \in k_{\alpha}\left( U^{0}_{\mu_{\alpha}} \right) $. In particular, in $ M_{\alpha} $, $ X\in U^{0}_{\mu_{\alpha}} $.

We now proceed to the proof for $ U^{j}_{\mu_{\alpha}} $ for every $ 1\leq j \leq m_{\alpha}-1 $. Assume that $ \mu^{*j}_{\alpha}\in k_{\alpha}(X) $, and recall that $ \mu^{*j}_{\alpha} $ is the $ j $-th element in $ d^{-1}\left( \mu_{\alpha} \right) $. We repeat the same argument above. First, define--
\begin{align*}
e\left( \vec{\xi}, \vec{\nu}_1, \ldots, \vec{\nu}_{k} \right) = \{ & r\in P\setminus \nu_k \colon r\restriction_{ h\left( \vec{\xi}, \vec{\nu}_1, \ldots, \vec{\nu}_{k} \right) }\parallel X\left( \vec{\xi}, \vec{\nu}_1, \ldots, \vec{\nu}_{k} \right)\in U^{j }_{ h\left( \vec{\xi}, \vec{\nu}_1, \ldots, \vec{\nu}_{k} \right) } , \\
	& \mbox{if it decides positively, then } r\restriction_{ h\left( \vec{\xi}, \vec{\nu}_1, \ldots, \vec{\nu}_{k} \right) } \Vdash \lusim{A}^{r}_{ h\left( \vec{\xi}, \vec{\nu}_1, \ldots, \vec{\nu}_{k} \right) } \subseteq \\
	& d'' X\left( \vec{\xi}, \vec{\nu}_1, \ldots, \vec{\nu}_{k} \right) ; \mbox{ else, } r\restriction_{ h\left( \vec{\xi}, \vec{\nu}_1, \ldots, \vec{\nu}_{k} \right) } \Vdash \pi^{-1}_{j,0} {''} \lusim{A}^{r}_{ h\left( \vec{\xi}, \vec{\nu}_1, \ldots, \vec{\nu}_{k} \right) } \mbox{ is disjoint} \\
	&\mbox{from }  X\left( \vec{\xi}, \vec{\nu}_1, \ldots, \vec{\nu}_{k} \right). \mbox{ Moreover, } r\restriction_{ h\left( \vec{\xi}, \vec{\nu}_1, \ldots, \vec{\nu}_{k} \right) } \parallel \mbox{lh}\left( t^{r}_{ h\left( \vec{\xi}, \vec{\nu}_1, \ldots, \vec{\nu}_{k} \right) } \right) > n^*, \\
	&\mbox{and if it decides positively, then there exists a bounded subset } \\
	& A\left( \vec{\xi}, \vec{\nu}_1, \ldots, \vec{\nu}_{k} \right)\subseteq h\left( \vec{\xi}, \vec{\nu}_1, \ldots, \vec{\nu}_{k} \right) \mbox{ for which } r\restriction_{ h\left( \vec{\xi}, \vec{\nu}_1, \ldots ,\vec{\nu}_{k} \right) } \Vdash \mbox{ the } n^*\mbox{-th}\\
	&\mbox{element of }t^{r}_{ h\left( \vec{\xi}, \vec{\nu}_1, \ldots, \vec{\nu}_{k} \right) } \mbox{ belongs to } A\left( \vec{\xi}, \vec{\nu}_1, \ldots, \vec{\nu}_{k} \right)  \}
\end{align*}
Now we argue as before, and claim that--
$$ p^*\restriction_{  h\left( \vec{\mu}^*_{ 0 }(\xi), \vec{\mu}^{*}_{\alpha_1}(\xi), \ldots, \vec{\mu}^{*}_{\alpha_k}(\xi) \right) } \Vdash  X\left( \vec{\mu}^*_{ 0 }(\xi), \vec{\mu}^{*}_{\alpha_1}(\xi), \ldots, \vec{\mu}^{*}_{\alpha_k}(\xi) \right)\in U^{j}\left( \vec{\mu}^*_{ 0 }(\xi), \vec{\mu}^{*}_{\alpha_1}(\xi), \ldots, \vec{\mu}^{*}_{\alpha_k}(\xi) \right) $$
indeed, otherwise, there exists a set of $ \xi $-s in $ W $ for which--
$$ \mu^{j}_{\alpha}(\xi) = \pi_{j,0}^{-1}\left( \mu_{\alpha}(\xi) \right) \notin X\left( \vec{\mu}^*_{ 0 }(\xi), \vec{\mu}^{*}_{\alpha_1}(\xi), \ldots, \vec{\mu}^{*}_{\alpha_k}(\xi) \right) $$
contradicting the fact that $ \mu^{j}_{\alpha}\in k_{\alpha}\left( X \right) $. It follows that, in $ M\left[H\right] $, $  k_{\alpha}(X)\in k_{\alpha}\left(   U^{j}_{\mu_{\alpha}}  \right) $, and so $ X\in U^{j}_{\mu_{\alpha} } $.
\end{proof}

\begin{corollary}
	$\mathcal{E}_{\alpha} = \{  X\subseteq \left[ \mu_{\alpha}  \right]^{m_{\alpha}}  \colon \langle \mu_{\alpha}, \mu^{*1}_{\alpha}, \ldots, \mu^{*m_{\alpha}-1}_{\alpha} \rangle \in k_{\alpha}\left(X\right) \}\cap M_{\alpha}$. 
\end{corollary}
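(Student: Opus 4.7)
The plan is to derive this corollary directly from the preceding lemma by combining it with the product representation of $\mathcal{E}_\alpha$. Let $\mathcal{F}_\alpha$ denote the right-hand side. Both $\mathcal{E}_\alpha$ and $\mathcal{F}_\alpha$ are ultrafilters in $M_\alpha$ on the Boolean algebra $\mathcal{P}\bigl([\mu_\alpha]^{m_\alpha}\bigr)\cap M_\alpha$: the former by its construction as a finite iterated product of normal measures, and the latter because for any $X\in M_\alpha$ with $X\subseteq [\mu_\alpha]^{m_\alpha}$, the fixed tuple $\langle \mu_\alpha, \mu^{*1}_\alpha, \ldots, \mu^{*m_\alpha-1}_\alpha\rangle$ lies in exactly one of $k_\alpha(X)$, $k_\alpha\bigl([\mu_\alpha]^{m_\alpha}\setminus X\bigr)$. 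Hence it suffices to establish one inclusion, which I will take to be $\mathcal{E}_\alpha\subseteq\mathcal{F}_\alpha$.

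Fix $X\in\mathcal{E}_\alpha$. By the product-representation claim proved at the end of subsection 3.1 (for every set in $W_j$ there are factors of measure one whose increasing sequences lie inside the given set), applied inductively to $\mathcal{E}_\alpha$, there exist sets $X_0\in U^0_{\mu_\alpha},\ X_1\in U^1_{\mu_\alpha},\ \ldots,\ X_{m_\alpha-1}\in U^{m_\alpha-1}_{\mu_\alpha}$ in $M_\alpha$ such that every increasing sequence in $X_0\times\ldots\times X_{m_\alpha-1}$ belongs to $X$. The preceding lemma identifies each $U^j_{\mu_\alpha}$ as the derived measure $\{Y\subseteq\mu_\alpha : \mu^{*j}_\alpha\in k_\alpha(Y)\}\cap M_\alpha$ (where we set $\mu^{*0}_\alpha:=\mu_\alpha$), so from $X_j\in U^j_{\mu_\alpha}$ we obtain $\mu^{*j}_\alpha\in k_\alpha(X_j)$ for every $0\leq j<m_\alpha$.

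Since $\mu_\alpha<\mu^{*1}_\alpha<\ldots<\mu^{*m_\alpha-1}_\alpha$ is strictly increasing (the generators are the pairwise distinct elements of $d^{-1}\{k_\alpha(\mu_\alpha)\}$ together with $\mu_\alpha$), the tuple $\langle\mu_\alpha,\mu^{*1}_\alpha,\ldots,\mu^{*m_\alpha-1}_\alpha\rangle$ is an increasing sequence lying simultaneously in each $k_\alpha(X_j)$. Applying $k_\alpha$ to the statement ``every increasing sequence in $X_0\times\ldots\times X_{m_\alpha-1}$ belongs to $X$,'' which holds in $M_\alpha$, elementarity of $k_\alpha$ gives that this tuple lies in $k_\alpha(X)$. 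Thus $X\in\mathcal{F}_\alpha$, so $\mathcal{E}_\alpha\subseteq\mathcal{F}_\alpha$ and equality follows. No real obstacle is anticipated: the substantive work was already carried out in the preceding lemma via Multivariable Fusion, and here one is merely packaging the coordinate-wise identifications into a single product-measure statement.
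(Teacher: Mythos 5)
Your proposal is correct and follows essentially the same route as the paper: reduce to the single inclusion $\mathcal{E}_{\alpha}\subseteq\{X \colon \langle \mu_{\alpha}, \mu^{*1}_{\alpha}, \ldots, \mu^{*m_{\alpha}-1}_{\alpha}\rangle\in k_{\alpha}(X)\}$ (both being ultrafilters in $M_{\alpha}$), use the product-representation claim from subsection 3.1 to find $X_0\in U^{0}_{\mu_{\alpha}},\ldots,X_{m_{\alpha}-1}\in U^{m_{\alpha}-1}_{\mu_{\alpha}}$ whose increasing sequences all lie in $X$, and then combine elementarity of $k_{\alpha}$ with the preceding lemma's identification $\mu^{*j}_{\alpha}\in k_{\alpha}(X_j)$ to conclude that the generator tuple lies in $k_{\alpha}(X)$. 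Your added justifications (why one inclusion suffices, why the tuple is increasing) are correct and merely make explicit what the paper leaves implicit.
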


\begin{proof}
It suffices to prove that--
$$\mathcal{E}_{\alpha} \subseteq \{  X\subseteq \left[ \mu_{\alpha}  \right]^{m_{\alpha}}  \colon \langle \mu_{\alpha}, \mu^{*1}_{\alpha}, \ldots, \mu^{*m_{\alpha}-1}_{\alpha} \rangle \in k_{\alpha}\left(X\right) \}\cap M_{\alpha}$$
Start from $ X\in \mathcal{E}_{\alpha} $. Then there are sets $ X_0\in U^{0}_{\mu_{\alpha}} , \ldots, X_{m_{\alpha}-1}\in U^{m_{\alpha}-1}_{\mu_{\alpha} }$ such that the set of increasing sequences in $ X_0 \times \ldots \times X_{m-1}$ is contained in $ X $. Thus every increasing sequence in $ k_{\alpha}\left(X_0\right)\times \ldots \times k_{\alpha}\left( X_{m-1} \right)$ belongs to $k_{\alpha}(X) $, and by the previous lemma, $ \langle \mu_{\alpha}, \ldots, \mu^{*m_{ \alpha }-1}_{\alpha} \rangle\in k_{\alpha}(X) $, as desired.
\end{proof}

This concludes the proof of properties $ (A)-(F) $ from the beginning of the section.

Recall that $ \kappa^* = j_{U}(\kappa) $. Note that $ \kappa^* = j_{\kappa^*}\left( \kappa \right) $, since $ \kappa $ is mapped to $ \kappa^* $ after the first step in the iteration, and every step after it is taken on a measurable $ \mu^{j}_{\alpha} $ below $ \kappa^* $. Moreover, $ \mbox{sup}\left( \mu_{\alpha} \colon \alpha<\kappa^* \right) = \kappa^* $ and thus $ \mbox{crit}\left( k_{\kappa^*} \right) \geq \kappa^* $. Let us use the above properties and argue that the induction halts after $ \kappa^* $-many steps. 

\begin{lemma}
$ k_{\kappa^*} \colon M_{\kappa^*}\to M $ is the identity function. In particular, $j_W\restriction_{ V } = j_{\kappa^*}$.
\end{lemma}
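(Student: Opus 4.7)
The plan is to show that $k_{\kappa^*}$ has no critical point. Once this is established, a standard rank induction (an elementary embedding between transitive models that fixes every ordinal is the identity) forces $k_{\kappa^*}$ to be the inclusion of $M_{\kappa^*}$ into $M$, whence $j_W\restriction_V = k_{\kappa^*}\circ j_{\kappa^*} = j_{\kappa^*}$ as required.

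First I would observe that $k_{\kappa^*}(\kappa^*)=\kappa^*$: combining $j_{\kappa^*}(\kappa)=\kappa^*$ (just noted) with $k_{\kappa^*}\circ j_{\kappa^*} = j_W\restriction_V$ yields $k_{\kappa^*}(\kappa^*) = k_{\kappa^*}(j_{\kappa^*}(\kappa)) = j_W(\kappa)=\kappa^*$. Consequently, any hypothetical critical point $\mu_{\kappa^*}$ of $k_{\kappa^*}$ must satisfy $\mu_{\kappa^*}>\kappa^*$ and, by the usual derivation of a measure from an elementary embedding, be measurable in $M_{\kappa^*}$.

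Suppose for contradiction such $\mu_{\kappa^*}$ exists. Using that $M_{\kappa^*}$ is the direct limit of $\langle M_\alpha : \alpha<\kappa^*\rangle$, write
\[\mu_{\kappa^*} = j_{\kappa^*}(h)\bigl(j_{1,\kappa^*}([Id]'_0),\ j_{\alpha_1+1,\kappa^*}([Id]_{\alpha_1}),\ \ldots,\ j_{\alpha_k+1,\kappa^*}([Id]_{\alpha_k})\bigr)\]
for some $h\in V$ and a nice sequence $0=\alpha_0<\alpha_1<\ldots<\alpha_k<\kappa^*$. I would then rerun the proof of Lemma~\ref{Lemma:  mu alpha appears in the sequence of its k alpha } with $\mu_{\kappa^*}$ in place of $\mu_\alpha$: Multivariable Fusion applied to this representation reflects through $j_W$ the statement that $\mu_{\kappa^*}$ appears as an element of the Prikry sequence attached by $H$ to the measurable $k_{\kappa^*}(\mu_{\kappa^*})$ of $M$. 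But $k_{\kappa^*}(\mu_{\kappa^*})>\mu_{\kappa^*}>\kappa^* = j_W(\kappa)$, and $H$ is $V[G]$-generic for the Magidor iteration $j_W(P)$, which, as computed in $M$, has length $j_W(\kappa)=\kappa^*$. Thus $H$ attaches Prikry sequences only to measurables of $M$ strictly below $\kappa^*$, so $k_{\kappa^*}(\mu_{\kappa^*})$ receives no Prikry sequence whatsoever in $M[H]$, contradicting the previous assertion.

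The main obstacle is ensuring that the Multivariable Fusion argument of Lemma~\ref{Lemma:  mu alpha appears in the sequence of its k alpha } genuinely transfers to the stage $\alpha=\kappa^*$: that proof tacitly uses that the representing function $h$ takes values in the singularized set $\Delta$ on a $W$-large set of $\xi$, which for $\alpha<\kappa^*$ was automatic from $k_\alpha(\mu_\alpha)<\kappa^*$ lying inside the Prikry region of $M[H]$. In the limit case one must argue a dichotomy: either $h$ still lands in $\Delta$ on a $W$-large set, in which case the Fusion argument produces $\mu_{\kappa^*}$ as an element of the reflected Prikry sequence and we get the contradiction above; or $h$ fails to land in $\Delta$, in which case $k_{\kappa^*}(\mu_{\kappa^*})$ is not in $j_W(\Delta)$, so although $k_{\kappa^*}(\mu_{\kappa^*})$ is measurable in $M$, its representation via $h$ forces it to equal $j_W$ of an ordinal not singularized in $M[H]$—which, once combined with $k_{\kappa^*}(\mu_{\kappa^*})>\kappa^*$ and the structure of nice sequences, again collapses the hypothesis that $\mu_{\kappa^*}$ serves as the critical point. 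Formulating and discharging both horns uniformly from a single run of Multivariable Fusion at $\alpha=\kappa^*$ is the delicate bookkeeping step.
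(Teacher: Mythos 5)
Your overall reduction (no critical point $\Rightarrow$ identity on ordinals $\Rightarrow$ identity) is sound, but the way you try to rule out a critical point has a genuine gap, and it sits exactly where you defer to ``delicate bookkeeping.'' Suppose $\mu=\mbox{crit}\left(k_{\kappa^*}\right)$ exists and write $\mu=j_{\kappa^*}(h)(\vec{c}\,)$, where the generators $\vec{c}$ all lie below $\kappa^*$ and are therefore fixed by $k_{\kappa^*}$, so that $k_{\kappa^*}(\mu)=\left[\xi\mapsto h\left(\vec{\mu}^{*}_{0}(\xi),\ldots,\vec{\mu}^{*}_{\alpha_k}(\xi)\right)\right]_W$. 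Your first horn --- $h$ lands in $\Delta$ (indeed, anywhere below $\kappa$) on a $W$-large set --- is vacuous for a trivial reason that needs no Prikry reflection: it would give $k_{\kappa^*}(\mu)<j_W(\kappa)=\kappa^*$, while $k_{\kappa^*}(\mu)\geq \mu>\kappa^*$. So the entire content of the proof is the second horn, where $h$ takes values $\geq\kappa$ on a $W$-large set, and there your argument produces nothing: the lemma that $\mu_{\alpha}$ appears in the Prikry sequence of $k_{\alpha}(\mu_{\alpha})$ cannot even be set up, because its very first step fixes $t^*$ as an initial segment of the Prikry sequence of $k_{\alpha}(\mu_{\alpha})$ in $M\left[H\right]$, and here $k_{\kappa^*}(\mu)>\kappa^*=j_W(\kappa)$ receives no Prikry sequence at all; on the $V\left[G\right]$ side the coordinates $h(\ldots)\geq\kappa$ carry no forcing either, so there is nothing to reflect. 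Asserting that this horn ``collapses the hypothesis'' is precisely the statement to be proved --- why can there be no ordinal $\mu>\kappa^*$ outside $\mbox{Im}\left(k_{\kappa^*}\right)$ with $j_{\kappa^*}(h)(\vec{c}\,)\neq j_W(h)(\vec{c}\,)$ --- and nothing in your sketch addresses it. (A smaller inaccuracy: the critical point being measurable in $M_{\kappa^*}$ is not ``the usual derived-measure'' fact, since the derived ultrafilter need not belong to $M_{\kappa^*}$; in this paper that is itself a lemma proved by Multivariable Fusion.)

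The paper closes exactly this case by a different and much simpler argument, with no Prikry-sequence machinery: it shows that $\mbox{Im}\left(k_{\kappa^*}\right)$ contains every ordinal, which rules out a critical point outright (a critical point is never in the image of its own embedding). Given an ordinal $\eta=\left[g\right]_{W^*}$ with $g\in V\left[G\right]$, lemma \ref{Lemma: every name for an ordinael can be decided up to boudedly many values by a direct extension} makes the relevant sets $\leq^*$-dense open, so the Fusion lemma \ref{Lemma: FS, fusion lemma} yields $p\in G$ and a function $\xi\mapsto A(\xi)$ in $V$ with $\left|A(\xi)\right|<\kappa$ and $p^{-\xi}\Vdash \lusim{g}(\xi)\in A(\xi)$; since $\left(j_W(p)\right)^{-\left[Id\right]_{W^*}}\in H$ by lemma \ref{Lemma: FS, jW of p minus idW* belongs to H}, it follows that $\eta\in j_{W^*}\left(\xi\mapsto A(\xi)\right)\left(\left[Id\right]_{W^*}\right)=k_{\kappa^*}\bigl(j_{\kappa^*}\left(\xi\mapsto A(\xi)\right)\left(j_{1,\kappa^*}(\kappa)\right)\bigr)$, and this last set has $M_{\kappa^*}$-cardinality below $\kappa^*\leq \mbox{crit}\left(k_{\kappa^*}\right)$, so $k_{\kappa^*}$ acts on it pointwise and $\eta\in\mbox{Im}\left(k_{\kappa^*}\right)$. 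This size argument, exploiting $\mbox{crit}\left(k_{\kappa^*}\right)\geq\kappa^*$, is what must replace your second horn; if you wish to keep your contradiction-style framing, simply apply it to $\eta=\mu$ itself.
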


\begin{proof}
It suffices to prove that for every ordinal $ \eta $, $ \eta\in \mbox{Im}\left( k_{\kappa^*} \right) $. Fix such $ \eta $ and let $ g\in V\left[G\right] $ be a function such that $ \left[g\right]_{W^*} = \eta $. Let $ \lusim{g} $ be a $ P = P_{\kappa} $-name for it. For every $ \xi<\kappa $, let--
$$ e(\xi) = \{ r\in P\setminus \xi \colon \mbox{for some } A\subseteq \kappa \mbox{ with } \left|A\right|<\kappa, \ r\Vdash \lusim{g}(\xi)\in A \} $$
$ e(\xi) $ is $ \leq^* $-dense open by lemma \ref{Lemma: every name for an ordinael can be decided up to boudedly many values by a direct extension}. By Fusion, there exists $ p\in G $ such that for every $ \xi<\kappa $,
$$  p\restriction_{\xi}\Vdash  \mbox{for some } A\subseteq \kappa \mbox{ with } \left|A\right|<\kappa, \  \left(  p\setminus \xi \right)^{-\xi}  \Vdash \lusim{g}(\xi)\in A  $$
Let $ A(\xi) = \{ \gamma<\kappa \colon \exists q\geq p\restriction_{\xi}, \ q\Vdash \gamma \in \lusim{A}  \} $. Then for every $ \xi<\kappa $,  $ \left| A(\xi) \right| <\kappa $, and $ p^{-\xi} \Vdash \lusim{g}(\xi)\in A(\xi) $. Recall that for every $ p\in G $, $ \left(j_{W}(p)\right)^{-\left[Id\right]_{W^*}  }\in H $. Thus, in $ M\left[H\right] $,
$$ \left[g\right]_{W^*} \in j_{W^*}\left( \xi \mapsto A(\xi) \right)( \left[Id\right]_{W^*} ) = k_{\kappa^*}\left(  \   j_{\kappa^*} \left( \xi\mapsto A(\xi) \right) \left( j_{1,\kappa^*}\left( \kappa \right) \right)  \   \right)   $$
but $ \left| j_{\kappa^*} \left( \xi\mapsto A(\xi) \right) \left( j_{1,\kappa^*}\left( \kappa \right) \right) \right| < j_{\kappa^*}\left( \kappa \right)  = \kappa^*  \leq \mbox{crit}\left( k_{\kappa^*} \right)  $, and thus $ \left[g\right]_{W^*}\in \mbox{Im}\left( k_{\kappa^*} \right) $.
\end{proof}

This completes the proof of theorem \ref{Thm: Structure of j_Wrestriction V}.

\begin{lemma}
Fix $ \alpha<\kappa^* $ and denote $ m = m_{\alpha} $. Let $ 0< j \leq m $. Then $ \mu^{*j}_{\alpha} $ is measurable in $ M $, and its Prikry sequence in $ M\left[H\right] $ is the sequence of critical points obtained by iterating the measure $ U^{j-1}_{ \mu_{\alpha} } $ over $ M_{\alpha} $.
\end{lemma}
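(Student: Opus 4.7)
The plan divides the lemma into the measurability assertion and the identification of the Prikry sequence.

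\emph{Measurability.} For $j = m_\alpha$, $\mu^{*j}_\alpha = k_\alpha(\mu_\alpha)$ is measurable in $M$ by elementarity of $k_\alpha$ combined with property (B), which supplies measurability of $\mu_\alpha$ in $M_\alpha$. For $1 \leq j < m_\alpha$, property (D) places $\mu^{*j}_\alpha$ in $d^{-1}\{\mu_\alpha\}$ in $M\left[H\right]$, so $\mu^{*j}_\alpha$ must be a measurable of $M$ whose Prikry sequence in $M\left[H\right]$ begins with $\mu_\alpha$. For $j = m_\alpha$ the same first Prikry element $\mu_\alpha$ is given by property (C). In particular, the base case of the forthcoming induction is settled.

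\emph{Identification of the Prikry sequence.} Let $\langle N_n , i_{m,n} \colon m \leq n \leq \omega \rangle$ be the $\omega$-iteration of $M_\alpha$ by $U^{j-1}_{\mu_\alpha}$, with $N_0 = M_\alpha$ and $N_{n+1} = \mbox{Ult}(N_n , i_{0,n}(U^{j-1}_{\mu_\alpha}))$, and let $\kappa_n = i_{0,n}(\mu_\alpha)$, so that $\kappa_0 = \mu_\alpha$ and $\kappa_n$ is the critical point of $i_{n,n+1}$. Let $c_0 < c_1 < \ldots$ enumerate the Prikry sequence of $\mu^{*j}_\alpha$ in $M\left[H\right]$. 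My plan is to prove by induction on $n$ that $c_n = \kappa_n$. Assuming $c_m = \kappa_m$ for $m \leq n$, I would represent $\kappa_{n+1}$ inside $M_\alpha$ as the value $i_{0,n+1}(F)(\kappa_0,\ldots,\kappa_n)$ for a suitable function $F \in M_\alpha$ on $\left[\mu_\alpha\right]^{n+1}$ (for instance, the projection on the last coordinate). Using corollary \ref{Corollary: FS, how U^j_mualpha depends on U^j_xi} to translate $U^{j-1}_{\mu_\alpha}$-statements in $M_\alpha$ into $j_W(\xi \mapsto U^{j-1}_\xi)(\mu^{*m_\alpha}_\alpha)$-statements in $M$, this description in $M_\alpha$ turns into a candidate description of $c_{n+1}$ in $M\left[H\right]$.

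This candidate description is realized generically by the Multivariable Fusion Theorem, applied to $\leq^*$-dense open sets of the form
\begin{align*}
e\left( \vec{\xi}, \vec{\nu}_1, \ldots, \vec{\nu}_k \right) = \{ r \in P\setminus \nu_k \colon \ & r \mbox{ forces the } (n{+}1)\mbox{-th Prikry element of } \mu^{*j}_\alpha \\
& \mbox{to lie in a bounded subset of } \mu^{*j}_\alpha \mbox{ of the prescribed form} \}.
\end{align*}
Extracting a fusion $p^* \in G$ as in the preceding lemmas, and using the commutativity $j_W \restriction_V = k_\alpha \circ j_\alpha$, one concludes that $c_{n+1} = \kappa_{n+1}$. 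Cofinality of $\langle \kappa_n \colon n<\omega \rangle$ in $\mu^{*j}_\alpha$ is automatic once the termwise equalities hold, because the Prikry sequence $\langle c_n \colon n<\omega \rangle$ is cofinal in $\mu^{*j}_\alpha$ by definition.

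\emph{Main obstacle.} The hard part will be the bookkeeping required to encode the inductive hypothesis $c_0 = \kappa_0 , \ldots , c_n = \kappa_n$ as a constraint inside the dense-open sets $e(\vec\xi , \vec\nu_1 , \ldots , \vec\nu_k)$, and simultaneously to shuttle between the ``$M_\alpha$-side'' representation of $\kappa_{n+1}$ via $U^{j-1}_{\mu_\alpha}$ and the ``$V\left[G\right]$-side'' representation of $c_{n+1}$ via the Rudin--Keisler projection $d$ and the embedding $k_\alpha$. Once the density of each $e(\vec\xi , \vec\nu_1 , \ldots , \vec\nu_k)$ is verified by the same tools used in lemmas \ref{Lemma: every name for an ordinael can be decided up to boudedly many values by a direct extension} and \ref{Lemma:  mu alpha appears in the sequence of its k alpha }, the fusion lemma combined with corollary \ref{Corollary: FS, how U^j_mualpha depends on U^j_xi} closes the induction.
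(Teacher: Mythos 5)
There is a genuine gap in your inductive step, and it is concentrated in one direction of the equality $c_{n+1}=\kappa_{n+1}$. A fusion argument of the kind you describe can deliver the \emph{lower} bound (no element of the Prikry sequence sneaks into the interval $\left(\kappa_n,\kappa_{n+1}\right)$): this is exactly how the paper argues, by representing the candidate intermediate point by a function $g$ with $g(\ldots)<\min\{h(\ldots),\nu^{j-1}\}$ and shrinking measure-one sets to sit above $\nu^{j-1}$ and inside a club of closure points of $g$. But your plan has no mechanism for the \emph{upper} bound, i.e.\ for showing that $\kappa_{n+1}$ itself actually occurs as the next element of the Prikry sequence. The tools you allow yourself at stage $\alpha$ --- the corollary relating $U^{j-1}_{\mu_\alpha}$ to $j_W(\xi\mapsto U^{j-1}_\xi)$, the embedding $k_\alpha$, and Multivariable Fusion --- cannot do this, because $\mbox{crit}\left(k_\alpha\right)=\mu_\alpha=\kappa_0<\kappa_n$ for $n\geq 1$: every translation of an ``$M_\alpha$-side'' description of $\kappa_{n+1}$ through $k_\alpha$ (or through functions representing it and the commutativity $j_W\restriction_V=k_\alpha\circ j_\alpha$) produces a statement about $k_\alpha\left(\kappa_{n+1}\right)$, not about $\kappa_{n+1}$. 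Likewise the ``bounded set in the image'' technique, which works in the earlier lemma precisely because $\mbox{Im}\left(k_\alpha\right)\cap k_\alpha\left(\mu_\alpha\right)=\mu_\alpha$, breaks down here: $\mbox{Im}\left(k_\alpha\right)\cap k_\alpha\left(\kappa_{n+1}\right)=k_\alpha''\kappa_{n+1}$ is not an initial segment and contains ordinals far above $\kappa_{n+1}$ (for instance $k_\alpha\left(\mu_\alpha\right)$), so membership in it does not pin $c_{n+1}$ below $\kappa_{n+1}$.

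The paper closes exactly this hole by leaving stage $\alpha$ and invoking the global structure of the iteration, which is available since properties (A)--(E) were proved for all stages: the critical point $j_{U^{j-1}_{\mu_\alpha}}\left(\mu_\alpha\right)$ is measurable in $M_{\alpha+1}$ and has $V$-cofinality above $\kappa$, so it equals $\mu_\beta$ for some $\beta>\alpha$; the commutativity $k_\beta\circ j_{\alpha+1,\beta}=k_{\alpha+1}$ gives $k_\beta\left(\mu_\beta\right)=\mu^{*j}_\alpha$, and then property (C) \emph{at stage} $\beta$ (where $\mbox{crit}\left(k_\beta\right)=\mu_\beta$ is exactly the ordinal one needs to pin down) puts $\mu_\beta$ into the Prikry sequence of $\mu^{*j}_\alpha$. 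With that, the paper does not need your induction on $n$ at all: it identifies only the immediate successor of $\mu_\alpha$ in the Prikry sequence, and the rest of the sequence is identified by applying the same fact at the later stages $\beta$. So the correct repair of your proposal is not better bookkeeping inside the dense sets $e(\vec\xi,\vec\nu_1,\ldots,\vec\nu_k)$, but importing, for each $n$, the later-stage embedding $k_\beta$ whose critical point is $\kappa_{n+1}$; your fusion step should then be reserved for the lower-bound half, where it matches the paper's argument.
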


\begin{proof}
First, by lemma \ref{Lemma:  mu alpha appears in the sequence of its k alpha },  $ \mu_{\alpha} $ appears in the Prikry sequence of $ \mu^{*j}_{\alpha} $. Let $ \lambda $ be the element after it in this Prikry sequence, and let us argue that $ \lambda = j_{  U^{j-1}_{\mu_{\alpha} }  }\left( \mu_{\alpha} \right) = \mu^{j-1}_{\alpha}  $. Since $ j_{  U^{j-1}_{\mu_{\alpha} }  }\left( \mu_{\alpha} \right) $ is measurable in $ M_{\alpha+1} $ and has cofinality above $ \kappa $ in $ V $, there exists $ \beta>\alpha $ such that $\mu_{\beta} = \mu^{j}_{\alpha} $; Also, $ k_{\beta}\left( \mu_{\beta} \right) = k_{\beta}\left( j_{\alpha+1,\beta  } \left( \mu^{j}_{\alpha} \right) \right) = k_{\alpha+1}\left( \mu^{j}_{\alpha} \right) = \mu^{*j}_{\alpha} $, and thus $ \mu_{\beta} = \mu^{j}_{\alpha} $ appears as an element in the Prikry sequence of $ \mu^{*j}_{\alpha} $. Thus, $ \lambda \leq \mu^{j}_{\alpha} $, and it suffices to prove that $ \lambda = \mu^{j}_{\alpha} $. Assume for contradiction that $ \lambda < \mu^{j-1}_{\alpha} = j_{  U^{j-1}_{\mu_{\alpha} }  }\left( \mu_{\alpha} \right)$.

In $M_{\alpha}$ write-- $$\mu_{\alpha} = j_{\alpha}(h)\left( j_{1,\alpha}\left(  \left[Id\right]^{'}_{0} \right), j_{ \alpha_1+1, \alpha }\left( \left[Id\right]_{\alpha_1}  \right) , \ldots, j_{  \alpha_k +1, \alpha }\left( \left[Id\right]_{\alpha_k} \right)  \right)$$ 
and-- 
$$\lambda = j_{\alpha+1}(g)\left( j_{1,\alpha}\left(  \left[Id\right]^{'}_{0} \right), j_{ \alpha_1+1, \alpha }\left( \left[Id\right]_{\alpha_1}  \right) , \ldots, j_{  \alpha_k +1, \alpha }\left( \left[Id\right]_{\alpha_k} \right), \left[Id\right]_{\alpha}  \right)$$
 for some functions $ f,g $ in $ V $. Recall that $ \left[Id\right]_{\alpha} = \langle \mu_{\alpha}, \ldots, \mu^{j-1}_{\alpha}, \ldots, \mu^{m_\alpha-1}_{\alpha} \rangle $, so we can assume that for every $ \vec{\xi}, \vec{\nu}_1 , \ldots, \vec{\nu}_k, \vec{\nu} = \langle \nu_0, \ldots, \nu_{m-1} \rangle$,
$$  g\left( \vec{\xi}, \vec{\nu}_1, \ldots, \vec{\nu}_k, \langle \nu^0, \ldots, \nu^{j-1}, \ldots , \nu^{m-1}\rangle \right)  <\min \{ h\left( \vec{\xi}, \vec{\nu}_1, \ldots, \vec{\nu}_k\right),  \nu^{j-1}\} $$

Let $ t^* $ be the initial segment of the Prikry sequence of $ k_{\alpha}\left( \mu_{\alpha} \right) $ which consists of all the ordinals below $ \mu_{\alpha} $. Fix a function $ \langle \vec{\xi}, \vec{\nu}_1, \ldots, \vec{\nu}_k \rangle\mapsto t^*\left( \vec{\xi}, \vec{\nu}_1, \ldots, \vec{\nu}_k \right) $ which represents $ t^* $ in $ M_{\alpha} $ (as in lemma \ref{Lemma:  mu alpha appears in the sequence of its k alpha }). 

For simplicity, we adopt the following notation below: whenever $ \langle \vec{\xi}, \vec{\nu}_1, \ldots, \vec{\nu}_k \rangle $ are fixed, let $ h = h\left( \vec{\xi}, \vec{\nu}_1, \ldots, \vec{\nu}_k \right) $. Also, for every $ \nu < h $, we denote $ d^{-1}(\nu) = \langle  \nu^{1}, \ldots, \nu^{m-1} \rangle $ (whenever $ m\neq 1 $). We also denote $ \nu^0 = \nu $ and $ \vec{\nu} = \langle \nu^{0}, \ldots, \nu^{m-1} \rangle $. Let $ C = C\left( \vec{\xi},\vec{\nu}_1, \ldots, \vec{\nu}_k \right) $ be the club of closure points of $ \nu_0\mapsto g\left( \vec{\xi}, \vec{\nu}_1, \ldots, \vec{\nu}_k, \vec{\nu} \right)  $ (this is a club in $ h $. We remark that it is necessary in the proof below only in the case where $ j=1 $).

We now apply the Multivariable Fusion lemma. Fix $ \langle \vec{\xi}, \vec{\nu}_1, \ldots, \vec{\nu}_k \rangle $, and let--
\begin{align*}
e\left( \vec{\xi}, \vec{\nu}_1, \ldots, \vec{\nu}_k \right) = \{ & r\in P\setminus \nu_k \colon  \mbox{for every } \nu\in \lusim{A}^{r}_{ h }, \\  &A^{r}_{h} \setminus \nu \subseteq  \left(h\setminus \nu^{j-1}\right) \cap C\left( \vec{\xi}, \vec{\nu}_1, \ldots, \vec{\nu}_k \right) \}
\end{align*}

First let us consider the case where $ j>1 $. There exists $ p\in G $ such that for a set of  $ \xi<\kappa $ in $ W $, the condition $ p^{\frown} \langle \vec{\xi}, \vec{\mu}^{*}_{\alpha_1}(\xi), \ldots, \vec{\mu}^{*}_{\alpha_k}(\xi) \rangle$ forces that the element which appears after $\mu_{\alpha}(\xi)$ in the Prikry sequence of $ h\left( \vec{\xi}, \vec{\mu}^{*}_{\alpha_0}(\xi), \ldots, \vec{\mu}^{*}_{\alpha_k }(\xi) \right) $ is strictly greater then $ \mu^{*j-1}(\xi) $. Thus, in $ M\left[H\right] $,
$$ \lambda > \mu^{*j-1} > j_W\left(g\right)\left( \vec{\kappa}, \vec{\mu}^{*}_{\alpha_0}, \ldots , \vec{\mu}^{*}_{\alpha_k}, \vec{\mu}^{*}_{\alpha} \right) \geq \lambda $$
which is a contradiction.

If $ j=1 $, we use the club $ C $ defined above:  Since  $ \mu_{\alpha}< \lambda $, it follows that $ \lambda > j_W(g)\left( \vec{\kappa}, \vec{\mu}^{*}_{\alpha_1}, \ldots, \vec{\mu}^{*}_{\alpha_k}, \vec{\mu}^{*}_{\alpha} \right) $ which is again a contradiction as above.
\end{proof}

\begin{corollary}
In $ M\left[H\right] $, recall the sequence--
$$ d^{-1}\{\kappa\} = \langle \mu^{*1}_{0}, \ldots, \mu^{*m}_{0} \rangle  = \langle \left[Id\right]_{W^{1}}, \ldots, \left[Id\right]_{W^{m}} \rangle$$
For every $ 1\leq j\leq m $, the cardinal $ \mu^{*j} = \left[Id\right]_{W^j} $ is measurable in $ M $, and its Prikry sequence in $ M\left[H\right] $ is the sequence of critical points in the iterated ultrapower, $ \omega $-many times, taken with the measure $ U^{j-1}  = W^{j-1}\cap V\in V$.
\end{corollary}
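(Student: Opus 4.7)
The plan is to read this corollary as precisely the instance $\alpha=0$ of the preceding lemma, with all remaining work concentrated in bookkeeping. First I would unwind the notation at the initial stage of the iteration: $M_0 = M_U$, the embedding $k_0 \colon M_U \to M$ is the one constructed in Section 2 (mapping $[f]_U$ to $[f]_{W^*}$), and $\mu_0 = \mbox{crit}(k_0) = \kappa$, $m_0 = m$. By construction $k_0(\mu_0) = [\mbox{Id}]_{W^*} = \mu^{*m}_0$, and the sequence $\langle \mu^{*1}_0, \ldots, \mu^{*m-1}_0 \rangle$ is the increasing enumeration of $d^{-1}\{\kappa\}$ strictly below $\mu^{*m}_0$, which is exactly $\langle [\mbox{Id}]_{W^1}, \ldots, [\mbox{Id}]_{W^{m-1}} \rangle$.

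The next step is to identify the measures $U^{j}_{\mu_0}$ appearing at the start of this section, which were defined by $U^{0}_{\mu_0} = \{ X \subseteq \kappa \colon \mu_0 \in k_0(X) \} \cap M_0$ and $U^{j}_{\mu_0} = \{ X \subseteq \kappa \colon \mu^{*j}_0 \in k_0(X) \} \cap M_0$ for $1 \leq j \leq m-1$, with the measures $U^{j} = W^{j} \cap V$ from Section 2. For $j=0$ this is exactly the last clause of Lemma \ref{Lemma: FS, U0 below U}, and for $1 \leq j \leq m-1$ it is Remark \ref{Remark: U^i s derived from k}. So under the identification $U^{j}_{\mu_0} = U^{j}$, applying the preceding lemma with $\alpha = 0$ gives, for each $1 \leq j \leq m$, that $\mu^{*j} = \mu^{*j}_0$ is measurable in $M$ and that its Prikry sequence in $M[H]$ is the sequence of critical points of the $\omega$-length iteration of $M_0 = M_U$ by $U^{j-1}_{\mu_0} = U^{j-1}$.

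Finally I would reconcile the statement's mention that $U^{j-1} = W^{j-1} \cap V \in V$ with the iteration's being performed over $M_U$ rather than over $V$: since $U^{j-1} \vartriangleleft U$ we have $U^{j-1} \in M_U$, and the same ultrafilter lives in both models. The inductively defined sequence $\kappa_0 = \kappa$, $\kappa_{n+1} = j_{U^{j-1}}(\kappa_n)$ is computed identically whether the ambient model is $V$ or $M_U$, because the iterated ultrapower embeddings commute with the inclusion $M_U \hookrightarrow V$ (equivalently, the finite iterates of $M_U$ by $U^{j-1}$ sit inside the corresponding finite iterates of $V$ as inner models with the same critical points). No serious obstacle is anticipated; the entire argument is a relabeling, and the only subtlety worth flagging explicitly is this last point about identifying the sequences of critical points produced from the two possible base models.
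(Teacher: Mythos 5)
Your proposal is correct and is essentially the paper's own (implicit) proof: the corollary is precisely the instance $\alpha = 0$ of the preceding lemma, with $M_0 = M_U$, $k_0\colon M_U \to M$, $\mu_0 = \kappa$, $m_0 = m$, and the identification $U^{j}_{\mu_0} = U^{j}$ supplied by Lemma \ref{Lemma: FS, U0 below U} (case $j=0$) and Remark \ref{Remark: U^i s derived from k} (cases $1 \leq j \leq m-1$). Your closing point --- that the critical-point sequence of the $\omega$-fold iteration by $U^{j-1}$ is the same whether the base model is $M_U$ or $V$, since $M_U$ is closed under $\kappa$-sequences and so has the same functions $\left[\kappa\right]^{<\omega}\to\kappa$ --- is a correctly handled bookkeeping subtlety that the paper passes over in silence.
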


Finally, let us provide a sufficient condition for definability of $ j_W\restriction_{V} $. Denote $ \mathcal{\vec{U}} = \langle \langle U^{0}_{\xi}, \ldots , U^{m_{\xi}-1}_{\xi} \rangle  \colon  \xi\in \Delta \rangle $.

\begin{lemma} \label{Lemma: FS, sufficient condition for definability}
If $ \mathcal{\vec{U}}\in V $, then $ j_W\restriction_{V} $  is definable in $ V $.
\end{lemma}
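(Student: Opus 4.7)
The plan is to re-execute the iteration $\langle M_\alpha, j_\alpha : \alpha \leq \kappa^* \rangle$ constructed in the previous subsections \emph{entirely inside $V$}, using $U \in V$ (the measure with $W = U^{\times}$) and the sequence $\vec{\mathcal{U}}$ as parameters. Since the properties (A)--(E) have already been established, what remains is to observe that each stage of the construction is a definable procedure in $V$.

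The base case $M_0 = \mbox{Ult}(V, U)$ with $j_0 = j_U$ is visibly $V$-definable from $U$. At a limit $\alpha$ we take direct limits, which is a $V$-definable operation on $\langle M_\beta, j_\beta : \beta < \alpha \rangle$. For a successor step, assume that $M_\alpha$ and $j_\alpha$ have been constructed. By Property (B), the critical point $\mu_\alpha$ is characterized as the least $M_\alpha$-measurable cardinal with $\mu_\alpha \geq \sup_{\beta<\alpha}\mu_\beta$ and $(\mbox{cf}(\mu_\alpha))^V > \kappa$ --- a condition read off from $M_\alpha \in V$, the sequence $\langle \mu_\beta : \beta<\alpha\rangle \in V$, and $V$-cofinalities. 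Next, Corollary \ref{Corollary: FS, how U^j_mualpha depends on U^j_xi} combined with the factorization $j_W\restriction_{V} = k_\alpha \circ j_\alpha$ gives, for every $0 \leq j < m_\alpha$,
$$k_\alpha(U^j_{\mu_\alpha}) \;=\; j_W(\xi \mapsto U^j_\xi)(k_\alpha(\mu_\alpha)) \;=\; k_\alpha\bigl(j_\alpha(\xi \mapsto U^j_\xi)(\mu_\alpha)\bigr),$$
and by elementarity (hence injectivity) of $k_\alpha$,
$$U^j_{\mu_\alpha} \;=\; j_\alpha(\vec{\mathcal{U}})(\mu_\alpha)(j).$$
The hypothesis $\vec{\mathcal{U}} \in V$ is exactly what lets the right-hand side be formed in $V$: applying $j_\alpha$ to the function $\xi \mapsto U^j_\xi$ makes sense precisely because the function lies in $V$. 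The same identification also reads off $m_\alpha$. This determines $\mathcal{E}_\alpha$ inside $M_\alpha$, and hence $M_{\alpha+1} = \mbox{Ult}(M_\alpha, \mathcal{E}_\alpha)$.

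The iteration terminates at $\kappa^* = j_U(\kappa) \in V$, and by the last lemma preceding this one, $j_W\restriction_V = j_{\kappa^*}$. Hence $j_W\restriction_V$ is a definable class of $V$ with parameters $U$ and $\vec{\mathcal{U}}$, both of which lie in $V$ by hypothesis. The main --- and essentially the only --- nontrivial step is the identification of the internal measures $U^j_{\mu_\alpha}$ with $j_\alpha$-images of $\vec{\mathcal{U}}$; once this is in place, all remaining ingredients (the choice of $\mu_\alpha$, the formation of $\mathcal{E}_\alpha$, successor and limit steps, the termination ordinal) are transparently $V$-definable, and the verification that the resulting tree of iterated ultrapowers coincides with the one built in $V[G]$ is immediate from the uniqueness clause in Properties (A)--(E).
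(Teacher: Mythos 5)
Your proposal is correct and follows essentially the same route as the paper: induct on $\alpha\leq\kappa^*$, and use the hypothesis $\vec{\mathcal{U}}\in V$ together with Corollary \ref{Corollary: FS, how U^j_mualpha depends on U^j_xi} and the factorization $j_W\restriction_V = k_\alpha\circ j_\alpha$ (plus injectivity of $k_\alpha$) to identify $U^j_{\mu_\alpha}$, and hence $\mathcal{E}_\alpha$, as $j_\alpha(\vec{\mathcal{U}})(\mu_\alpha)(j)$ --- exactly the internal definition $\mathcal{E}_{\mu_\alpha}=j_\alpha\left(\vec{\xi},\vec{\nu}_1,\ldots,\vec{\nu}_k \mapsto \mathcal{E}(h(\vec{\xi},\vec{\nu}_1,\ldots,\vec{\nu}_k))\right)(\cdots)$ that the paper writes down. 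Your explicit appeal to Property (B) to locate $\mu_\alpha$ inside $V$ is a point the paper leaves implicit, but it is the same argument.
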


\begin{proof}
We prove by induction on $ \alpha\leq \kappa^* $ that $ j_{\alpha} \colon V\to M_{\alpha} $ is definable in $ V $. Fix $ \alpha<\kappa^* $ and assume that $ j_{\alpha} \colon V\to M_{\alpha} $ has been defined in $ V $. Let us define the measure $ \mathcal{E}_{\mu_{\alpha}} $. 
	
We use below the usual notations: for some $ \alpha_1<\ldots < \alpha_k <\alpha$  and $ h\in V $,
$$ \mu_{\alpha} = j_{\alpha}(h)\left(   j_{1,\alpha}\left(\left[Id\right]^{'}_{0}\right),   j_{\alpha_1+1, \alpha}\left(   \left[Id\right]_1 \right) , \ldots, j_{\alpha_k+1, \alpha}\left(  \left[Id\right]_k \right) \right) $$
(for sake of definability, we can use the least $ \langle \alpha_0, \ldots, \alpha_k \rangle $ and $ h $, taken with respect to a prescribed well order of $ V_\lambda $ for $ \lambda $ high enough).
For every $ \xi\in \Delta $, let $ \mathcal{E}(\xi) $ be the measure on $ \left[\xi\right]^{ m_{\xi}-1 } $ which corresponds to the sequence $$U^{0}_{\xi}  \vartriangleleft  U^{1}_{\xi} \vartriangleleft \ldots  \vartriangleleft U^{m_{\xi}-1}_{\xi} $$
Since $ \mathcal{U}  $ belongs to $ V $, the mapping  $\xi\mapsto  \mathcal{E}(\xi) $ belongs to $ V $ as well. By corollary \ref{Corollary: FS, how U^j_mualpha depends on U^j_xi}, for every $ \alpha<\kappa^* $,
$$  \mathcal{E}_{\mu_{\alpha}} =    j_\alpha \left( \langle \vec{\xi}, \vec{\nu}_1, \ldots , \vec{\nu}_k  \rangle \mapsto \mathcal{E}( h\left( \vec{\xi}, \vec{\nu}_1, \ldots , \vec{\nu}_k \right)  )   \right)\left(   j_{1,\alpha}\left(\left[Id\right]^{'}_{0}\right),   j_{\alpha_1+1, \alpha}\left(   \left[Id\right]_1 \right) , \ldots, j_{\alpha_k+1, \alpha}\left(  \left[Id\right]_k \right)   \right) $$
and this definition can be carried inside $ V $.
\end{proof}

\bibliography{FullSuppRestElm.bib}
\bibliographystyle{plain} 

\end{document}